\newcommand{\VAN}[3]{#2}
\DeclareSymbolFontAlphabet{\mathbb}{AMSb}
\DeclareSymbolFontAlphabet{\mathbbl}{bbold}
\newcommand{\prism}{{\mathlarger{\mathbbl{\Delta}}}}
\theoremstyle:=definition,remark,plain\do{%
        \expandafter\g@addto@macro\csname th@\theoremstyle\endcsname{%
            \addtolength\thm@preskip\parskip
            }%
        }
\newtheorem{Thm}[subsubsection]{Theorem}
\newtheorem{Lem}[subsubsection]{Lemma}
\newtheorem{Prop}[subsubsection]{Proposition}
\newtheorem{Cor}[subsubsection]{Corollary}
\newtheorem{Conj}[subsubsection]{Conjecture}
\newtheorem*{Thm*}{Theorem}
\newtheorem{mainThm}{Theorem}
\newtheorem{Claim}[subsubsection]{Claim}
\newtheorem{mainCor}[mainThm]{Corollary}
\theoremstyle{definition}
\newtheorem{Def}[subsubsection]{Definition}
\newtheorem{Rem}[subsubsection]{Remark}
\numberwithin{equation}{subsection}
\newcommand{\spec}{\operatorname{Spec}}
\newcommand{\spf}{\operatorname{Spf}}
\newcommand{\spa}{\operatorname{Spa}}
\newcommand{\spd}{\operatorname{Spd}}
\newcommand{\gal}{\operatorname{Gal}}
\newcommand{\ovfp}{\overline{\mathbb{F}}_p}
\newcommand{\qpbr}{\breve{\mathbb{Q}}_{p}}
\newcommand{\ebreve}{\breve{E}}
\newcommand{\oebreve}{\mathcal{O}_{\breve{E}}}
\newcommand{\zpbr}{\breve{\mathbb{Z}}_{p}}
\newcommand{\afp}{{\mathbb{A}_f^p}}
\newcommand{\CO}{\mathcal{O}}
\newcommand{\calg}{\mathcal{G}}
\newcommand{\qp}{{\mathbb{Q}_p}}
\newcommand{\zp}{{\mathbb{Z}_{p}}}
\newcommand{\fp}{{\mathbb{F}_{p}}}
\newcommand{\fpbar}{{\overline{\mathbb{F}}_{p}}}
\newcommand{\qpbar}{\overline{\mathbb{Q}}_p}
\newcommand{\perf}{\operatorname{Perf}}
\newcommand{\gxg}{{(\mathsf{G}, \mathsf{X}, \mathcal{G})}}
\newcommand{\gxgp}{{(\mathsf{G}', \mathsf{X}', \mathcal{G}')}}
\newcommand{\calgcirc}{\mathcal{G}^{\circ}}
\newcommand{\calgdeltacirc}{\mathcal{G}_{\delta}^{\circ}}
\newcommand{\calgdelta}{\mathcal{G}_{\delta}}
\newcommand{\shtgcircmu}{\mathrm{Sht}_{\calgcirc,\mu}}
\newcommand{\shtgdeltamu}{\mathrm{Sht}_{\calgdelta,\mu, \oebreve}}
\newcommand{\shtgdeltacircmu}{\mathrm{Sht}_{\calgdeltacirc,\mu, \oebreve}}
\newcommand{\shtgmubr}{\mathrm{Sht}_{\mathcal{G},\mu, \oebreve}}
\newcommand{\mintgdeltacircmu}{\mathcal{M}^{\mathrm{int}}_{\mathcal{G}_{\delta}^{\circ},b,\mu}}
\newcommand{\shtgmuone}{\mathrm{Sht}_{\mathcal{G},\mu, \delta=1}}
\newcommand{\shtgmukappa}{\mathrm{Sht}_{\mathcal{G},\mu}^{\kappa=-\mu^\natural}}
\newcommand{\shtgmubrkappa}{\mathrm{Sht}_{\mathcal{G},\mu,\oebreve}^{\kappa=-\mu^\natural}}
\newcommand{\shtg}{\mathrm{Sht}_{\mathcal{G}}}
\newcommand{\shtgmu}{\mathrm{Sht}_{\mathcal{G},\mu}}
\newcommand{\shthmuone}{\mathrm{Sht}_{\mathcal{H},\mu, \delta=1}}
\newcommand{\shtgmub}{\mathrm{Sht}_{\mathcal{G},\mu}^{[b]}}
\newcommand{\shtgmup}{\mathrm{Sht}_{\mathcal{G}^\prime,\mu'}}
\newcommand{\mintgmu}{\mathcal{M}^{\mathrm{int}}_{\mathcal{G},b,\mu}}
\newcommand{\mintxgmu}{\mathcal{M}^{\mathrm{int}}_{\mathcal{G},b_x,\mu}}
\newcommand{\mintxgcircmu}{\mathcal{M}^{\mathrm{int}}_{\calgcirc,b_x,\mu}}
\newcommand{\admu}{\operatorname{Adm}(\mu^{-1})}
\newcommand{\bun}{\mathrm{Bun}}
\newcommand{\bung}{\bun_{G}}
\newcommand{\bungmu}{\bun_{G,\mu^{-1}}}
\newcommand{\gafp}{\mathsf{G}(\afp)}
\newcommand{\gx}{{(\mathsf{G}, \mathsf{X})}}
\newcommand{\gv}{\mathsf{G}_{V}}
\newcommand{\gvx}{(\mathsf{G}_{V},\mathsf{H}_{V})}
\newcommand{\gxp}{(\mathsf{G}', \mathsf{X}')}
\newcommand{\gp}{\mathsf{G}'}
\newcommand{\scrs}{\mathscr{S}}
\newcommand{\scrsg}{\mathscr{S}_K\gx}
\newcommand{\locmodgmu}{\mathbb{M}_{\mathcal{G},\mu}}
\newcommand{\locmodgcircmu}{\mathbb{M}_{\calgcirc,\mu}}
\newcommand{\locmodgcircmuv}{\mathbb{M}_{\calgcirc,\mu}^{\mathrm{v}}}
\newcommand{\locmodgmuv}{\mathbb{M}_{\mathcal{G},\mu}^{\mathrm{v}}}
\newcommand{\bdtimes}{\buildrel{\boldsymbol{.}}\over\times}
\newcommand{\frob}{\mathrm{Frob}}
\newcommand{\bgmu}{B(G,\mu^{-1})}
\newcommand{\Hom}{\operatorname{Hom}}
\newcommand{\Aut}{\operatorname{Aut}}
\newcommand{\conj}[1]{\operatorname{Int} #1}
\newcommand{\g}{\mathsf{G}}
\newcommand{\mintgcircbmu}[1]{\mathcal{M}^\mathrm{int}_{\calgcirc,b,\mu,#1}}
\newcommand{\mintgcircbxmu}[1]{\mathcal{M}^{\mathrm{int}}_{\calgcirc,b_x,\mu,#1}}
\newcommand{\mintgbmu}[1]{\mathcal{M}^\mathrm{int}_{\mathcal{G},b,\mu,#1}}
\newcommand{\mintgbxmu}[1]{\mathcal{M}^\mathrm{int}_{\mathcal{G},b_x,\mu,#1}}
\newcommand{\mintgbmuone}[1]{\mathcal{M}^\mathrm{int}_{\mathcal{G},b,\mu,\delta=1,#1}}
\newcommand{\mintgbxmuone}[1]{\mathcal{M}^\mathrm{int}_{\mathcal{G},b_x,\mu,\delta=1,#1}}
\newcommand{\oeel}[1]{{W_{\mathcal{O}_E,#1}(\ell)}}
\newcommand{\stacks}[1]{\cite[\href{https://stacks.math.columbia.edu/tag/#1}{Tag~#1}]{stacks-project}}
\newcommand\restr[2]{{% we make the whole thing an ordinary symbol
  \left.\kern-\nulldelimiterspace % automatically resize the bar with \right
  #1 % the function
  \vphantom{\big\vert} % pretend it's a little taller at normal size
  \right\rvert_{#2} % this is the delimiter
  }}
\author{Patrick Daniels}
\address{Department of Mathematics and Statistics, Skidmore College, 815 N Broadway, Saratoga Springs, NY, 12866, USA}
\email{pdaniels@skidmore.edu}
\author{Pol van Hoften} 
\address{School of Mathematical Sciences, Zhejiang University, 866 Yuhangtang Rd, Hangzhou, 310058, P. R. China}
\email{pvhoften@zju.edu.cn}
\author{Dongryul Kim}
\address{Department of Mathematics, Stanford University, 450 Jane Stanford Way
(Building 380), Stanford, California, USA}
\email{dkim04@stanford.edu}
\author{Mingjia Zhang}
\address{Department of Mathematics, Princeton university, Fine Hall, Washington Road,
Princeton, NJ, 08544-1000, USA}
\email{mz9413@princeton.edu}
\thanks{MZ is funded by the Oswald Veblen Fund through Princeton University.}
\newcommand{\commentDaniel}[1]{\textcolor{Blue}{Daniel: #1}}
\newcommand{\commentPatrick}[1]{\textcolor{violet}{Patrick: #1}}
\newcommand{\commentPol}[1]{\textcolor{red}{Pol: #1}}
\newcommand{\commentMingjia}[1]{\textcolor{ForestGreen}{Mingjia: #1}}
\newcommand{\commentDaniel}[1]{}
\newcommand{\commentPatrick}[1]{}
\newcommand{\commentPol}[1]{}
\newcommand{\commentMingjia}[1]{}
\title{On a conjecture of Pappas and Rapoport}
\begin{document}
\sloppy %should keep things from extending into the margins
\begin{abstract}
We prove a conjecture of Pappas and Rapoport about the existence of ``canonical'' integral models of Shimura varieties of Hodge type with quasi-parahoric level structure at a prime $p$. For these integral models, we moreover show uniformization of isogeny classes by integral local Shimura varieties and prove a conjecture of Kisin and Pappas on local model diagrams. 
\end{abstract}
\maketitle 
\setcounter{tocdepth}{1}
\tableofcontents

\section{Introduction}

\subsection{Background} Fix a prime $p$. Pappas and Rapoport have recently determined conditions which uniquely characterize $p$-adic integral models of Shimura varieties with parahoric level at $p$ \cite{PappasRapoportShtukas}. Integral models satisfying these conditions are called \textit{canonical integral models}, and Pappas and Rapoport have conjectured the existence of such models in general. Moreover, they prove the conjecture for Shimura varieties of Hodge type, under the assumption that the level subgroup $K_p$ at $p$ is a stabilizer parahoric (see Definition~\ref{Def:StabilizerParahoric}). In this article, we prove the existence of canonical integral models for Hodge-type Shimura varieties with arbitrary parahoric level at $p$ and, more generally, with quasi-parahoric level at $p$.  

When the level subgroup at $p$ is hyperspecial, a collection of smooth integral models for a given Shimura variety can be uniquely characterized by an extension property, similar to the N\'eron mapping property, see \cite{MilnePoints}, \cite{MoonenModels}. In this case Kisin has constructed smooth integral models satisfying the extension property for Shimura varieties of abelian type \cite{KisinModels}. In this article, we are most interested in the case where the level subgroup at $p$ is (more generally) parahoric in the sense of \cite{BTII}. In such cases, even the most accessible Shimura varieties (for example, the Siegel modular varieties) have integral models with complicated singularities, see e.g., \cite{RapoportReduction}, and such models are not so easily characterized.

The key innovation of Pappas and Rapoport in \cite{PappasRapoportShtukas}, building on earlier work of Pappas (see \cite{PappasIntegralModels}), was that integral models of Shimura varieties can be characterized by the existence of a universal $p$-adic shtuka (in the sense of \cite{ScholzeWeinsteinBerkeley}) which satisfies certain compatibilities. In this article we work in reverse, in a sense. We take as a starting point the notion that a shtuka should exist over some integral model of the given Shimura variety at (quasi-)parahoric level, and that such a shtuka should be compatible with transition morphisms between varying levels. Following these ideas, we first define a v-sheaf supporting a universal shtuka, which we then show is the v-sheaf associated to an integral model of the given Shimura variety at parahoric level. We explain our results and methods in more detail below. 

\subsection{Main Results} Let $\gx$ be a Shimura datum with reflex field $\mathsf{E}$. Let $p$ be a prime number, let $v$ be a prime of $\mathsf{E}$ above $p$ and let $E$ be the $v$-adic completion of $\mathsf{E}$ with ring of integers $\mathcal{O}_E$ and residue field $k_E$. We write $G=\mathsf{G} \otimes \qp$ and let $K_p \subset G(\qp)$ be a parahoric subgroup. For $K^p \subset \gafp$ a neat compact open subgroup, we write $K=K^pK_p$. We denote by $\mathbf{Sh}_{K}\gx/\spec(E)$ the base change to $E$ of the canonical model of the Shimura variety at level $K$ over $\spec(\mathsf{E})$. 

We will consider systems $\{\scrs_{K}\gx\}_{K^p}$ of normal schemes $\scrs_{K}\gx$, flat, of finite type, and separated over $\mathcal{O}_E$, with generic fibers $\mathbf{Sh}_{K}\gx$; here $K^p$ runs over all neat compact open subgroups of $\gafp$. Pappas and Rapoport give axioms for such systems, see \cite[Conjecture 4.2.2]{PappasRapoportShtukas}, and show that systems satisfying their axioms are unique if they exist, see \cite[Theorem 4.2.4]{PappasRapoportShtukas}. They conjecture that systems satisfying their axioms always exist, see \cite[Conjecture 4.2.2]{PappasRapoportShtukas}. 

The conjecture of Pappas and Rapoport is known when $\mathsf{G}$ is a torus, see \cite{Daniels}, and, under the assumption that $p>2$, when $K_p$ is hyperspecial and $\gx$ is of abelian type, see \cite{ImaiKatoYoucis}. It is known moreover when $\gx$ is of Hodge type and $K_p$ is a stabilizer parahoric, see \cite[Theorem 4.5.2]{PappasRapoportShtukas}. Our main theorem extends this result to all parahoric subgroups.
\begin{mainThm}[Theorem \ref{Thm:Main}] \label{Thm:IntroMain}
    If $\gx$ is of Hodge type, then there exists a system $\{\scrs_{K}\gx\}_{K^p}$ satisfying \cite[Conjecture 4.2.2]{PappasRapoportShtukas}. 
\end{mainThm}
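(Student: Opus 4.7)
The plan is to descend from Pappas--Rapoport's known stabilizer parahoric case. Let $K_p$ be a parahoric subgroup with associated Bruhat--Tits group scheme $\calgcirc$, let $K_p^{\dagger} = \mathcal{G}^{\dagger}(\zp)$ be the full stabilizer parahoric containing $K_p$, and for a quasi-parahoric $K_p^{\delta}$ with $K_p \subset K_p^{\delta} \subset K_p^{\dagger}$ let $\calgdelta$ be the corresponding smooth $\zp$-group scheme; the quotient $\mathcal{G}^{\dagger}/\calgdelta$ is a finite étale group scheme. By \cite[Theorem 4.5.2]{PappasRapoportShtukas}, the system $\{\mathscr{S}_{K^p K_p^{\dagger}}\gx\}_{K^p}$ exists, carries a universal $\mathcal{G}^{\dagger}$-shtuka $\mathcal{P}^{\dagger}$, and satisfies all the Pappas--Rapoport axioms. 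The first step is to define a candidate v-sheaf $\mathscr{S}_{K^p K_p^{\delta}}\gx^{\diamond}$ over $\spd \mathcal{O}_E$ which parameterizes an $S$-point of $\mathscr{S}_{K^p K_p^{\dagger}}\gx^{\diamond}$ together with a reduction of the universal $\mathcal{G}^{\dagger}$-shtuka to a $\calgdelta$-shtuka; this exhibits the candidate as a torsor over $\mathscr{S}_{K^p K_p^{\dagger}}\gx^{\diamond}$ under the finite constant group associated to $\mathcal{G}^{\dagger}/\calgdelta$.

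The central task is then to show that this v-sheaf is the diamond of a flat, normal $\mathcal{O}_E$-scheme $\mathscr{S}_{K^p K_p^{\delta}}\gx$ of finite type with the correct generic fiber. In the generic fiber the torsor recovers the finite étale cover $\mathbf{Sh}_{K^p K_p^{\delta}}\gx \to \mathbf{Sh}_{K^p K_p^{\dagger}}\gx$, so the natural scheme-theoretic candidate is the normalization of $\mathscr{S}_{K^p K_p^{\dagger}}\gx$ in $\mathbf{Sh}_{K^p K_p^{\delta}}\gx$; one must identify its diamond with the v-sheaf above. I would handle this via the Kisin--Pappas conjecture on local model diagrams (also established elsewhere in the paper), which furnishes smooth étale-local charts at parahoric level. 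Over such a chart the $\mathcal{G}^{\dagger}/\calgdelta$-torsor trivializes, reducing representability to the analogous finite étale cover of local models, and the two descriptions of $\mathscr{S}_{K^p K_p^{\delta}}\gx$ then agree by the uniqueness half of \cite[Theorem 4.2.4]{PappasRapoportShtukas}.

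Once representability is in hand, verification of the axioms of \cite[Conjecture 4.2.2]{PappasRapoportShtukas} proceeds axiom by axiom. The universal $\calgdelta$-shtuka exists by construction; the correct generic fiber holds by design; functoriality in $K^p$, compatibilities with morphisms of Shimura data, and independence of the auxiliary Siegel embedding all propagate from the stabilizer parahoric case using the torsor structure and the uniqueness theorem as rigidity tools. The pointwise uniformization of isogeny classes by $\mintgcircmu$ (and its $\delta$-variant) is deduced from the analogous uniformization at stabilizer parahoric level by descending along the finite étale cover. The main obstacle is the representability step together with the compatibility of local model diagrams across levels: controlling how the universal shtuka over the stabilizer parahoric model reduces to $\calgcirc$ in a scheme-theoretic (rather than merely v-sheaf-theoretic) manner, uniformly at the closed points of the special fiber, is where the essential work lies; once this geometric input is available, the Pappas--Rapoport axioms fall out by formal manipulations of torsors and descent.
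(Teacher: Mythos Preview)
Your overall architecture matches the paper's: start from the stabilizer quasi-parahoric model supplied by \cite[Theorem 4.5.2]{PappasRapoportShtukas}, produce a candidate v-sheaf as a fiber product with a stack of $\calgcirc$-shtukas, and identify it with the normalization. But the proposal has two genuine gaps.

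First, your representability step is circular. You invoke the Kisin--Pappas local model diagram at parahoric level to produce \'etale-local charts on which the torsor trivializes. In this paper that diagram (Theorem~\ref{Thm:IntroLocalModels}) is proved \emph{after} and \emph{using} Theorem~\ref{Thm:IntroMain}; moreover, even the known stabilizer-level diagrams of \cite{KisinPappas, KisinZhou} require extra hypotheses ($p$ coprime to $2\cdot\lvert\pi_1(G^{\mathrm{der}})\rvert$, $G$ acceptable and $R$-smooth) that Theorem~\ref{Thm:IntroMain} does not impose. The paper avoids local models entirely here: the representability input is Proposition~\ref{Prop:FiniteEtaleCover}, which says that an \'etale $\Lambda$-torsor over $X^{\lozenge/}$ for a normal flat finite-type $\mathcal{O}_E$-scheme $X$, with prescribed generic fiber, is automatically the $\lozenge/$ of the relative normalization. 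This rests on a comparison of finite \'etale covers of $\widehat{X}$ and of $\widehat{X}^{\lozenge}$ (Lemma~\ref{Lem:H1Henselianity}), which in turn uses \cite{KimVectorBundles}. You are missing this ingredient.

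Second, you assert that the candidate v-sheaf is a torsor over $\scrs_{K^pK_p^{\dagger}}\gx^{\lozenge/}$ under the finite group $\mathcal{G}^{\dagger}/\calgdelta$, but this is precisely the content of Theorem~\ref{Thm:QuasiParahoricShtukas} and Corollary~\ref{Cor:QuasiParahoricShtukas}, and it is not formal. One must first show that the universal $\mathcal{G}^{\dagger}$-shtuka lands in the open-and-closed substack $\operatorname{Sht}_{\mathcal{G}^{\dagger},\mu,\delta=1}$ (Remark~\ref{Rem:ShtukaOne}), and then that $\shtgcircmu \to \operatorname{Sht}_{\mathcal{G}^{\dagger},\mu,\delta=1}$ is a $\pi_0(\mathcal{G}^{\dagger})^{\phi}$-torsor. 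The latter requires the Newton-stratum analysis of Proposition~\ref{Prop:NewtonOverHodge} and the proper* formalism to reduce to geometric points, where one can invoke \cite[Theorem 4.4.1]{PappasRapoportRZSpaces}. Without this, the fiber product you write down need not be finite \'etale over the base, and the normalization argument cannot start.
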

In fact, in Theorem~\ref{Thm:Main}, we prove an extension of \cite[Conjecture 4.2.2]{PappasRapoportShtukas} to quasi-parahoric subgroups. This generalization was motivated by considerations arising from our construction of Igusa stacks of Hodge type, \cite{IgusaStacks}. We also note that Bruhat--Tits stabilizer quasi-parahorics naturally arise in moduli-theoretic integral models, while parahorics are more natural from the representation-theoretic point of view.

Theorem \ref{Thm:IntroMain} is used in work of one of us (PD) and Youcis, see \cite{DanielsYoucis}, to prove \cite[Conjecture 4.2.2]{PappasRapoportShtukas} for almost all (and all if $p \ge 3$) Shimura varieties of abelian type. Without Theorem \ref{Thm:IntroMain}, the results of \textit{loc. cit.} would have strong restrictions for Shimura varieties of type $D^{\mathbb{H}}$ in the sense of \cite[Appendix B]{Milne}. 

We remark that recent work of Takaya \cite{Takaya} also proves \cite[Conjecture 4.2.2]{PappasRapoportShtukas} under the more restrictive assumption that $K_p$ is contained in a hyperspecial subgroup $K_p'$ of $G(\qp)$, assuming the conjecture holds for $K_p'$. Such a $K_p$ is necessarily a stabilizer parahoric\footnote{See \cite[Remark 4.2.14.(b)]{KisinPappas}.}, so in the Hodge-type case the result of Takaya follows from the work of Pappas and Rapoport. The results of Takaya therefore do not intersect with ours. We mention also that the methods of Takaya, while similar in spirit to ours, crucially require smoothness and so they do not apply in our situation. 

\subsubsection{} As an application of Theorem \ref{Thm:Main}, we prove a conjecture of Kisin and Pappas on the existence of local model diagrams for Shimura varieties of Hodge type \cite[\S 4.3.10]{KisinPappas}. Let $K_p = \mathcal{G}(\zp)$ for some quasi-parahoric $\zp$-model $\mathcal{G}$ for $G$. Recall \cite[\S 4.9]{PappasRapoportShtukas} that a \textit{local model diagram} for $\scrs_K\gx$ is a diagram of $\mathcal{O}_E$-schemes
\begin{equation}\label{Eq:LocalModelDiagram}
    \begin{tikzcd} 
        & \widetilde{\scrs}_K\gx \arrow[dl, "\pi"'] \arrow[dr, "q"] & \\
        \scrs_K\gx & & \mathbb{M}_{\mathcal{G},\mu},
    \end{tikzcd}
\end{equation}
where $\pi$ is a $\mathcal{G}$-torsor and $q$ is a smooth, $\mathcal{G}$-equivariant morphism. Here $\mathbb{M}_{\mathcal{G},\mu}$ is the local model associated to $\mathcal{G}$ and $\mu$, see e.g., \cite{AGLR}. If a diagram as in \eqref{Eq:LocalModelDiagram} exists, then the singularities of $\scrs_K\gx$ are (\'etale-locally) modeled by those of the (often simpler) scheme $\mathbb{M}_{\mathcal{G},\mu}$. The existence of a diagram \eqref{Eq:LocalModelDiagram} is shown in \cite{KisinPappasZhou} under some assumptions on $p$, $G$, and $\mathcal{G}$, see Section \ref{subsub:Assumptions}. These assumptions are satisfied in many cases of interest, see Remarks \ref{Rem:AssumptionsI} and \ref{Rem:AssumptionsII} below. However, we emphasize that they need to assume that $\mathcal{G}$ is a stabilizer quasi-parahoric. 

In \cite[Section 4.9.1]{PappasRapoportShtukas}, Pappas and Rapoport construct an analogous diagram at the level of v-sheaves for any integral model $\scrs_K\gx$ which admits a $\mathcal{G}$-shtuka. A diagram as in \eqref{Eq:LocalModelDiagram} which recovers the Pappas-Rapoport v-sheaf diagram is called a \textit{scheme-theoretic local model diagram}, \cite[Definition 4.9.1]{PappasRapoportShtukas}. Pappas and Rapoport conjecture the existence of scheme-theoretic local model diagrams in general, see \cite[Conjecture 4.9.2]{PappasRapoportShtukas}. The following theorem proves their conjecture in many cases, and part (2) verifies the conjecture of Kisin--Pappas in many cases.

\begin{mainThm}[Theorem \ref{Thm:WhyDidWeCheckThis?}, Theorem \ref{Thm:SchemeTheoreticLocalModelExists}] \label{Thm:IntroLocalModels}
Let $\gxg$ be as above and assume that it satisfies the assumptions of Kisin--Pappas--Zhou, see Section \ref{subsub:Assumptions}; let $\calgcirc$ be the identity component of $\calg$ and set $K_p^{\circ}=\calgcirc(\zp)$. The following hold:
\begin{enumerate}
    \item The local model diagram of \cite{KisinPappasZhou} for $\scrs_{K}\gx$ is a scheme-theoretic local model diagram.

    \item The integral model $\scrs_{K^{\circ}}\gx$ admits a scheme-theoretic local model diagram. 
\end{enumerate}
\end{mainThm}
\subsubsection{} Other recent advances in the theory of integral models of Shimura varieties of Hodge type that require one to restrict to stabilizer parahorics are Rapoport--Zink uniformization of isogeny classes and the existence of CM lifts, see \cite[Theorem 1.1]{ZhouIsogeny}, \cite[Theorem I,II]{vH}, \cite[Corollary 1.4, Corollary 6.3]{GleasonLimXu}. We show that the proof of \cite[Corollary 6.3]{GleasonLimXu} can be
combined with Theorem \ref{Thm:IntroMain} to prove uniformization in full generality, see Corollary \ref{Cor:Uniformization}. We expect that Corollary \ref{Cor:Uniformization} can be used to prove the existence of CM lifts of isogeny classes when $G$ is quasi-split.

\subsection{A sketch of the proof of Theorem \ref{Thm:IntroMain}} From now on, we change our notation and let $\mathcal{G}^{\circ}$ be a parahoric group scheme which is the relative identity component of a stabilizer Bruhat--Tits group scheme $\mathcal{G}$, see Section \ref{Sec:BruhatTits}. Fixing $K^p \subset \gafp$, we will write $K_p^{\circ}=\calgcirc(\zp)$, $K_p=\mathcal{G}(\zp)$, and $K^{\circ}=K^p K_p^{\circ}$.

As usual, to construct integral models of Shimura varieties of level $K$ and $K^{\circ}$, we choose a Hodge embedding $\gx \to \gvx$, where $\gv=\operatorname{GSp}(V, \psi)$ for a symplectic space $(V,\psi)$ over $\mathbb{Q}$. We then choose a lattice $\Lambda \subset V \otimes \qp$ such that $\mathcal{G}(\zpbr)$ is the stabilizer of $\Lambda \otimes_{\zp} \zpbr$, and define $\scrs_{K}\gx$ as the normalization of the Zariski closure of $\mathbf{Sh}_K\gx$ in an integral model of the Shimura variety for $\gvx$ at level $K_{\Lambda}$. The arguments in \cite{PappasRapoportShtukas} show that $\scrs_{K}\gx$ satisfies (a generalization to quasi-parahoric subgroups of) the axioms of \cite[Conjecture 4.2.2]{PappasRapoportShtukas}. We define $\scrs_{K^{\circ}}\gx$ to be the normalization of $\scrs_{K}\gx$ in $\mathbf{Sh}_{K^{\circ}}\gx$. 

\subsubsection{} \label{SubSub:Sketch} The most important part of the axioms of \cite[Conjecture 4.2.2]{PappasRapoportShtukas} is the existence of a $\mathcal{G}$-shtuka on $\scrs_{K}\gx$, encoded as a morphism of v-stacks (here the notation $(-)^{\diamondsuit/}$ denotes a variant of the v-sheaf associated to a $\zp$-scheme, see Section \ref{Sub:AdicSpaces} below, and $\mu$ is the $G(\qpbar)$-conjugacy class of cocharacters of $G$ coming from the Hodge cocharacter and the place $v$)
\begin{align}
    \scrs_{K}\gx^{\diamondsuit/} \to \shtgmu.
\end{align}
Yet it is not a priori clear that there is a $\calgcirc$-shtuka on $\scrs_{K^{\circ}}\gx$, or in other words, that there is a dotted arrow making the following diagram commutative
\begin{equation} \label{Eq:DottedArrow}
    \begin{tikzcd}
        \scrs_{K^{\circ}}\gx^{\diamondsuit/} \arrow{d} \arrow[r, densely dotted] & \shtgcircmu \arrow{d} \\
         \scrs_{K}\gx^{\diamondsuit/} \arrow{r} & \shtgmu.
    \end{tikzcd}
\end{equation}
In fact, considerations from our companion paper \cite{IgusaStacks} lead us to believe that such a diagram exists and is cartesian. A computation of \cite[Section 4.3]{KisinPappas} suggests that the left vertical map in the diagram should be finite \'etale. 

The rough strategy of the proof now goes as follows: We show that $\scrs_{K}\gx^{\diamondsuit/} \to \shtgmu$ and $\shtgcircmu \to \shtgmu$ factor through an open and closed substack $\shtgmuone \subset \shtgmu$. We then show that $\shtgcircmu \to \shtgmuone$ is an \'etale torsor under a finite abelian group $\Lambda$, see Theorem~\ref{Thm:IntroShtukas} and Corollary~\ref{Cor:IntroCor} below. By pulling back this cover along the map $\scrs_{K}\gx^{\diamondsuit/}\to \shtgmuone$, we get an \'etale $\Lambda$-torsor $\mathscr{Y} \to \scrs_K\gx^{\diamondsuit/}$. We show, using a result of one of us (DK) \cite{KimVectorBundles}, that $\mathscr{Y}$ must be isomorphic to $\scrs_{K^{\circ}}\gx^{\diamondsuit/}$ over $\scrs_K\gx^{\diamondsuit/}$, see Proposition \ref{Prop:FiniteEtaleCover}. This then implies that $\scrs_{K^{\circ}}\gx^{\diamondsuit/} \to \scrs_K\gx^{\diamondsuit/}$ is an \'etale $\Lambda$-torsor and establishes the existence of the dotted arrow in \eqref{Eq:DottedArrow}. The resulting diagram is Cartesian, and the rest of the proof of Theorem \ref{Thm:IntroMain} is now routine.

\subsubsection{Moduli stacks of quasi-parahoric shtukas} The stack of $\mathcal{G}$-shtukas $\shtgmu$ is not as well behaved as the stack of $\calgcirc$-shtukas $\shtgcircmu$. For example the image of
\begin{align}
    \shtgcircmu \to \bun_G
\end{align}
is given by the open substack corresponding to the $\mu^{-1}$-admissible elements $\bgmu \subset B(G) = |\bun_G|$, see Lemma \ref{Lem:NewtonOverHodge}; the analogous statement generally fails for $\shtgmu$.

Our first order of business is to show that this failure can be rectified by restricting to the preimage $\shtgmukappa$ of $-\mu^\natural$ under the Kottwitz map (see \cite[Theorem III.2.7]{FarguesScholze})
\begin{align}
    |\shtgmu| \to |\bun_G| \to \pi_1(G)_{\Gamma_p},
\end{align}
see Proposition \ref{Prop:NewtonOverHodge}. Following ideas of \cite[Section 4]{PappasRapoportRZSpaces}, we show the following (see Section \ref{Sec:Decomposition} for the notation). 
\begin{mainThm}[Theorem \ref{Thm:QuasiParahoricShtukas}]\label{Thm:IntroShtukas}
There is a finite decomposition
\[
    \coprod_{\delta \in \Pi_\mathcal{G}}
    [\shtgdeltacircmu/\pi_0(\mathcal{G}_\delta)^\phi] \simeq
    \shtgmubrkappa.
\]
\end{mainThm}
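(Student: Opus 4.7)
The plan is to follow the strategy of \cite[Section 4]{PappasRapoportRZSpaces}, decomposing $\shtgmubrkappa$ according to the ``type'' of the $\pi_0$-component of the underlying torsor. The starting point is that $\pi_0(\mathcal{G}) := \mathcal{G}/\calgcirc$ is a finite \'etale $\zp$-group scheme. Pushing out along the canonical projection $\mathcal{G} \to \pi_0(\mathcal{G})$ sends a $\mathcal{G}$-shtuka $(\mathcal{P},\varphi)$ to a $\pi_0(\mathcal{G})$-torsor equipped with a Frobenius structure; since $\pi_0(\mathcal{G})$ is \'etale, the $\mu$-modification is automatically trivial and the shtuka data collapses to a $\phi$-equivariant $\pi_0(\mathcal{G})$-torsor. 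This produces a map $\shtgmu \to \mathscr{T}$, where $\mathscr{T}$ is the v-stack of $\phi$-equivariant $\pi_0(\mathcal{G})$-torsors.

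Next, I would analyze $\mathscr{T}$ on geometric points. Over $\zpbar$, $\phi$-equivariant $\pi_0(\mathcal{G})$-torsors are classified by $H^1(\langle\phi\rangle,\pi_0(\mathcal{G})(\zpbar))$, which is the finite set of coinvariants $\pi_0(\mathcal{G})_\phi$. Local constancy of this invariant on the v-site—together with the fact that the Kottwitz cut-off $\kappa = -\mu^\natural$ is also locally constant—produces a decomposition of $\shtgmubrkappa$ into open and closed substacks $\shtgmubrdelta$, indexed by the subset $\Pi_\mathcal{G} \subset \pi_0(\mathcal{G})_\phi$ of types whose image in $\pi_1(G)_{\Gamma_p}$ under the canonical map matches $-\mu^\natural$; this compatibility is precisely the content of restricting to $\shtgmubrkappa$, and makes $\Pi_\mathcal{G}$ finite.

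The third step is to identify each piece. For $\delta \in \Pi_\mathcal{G}$, let $\calgdelta$ be the inner twist of $\mathcal{G}$ by a cocycle representing $\delta$, with identity component $\calgdeltacirc$; this is again a (quasi-)parahoric pair. Over the $\delta$-stratum, the associated $\pi_0(\mathcal{G})$-shtuka becomes v-locally trivial once pulled back along the natural $\phi$-equivariant trivialization determined by $\delta$; choosing such a trivialization reduces the structure group from $\mathcal{G}$ to $\calgdeltacirc$ and produces a $\calgdeltacirc$-shtuka. Two such trivializations differ by the automorphism group of the trivial $\phi$-twisted torsor of type $\delta$, which is $\pi_0(\calgdelta)^\phi$. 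Descent along this $\pi_0(\calgdelta)^\phi$-gerbe identifies the $\delta$-stratum with $[\shtgdeltacircmu/\pi_0(\calgdelta)^\phi]$, and assembling over $\delta \in \Pi_\mathcal{G}$ gives the stated decomposition.

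The main obstacle is the bookkeeping around the inner twisting: making rigorous sense of $\calgdelta$ as a $\zp$-group scheme built from the cocycle class $\delta$, verifying that its identity component $\calgdeltacirc$ produces the shtuka stack $\shtgdeltacircmu$ that appears in the statement, and confirming that the quotient by $\pi_0(\calgdelta)^\phi$ (rather than $\pi_0(\mathcal{G})^\phi$) is the correct automorphism group on the $\delta$-stratum. Once these points are settled, the remaining verifications—finiteness of $\Pi_\mathcal{G}$, openness/closedness of each stratum, and the fact that the stacky quotient is of the stated shape—are formal, using that $\pi_0(\mathcal{G})$ is \'etale and standard v-descent.
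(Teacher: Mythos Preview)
Your outline is conceptually in the right spirit, but it diverges from the paper's proof and has a genuine gap at the step you flag as ``formal.''

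\textbf{How the paper argues.} The paper does not decompose $\shtgmubrkappa$ via a type map to a stack of $\pi_0(\mathcal{G})$-torsors. Instead it first \emph{constructs} the comparison map explicitly (Propositions~\ref{Prop:ActionExist}, \ref{Prop:ConjugationExist}), then shows both source and target are \emph{proper*} over $\spd(\oebreve)$ (Lemma~\ref{Lem:ProperStar}). This reduces the isomorphism check to rank-one geometric points, which is then handled Newton-stratum by Newton-stratum: Proposition~\ref{Prop:NewtonOverHodge} restricts attention to $\bgmu$, and Lemma~\ref{Lem:LocalUniformisation} identifies each stratum with a quotient of an integral local Shimura variety, whereupon the result becomes precisely \cite[Theorem~4.4.1]{PappasRapoportRZSpaces}.

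\textbf{Where your argument has gaps.} First, $\pi_0(\mathcal{G})$ is a finite \'etale group scheme over $\fp$, not $\zp$; the quotient $\mathcal{G}/\calgcirc$ is $j_\ast\pi_0(\mathcal{G})$ supported on the special fiber. Pushing out a $\mathcal{G}$-torsor on $S\bdtimes\spa(\zp)$ along this quotient does not produce a clean ``type'' invariant in the way you describe, since $S\bdtimes\spa(\zp)$ avoids $[\varpi]=0$. Second, your characterization of $\Pi_\mathcal{G}$ is off: it is the \emph{kernel} of $\pi_0(\mathcal{G})_\phi\to\pi_1(G)_{\Gamma_p}$ (independent of $\mu$), not the fiber over $-\mu^\natural$. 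The Kottwitz condition $\kappa=-\mu^\natural$ forces the type to land in this kernel, but the relationship needs the analysis in Proposition~\ref{Prop:NewtonOverHodge}. Third and most seriously, the passage from a pointwise decomposition to an isomorphism of v-stacks is not formal: you need to know that an equivalence on geometric points implies an equivalence of stacks. This is exactly what the proper* machinery buys you (checking on products of points, which form a basis for the v-topology), and without it your ``local constancy'' and ``v-descent'' steps do not close the argument. Your gerbe-descent identification on each stratum also elides the boundedness-by-$\mu$ check, which the paper handles explicitly.
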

For $\delta=1 \in \Pi_{\mathcal{G}}$, we have $\calgdeltacirc=\calgcirc$. In particular, this establishes the following corollary, which clarifies the relationship between the stack of $\mathcal{G}^\circ$-shtukas with one leg bounded by $\mu$ with that of $\mathcal{G}$-shtukas. We see that the image of $\shtgcircmu \to \shtgmu$ defines an open and closed substack $\shtgmuone \subset \shtgmu$.
\begin{mainCor}[Corollary \ref{Cor:QuasiParahoricShtukas}] \label{Cor:IntroCor}
    The morphism $\shtgcircmu \to \shtgmuone$ is a torsor for the abelian group $\pi_0(\mathcal{G})^\phi$.
\end{mainCor}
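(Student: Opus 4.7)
The plan is to deduce the corollary directly from Theorem~\ref{Thm:QuasiParahoricShtukas}. The key step is to isolate the $\delta = 1$ summand of the decomposition
\[
    \coprod_{\delta \in \Pi_\mathcal{G}} \bigl[\shtgdeltacircmu/\pi_0(\mathcal{G}_\delta)^\phi\bigr] \simeq \shtgmubrkappa.
\]
Since $\calgdeltacirc = \calgcirc$ when $\delta = 1$, this summand is the quotient stack $[\shtgcircmu/\pi_0(\mathcal{G}_\delta)^\phi]$ (with $\delta = 1$), and it sits in $\shtgmubrkappa$ as an open and closed substack.

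Next, I would verify that this open and closed substack agrees with $\shtgmuone$, the image of the natural forgetful morphism $\shtgcircmu \to \shtgmu$ coming from the inclusion $\calgcirc \hookrightarrow \mathcal{G}$. This amounts to unwinding the construction of the equivalence in Theorem~\ref{Thm:QuasiParahoricShtukas}: the $\delta = 1$ stratum should correspond to those $\mathcal{G}$-shtukas whose underlying $\mathcal{G}$-torsor admits a reduction to $\calgcirc$, and these are exactly the objects in the essential image of the forgetful functor from $\calgcirc$-shtukas. Under this identification, the forgetful morphism $\shtgcircmu \to \shtgmuone$ should correspond to the canonical projection onto the quotient stack $[\shtgcircmu/\pi_0(\mathcal{G}_\delta)^\phi]$.

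Granting this identification, the corollary is formal: for any action of a group on a stack, the projection to the quotient stack is tautologically a torsor under that group, and so $\shtgcircmu \to \shtgmuone \simeq [\shtgcircmu/\pi_0(\mathcal{G}_\delta)^\phi]$ is a torsor under the finite abelian group $\pi_0(\mathcal{G}_\delta)^\phi$. The principal subtlety therefore lies in the compatibility check of the middle paragraph—namely, that the tautological inclusion of the $\delta = 1$ summand matches the image of the forgetful morphism. I expect this to be visible from the way the equivalence in Theorem~\ref{Thm:QuasiParahoricShtukas} is constructed, following the ideas of \cite[Section 4]{PappasRapoportRZSpaces}, but it is the one point in the argument that genuinely requires care.
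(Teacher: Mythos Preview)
Your approach is essentially the paper's, but you have overlooked one genuine step and misidentified where the difficulty lies.

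The compatibility check you flag as the ``principal subtlety''---that the $\delta=1$ summand maps to $\shtgmubr$ via the forgetful functor induced by $\calgcirc \hookrightarrow \mathcal{G}$---is immediate from the construction: for $\delta=1$ one may take $\dot{\gamma}=1$, so the map of Proposition~\ref{Prop:ConjugationExist} is the identity and the composite $[\shtgcircmu/\pi_0(\mathcal{G})^\phi] \to \shtgmubr$ is exactly the one built from pushout along $\calgcirc \to \mathcal{G}$. No care is needed here.

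The real gap is that Theorem~\ref{Thm:QuasiParahoricShtukas} is a statement over $\spd(\oebreve)$, while the corollary is about $\shtgcircmu \to \shtgmuone$ over $\spd(\mathcal{O}_E)$. Your argument, as written, only yields the torsor property after base change along $\spd(\oebreve) \to \spd(\mathcal{O}_E)$; you never descend. The paper handles this by observing that $\spd(\oebreve) \to \spd(\mathcal{O}_E)$ is v-surjective and invoking \cite[Corollary~9.11]{EtCohDiam}, which says that being finite \'etale is v-local on the target. This shows $\shtgcircmu \to \shtgmu$ is finite \'etale over $\mathcal{O}_E$; since the $\pi_0(\mathcal{G})^\phi$-action is already defined over $\mathcal{O}_E$ (Proposition~\ref{Prop:ActionExist}), the action map being an isomorphism can then also be checked after this v-cover. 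Without this descent step your argument does not prove the stated corollary.
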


As explained above, Corollary \ref{Cor:IntroCor} allows us to prove that $\scrs_{K^\circ}\gx^{\diamondsuit /}$ is the fiber product of $\scrs_K\gx^{\diamondsuit /}$ with $\shtgcircmu$ over $\shtgmu$.

\subsubsection{} 
Let us close the introduction with some comments on the proof of Theorem \ref{Thm:IntroLocalModels}. Both parts of Theorem \ref{Thm:IntroLocalModels} contain two separate assertions: That there exists a diagram as in \eqref{Eq:LocalModelDiagram} for $\scrs_{K}\gx$ (and $\scrs_{K^{\circ}}\gx$), and that the diagram recovers the one of \cite[Section 4.9.1]{PappasRapoportShtukas} at the level of v-sheaves. Our strategy is to verify both assertions for $\scrs_K\gx$, and then deduce the two simultaneously for $\scrs_{K^\circ}\gx$. 

Under the assumptions in Theorem \ref{Thm:IntroLocalModels}, the existence of a diagram \eqref{Eq:LocalModelDiagram} is proved for $\scrs_K\gx$ in \cite{KisinPappasZhou}. Pappas and Rapoport point out that the construction of \textit{loc. cit.} provides a scheme-theoretic local model diagram in \cite[Section 4.9.2]{PappasRapoportShtukas}. We verify this statement in Appendix \ref{Appendix:A}. 

Given a scheme-theoretic local model diagram for $\scrs_K\gx$, we obtain in particular a $\mathcal{G}$-torsor on $\scrs_{K^\circ}\gx$ by pullback, and we have to show this torsor admits a reduction of structure group to $\mathcal{G}^\circ$. Such a reduction exists at the level of v-sheaves by functoriality of the construction, so the crux of the argument is to show that this arises from a reduction at the level of schemes. This is done in Proposition \ref{Prop:KisinPappasConjecture}.

\subsection{Outline of the paper} In Section \ref{Sec:Preliminaries} we recall preliminaries on perfectoid geometry and Bruhat--Tits theory, and we prove a key technical result, Proposition \ref{Prop:FiniteEtaleCover}. In Section \ref{Sec:Shtukas} we study the moduli stack of $\mathcal{G}$-shtukas for a quasi-parahoric group $\mathcal{G}$ and its relationship to the moduli stack of $\mathcal{G}^\circ$-shtukas for the parahoric group scheme $\mathcal{G}^\circ$ associated with $\mathcal{G}$. This culminates in the proof of Theorem \ref{Thm:IntroShtukas}. Finally, in Section \ref{Sec:ConjecturalModels}, we recall the conjecture of Pappas and Rapoport, and prove our main result, Theorem \ref{Thm:IntroMain}. We close by proving Theorem \ref{Thm:IntroLocalModels}, and proving Rapoport--Zink uniformization, see Theorem \ref{Thm:Uniformization}. In Appendix \ref{Appendix:A}, we verify that the local model diagrams of \cite{KisinPappas, KisinPappasZhou} give scheme-theoretic local model diagrams in the sense of \cite[Conjecture 4.9.2]{PappasRapoportShtukas}, for stabilizer Bruhat--Tits group schemes. 

\subsection{Acknowledgments} We would like to thank Ian Gleason and Alex Youcis for helpful discussions about the proof of Proposition \ref{Prop:FiniteEtaleCover}, Zhiyu Zhang for his comments, and the anonyomous referee for their comments and corrections. 

\subsection{Declarations} The authors declare that they have no conflicts of interest pertaining to this manuscript. Data sharing is not applicable as no datasets were generated or analyzed during the writing of this article. 

\section{Preliminaries} \label{Sec:Preliminaries}

\subsection{Recollections from \texorpdfstring{\cite{FarguesScholze}}{Fargues--Scholze} }
We begin by establishing notation and recalling some definitions from the theory of v-sheaves. For a more comprehensive background, we refer the reader to \cite{ScholzeWeinsteinBerkeley}, \cite{FarguesScholze}, and \cite[Section 2.1]{PappasRapoportShtukas}. Throughout this section, we let $k$ be a perfect field of characteristic $p$, and write $\perf_{k}$ for the category of \textit{affinoid} perfectoid spaces over $k$. If $k=\fp$ we will write $\perf=\perf_{\fp}$.\footnote{In the literature, $\perf_{k}$ usually denotes the category of \emph{all} perfectoid spaces over $k$.}

For any perfectoid space $S$ over $\fp$, we write $S \bdtimes \spa(\zp)$ for the analytic adic space defined in \cite[Proposition 11.2.1]{ScholzeWeinsteinBerkeley}. In particular, when $S = \spa(R,R^+)$ is affinoid perfectoid, $S \bdtimes \spa(\zp)$ is given by 
\begin{align}
    S \bdtimes \spa(\zp) = \spa(W(R^{+})) \setminus \{[\varpi]=0\},
\end{align}
where $[\varpi]$ denotes the Teichm\"uller lift to $W(R^+)$ of a fixed pseudouniformizer $\varpi$ in $R^+$, and where $W(R^+)$ denotes the $p$-typical Witt vectors of $R^+$. The Frobenius for $W(R^+)$ restricts to a Frobenius operator $\frob_S$ on $S \bdtimes \spa(\zp)$. By \cite[Proposition 11.3.1]{ScholzeWeinsteinBerkeley}, any untilt $S^\sharp$ of $S$ determines a closed Cartier divisor $S^\sharp \hookrightarrow S \bdtimes \spa(\zp)$. 

For $S$ in $\perf$, define $Y_S = S \bdtimes \spa(\zp) \setminus \{p = 0\}$. If $S = \spa(R,R^+)$ we write also $Y(R,R^+)$ for $Y_S$. For any $S = \spa(R,R^+)$ in $\perf$, one defines a function (here $\lvert X \rvert$ denotes the underlying topological space of an adic spaces or v-sheaf)
\begin{align}
    \kappa: \lvert S \bdtimes \spa(\zp) \rvert \to [0,\infty)
\end{align}
by $\kappa(x) = (\log \lvert [\varpi](\tilde{x}) \rvert)/(\log(\lvert p(\tilde{x}) \rvert)$, where $\tilde{x}$ denotes the maximal generalization of $x \in \lvert S \bdtimes \spa(\zp) \rvert$, see \cite[Proposition II.1.16]{FarguesScholze} for details. For any interval $I = [a,b] \subset [0,\infty)$ with rational endpoints, denote by $\mathcal{Y}_I(S)$ the open subset of $S \bdtimes \spa(\zp)$ given by 
\begin{align}
    \mathcal{Y}_I(S) = \{\lvert p \rvert^b \le \lvert [\varpi] \rvert \le \lvert p\rvert^a\} \subset \kappa^{-1}(I).
\end{align} 
One extends this definition to open intervals in the obvious way. In particular,
we have $\mathcal{Y}_{[0,\infty)}(S) = S \bdtimes \spa(\zp)$ and $\mathcal{Y}_{(0,\infty)}(S) = \mathcal{Y}_S$. 
\subsubsection{}
By \cite[Proposition II.1.16]{FarguesScholze}, for any $S$ in $\perf$ the action of $\frob_S$ on $Y_S$ is free and totally discontinuous, hence we may take the quotient
\begin{align}
    \mathcal{X}_S = \mathcal{Y}_S / \frob_S^\mathbb{Z},
\end{align}
called the \textit{relative adic Fargues--Fontaine curve over} $S$, which is an analytic adic space.

Let $G$ be a reductive group over $\qp$. Following \cite{FarguesScholze}, we denote by $\bun_G(S)$ the groupoid of $G$-torsors on $\mathcal{X}_S$. By \cite[Theorem~III.0.2]{FarguesScholze}, the presheaf of groupoids $\bun_G$ on $\perf$ sending $S$ to $\bun_G(S)$ is a small v-stack.

\subsubsection{} \label{Sec:BGMU} For a choice of algebraic closure $\ovfp$ of $\fp$ we set $\zpbr=W(\ovfp)$ and $\qpbr=W(\ovfp)[1/p]$. Let $\sigma$ be the automorphism of $\qpbr$ induced by the absolute Frobenius on $\ovfp$. Let $B(G)$ be the set of $\sigma$-conjugacy classes in $G(\qpbr)$, equipped with the topology coming from the \emph{opposite} of the partial order defined in \cite[Section 2.3]{RapoportRichartz}. The formation of $B(G)$ is invariant under extensions of algebraically closed fields $\fpbar\hookrightarrow F$. Indeed, for such an extension, the natural map $G(\qpbr)\hookrightarrow G(W(F)[1/p])$ induces a bijection on $\sigma$-conjugacy classes. 

By \cite[Theorem 1]{Viehmann}, there is a homeomorphism
\begin{align}
    \lvert \bun_G \rvert \xrightarrow{\sim} B(G). 
\end{align}
If $\mu$ is a $G(\qpbar)$-conjugacy class of minuscule cocharacters, we let $\bgmu \subset B(G)$ be the (open) subset of $\mu^{-1}$-admissible elements, as defined in \cite[Section 1.1.5]{KMPS}; this defines an open substack $$\bungmu \subset \bung$$ via \cite[Proposition 12.9]{EtCohDiam}. Explicitly, for $S$ in $\perf$, $\bungmu(S)$ consists of maps $S \to \bung$ for which the induced map on topological spaces factors through $\bgmu \subset B(G)\cong \lvert \bun_G \rvert$. 

\subsubsection{} For $b \in G(\qpbr)$, we let $[b] \in B(G)$ denote the $\sigma$-conjugacy class of $b$. We recall the following result. 

\begin{Thm}{\cite[Theorem~III.0.2]{FarguesScholze}}\label{Thm:GeometryOfBunG}
The subfunctor      
\[
        \bun_{G}^{[b]} = \bun_G \times_{\lvert \bun_G \rvert} \lbrace [b] \rbrace
        \subseteq \bun_G
\]
      is locally closed. Moreover its base change to $\spd(\ovfp)$ is isomorphic to $[\spd(\ovfp) / \tilde{G}_b]$, where $\tilde{G}_b = \Aut(\mathcal{E}_b)$ and $\mathcal{E}_b \in \bun_G(\spd(\ovfp))$ corresponds  to $b$ (see \cite[Theorem 5.3]{AnschutzIsocrystal}).
\end{Thm}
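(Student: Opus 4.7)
The plan is to prove the two assertions separately: first, the local closedness of $\bun_G^{[b]}$ in $\bun_G$, and second, the identification of its base change to $\spd(\ovfp)$ with $[\spd(\ovfp)/\tilde{G}_b]$.

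For local closedness, I would reduce to a topological statement via the homeomorphism $|\bun_G| \xrightarrow{\sim} B(G)$ of Viehmann recalled above. It then suffices to show that $\{[b]\}$ is locally closed in $B(G)$ with respect to the topology coming from the opposite of the Rapoport--Richartz partial order. In this Alexandrov-type topology, the closed subsets are precisely the upper sets for the original partial order $\leq$, so $\overline{\{[b]\}} = \{[b'] : [b] \leq [b']\}$ is closed. The strict upper set $\{[b'] : [b] < [b']\}$ is itself an upper set, hence also closed. Therefore $\{[b]\}$ is the intersection of the closed set $\overline{\{[b]\}}$ with the open complement of $\{[b'] : [b] < [b']\}$, which is locally closed. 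The corresponding substack $\bun_G^{[b]} \subseteq \bun_G$ is then locally closed by the general principle that preimages of locally closed subsets of $|\bun_G|$ give locally closed substacks (cf.\ \cite[Proposition 12.9]{EtCohDiam}).

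For the quotient stack identification, the tautological $\tilde{G}_b$-bundle $\mathcal{E}_b$ over $\spd(\ovfp)$ induces a natural morphism
\[
[\spd(\ovfp)/\tilde{G}_b] \longrightarrow \bun_G^{[b]} \times \spd(\ovfp),
\]
and I would show this is an equivalence of v-stacks. The key input is Anschütz's theorem \cite[Theorem 5.3]{AnschutzIsocrystal}, which says that for any algebraically closed nonarchimedean field $C/\ovfp$ and any $G$-bundle on $X_{\spa(C,C^+)}$ in the class $[b]$, the bundle is isomorphic to $\mathcal{E}_b|_{X_{\spa(C,C^+)}}$. Given $\mathcal{E} \in \bun_G^{[b]}(S)$ for $S/\ovfp$ perfectoid, the sheaf $\isom(\mathcal{E}_b|_{X_S}, \mathcal{E})$ is then a pseudo-torsor under $\tilde{G}_b$ which is nonempty at every geometric point of $S$. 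Upgrading this to a genuine v-torsor produces the inverse functor $\mathcal{E} \mapsto \isom(\mathcal{E}_b|_{X_S}, \mathcal{E})$ taking values in $[\spd(\ovfp)/\tilde{G}_b]$.

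The main obstacle is exactly this last step: promoting a pointwise-nonempty pseudo-torsor into a v-torsor. Fiberwise nonvanishing does not a priori imply v-local nonemptiness without some geometric control over the isomorphism sheaf. One would invoke finer structural results from \cite{FarguesScholze} — in particular, properties of the automorphism v-sheaf $\tilde{G}_b$ (related to relative Banach--Colmez spaces) and the smoothness of the map $\spd(\ovfp) \to \bun_G^{[b]}$ — to spread isomorphisms from individual geometric points of $S$ to a v-cover. Once that descent step is in place, both essential surjectivity and full faithfulness are formal.
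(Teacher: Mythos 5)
This statement is cited verbatim from Fargues--Scholze as \cite[Theorem~III.0.2]{FarguesScholze}; the paper offers no proof of its own, so there is no in-paper argument to compare against. Judged as a reconstruction of the Fargues--Scholze proof, your sketch is a faithful outline of the actual argument. For local closedness, reducing via Viehmann's homeomorphism $|\bun_G|\xrightarrow{\sim}B(G)$ and then observing that $\{[b]\}$ is a difference of two upper sets in the order topology is exactly the right move. For the quotient-stack description, constructing the comparison map from $[\spd(\ovfp)/\tilde{G}_b]$, invoking Anschütz's pointwise classification to see that $\isom(\mathcal{E}_b|_{X_S},\mathcal{E})$ is a $\tilde{G}_b$-pseudo-torsor nonempty at every geometric point, and then flagging the v-local triviality of this pseudo-torsor (equivalently, the v-surjectivity of $\spd(\ovfp)\to\bun_G^{[b]}$) as the substantive obstacle is precisely where the weight of the Fargues--Scholze argument lies: it rests on the theory of charts and the description of $\tilde{G}_b$ as an extension of the locally profinite group $G_b(\qp)$ by a unipotent group built from positive Banach--Colmez spaces. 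Two small caveats: the topology on $B(G)$ is defined by decreeing that open sets are the downward-closed sets for the relevant order, and identifying this with the specialization topology on $|\bun_G|$ is itself part of Viehmann's theorem rather than a purely formal step; and in a reconstruction of a Fargues--Scholze theorem one should be explicit that the ``finer structural results'' being invoked (charts, structure of $\tilde{G}_b$) are logically prior to Theorem~III.0.2 in that book, so no circularity arises.
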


For any v-stack $\mathscr{Y}$ on $\perf$ equipped with a morphism $\mathscr{Y} \to \bun_G$, we write
\begin{equation} \label{Eq:NewtonStratumNotation}
    \mathscr{Y}^{[b]} := \mathscr{Y} \times_{\bun_G} \bun_G^{[b]}.
\end{equation}

\subsubsection{}\label{Sub:AdicSpaces} If $X$ is a pre-adic space over $\spa(\zp)$ in the sense of \cite[Section 3.4]{ScholzeWeinsteinBerkeley}, we let $X^\diamondsuit$ denote the set-valued functor on $\perf$ given by
\begin{align}
    X^\diamondsuit(S) = \{(S^\sharp, f)\} / \text{isom.}
\end{align}
for any $S$ in $\perf$, where $S^\sharp$ is an untilt of $S$ and $f:S^\sharp \to X$ is a morphism of pre-adic spaces. This determines a v-sheaf on $\perf$ by \cite[Lemma 18.1.1]{ScholzeWeinsteinBerkeley}. For a Huber pair $(A,A^+)$ we write $\spd(A,A^+)$ in place of $\spa(A,A^+)^\diamondsuit$, and we abbreviate it as $\spd(A)$ when $A^+$ is equal to the subring $A^\circ$ of power bounded elements. In particular, $\spd(\zp)$ parametrizes isomorphism classes of untilts, see \cite[Definition 10.1.3]{ScholzeWeinsteinBerkeley}.

For a formal scheme $\mathfrak{X}$ over $\spf(\zp)$, we write $\mathfrak{X}^\mathrm{ad}$ for the pre-adic space associated to $\mathfrak{X}$ as in \cite[Proposition 2.2.1]{ScholzeWeinsteinModuli}. We then write $\mathfrak{X}^\diamondsuit$ as shorthand for $(\mathfrak{X}^\mathrm{ad})^\diamondsuit$.

For a $\zp$-scheme $X$, we can attach to it two different v-sheaves, following \cite[Section 2.2]{AGLR}. If $X = \spec(A)$ is affine, we define v-sheaves $X^\diamond$ and $X^\diamondsuit$ whose points on an affinoid perfectoid space $S = \spa(R,R^+)$ are
\begin{equation}
    X^\diamond(S) = \{ (\spa(R^\sharp, R^{\sharp+}), f: A \to R^{\sharp+}) \}/ \text{isom.},
\end{equation}
and respectively
\begin{equation}
    X^\diamondsuit(S) = \{ (\spa(R^\sharp, R^{\sharp+}), f: A \to R^\sharp)\} / \text{isom.},
\end{equation}
where $\spa(R^{\sharp}, R^{\sharp+})$ denotes an untilt of $\spa(R,R^+)$, and in each case $f$ denotes a $\zp$-algebra homomorphism.\footnote{Note that in \cite{PappasRapoportShtukas}, the notation $(-)^\blacklozenge$ is used in place of $(-)^\diamond$.} 

Both $(-)^\diamond$ and $(-)^\diamondsuit$ are compatible with localisations and glue to define functors from the category of schemes over $\spec(\zp)$ to the category of v-sheaves over $\spd(\zp)$. Following \cite{AGLR}, we refer to these as the ``small diamond'' and ``big diamond'' functors, respectively. There is a natural transformation
\begin{equation}
    j_X: X^\diamond \to X^\diamondsuit,
\end{equation}
which is a monomorphism if $X$ is separated over $\zp$, an open immersion if $X$ is separated and of finite type over $\zp$, and is an isomorphism if $X$ is proper over $\zp$. 

The two diamond functors can also be obtained by passing first (suitably) from schemes to their attached adic spaces. Indeed, if $X$ is a $\zp$-scheme, then $X^\diamond \cong (\widehat{X})^\diamondsuit$, where $\widehat{X}$ denotes the formal scheme over $\spf(\zp)$ given by the $p$-adic completion of $X$. If $X$ is additionally locally of finite type over $\spec(\zp)$, then we denote by $X^\mathrm{ad}$ the fiber product
\begin{equation}\label{Eq:AdicSpace}
    X^\mathrm{ad} = X \times_{\spec(\zp)} \spa(\zp)
\end{equation}
in the sense of \cite[Proposition 3.8]{Huber}, and one can check that $X^\diamondsuit \cong (X^\mathrm{ad})^\diamondsuit$. 

Following \cite[Definition 2.1.9]{PappasRapoportShtukas}, for a scheme $X$ which is separated and of finite type over $\zp$, we will also consider the v-sheaf $X^{\diamondsuit/}$, defined by gluing $X^{\diamond}$ to $X_{\qp}^{\diamondsuit}$\footnote{The operations of applying $(-)^\diamondsuit$ and base change to $\qp$ commute, so this notation is unambiguous. We also remark that $(-)^\diamond$ does not commute with base change to $\qp$.} along the open immersion $(X^{\diamond})_{\qp} \to X_{\qp}^{\diamondsuit}$, that is,
\begin{align}
    X^{\diamondsuit /} = X^\diamond \sqcup_{(X^\diamond)_\qp} X_\qp^\diamondsuit.
\end{align}

All constructions above extend to schemes over the ring of integers $\mathcal{O}_E$ in a finite extension $E$ of $\qp$ or $\qpbr$. Below we will use these constructions without comment.

\subsubsection{} We recall the following construction from \cite{ScholzeWeinsteinBerkeley}\footnote{The terminology first appeared in \cite{GleasonSpecialization}.}.

\begin{Def} \label{Def:ProductOfPoints}
  A \textit{product of geometric points} is the adic spectrum of a perfectoid Huber pair of the form 
  \begin{align*}
      \big((\prod_{i\in I}
  C_i^+)[\varpi^{-1}], \prod_{i\in I} C_i^+\big),
  \end{align*}
  where $I$ is a set, and for each $i \in I$, 
  \begin{itemize}
      \item $C_i$ is an algebraically closed perfectoid field of characteristic $p$, and
      \item $C_i^+$ is an open, bounded valuation subring of $C_i$ with pseudouniformizer $\varpi_i$.
  \end{itemize}
  Here we give $\prod_i
  C_i^+$ the $\varpi$-adic topology, where $\varpi = (\varpi_i)$. 
\end{Def}

\def\RkOne{\coprod_i s_i}
\def\ProdPts{S}
\def\ProdRkOne{s}
We introduce the following definition.
\begin{Def}  \label{Def:ProperStar}
Let $f \colon \mathcal{F} \to \mathcal{G}$ be a map of presheaves of groupoids on $\perf$.

\begin{enumerate}[(1)] 
  \item  Given a morphism $T \to S$ in $\perf$ and a
  2-commutative diagram of solid arrows
  \begin{equation} \begin{tikzcd}
    T \arrow[d] \arrow{r} & \mathcal{F}
    \arrow{d} \\ S \arrow[r]
    \arrow[dashed]{ru} & \mathcal{G}, \label{Eq:LiftingDiagram}
  \end{tikzcd} \end{equation}
  we say that $f$ has \textit{uniquely existing lifts along $T \to S$} if the map
  \[
    \lambda_f \colon \mathcal{F}(S) \to \mathcal{F}(T)
    \times_{\mathcal{G}(T)} \mathcal{G}(S).
  \]
  induced by the diagram \eqref{Eq:LiftingDiagram} is an equivalence of groupoids. 
  \item We say $f$ is \textit{proper*} if $f$ has uniquely existing lifts along every morphism of the form
  \begin{align*}
    \RkOne := \coprod_i \spa(C_i, \mathcal{O}_{C_i}) \to \ProdPts := \spa((\prod_i C_i^+)[\varpi^{-1}], \prod_i C_i^+)
    \end{align*}
  where $S$ is a product of geometric points  in characteristic $p$.
\end{enumerate}
\end{Def}

\begin{Lem} \label{Lem:LiftsTwoOutOfThree}
  Let $f \colon \mathcal{F} \to \mathcal{G}$ and $g \colon \mathcal{G} \to
  \mathcal{H}$ be maps between presheaves of groupoids on $\perf$. Assume $g$ is
  proper*. Then $f$ is proper* if and only if $g \circ f$ is proper*.
\end{Lem}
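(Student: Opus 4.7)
The plan is to reduce this to the familiar two-out-of-three property for equivalences of groupoids by exhibiting a natural factorization of the lifting maps. Fix a morphism $T \to S$ of the form appearing in Definition~\ref{Def:ProperStar}(2), and write $\lambda_f$, $\lambda_g$, $\lambda_{g \circ f}$ for the corresponding lifting maps associated to $f$, $g$, and $g \circ f$. First I would construct a canonical comparison functor
\[
    \psi \colon \mathcal{F}(S) \times_{\mathcal{G}(T)} \mathcal{G}(S) \longrightarrow \mathcal{F}(T) \times_{\mathcal{H}(T)} \mathcal{H}(S)
\]
by postcomposing the $\mathcal{G}(S)$-component with $g$ and restricting along $T \to S$; the compatibility $g(b)|_T = g(b|_T) = g(f(a))$ shows $\psi$ lands in the stated fiber product. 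A direct inspection of the canonical isomorphisms involved shows that $\psi \circ \lambda_f = \lambda_{g \circ f}$.

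Next I would argue that, under the hypothesis that $g$ is proper*, the functor $\psi$ is itself an equivalence. This is a purely formal property of fiber products of groupoids: using that $\lambda_g \colon \mathcal{G}(S) \to \mathcal{G}(T) \times_{\mathcal{H}(T)} \mathcal{H}(S)$ is an equivalence, one obtains natural equivalences
\[
    \mathcal{F}(T) \times_{\mathcal{G}(T)} \mathcal{G}(S) \xrightarrow{\sim} \mathcal{F}(T) \times_{\mathcal{G}(T)} \bigl(\mathcal{G}(T) \times_{\mathcal{H}(T)} \mathcal{H}(S)\bigr) \xrightarrow{\sim} \mathcal{F}(T) \times_{\mathcal{H}(T)} \mathcal{H}(S),
\]
the second arrow being the standard cancellation of iterated pullbacks along the composable maps $\mathcal{F}(T) \to \mathcal{G}(T) \to \mathcal{H}(T)$. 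Chasing through the definitions, this composition is precisely $\psi$.

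Finally, since $\psi$ is an equivalence and $\lambda_{g \circ f} = \psi \circ \lambda_f$, the usual two-out-of-three for equivalences in the 2-category of groupoids implies $\lambda_f$ is an equivalence if and only if $\lambda_{g \circ f}$ is. Letting $T \to S$ range over all morphisms of the form in Definition~\ref{Def:ProperStar}(2) yields the claimed biconditional for $f$ and $g \circ f$.

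There is no real obstacle here beyond bookkeeping; the argument is formal once one sets up the comparison $\psi$ correctly. The only mild subtlety is that we are working with presheaves of groupoids rather than sets, so we have to interpret "equivalence" at the level of groupoids and use the 2-categorical cancellation of pullbacks rather than the set-theoretic one, but both the factorization and the cancellation of iterated fiber products go through without change.
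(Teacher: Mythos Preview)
Your argument is correct and is essentially identical to the paper's: both factor $\lambda_{g\circ f}$ as the composition of $\lambda_f$ with the map $\mathcal{F}(T)\times_{\mathcal{G}(T)}\mathcal{G}(S)\to\mathcal{F}(T)\times_{\mathcal{H}(T)}\mathcal{H}(S)$ induced by $\lambda_g$ on the second factor, observe that this second map is an equivalence because $\lambda_g$ is, and conclude by two-out-of-three. Note, however, that your displayed source for $\psi$ should read $\mathcal{F}(T)\times_{\mathcal{G}(T)}\mathcal{G}(S)$ rather than $\mathcal{F}(S)\times_{\mathcal{G}(T)}\mathcal{G}(S)$; the rest of your write-up (including the cancellation paragraph) already uses the correct domain.
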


\begin{proof}
  We note that $\lambda_{g \circ f}$ can be identified with the composition
  \[
    \mathcal{F}(\ProdPts) \xrightarrow{\lambda_f}
    \mathcal{F}({\textstyle\RkOne}) \times_{\mathcal{G}(\RkOne)}
    \mathcal{G}(\ProdPts) \xrightarrow{\mathrm{id} \times \lambda_g}
    \mathcal{F}({\textstyle\RkOne}) \times_{\mathcal{H}(\RkOne)}
    \mathcal{H}(\ProdPts),
  \]
  and so if $\lambda_g$ is an equivalence, then $\lambda_f$ is an
  equivalence if and only if $\lambda_{g \circ f}$ is an equivalence.
\end{proof}

\begin{Lem} \label{Lem:ProperRepresentableLifting}
  If a map $f \colon \mathcal{F} \to \mathcal{G}$ of v-stacks is proper and
  representable by diamonds, then $f$ is proper*.
\end{Lem}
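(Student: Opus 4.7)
The plan is to base change along $S \to \mathcal{G}$ to reduce the statement to a bijection of sections of a proper morphism of diamonds, and then to verify injectivity and existence separately using the valuative criterion of properness.

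First, I would fix a morphism $T \to S$ as in Definition~\ref{Def:ProperStar} together with a 2-commutative diagram of solid arrows as in \eqref{Eq:LiftingDiagram}, and form $X_S := \mathcal{F} \times_\mathcal{G} S$. The representability assumption makes $X_S$ a diamond, and base change preserves properness, so $X_S \to S$ is a proper morphism of diamonds. Under this identification, the map $\lambda_f$ becomes the restriction $X_S(S) \to X_S(T)$, where the latter denotes sections of $X_S \times_S T \to T$; it then suffices to show this restriction map is a bijection.

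For injectivity, I would take two sections $\sigma_1, \sigma_2 \in X_S(S)$ agreeing on $T$ and use separatedness of $X_S \to S$ to produce a closed subdiamond $E \hookrightarrow S$ (their equalizer) containing $T$. The valuative criterion of properness applied factorwise then forces $\sigma_1|_{\spa(C_i, C_i^+)} = \sigma_2|_{\spa(C_i, C_i^+)}$ for each $i$, so $\spa(C_i, C_i^+) \subset E$. Writing $E$ as the closed subdiamond cut out by an ideal $J$ of $R := (\prod_i C_i^+)[\varpi^{-1}]$, these containments force $J \subset \bigcap_i \ker(R \to C_i) = 0$, so $\sigma_1 = \sigma_2$.

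For existence, the valuative criterion of properness produces, for a given $\sigma_T \colon T \to X_S$, a unique lift $\widetilde\sigma_i \colon \spa(C_i, C_i^+) \to X_S$ of each restriction $\sigma_T|_{s_i}$. The main obstacle will be assembling these into a single section $\widetilde\sigma \colon S \to X_S$: the natural map $\coprod_i \spa(C_i, C_i^+) \to S$ is not a v-cover in general (due to non-principal ultrafilter points on $I$), so one cannot directly invoke v-descent. I would handle this by applying the valuative criterion at every rank-one geometric point $\spa(C, \mathcal{O}_C) \to S$ using the universal closedness of $X_S \to S$; the resulting compatible family would then glue to the desired section $\widetilde\sigma$ via the v-sheaf property of $X_S$ combined with the specialization theory of proper diamonds (cf.\ \cite[Ch.~18]{ScholzeWeinsteinBerkeley}).
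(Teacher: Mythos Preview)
Your proposal attempts a direct argument, whereas the paper simply cites \cite[Proposition~2.18]{ZhangThesis} (noting that partial properness handles the step $\coprod_i \spa(C_i,\mathcal{O}_{C_i}) \to \coprod_i \spa(C_i,C_i^+)$). Your direct route has genuine gaps in both halves.

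For injectivity, the claim that the equalizer $E \hookrightarrow S$ is ``cut out by an ideal $J$ of $R$'' is not justified: a closed immersion of v-sheaves into an affinoid perfectoid $S$ corresponds to a closed subset of $|S|$, not to an ideal of $R$ in general. Even on the level of topological spaces the argument fails: the closed subset $|E|$ contains the images of all $\spa(C_i,C_i^+)$, but $|S|$ has additional points (coming from non-principal ultrafilters on $I$) that are not in the closure of $\bigcup_i |\spa(C_i,C_i^+)|$, so you cannot conclude $|E| = |S|$ this way.

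For existence you correctly identify the obstacle---that $\coprod_i \spa(C_i,C_i^+) \to S$ is not a v-cover---but your proposed workaround is not a proof. ``Applying the valuative criterion at every rank-one geometric point'' and then invoking ``the v-sheaf property combined with the specialization theory of proper diamonds'' does not produce a global section: having a compatible family of sections over all geometric points of $S$ does not, by itself, glue to a section over $S$, precisely because the collection of geometric points is not a v-cover, and the cited chapter of \cite{ScholzeWeinsteinBerkeley} does not contain a statement that would bridge this gap. The actual content of \cite[Proposition~2.18]{ZhangThesis} is exactly this extension step for proper representable morphisms over products of points, and it requires a real argument (using quasicompactness and the structure of such $S$) that your sketch does not supply.
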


\begin{proof}
  This is an immediate consequence of \cite[Proposition~2.18]{ZhangThesis}. Note
  that partial properness is used to produce uniquely existing lifts along $\coprod_i
  \spa(C_i, \mathcal{O}_{C_i}) \to \coprod_i \spa(C_i, C_i^+)$.
\end{proof}

\begin{Lem} \label{Lem:BaseChangeLifting}
  Given a Cartesian square of presheaves of groupoids on $\perf$
  \[ \begin{tikzcd}
    \mathcal{F}^\prime \arrow{d}{f^\prime} \arrow{r}{g^\prime} &
    \mathcal{F} \arrow{d}{f} \\ \mathcal{G}^\prime \arrow{r}{g} & \mathcal{G},
  \end{tikzcd} \]
  \begin{enumerate}[(1)]
   \item if $f$ is proper*, then $f^\prime$ is proper*, and 
    \item if $f'$ is proper*, and for every product of geometric points $S$ in $\perf$ the map $\mathcal{G}'(\ProdPts) \to \mathcal{G}(\ProdPts)$ induced by $g$ is essentially surjective, then $f$ is proper*.
  \end{enumerate}
\end{Lem}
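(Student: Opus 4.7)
The plan is to reduce both assertions to a single pointwise computation, followed by a standard $2$-categorical descent statement.

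Since fiber products of presheaves of groupoids are computed pointwise, the given Cartesian square provides a natural equivalence $\mathcal{F}^\prime(U) \simeq \mathcal{F}(U) \times_{\mathcal{G}(U)} \mathcal{G}^\prime(U)$ for every $U \in \perf$. Applying this to both $U = S$ and $U = T$ and using associativity of fiber products, one obtains a canonical identification
\begin{equation*}
  \mathcal{F}^\prime(T) \times_{\mathcal{G}^\prime(T)} \mathcal{G}^\prime(S) \;\simeq\; \mathcal{F}(T) \times_{\mathcal{G}(T)} \mathcal{G}^\prime(S) \;\simeq\; \bigl(\mathcal{F}(T) \times_{\mathcal{G}(T)} \mathcal{G}(S)\bigr) \times_{\mathcal{G}(S)} \mathcal{G}^\prime(S),
\end{equation*}
under which the map $\lambda_{f^\prime}$ becomes the base change of $\lambda_f$ along the map $\mathcal{G}^\prime(S) \to \mathcal{G}(S)$ induced by $g$.

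Part $(1)$ follows immediately: an equivalence of groupoids remains an equivalence after any base change. For part $(2)$, the hypothesis that $\lambda_{f^\prime}$ is an equivalence for every product of geometric points $S$ (and its associated $T = \coprod_i \spa(C_i, \mathcal{O}_{C_i})$) means, via the identification above, that the base change of $\lambda_f$ along the essentially surjective functor $\mathcal{G}^\prime(S) \to \mathcal{G}(S)$ is an equivalence. One is thus reduced to the following purely $2$-categorical claim: if $\pi \colon C \to D$ is an essentially surjective functor of groupoids and $\alpha \colon A \to B$ is a morphism of groupoids over $D$ such that $\alpha \times_D \mathrm{id}_C$ is an equivalence, then $\alpha$ is itself an equivalence. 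This I will verify by a direct chase: to show essential surjectivity of $\alpha$, given $b \in B$, apply essential surjectivity of $\pi$ to the image of $b$ in $D$ to produce $c \in C$ and hence an object $(b,c) \in B \times_D C$, which lifts to $(a,c) \in A \times_D C$ by hypothesis, whence $\alpha(a) \simeq b$; full faithfulness is verified by applying the same chase to morphisms, where independence of the choice of lift in $C$ follows from another application of the same principle to automorphisms.

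I do not anticipate any real obstacle; the only item requiring care is the bookkeeping of coherence isomorphisms for the $2$-categorical fiber products, and these manipulations are all formal.
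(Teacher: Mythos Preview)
Your proof is correct and follows essentially the same approach as the paper's. Both arguments identify $\lambda_{f'}$ with the base change of $\lambda_f$ along $\mathcal{G}'(S) \to \mathcal{G}(S)$ (the paper packages this as a pair of Cartesian squares), and then use that equivalences of groupoids are stable under base change and descend along essentially surjective functors; you spell out this last $2$-categorical fact, whereas the paper simply invokes it.
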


\begin{proof}
  We note that there is a natural commutative diagram
  \[ \begin{tikzcd}
    \mathcal{F}^\prime(\ProdPts) \arrow{r}{\lambda_{f^\prime}}
    \arrow{d}{g^\prime} & \mathcal{F}^\prime(\RkOne)
    \times_{\mathcal{G}^\prime(\RkOne)} \mathcal{G}^\prime(\ProdPts)
    \arrow{d}{g^\prime \times_g g} \arrow{r} & \mathcal{G}^\prime(S)
    \arrow{d}{g} \\ \mathcal{F}(\ProdPts) \arrow{r}{\lambda_f} &
    \mathcal{F}(\RkOne) \times_{\mathcal{G}(\RkOne)} \mathcal{G}(\ProdPts)
    \arrow{r} & \mathcal{G}(\ProdPts)
  \end{tikzcd} \]
  where both squares are Cartesian. It is then clear that $\lambda_f$ being an
  equivalence implies $\lambda_{f^\prime}$ being an equivalence. Conversely, if
  $\mathcal{G}^\prime(\ProdPts) \to \mathcal{G}(\ProdPts)$ is essentially
  surjective, then all vertical maps are, and hence $\lambda_{f^\prime}$ being
  an equivalence implies that $\lambda_f$ is an equivalence as well.
\end{proof}

\subsection{Some Bruhat--Tits theory } \label{Sec:BruhatTits} Let $G$ be a connected reductive group over $\qp$. We write $\Gamma_p$ for the absolute Galois group $\gal(\qpbar/\qp)$, and let $I_{p} \subset \Gamma_p$ be the inertia subgroup. Let $\pi_1(G)$ be the algebraic fundamental group of $G$, see \cite{Borovoi}. Recall from \cite[Section 7]{Kottwitz2}, that there is a functorial and surjective homomorphism 
\[
  \tilde{\kappa}_G \colon G(\qpbr) \to \pi_1(G)_{I_p}.
\]
The map $\tilde{\kappa}_G$ is called the Kottwitz map, an exposition of whose construction is given in \cite[Section 11.5]{KalethaPrasad}. Denote the composition of $\tilde{\kappa}_G$ with $\pi_1(G)_{I_p} \to \pi_1(G)_{\Gamma_p}$ by $\kappa_G$. We define $G(\qpbr)^0$ to be the kernel of $\tilde{\kappa}_G$ and $G(\qpbr)^1$ to be the inverse image under $\tilde{\kappa}_G$ of the torsion subgroup $\pi_1(G)_{I_p, \mathrm{tors}}$ of $\pi_1(G)_{I_p}$. 

\subsubsection{} \label{Sec:Parahorics} Let $\mathcal{B}(G,\qp)$ (resp. $\mathcal{B}(G,\qpbr)$) denote the (reduced) Bruhat--Tits building of $G$ (resp. of $G_{\qpbr})$; it is a contractible metric space with an action of $G(\qp)$ (resp. $G(\qpbr)$) by isometries, see \cite[Axiom 4.1.1, Corollary 4.2.9]{KalethaPrasad}. It also naturally has the structure of a polysimplicial complex (see \cite[Definition 1.5.1]{KalethaPrasad}) with facets denoted by $\mathcal{F} \subset \mathcal{B}(G,\qp)$ (resp. $\mathcal{F} \subset \mathcal{B}(G,\qpbr)$). Note that there is a $G(\qp)$-equivariant inclusion $\mathcal{B}(G,\qp) \subset \mathcal{B}(G,\qpbr)$ identifying $\mathcal{B}(G,\qp)$ with the fixed points of $\mathcal{B}(G,\qpbr)$ under the Frobenius $\sigma$, see \cite[Theorem 9.2.7]{KalethaPrasad}.

Given a subset $\Omega \subseteq \mathcal{B}(G,\qpbr)$ we consider the pointwise
stabilizers $G(\qpbr)^0_\Omega$ and $G(\qpbr)^1_\Omega$ of $\Omega$ inside of
$G(\qpbr)^0$ and $G(\qpbr)^1$, respectively. Subgroups of the form
$G(\qpbr)^0_{\mathcal{F}}$ for a facet $\mathcal{F}$ are called
\textit{parahoric subgroups}. Following
\cite[Section~2.2]{PappasRapoportRZSpaces}, we will define a
\textit{quasi-parahoric subgroup} $\breve{K} \subseteq G(\qpbr)$ to be any
subgroup for which there exists a facet $\mathcal{F}$ such that
\[
  G(\qpbr)^0_\mathcal{F} = G(\qpbr)^0 \cap \operatorname{Stab}_{\mathcal{F}}
  \subset \breve{K} \subset G(\qpbr)^1 \cap \operatorname{Stab}_{\mathcal{F}},
\]
where now
\[
  \operatorname{Stab}_{\mathcal{F}} =\{ g \in G(\qpbr) : g \mathcal{F} =
  \mathcal{F}\}
\]
is the stabilizer of $\mathcal{F}$ in $G(\qpbr)$ (rather than the pointwise
stabilizer). Note that the inclusion $G(\qpbr)^1_{\mathcal{F}} \subset
G(\qpbr)^1 \cap \operatorname{Stab}_{\mathcal{F}}$ is generally not an
equality, e.g., when $G = \mathrm{PGL}_2$ and $\mathscr{F}$ is an alcove.

\subsubsection{} For a quasi-parahoric subgroup $\breve{K}$ there is a unique
smooth affine group scheme $\mathcal{G}$ over $\zpbr$ together with an
isomorphism $\mathcal{G}_{\qpbr} \xrightarrow{\sim} G_{\qpbr}$ which identifies
$\mathcal{G}(\zpbr)$ with $\breve{K}$, called the \emph{quasi-parahoric group
scheme} associated to $\breve{K}$. When $\breve{K}$ is moreover stable under
$\sigma$, the group $\mathcal{G}$ descends canonically to a smooth affine group
scheme over $\zp$. For example, let $\Omega \subseteq \mathcal{B}(G,\qp)$ be a
nonempty subset contained in a facet, and view it as a subset of
$\mathcal{B}(G,\qpbr)$. Then the pointwise stabilizers $G(\qpbr)^0_\Omega$ and
$G(\qpbr)^1_\Omega$ are stable under $\sigma$, and define quasi-parahoric group
schemes over $\zp$.

\subsubsection{} \label{Sec:quasi-parahoric}
For any quasi-parahoric subgroup $\breve{K} \subseteq
G(\qpbr)$, there exists by definition a facet $\mathcal{F}$ in
$\mathcal{B}(G,\qpbr)$ with $G(\qpbr)^0 \cap \mathrm{Stab}_\mathcal{F} \subseteq
\breve{K} \subseteq G(\qpbr)^1 \cap \mathrm{Stab}_\mathcal{F}$. Intersecting
with $G(\qpbr)^0$, we observe that $G(\qpbr)_\mathcal{F}^0 = \breve{K} \cap
G(\qpbr)^0$, and hence the facet $\mathcal{F}$ is in fact uniquely determined by
$\breve{K}$, since the facet is determined by $G(\qpbr)_\mathcal{F}^0$, see \cite[Proposition~9.3.25]{KalethaPrasad}. The inclusion $G(\qpbr)^0_\mathcal{F} \hookrightarrow \breve{K}$
induces an open immersion $\mathcal{G}^\circ \to \mathcal{G}$ of smooth affine
group schemes over $\zpbr$, where $\mathcal{G}^\circ$ is the parahoric group
scheme corresponding to $G(\qpbr)^0_\mathcal{F}$. Moreover, the induced map on
the special fiber $\mathcal{G}^\circ_{\fpbar} \to \mathcal{G}_{\fpbar}$ is the
inclusion of the identity component, see \cite[Theorem~8.3.13]{KalethaPrasad}.
By \cite[Corollary~11.6.3]{KalethaPrasad}, the finite group
\[
  \pi_0(\mathcal{G}):=\pi_0(\mathcal{G}_{\fpbar})
\]
can be identified with the image of $\breve{K}$ in $\pi_1(G)_{I_p,\mathrm{tors}}$
under the Kottwitz map $\tilde{\kappa}_G$. When $\breve{K}$ is also stable under
$\sigma$, we obtain an inclusion $\mathcal{G}^\circ \to \mathcal{G}$ of smooth
affine group schemes over $\zp$. In this case, $\pi_0(\mathcal{G})$ is a finite
\'{e}tale group scheme over $\fp$.

\begin{Def} \label{Def:StabilizerParahoric}
  We say that a quasi-parahoric group scheme $\mathcal{G} / \zp$ is a
  \textit{stabilizer Bruhat--Tits group scheme} when $\mathcal{G}(\zpbr)
  \subseteq G(\qpbr)$ is of the form $G(\qpbr)^1_x$ for a point $x \in
  \mathcal{B}(G,\qp)$ (as opposed to $x \in \mathcal{B}(G,\qpbr)$). A
  \textit{stabilizer parahoric group scheme} is stabilizer Bruhat--Tits group
  scheme with connected special fiber.

  We also say that an open subgroup $K \subseteq G(\qp)$ is a \textit{stabilizer
  parahoric subgroup} when it is a parahoric subgroup and the corresponding
  smooth affine group scheme $\mathcal{G}/\zp$ is a stabilizer parahoric group
  scheme.\footnote{Stabilizer parahoric subgroups are also called
  \textit{connected parahorics} in the literature, e.g., in \cite{ZhouIsogeny}.
  We find the terminology ``stabilizer parahoric'' more descriptive, as
  parahoric group schemes are connected by construction.
  }
\end{Def}

\begin{Rem}
  A subgroup of $G(\qpbr)$ being a stabilizer of a point in
  $\mathcal{B}(G,\qp)$ is strictly stronger than being both $\sigma$-stable and
  a stabilizer of a point in $\mathcal{B}(G,\qpbr)$.

  Following \cite[Remark~2.3]{PappasRapoportCurves}, we consider the group $G =
  D^\times / \mathbb{G}_m$, where $D/\qp$ is
  the unique quaternion algebra. The building $\mathcal{B}(G,\qpbr) \cong
  \mathcal{B}(\mathrm{PGL}_2,\qpbr)$ is a tree, inside which
  $\mathcal{B}(G,\qp)$ is a midpoint of an edge $\mathcal{F}$, see
  \cite[Example~9.2.9]{KalethaPrasad}. Taking any point $x \in \mathcal{F}
  \setminus \mathcal{B}(G,\qp)$, we see that $G(\qpbr)_x^1 =
  G(\qpbr)_\mathcal{F}^1$ is $\sigma$-stable but not a stabilizer of a point in
  $\mathcal{B}(G,\qp)$.
\end{Rem}

The following result is well known, but we include the proof for the sake of completeness.

\begin{Lem} \label{Lem:QuasiToStabilizer}
  Let $\breve{K} \subseteq G(\qpbr)$ be a $\sigma$-stable quasi-parahoric
  subgroup. Then there exists a point $x \in \mathcal{B}(G,\qp)$ for which
  $\breve{K} \subseteq G(\qpbr)_x^1$ and $\breve{K} \cap G(\qpbr)^0 =
  G(\qpbr)_x^0$.
\end{Lem}

\begin{proof}
  Consider the unique facet $\mathcal{F}$ of $\mathcal{B}(G,\qpbr)$ for which
  $G(\qpbr)_\mathcal{F}^0 \subseteq \breve{K} \subseteq G(\qpbr)^1 \cap
  \operatorname{Stab}_\mathcal{F}$. Since $\breve{K}$ is $\sigma$-stable, so is
  $\mathcal{F}$. We now note that $\gal(\qpbr/\qp) \rtimes \breve{K}$ acts on
  $\mathcal{F}$ through affine-linear automorphisms. Thus if we take $x$ to be the
  center-of-mass of the vertices of $\mathcal{F}$, the point $x$ is fixed under
  the action of $\gal(\qpbr/\qp) \rtimes \breve{K}$. It is also contained in
  $\mathcal{F}$ because $\mathcal{F}$ is the interior of a convex polytope.

  Because $x$ is fixed under $\gal(\qpbr/\qp)$, we have $x \in
  \mathcal{B}(G,\qp)$. Because $x$ is fixed under the action of $\breve{K}$, we
  have $\breve{K} \subseteq G(\qpbr)_x^1$. Finally, we have
  \[
    \breve{K} \cap G(\qpbr)^0 = G(\qpbr)_\mathcal{F}^0 = G(\qpbr)_x^0
  \]
  because $x \in \mathcal{F}$, see \cite[Axiom~4.1.20(1)]{KalethaPrasad}.
\end{proof}

\begin{Cor} \label{Cor:QuasiToStabilizer}
  Let $\mathcal{G}/\zp$ be a quasi-parahoric group scheme for $G/\qp$. Then
  there exists a stabilizer Bruhat--Tits model $\mathcal{H}/\zp$ of $G$
  such that the identity map on $G$ extends to an open embedding $\mathcal{G}
  \hookrightarrow \mathcal{H}$.
\end{Cor}

\begin{proof}
  We take $\breve{K} = \mathcal{G}(\zpbr)$ in Lemma~\ref{Lem:QuasiToStabilizer}
  and let $\mathcal{H}$ be the quasi-parahoric group corresponding to
  $G(\qpbr)_x^1$. The first condition implies that there exists a map
  $\mathcal{G} \to \mathcal{H}$, see \cite[Corollary 2.10.10]{KalethaPrasad},
  which is an isomorphism over $\qp$. By the second condition, both connected
  components $\mathcal{G}^0$ and $\mathcal{H}^0$ are identified with the
  parahoric group scheme corresponding to $G(\qpbr)_x^0$ as in
  Section~\ref{Sec:quasi-parahoric}. By \cite[Corollary~11.6.3]{KalethaPrasad}, the map $\pi_0(\mathcal{G}_{\fpbar}) \to \pi_0(\mathcal{H}_{\fpbar})$ can be identified with the inclusion
  \begin{align}
      \operatorname{Im}\left(\breve{K} \to \pi_1(G)_{I_p,\mathrm{tors}}\right) \to \operatorname{Im}\left(G(\qpbr)_x^1 \to  \pi_1(G)_{I_p,\mathrm{tors}} \right).
  \end{align}
  It follows that $\mathcal{G}_{\fpbar} \to \mathcal{H}_{\fpbar}$ is an open
  embedding and in particular smooth of relative dimension zero. By the
  fiberwise criterion for smoothness, we deduce that $\mathcal{G} \to
  \mathcal{H}$ is smooth of relative dimension zero, hence \'etale. Since it is a monomorphism on both the special fiber and the generic fiber, it is universally injective by \cite[Tag 01S4]{stacks-project} and thus an open immersion by \cite[Tag 025G]{stacks-project}.
\end{proof}

\subsubsection{} \label{Sec:Alcove}
Fix a maximal split torus $S \subset G_{\qpbr}$ with centralizer $T$ and normalizer $N$. By definition, the Iwahori--Weyl group $\widetilde{W}$ associated with $S$ sits in a short exact sequence 
\begin{align}
    1 \to T(\qpbr)^0 \to N(\qpbr) \to\widetilde{W} \to 1,
\end{align}
see \cite[Definition 7]{HainesRapoportParahoric}. 
Let $G_\mathrm{sc}$ denote the simply connected cover of the derived group $G_\mathrm{der}$ of $G$, and let $\widetilde{W}_\mathrm{sc}$ be the Iwahori--Weyl group of $G_\mathrm{sc}$. There is a short exact sequence 
\begin{align}
    1 \to \widetilde{W}_\mathrm{sc} \to \widetilde{W} \to \pi_1(G)_{I_p}\to 0.
\end{align}
Any choice of an alcove\footnote{An alcove is a facet which is maximal for the inclusion relation between facets.} $\mathfrak{a}$ of $\mathcal{B}(G,\qpbr)$ in the apartment associated to $S$ determines a splitting of this short exact sequence.

For a $G(\qpbar)$-conjugacy class $\mu$ of cocharacters, we will use the notation $\admu \subset \widetilde{W}$ for the $\mu^{-1}$-admissible subset, defined as in \cite[Equation (3.4)]{RapoportReduction}. 

\subsubsection{} \label{Subsub:DeltaNotation} Assume $\calg$ is a quasi-parahoric group scheme over $\zp$ determined by a $\sigma$-stable quasi-parahoric subgroup $\Breve{K}\subset G(\qpbr)^1_\mathcal{F}$, such that $\mathcal{F}$ is $\sigma$-stable. As in \cite[Section 3]{PappasRapoportRZSpaces}, we let
$\Pi_{\mathcal{G}}$ be the kernel of $H^1(\zp, \mathcal{G}) \to H^1(\qp, G)$. By Lemma~3.1.1 of \textit{loc. cit.}, we may identify
\[
  \Pi_\mathcal{G} \cong \ker(\pi_0(\mathcal{G})_{\phi} \to \pi_1(G)_{\Gamma_p}),
\]
where $\phi \in \Gamma_p/I_p$ is the Frobenius.\footnote{From now on we will sometimes use $\phi$ instead of $\sigma$ for the Frobenius, to better match the conventions of \cite{PappasRapoportRZSpaces}.} Using this and the exact sequence
\[0\to \pi_1(G)^\phi_{I_p} \to \pi_1(G)_{I_p}\xrightarrow{1-\phi} \pi_1(G)_{I_p}\to \pi_1(G)_{\Gamma_p}\to 0,\] 
we may lift any $\delta\in \Pi_\mathcal{G}$ to an element $\dot{\delta}\in \pi_0(\calg)$, such that $\dot{\delta} =
(1 - \phi) \gamma$ for some $\gamma \in \pi_1(G)_{I_p}$. Choose a splitting of
$\widetilde{W} \to \pi_1(G)_{I_p}$ corresponding to a $\sigma$-stable alcove
$\mathfrak{a}$ of $\mathcal{B}(G,\qpbr)$ with $\mathcal{F} \subset \mathfrak{a}$
as in Section \ref{Sec:Alcove} above. By \cite[Lemma~4.3.1]{PappasRapoportRZSpaces}
there is a lift $\dot{\gamma}$ of $\gamma$ to $N(\qpbr)$ such that
$\dot{\delta} = \phi(\dot{\gamma})^{-1} \dot{\gamma} \in \mathcal{G}(\zpbr)$. We
then obtain a quasi-parahoric integral model $\calgdelta$ of $G$ such that
\[
  \calgdelta(\zpbr) = \dot{\gamma} \mathcal{G}(\zpbr) \dot{\gamma}^{-1} \quad \text{ and } \quad
  \calgdeltacirc(\zpbr) = \dot{\gamma} \mathcal{G}^\circ(\zpbr)
  \dot{\gamma}^{-1}.
\]
The $G(\qp)$-conjugacy class of $\calgdelta(\zpbr)$ does not depend on the
choice of $\dot{\gamma}$ or $\gamma$, see
\cite[Proposition~4.3.2]{PappasRapoportRZSpaces}, and hence the integral model
$\calgdelta$ only depends on $\delta \in \Pi_\mathcal{G}$ up to isomorphism.
However, we shall fix a choice of $\dot{\gamma}$ for each $\delta \in
\Pi_\mathcal{G}$, for later use in Section~\ref{Sec:Decomposition}.

\begin{Rem}
  The group $\calgdelta$ can be identified with the inner twist of $\calg$ by
  $\delta \in H^1(\zp, \calg)$, where the isomorphism $\calg \vert_{\qp} \cong
  G$ comes from the fact that the image of $\delta$ in $H^1(\qp, G)$ is
  trivial. Indeed, upon choosing $\dot{\gamma}$ and $\dot{\delta} =
  \phi(\dot{\gamma})^{-1} \dot{\gamma} \in \calgdelta(\zpbr)$ as above, the
  inner twist ${}^\delta \calg$ has the property that there is a
  $\phi$-equivariant isomorphism ${}^\delta \calg(\zpbr) \cong \calg(\zpbr)$
  where the $\phi$-action on $\calg(\zpbr)$ is $g \mapsto \dot{\delta}^{-1}
  \phi(g) \dot{\delta}$. Composing this with conjugation by $\dot{\gamma}$, we
  obtain a $\phi$-equivariant isomorphism ${}^\delta \calg(\zpbr) \cong
  \dot{\gamma} \calg(\zpbr) \dot{\gamma}^{-1}$ where the $\phi$-actions on both
  sides are natural.
 
\end{Rem}

\subsection{Finite \'etale covers of v-sheaves} \label{Sec:FiniteEtale} Let $\Lambda$ be a finite abelian group. In this section we will compare $\Lambda$-torsors over the v-sheaves associated to schemes with $\Lambda$-torsors over the corresponding schemes.

\begin{Prop}\label{Prop:FiniteEtaleCover}
Let $X$ be a flat normal scheme which is separated and of finite type over $\zp$, and let $f:Z_{\mathrm{rat}} \to X_{\qp}$ be a $\Lambda$-torsor. Suppose $\mathscr{Z}\to X^{\diamondsuit/}$ is an \'etale $\Lambda$-torsor whose generic fiber is $Z_\mathrm{rat}^\diamondsuit\to X_\qp^\diamondsuit$. Then the relative normalization $Z$ of $X$ in $Z_\mathrm{rat}\to X_\qp \to X$ is an \'etale $\Lambda$-torsor, and $Z^{\diamondsuit/}$ is isomorphic to $\mathscr{Z}$ over $X^{\diamondsuit/}$.
\end{Prop}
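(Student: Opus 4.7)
The plan is to use $\mathscr{Z}$ to produce a finite \'etale $\Lambda$-torsor on the $p$-adic formal completion $\widehat{X}$ of $X$, glue this with $Z_\mathrm{rat}$ across the Cartier divisor $V(p) \subset X$ to obtain an \'etale $\Lambda$-torsor $Y \to X$, and then identify $Y$ with the relative normalization $Z$ via its universal property. This will simultaneously establish the \'etaleness of $Z \to X$ and the identification $Z^{\lozenge/} \cong \mathscr{Z}$.

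First, I check that the normalization $Z \to X$ is finite. Because $X$ is of finite type over the excellent ring $\zp$, it is Nagata, so the relative normalization of $X$ in the finite \'etale $X_\qp$-scheme $Z_\mathrm{rat}$ is finite over $X$. On the generic fiber, $Z_\qp \cong Z_\mathrm{rat}$ is \'etale over $X_\qp$ by hypothesis, so the only remaining issue is \'etaleness at the special fiber.

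For the special fiber, I will restrict $\mathscr{Z}$ to $X^\diamond = \widehat{X}^\lozenge$ and descend. The key input is a comparison result, taken from \cite{KimVectorBundles}, that allows one to recognize when a v-sheaf finite \'etale cover of $\widehat{X}^\lozenge$ arises from a scheme-theoretic finite \'etale cover of the $p$-adic formal scheme $\widehat{X}$. Under this comparison, the \'etale $\Lambda$-torsor $\mathscr{Z}|_{X^\diamond}$ descends to a finite \'etale $\Lambda$-torsor $\widehat{Y} \to \widehat{X}$. The compatibility built into $\mathscr{Z}$ over the adic generic fiber of $\widehat{X}$ then provides the required gluing datum between $\widehat{Y}$ and $Z_\mathrm{rat}$, and a Beauville--Laszlo-style gluing along $V(p) \subset X$ (a Cartier divisor because $X$ is $\zp$-flat) assembles these into a finite \'etale $\Lambda$-torsor $Y \to X$ whose generic fiber is $Z_\mathrm{rat}$ and whose $p$-adic completion is $\widehat{Y}$.

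The final step is to match $Y$ with $Z$. Since $Y \to X$ is finite \'etale and $X$ is normal, $Y$ is itself normal, finite over $X$, with generic fiber $Z_\mathrm{rat}$; by the universal property of relative normalization, $Y \cong Z$. This shows $Z \to X$ is an \'etale $\Lambda$-torsor. The identification $Z^{\lozenge/} \cong \mathscr{Z}$ then follows from the gluing description of both sheaves: on $X_\qp^\lozenge$ both restrict to $Z_\mathrm{rat}^\lozenge$ by hypothesis, and on $X^\diamond$ both equal $\widehat{Z}^\lozenge = \widehat{Y}^\lozenge$ by construction. The main obstacle is the v-sheaf descent step: without a direct comparison between v-sheaf finite \'etale covers of $\widehat{X}^\lozenge$ and finite \'etale covers of $\widehat{X}$, there would be no obvious way to extract scheme-theoretic structure from $\mathscr{Z}$ at the special fiber, and the gluing argument would have nothing to glue on the formal side.
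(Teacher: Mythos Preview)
Your outline is sound and shares the decisive input with the paper: the comparison between \'etale $\Lambda$-torsors on $\widehat{X}$ and on $\widehat{X}^\lozenge$. The paper isolates this as a separate lemma, proving that $H^1_\text{\'et}(\widehat{X},\Lambda)\to H^1_\text{\'et}(\widehat{X}^\lozenge,\Lambda)$ is an isomorphism by combining \cite{KimVectorBundles} for the special fiber with a connectedness argument for injectivity of $i^{\diamond,\ast}$. Note that this comparison is only established for abelian $\Lambda$-torsors, not arbitrary finite \'etale covers; your phrasing ``recognize when a v-sheaf finite \'etale cover \dots\ arises from a scheme-theoretic finite \'etale cover'' slightly overstates what is available, though for the case at hand it suffices.

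The divergence is in how the argument finishes. The paper never constructs an auxiliary $Y$ by gluing: instead it works directly with the relative normalization $Z$, checks that both $\widehat{Z}$ and the descended formal torsor $\mathfrak{Z}$ are $\eta$-normal in the sense of \cite{AchingerLaraYoucis}, and then invokes \cite[Lemma~4.1]{AchingerLaraYoucis} to conclude $\mathfrak{Z}\cong\widehat{Z}$ from the identification of their rigid generic fibers as classes in $H^1_\text{\'et}(\widehat{X}_\eta,\Lambda)$. Your Beauville--Laszlo route avoids the $\eta$-normality machinery at the cost of making the gluing precise for non-affine $X$ (a localization-and-cocycle check, since the overlap is a rigid space), and of tracking the specific isomorphism $\widehat{Y}^\lozenge\cong\mathscr{Z}|_{X^\diamond}$ through the construction so that $Y^{\lozenge/}\cong\mathscr{Z}$ holds as torsors and not merely as $H^1$-classes on each piece. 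Both approaches ultimately pivot on the same comparison over $\widehat{X}_\eta$; yours trades one piece of formal-scheme technology for another.
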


\begin{Rem}
For $X$ as in the statement of Proposition \ref{Prop:FiniteEtaleCover}, we expect that any finite \'etale cover of $X_{\qp}$ which extends to a finite \'etale cover of $X^{\diamondsuit/}$ comes from a finite \'etale cover of $X$. To prove this, it would suffice to prove an analogue of Lemma \ref{Lem:H1Henselianity} below for arbitrary finite \'etale covers. One would like to apply \cite[Theorem 4.27]{GleasonSpecialization} here, but we were unable to verify that if $\mathcal{F} \to X^{\diamondsuit/}$ is a finite \'etale cover, that then $\mathcal{F}$ must be a prekimberlite (in the sense of \cite[Definition 4.15]{GleasonSpecialization}). To be precise, we were unable to prove that $\mathcal{F}$ is v-specializing in the sense of \cite[Definition 4.6]{GleasonSpecialization}.
\end{Rem}

We first introduce some notation: Consider the closed and open subschemes of $X$ given by its special and generic fiber \[X_0\xrightarrow{i}X\xleftarrow{j}X_{\qp}.\]
Let $\widehat{X}$ be the completion of $X$ along $X_0$, and $\widehat{X}_\eta$ be its adic generic fiber. Its attached diamond $\widehat{X}_\eta^\diamondsuit$ is a quasicompact open sub-diamond of $X_\qp^\diamondsuit$. Similarly, we denote by $\widehat{Z}$ the $p$-adic completion of $Z$. 

\begin{Lem}\label{Lem:H1Henselianity}
    The natural map 
    \[H^1_\textup{\'et}(\widehat{X},\Lambda)\rightarrow H^1_\textup{\'et}(\widehat{X}^\diamondsuit,\Lambda),\]
    is an isomorphism.
\end{Lem}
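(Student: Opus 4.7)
The plan is to identify both $H^1_\text{\'et}(\widehat{X}, \Lambda)$ and $H^1_\text{\'et}(\widehat{X}^\lozenge, \Lambda)$ with a common target $H^1_\text{\'et}(X_0, \Lambda)$, compatibly with the natural map of the lemma.

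For the formal-scheme side, the inclusion $X_0 \hookrightarrow \widehat{X}$ is a limit of nilpotent thickenings $X_0 \hookrightarrow X \otimes_\zp \zp/p^n$, and since $\widehat{X}$ is a Noetherian adic formal scheme, its \'etale topos coincides with that of its underlying reduced subscheme. Classical topological invariance of the \'etale site under nilpotent thickenings therefore gives $H^1_\text{\'et}(\widehat{X}, \Lambda) \simeq H^1_\text{\'et}(X_0, \Lambda)$. This step is routine.

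For the v-sheaf side, I would use the specialization morphism $\mathrm{sp} \colon \widehat{X}^\lozenge \to X_{0, \mathrm{red}}$ of \cite{ScholzeWeinsteinBerkeley, GleasonSpecialization}. Since $H^1(-, \Lambda)$ classifies $\Lambda$-torsors (that is, finite \'etale covers with free $\Lambda$-action), it suffices to show that pullback along $\mathrm{sp}$ induces an equivalence between $\Lambda$-torsors on $X_{0,\mathrm{red}}$ and on $\widehat{X}^\lozenge$. This would follow from \cite[Theorem 4.27]{GleasonSpecialization} once both the base and the $\Lambda$-torsor are verified to be prekimberlites in the sense of \cite[Definition 4.15]{GleasonSpecialization}. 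That $\widehat{X}^\lozenge$ itself is a prekimberlite should follow from $X$ being flat, normal, separated, and of finite type over $\zp$. The more delicate point --- and the main obstacle --- is the prekimberlite hypothesis on the cover. As the authors' remark preceding the lemma indicates, this cannot be verified for a general finite \'etale cover of $X^{\lozenge/}$; however, a $\Lambda$-torsor over $\widehat{X}^\lozenge$ is \'etale-locally on the base a disjoint union of copies of $\widehat{X}^\lozenge$, and this local structure should propagate the prekimberlite condition from the base to the cover. Making this inheritance rigorous is the technical heart of the proof, and is what distinguishes the $\Lambda$-torsor case from the case of arbitrary finite \'etale covers flagged in the remark.

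Compatibility of the two identifications with the natural map of the lemma then follows from the functoriality of specialization, which fits into a commutative diagram with the closed immersion $X_0 \hookrightarrow \widehat{X}$ and the canonical site-theoretic comparison between the formal scheme and its associated v-sheaf. Combining the three steps produces the desired isomorphism.
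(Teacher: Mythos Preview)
Your proposal has a genuine gap at exactly the point you flag as ``the technical heart.'' You propose to invoke \cite[Theorem~4.27]{GleasonSpecialization} once the $\Lambda$-torsor is shown to be a prekimberlite, and you argue this should follow because a $\Lambda$-torsor is \'etale-locally on the base a disjoint union of copies of $\widehat{X}^\lozenge$. But \emph{every} finite \'etale cover is \'etale-locally a disjoint union of copies of the base; your heuristic does not distinguish $\Lambda$-torsors from arbitrary finite \'etale covers. So if this inheritance argument worked, it would prove the general statement that the Remark preceding the lemma says the authors could not establish. You have not supplied any additional input that uses the group structure of $\Lambda$ in a meaningful way, and being a prekimberlite is not obviously an \'etale-local property. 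The proposal therefore amounts to restating the obstruction rather than overcoming it.

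The paper's proof avoids prekimberlite theory entirely. The commutative square
\[
\begin{tikzcd}
H^1_{\text{\'et}}(X_0,\Lambda) \arrow[d,"\sim"] & H^1_{\text{\'et}}(\widehat{X},\Lambda) \arrow[l,"\sim"'] \arrow[d] \\
H^1_{\text{\'et}}(X_0^\diamond,\Lambda) & H^1_{\text{\'et}}(\widehat{X}^\lozenge,\Lambda) \arrow[l,"i^{\diamond,\ast}"']
\end{tikzcd}
\]
(with the left vertical map an isomorphism by \cite[Theorem~1.3]{KimVectorBundles}) immediately gives surjectivity of $i^{\diamond,\ast}$, so the content of the lemma is injectivity of $i^{\diamond,\ast}$. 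The paper proves this by a direct connected-component count: if a $\Lambda$-torsor $\mathscr{Y}\to\widehat{X}^\lozenge$ becomes trivial over $X_0^\diamond$, one computes $H^0_{\text{\'et}}(\mathscr{Y},\mathbb{F}_\ell)\cong\mathbb{F}_\ell^{|\Lambda|}$ via proper base change \cite[Theorem~19.2]{EtCohDiam} and \cite[Remark~V.4.3(ii)]{FarguesScholze}, forcing $\mathscr{Y}$ to have $|\Lambda|$ connected components each mapping isomorphically to $\widehat{X}^\lozenge$. This is where the abelian-torsor hypothesis genuinely enters: the fiber over $X_0^\diamond$ is known to have exactly $|\Lambda|$ points, which pins down the component count globally.
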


\begin{proof}
We have a commutative diagram
    \[\begin{tikzcd}
H^1_\text{\'et}(X_0, \Lambda)\arrow[d, "\sim"] & H^1_\text{\'et}(\widehat{X}, \Lambda)\arrow[d]\arrow{l}{\sim}[swap]{i^\ast} \\ H^1_\text{\'et}(X_0^\diamond, \Lambda) & H^1_\text{\'et}(\widehat{X}^\diamondsuit, \Lambda) \arrow{l}[swap]{i^{\diamond,\ast}},
\end{tikzcd}\]
where we note that $\widehat{X}^\diamondsuit \cong X^\diamond$.
The left vertical arrow of the diagram is an isomorphism by \cite[Theorem 1.3]{KimVectorBundles}, and the top arrow is an isomorphism by \stacks{0DEG} and \stacks{0DEA}. This implies surjectivity of the bottom map $i^{\diamond,\ast}$. To prove the lemma, it suffices to prove that the bottom arrow is also injective. 

Without loss of generality, we may assume that $\widehat{X}$ is connected. This implies that $\widehat{X}^{\diamondsuit}$ is connected; indeed, this follows because the map
\begin{align}
    \Hom(\widehat{X}, \spf \zp \amalg \spf \zp) \to \Hom(\widehat{X}^{\diamondsuit}, \spd \zp \amalg \spd \zp)
\end{align}
is a bijection by full-faithfulness of $\widehat{X} \mapsto \widehat{X}^{\diamondsuit}$, see \cite[Theorem 2.16]{AGLR}. Suppose an \'etale $\Lambda$-torsor $f: \mathscr{Y}\to\widehat{X}^\diamondsuit$ splits over $X_0^\diamond$. The natural map from a connected component $\mathscr{G}$ of $\mathscr{Y}$ to $\widehat{X}^{\diamondsuit}$ is still finite \'etale. Thus its image on topological spaces is open and closed, and since $|\widehat{X}^{\diamondsuit}|$ is connected, we find that $|\mathscr{G}| \to |X^\diamond|$ is surjective.  Since $\mathscr{G} \to X^\diamond$ is quasicompact, it follows from \cite[Lemma 12.11]{EtCohDiam} that it is surjective as a map of v-sheaves. Thus the natural map $\mathscr{G} \to \widehat{X}^{\diamondsuit}$ is a finite \'etale cover of $\widehat{X}^\diamondsuit$.

For $\ell\neq p$, we have
\begin{align}
    H^0_\text{\'et}(\mathscr{Y},\mathbb{F}_\ell)&\cong H^0_\text{\'et}(\widehat{X}^\diamondsuit,f_\ast\mathbb{F}_\ell)
    \cong H^0_\text{\'et}(X_0^\diamond,i^{\diamond,\ast} f_\ast\mathbb{F}_\ell)\\
    &\cong H^0_\text{\'et}(\mathscr{Y}^\mathrm{red,\diamond},\mathbb{F}_\ell)
    \cong \mathbb{F}_\ell^{\oplus |\Lambda|}.
\end{align}
Here the second isomorphism follows from \cite[Remark V.4.3(ii)]{FarguesScholze} or \cite[Lemma~4.3]{GleasonLourencoLocalModel}, and the third isomorphism follows from proper base change \cite[Theorem 19.2]{EtCohDiam}. Hence $\mathscr{Y}$ has $n:=|\Lambda|$ connected components and the map $|\mathscr{Y}| \to |\widehat{X}^{\diamondsuit}|$ has fibers of size $n$. It follows that $|\mathscr{G}| \to |\widehat{X}^{\diamondsuit}|$ has fibers of size $1$ (since for each connected component the map on topological spaces is surjective), and thus $\mathscr{G} \to \widehat{X}^{\diamondsuit}$ is an isomorphism by \cite[Lemma 12.5]{EtCohDiam}. It is now clear that the action map $\widehat{X}^\diamondsuit \times \Lambda\to \mathscr{Y}$ over $\widehat{X}^{\diamondsuit}$ is an isomorphism; the injectivity of $i^{\diamond,\ast}$ follows. 
\end{proof}

\begin{proof}[Proof of Proposition \ref{Prop:FiniteEtaleCover}]
It follows from \cite[Theorem 1.3]{KimVectorBundles}, as explained in Lemma \ref{Lem:H1Henselianity}, that there exists an \'etale $\Lambda$-torsor $\mathfrak{Z}\to \widehat{X}$ whose special fiber identifies with $Z_0\to X_0$. It thus suffices to show that $\mathfrak{Z} \to \widehat{X}$ is isomorphic to $\widehat{Z}$ over $\widehat{X}$. \smallskip 

\noindent By \stacks{035L}, the relative normalization $Z$ of $X$ in $Z_\mathrm{rat} \to X$ is normal, since $Z_{\mathrm{rat}}$ is normal. We first prove that $\mathfrak{Z}$ and $\widehat{Z}$ are both $\eta$-normal in the sense of \cite[Definition A.1]{AchingerLaraYoucis}. To show this for $\widehat{Z}$, we use \cite[Lemma A.2]{AchingerLaraYoucis}, which implies that it is enough to check that the local rings of $\widehat{Z}$ at closed points are normal. Note that the closed points of $\widehat{Z}$ are the same as those for $Z$, and at such a point $z$ we have an isomorphism $\widehat{\mathcal{O}}_{\widehat{Z},z} \xrightarrow{\sim} \widehat{\mathcal{O}}_{Z,z}.$ Normality of $\widehat{\mathcal{O}}_{\widehat{Z},z}$ then follows from normality of
$\widehat{\mathcal{O}}_{Z,z}$ which in turn follows from the normality of $\mathcal{O}_{Z,z}$. Indeed, the normality of (quasi-excellent) Noetherian local rings is preserved under completion, see \stacks{0C23}. It then follows from faithful flatness of $\mathcal{O}_{\widehat{Z},z} \to \widehat{\mathcal{O}}_{\widehat{Z},z}$ along with \stacks{033G} that $\mathcal{O}_{\widehat{Z},z}$ is normal. The same proof shows that $\widehat{X}$ is $\eta$-normal, and then it follows from \cite[Corollary A.16]{AchingerLaraYoucis} that the same holds for $\mathfrak{Z}$. \smallskip

\noindent By \cite[Lemma 4.1]{AchingerLaraYoucis}, since both $\mathfrak Z$ and $\widehat{Z}$ are $\eta$-normal, to prove $\mathfrak{Z}$ is isomorphic to $\widehat{Z}$ over $\widehat{X}$, it suffices to show their rigid generic fibers are isomorphic as \'etale $\Lambda$-torsors over $\widehat{X}_\eta$. In turn, it suffices to show the two $\Lambda$-torsors are represented by the same class in $H^1_\text{\'et}(\widehat{X}_\eta,\Lambda)\cong H^1_\text{\'et}(\widehat{X}_\eta^\diamondsuit,\Lambda)$ (see \cite[Lemma 15.6]{EtCohDiam} for this isomorphism). But this follows from the commutative diagram below
\[\begin{tikzcd}
H^1_\text{\'et}(\widehat{X}, \Lambda)\arrow[r]\arrow[d,"\sim"] & H^1_\text{\'et}(\widehat{X}_\eta,\Lambda)\arrow[d,"\sim"] \\ H^1_\text{\'et}(\widehat{X}^\diamondsuit, \Lambda)\arrow[r] &H^1_\text{\'et}(\widehat{X}^\diamondsuit_\eta, \Lambda).
\end{tikzcd}\]
Indeed, going clockwise from $H^1_\text{\'et}(\widehat{X}, \Lambda)$ to $H^1_\text{\'et}(\widehat{X}^\diamondsuit_\eta, \Lambda)$ the class of $\mathfrak{Z}\to \widehat{X}$ is sent to that of $\mathfrak{Z}_\eta^\diamondsuit\to \widehat{X}^\diamondsuit_\eta$. On the other hand, by the proof of Lemma \ref{Lem:H1Henselianity}, going counterclockwise we get the class of $\mathscr{Z}\times_{X^{\diamondsuit /}}\widehat{X}^\diamondsuit_\eta\to \widehat{X}^\diamondsuit_\eta$. Hence we are done if we can show $\mathscr{Z}\times_{X^{\diamondsuit /}} \widehat{X}_\eta^\diamondsuit$ is isomorphic to $\widehat{Z}_\eta^\diamondsuit$. But since $Z \to X$ is integral, this follows from \cite[Proposition 1.9.6]{Huber96} and our assumption that the generic fiber of $\mathscr{Z} \to X^{\diamondsuit /}$ is given by $Z_\mathrm{rat}^\diamondsuit \to X_\qp^\diamondsuit$.
\end{proof}

\section{The moduli stack of quasi-parahoric shtukas} \label{Sec:Shtukas} 

The goal of this section is to study moduli stacks of quasi-parahoric shtukas, and to prove Corollary~\ref{Cor:QuasiParahoricShtukas}.
\subsection{Newton strata in the moduli stack of quasi-parahoric shtukas}
In what follows we let $\mathcal{G}$ be a quasi-parahoric group scheme over $\zp$ with generic fiber $G$, and we let $\mathcal{G}^{\circ} \subset \mathcal{G}$ be the corresponding parahoric group scheme. Let $\operatorname{Gr}_{\mathcal{G}}$ and $\operatorname{Gr}_{\calgcirc}$ over $\spd(\zp)$ be the Beilinson--Drinfeld affine Grassmannians of \cite[Definition 20.3.1]{ScholzeWeinsteinBerkeley}. The natural map $\operatorname{Gr}_{\calgcirc} \to \operatorname{Gr}_{\mathcal{G}}$ becomes an isomorphism after base changing to $\spd(\qp)$. We will call this common base change the $B^+_{\mathrm{dR}}$-affine Grassmannian, and we will denote it by $\mathrm{Gr}_{G} \to \spd(\qp)$, see \cite[Lecture XIX]{ScholzeWeinsteinBerkeley}. 

\subsubsection{} For a $G(\qpbar)$-conjugacy class of minuscule cocharacters $\mu$ of $G$ with reflex field $E$, we denote by $\mathrm{Gr}_{G,\mu} \subset \mathrm{Gr}_{G,E}$ the closed Schubert-cell determined by $\mu$, see \cite[Section 19.2]{ScholzeWeinsteinBerkeley}. We define the \emph{v-sheaf local model} $\locmodgmuv \subset \mathrm{Gr}_{\mathcal{G},\spd(\mathcal{O}_E)}$ to be the v-sheaf theoretic closure of $\mathrm{Gr}_{G,\mu}$ inside $\mathrm{Gr}_{\mathcal{G},\spd(\mathcal{O}_E)}$, and similarly we have $\locmodgcircmuv$. As shown in \cite[Proposition~21.4.3]{ScholzeWeinsteinBerkeley}, functoriality of local models applied to the map $\mathcal{G}^\circ \to \mathcal{G}$ induces an isomorphism
\begin{equation}\label{Eq:local-model-comparison}
    \locmodgcircmuv \xrightarrow{\sim} \locmodgmuv.
\end{equation}

\subsubsection{} A $\mathcal{G}$-shtuka over a perfectoid space $S$ with leg at an untilt $S^\sharp$ is defined to be a $\mathcal{G}$-torsor $\mathscr{P}$ over $S \bdtimes \spa(\zp)$, together with an isomorphism of $\mathcal{G}$-torsors\footnote{Here we consider $\mathrm{Frob}_S^{\ast} (\mathscr{P})$ as a $\mathcal{G}$-torsor via the isomorphism $\mathrm{Frob}_S^{\ast} (\mathcal{G}) \to \mathcal{G}$ coming from the fact that $\mathcal{G}$ is defined over $\zp$.}
\begin{align}
    \phi_{\mathscr{P}}:\restr{\mathrm{Frob}_S^{\ast} \mathscr{P}}{S \bdtimes
    \spa(\zp) \setminus S^{\sharp}} \to \restr{\mathscr{P}}{S \bdtimes \spa(\zp)
    \setminus S^{\sharp}},
\end{align}
that is meromorphic along $S^\sharp$ in the sense of \cite[Definition 5.3.5]{ScholzeWeinsteinBerkeley}. We will occasionally denote such a meromorphic map by
\begin{align*}
    \phi_\mathscr{P}: \frob_S^\ast \mathscr{P} \dashrightarrow \mathscr{P},
\end{align*}
when the choice of untilt $S^\sharp$ is clear. For $\mu$ as above, we say that a $\mathcal{G}$-shtuka $(\mathscr{P}, \phi_{\mathscr{P}})$ is bounded by $\mu$ if the relative position of $\frob_S^\ast \mathscr{P}$ and $\mathscr{P}$ at $S^\sharp$ is bounded by the v-sheaf local model $\locmodgmuv \subset \mathrm{Gr}_{\mathcal{G},\spd(\mathcal{O}_E)}$, see \cite[Section 2.3.4]{PappasRapoportShtukas}. We note that this is well-defined as the local model $\locmodgmuv$ is stable under the action of $L^+\mathcal{G}$ by \cite[Proposition~4.13]{AGLR}, where the argument works verbatim when $\mathcal{G}$ is quasi-parahoric.

For $S$ in $\perf$, denote by $\shtg(S)$ the groupoid of triples $(S^{\sharp}, \mathscr{P}, \phi_{\mathscr{P}})$, where
$S^{\sharp}$ is an untilt of $S$ and where $(\mathscr{P}, \phi_{\mathscr{P}})$ is a $\mathcal{G}$-shtuka over $S$ with leg at $S^\sharp$. By \cite[Proposition~2.1.2]{ScholzeWeinsteinBerkeley}, the assignment $S \mapsto \shtg(S)$ defines a v-stack $\shtg$ on $\perf$ (for this, use the fact that $S \bdtimes \spa(\zp)$ is sousperfectoid by the proof of \cite[Proposition~11.2.1]{ScholzeWeinsteinBerkeley}). For $\mu$ as above, we let $\shtgmu \subset \shtg \times_{\spd(\zp)} \spd(\mathcal{O}_E)$ be the closed substack whose $S$-points consists of $\mathcal{G}$-shtukas over $S$ with one leg at $S^\sharp$, which are bounded by $\mu$.\footnote{Our moduli stacks $\shtgmu$ should not be confused with the moduli spaces of shtukas $\operatorname{Sht}_{(\mathcal{G},b,\mu)}$ of \cite[Definition 23.1.1]{ScholzeWeinsteinBerkeley}, also denoted by $\operatorname{Sht}_{(G,b,\mu,\mathcal{G}(\zp))}$ in \cite[Section 3.4]{GleasonLimXu}. They should also not be confused with the moduli spaces of $p$-adic shtukas $\operatorname{Sht}^{\mathcal{G}_b}_{\zp}$ of \cite[Definition 2.21]{GleasonComponents}. These objects are moduli spaces of shtukas with a framing (towards $b$); the similarity in notation is unfortunate.}\footnote{The stack denoted by $\shtg$ in \cite[Definition 7.4 of version 1]{GleasonIvanov} corresponds in our notation to the stack $\shtg \times_{\spd(\zp)} \spd(\fp)$.}

\subsubsection{} Let $S = \spa(R,R^+) \to \spd(\zp)$ be an object in $\perf$ together with an untilt $S^\sharp$, and let $(\mathscr{P}, \phi_{\mathscr{P}})$ be a $\mathcal{G}$-shtuka over $S$ with one leg at $S^\sharp$. We can choose $r$ sufficiently large such that $\mathcal{Y}_{[r,\infty)}$ does not meet the divisor of $\mathcal{Y}_{[r,\infty)}$ defined by $S^\sharp$. The restriction of $(\mathscr{P},\phi_{\mathscr{P}})$ determines a $\phi=\operatorname{Frob}_{S}$-equivariant $\mathcal{G}$-torsor on $\mathcal{Y}_{[r,\infty)}$. By spreading out via the Frobenius (see \cite[Proposition 22.1.1]{ScholzeWeinsteinBerkeley}), the bundle $\mathscr{P}$ descends to a $G$-bundle $\mathcal{E}(\mathscr{P},\phi_{\mathscr{P}})$ on $\mathcal{X}_S$. In this way we obtain a morphism of v-stacks on $\perf$ 
\begin{align}\label{Eq:ShtukaMorphism}
    \shtg \to \bung, \quad (\mathscr{P},\phi_{\mathscr{P}}) \mapsto \mathcal{E}(\mathscr{P},\phi_{\mathscr{P}}).
\end{align}
and we will denote both this map and its restriction to $\shtgmu$ by $\mathrm{BL}^{\circ}$. 

Using $\mathrm{BL}^\circ$, we obtain locally closed substacks $\shtg^{[b]} \subset \shtg$ and $\shtgmu^{[b]}\subset \shtgmu$ defined as in \eqref{Eq:NewtonStratumNotation}. We will refer to these as the Newton strata corresponding to $[b]$ in $\shtg$ and $\shtgmu$, respectively.

\subsubsection{} For $\ell$ an
algebraically closed field in characteristic $p$ together with a fixed embedding
$e \colon k_E \hookrightarrow \ell$, write
\[
  \oeel{e} = \mathcal{O}_E \otimes_{W(k_E),e} W(\ell).
\]
We make the following definition, which is a slight generalization of
\cite[Definition 25.1.1]{ScholzeWeinsteinBerkeley}. 

\begin{Def} \label{Def:IntegralLocalShimuraVariety}
  Let $\ell$ be a perfect field of characteristic $p$ together with an embedding $e
  \colon k_E \hookrightarrow \ell$, and let $b \in G(W(\ell)[p^{-1}])$. The
  \emph{integral local Shimura variety}
  \[
    \mintgbmu{e} \to \spd
    \oeel{e}
  \]
  is the v-sheaf on $\perf$ that assigns to each perfectoid $S$ the set of
  isomorphism classes of tuples $(S^\sharp, \mathscr{P}, \phi_\mathscr{P},
  \iota_r)$, where $S^\sharp$ is an untilt of $S$ over $\oeel{e}$, where
  $(\mathscr{P}, \phi_\mathscr{P})$ is a $\mathcal{G}$-shtuka with one leg along
  $S^\sharp$ bounded by $\mu$, and $\iota_r$ is an isomorphism
  \[
    \iota_r \colon G \vert_{\mathcal{Y}_{[r,\infty)}(S)} \xrightarrow{\cong}
    \mathscr{P} \vert_{\mathcal{Y}_{[r,\infty)}(S)}
  \]
  for $r \gg 0$, which satisfies $\iota_r \circ \phi_\mathscr{P} = (b \times
  \mathrm{Frob}_S) \circ \iota_r$. Two tuples $(S^\sharp, \mathscr{P}, \phi_\mathscr{P},\iota_r)$, $(S^\sharp, \mathscr{P}', \phi_\mathscr{P}', \iota'_{r'})$ are isomorphic if there is an isomorphism of $\mathcal{G}$-shtukas $(\mathscr{P}, \phi_\mathscr{P}) \to (\mathscr{P}', \phi_\mathscr{P}')$ which is compatible with $\iota_r$ and $\iota'_{r'}$ after restricting to $\mathcal{Y}_{[R,\infty)}(S)$ for some $R \gg r,r'$.
\end{Def}
\begin{Lem} \label{Lem:LocalUniformisation}
    There is a Cartesian diagram
    \begin{equation}
        \begin{tikzcd}
            \mintgbmu{e} \arrow{r} \arrow{d} & \shtgmub \times_{\mathcal{O}_E} \oeel{e}  \arrow{d}{\mathrm{BL}^{\circ}} \\
            \spd(\ell) \arrow{r}{b} & \bun_{G}^{[b]},
        \end{tikzcd}
    \end{equation}
where $b:\spd(\ell) \to \bun_G^{[b]}$ is the map coming from \cite[Theorem 5.3]{AnschutzIsocrystal}. 
\end{Lem}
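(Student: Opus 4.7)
The plan is to unwind both sides of the proposed Cartesian square directly, relying on the equivalence between $G$-bundles on the relative Fargues--Fontaine curve $X_S$ and $\phi$-equivariant $G$-bundles on $\mathcal{Y}_{[r,\infty)}(S)$ for $r \gg 0$ (\cite[Proposition 22.1.1]{ScholzeWeinsteinBerkeley}), which is precisely the equivalence used to define $\mathrm{BL}^\circ$ in \eqref{Eq:ShtukaMorphism}.

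First, I would produce a natural map from $\mintgbmu{e}$ into the fiber product. The structure map $\mintgbmu{e} \to \spd(\oeel{e}) \to \spd(\ell)$ gives the leg to $\spd(\ell)$, while forgetting the framing $\iota_r$ gives a map to $\shtgmu \times_{\mathcal{O}_E} \oeel{e}$. The Frobenius compatibility $\iota_r \circ \phi_\mathscr{P} = (b \times \frob_S) \circ \iota_r$ is exactly the condition needed for $\iota_r$ to descend, under the above equivalence, to an isomorphism between $\mathcal{E}(\mathscr{P},\phi_\mathscr{P})$ on $X_S$ and the pullback of $\mathcal{E}_b$ along $S \to \spd(\ell)$. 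This simultaneously shows that the underlying Newton class is $[b]$ (so the map lands in $\shtgmub$) and supplies the 2-morphism realising the compatibility in the fiber product.

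Conversely, a point of the fiber product over $S$ consists of an untilt $S^\sharp$ over $\oeel{e}$, a $\mu$-bounded $\mathcal{G}$-shtuka $(\mathscr{P},\phi_\mathscr{P})$ with leg at $S^\sharp$, a map $S \to \spd(\ell)$, and an isomorphism of $G$-bundles $\mathcal{E}(\mathscr{P},\phi_\mathscr{P}) \cong \mathcal{E}_b \times_{\spd(\ell)} S$. Applying the equivalence of \cite[Proposition 22.1.1]{ScholzeWeinsteinBerkeley} in reverse, this isomorphism corresponds to a $\phi$-equivariant trivialisation of $\mathscr{P}|_{\mathcal{Y}_{[r,\infty)}(S)}$ where on the trivial side the Frobenius acts by $b \cdot \frob_S$. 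Unwinding, this trivialisation is precisely a framing $\iota_r$ as in Definition \ref{Def:IntegralLocalShimuraVariety}, and the Frobenius equivariance translates into the required identity $\iota_r \circ \phi_\mathscr{P} = (b \times \frob_S) \circ \iota_r$.

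These two assignments are manifestly mutually inverse, so the lemma follows. There is essentially no serious obstacle: the main bookkeeping point is that the datum of $\iota_r$ in Definition \ref{Def:IntegralLocalShimuraVariety} is only well-defined up to enlarging $r$, but this identification is exactly the equivalence relation built into \cite[Proposition 22.1.1]{ScholzeWeinsteinBerkeley}. The only thing to keep in mind is the compatibility of the map $b \colon \spd(\ell) \to \bun_G^{[b]}$ with the actual bundle $\mathcal{E}_b$ on $X_{\spd(\ell)}$, which is supplied by \cite[Theorem 5.3]{AnschutzIsocrystal}.
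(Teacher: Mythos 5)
Your proposal is correct and is essentially the same argument the paper has in mind: the paper's proof is the single sentence ``This follows from unwinding the definition of the map $\mathrm{BL}^{\circ}$ and the definition of the sheaf $\mintgbmu{e}$,'' and you have spelled out exactly that unwinding, using \cite[Proposition 22.1.1]{ScholzeWeinsteinBerkeley} to translate between the framing $\iota_r$ (up to enlarging $r$) and an isomorphism of $G$-bundles on $X_S$, and \cite[Theorem 5.3]{AnschutzIsocrystal} to identify the target of $b$.
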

\begin{proof}
    This follows from unwinding the definition of the map $\mathrm{BL}^{\circ}:\shtgmu \to \bun_G$ and the definition of the sheaf $\mintgbmu{e}$.
\end{proof}
\begin{Rem} \label{Rem:BasePoint}
If 
\begin{align}
    b \in \bigcup_{w \in \admu} \mathcal{G}(W(\ell)) w \mathcal{G}(W(\ell)),
\end{align}
then $b$ defines an element $b \in \shtgmu(\spd(\ell))$ lifting $b \in  \bun_{G}(\spd(\ell))$, see \cite[Remark 4.2.3]{PappasRapoportShtukas}. The universal property of the fiber product diagram of Lemma \ref{Lem:LocalUniformisation} then gives us 
a tautological base point $x_0: \spd(\ell) \to \mintgbmu{e}$. 
\end{Rem}

\subsubsection{} We observe that it follows from the argument in \cite[Proposition 11.16]{ZhangThesis}\footnote{Note that although \cite[Proposition 11.16]{ZhangThesis} assumes $\calg$ being reductive, the reference to \cite{ScholzeWeinsteinBerkeley} cited in \textit{loc. cit.} only assumes that $\calg$ is smooth and has connected special fiber, so the argument works verbatim.} that there is an isomorphism (here the sheaf $\underline{\calgcirc(\zp)}$ is as in \cite[the discussion before Definition 10.12]{EtCohDiam})
\begin{align} \label{Eq:GenericFibreShtukas}
    c:\mathrm{Sht}_{\calgcirc,\mu,E} \to \left[\mathrm{Gr}_{G,\mu^{-1}} / \underline{\calgcirc(\zp)}\right].
\end{align}
We generally do \emph{not} have an isomorphism as in \eqref{Eq:GenericFibreShtukas} for $\mathcal{G}$-shtukas, as we will explain below in Corollary \ref{Cor:QuasiParahoricShtukasGeneric}. 
\begin{Lem} \label{Lem:NewtonOverHodge}
The map
\begin{align}
    \operatorname{BL}^{\circ}:\shtgcircmu \to \bung
\end{align}
factors through $\bungmu$
\end{Lem}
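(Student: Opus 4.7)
The strategy is to reduce to a pointwise statement and then verify the two defining properties of $B(G,\mu^{-1})$: the Kottwitz condition $\kappa_G([b]) = -\mu^{\natural}$ and the Mazur inequality $\nu_b \leq \mu^{\diamond}$.

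First, recall that $\bun_{G,\mu^{-1}} \subset \bun_G$ is, by the definition in Section~\ref{Sec:BGMU}, the open substack obtained via \cite[Proposition 12.9]{EtCohDiam} as the preimage of $B(G,\mu^{-1}) \subset B(G) \cong |\bun_G|$. Hence a map $\mathscr{Y} \to \bun_G$ factors through $\bun_{G,\mu^{-1}}$ if and only if its image on underlying topological spaces lies in $B(G,\mu^{-1})$. Applied to $\mathscr{Y} = \shtgcircmu$, this reduces the lemma to a statement about the image of $|\shtgcircmu|$ inside $B(G)$. By v-surjectivity of geometric points, this can in turn be checked on maps $\spa(C,\mathcal{O}_C) \to \shtgcircmu$, where $C$ is an algebraically closed perfectoid field of characteristic $p$.

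At such a geometric point, let $(\mathscr{P},\phi_{\mathscr{P}})$ be the corresponding $\calgcirc$-shtuka with leg at some untilt $S^{\sharp}$. Since $\calgcirc$ is smooth with connected special fiber and the residue field is algebraically closed, Lang's theorem (applied after passing to $\spec W(\mathcal{O}_C)$ via the descent of $\calgcirc$-torsors on $S\bdtimes\spa(\zp)$ to $\spec W(\mathcal{O}_C)$) trivializes $\mathscr{P}$. Under this trivialization, $\phi_{\mathscr{P}}$ is given by left multiplication by an element $b$ that lies in a $\calgcirc(W(\mathcal{O}_C))$-double coset dictated by $\mu$ at the leg $S^{\sharp}$, and the class $[\mathcal{E}(\mathscr{P},\phi_{\mathscr{P}})] \in B(G)$ is the $\sigma$-conjugacy class $[b]$.

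It remains to verify the two conditions defining $B(G,\mu^{-1})$ for such a $b$. For the Kottwitz condition, observe that by the very construction of the parahoric group scheme $\calgcirc$ (as corresponding to $G(\qpbr)^{0}_{\mathcal{F}} = \ker \tilde{\kappa}_G \cap G(\qpbr)_{\mathcal{F}}$, see Section~\ref{Sec:Parahorics}), every element of $\calgcirc(W(\overline{\mathbb{F}}_p))$ lies in the kernel of $\tilde{\kappa}_G$; hence $\kappa_G([b])$ coincides with the image in $\pi_1(G)_{\Gamma_p}$ of the $\mu$-bounded modification at the leg, which is exactly $-\mu^{\natural}$. For the Mazur inequality $\nu_b \leq \mu^{\diamond}$, this is the standard Mazur-type bound for $p$-adic shtukas bounded by $\mu$, and I would invoke it as proved for parahoric shtukas (for instance in the references in \cite{PappasRapoportShtukas} and \cite{GleasonLimXu}).

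Combining the two conditions gives $[b] \in B(G,\mu^{-1})$, completing the proof. The main technical point to be careful about is the Kottwitz computation, where one must track signs and use that the relative position is bounded by (not strictly equal to) $\mu$; the key input is that the boundary components introduced by passing to the closure $\locmodgcircmuv$ all map to the single element $-\mu^{\natural} \in \pi_1(G)_{\Gamma_p}$, because the Kottwitz image of the admissible set is constant. The Mazur inequality is invoked from existing literature.
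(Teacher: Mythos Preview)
Your overall strategy---reduce to geometric points and verify the two defining conditions of $B(G,\mu^{-1})$---is a legitimate alternative to the paper's argument, but as written it has two genuine gaps.

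First, your trivialization step and the description of $\phi_{\mathscr{P}}$ as ``left multiplication by an element $b$ in a $\calgcirc(W(\mathcal{O}_C))$-double coset dictated by $\mu$'' is only valid when the untilt $S^{\sharp}$ has characteristic $p$ (so the leg sits at $p=0$ and the shtuka descends to a BKF module over $W(\mathcal{O}_C)$). When the untilt has characteristic zero, the leg lies in $\mathcal{Y}_{(0,\infty)}$, the element $b$ lives in the global sections of $G$ over $\mathcal{Y}_{[0,\infty)}\setminus S^{\sharp}$, and the $\sigma$-conjugacy class in $B(G)$ is read off near $\infty$, not from a double-coset condition at the leg. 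You need a separate argument here (compare Case~1 in the paper's proof of Proposition~\ref{Prop:NewtonOverHodge}, which uses that for parahoric $\calgcirc$ the element of $H^1(\zp,\calgcirc)$ coming from $\mathcal{Y}_{[0,r]}$ is trivial, together with \cite[Proposition~9]{RapoportAccessiblePeriodDomain}).

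Second, your Mazur-inequality step is not a proof: you gesture at ``references in \cite{PappasRapoportShtukas} and \cite{GleasonLimXu}'' without naming a result. For general parahoric level this inequality is exactly the content of \cite[Theorem~A]{He2} on nonemptiness of affine Deligne--Lusztig varieties; there is no softer route available in the literature. Once you cite He's theorem, you are essentially back to the paper's argument.

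The paper instead argues uniformly: by Lemma~\ref{Lem:LocalUniformisation} and the v-surjectivity of $\spd(\ovfp)\to\bun_G^{[b]}$, it suffices to show $\lvert\mintgcircbmu{e}\rvert=\varnothing$ whenever $[b]\notin B(G,\mu^{-1})$. By \cite[Theorem~3.3.3]{PappasRapoportShtukas} the reduction $(\mintgcircbmu{e})^{\mathrm{red}}$ is the affine Deligne--Lusztig variety $X_{\calgcirc}(b,\mu^{-1})$, which is empty for such $[b]$ by \cite[Theorem~A]{He2}; then \cite[Proposition~4.8.(4)]{GleasonSpecialization} forces $\lvert\mintgcircbmu{e}\rvert$ itself to be empty. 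This avoids the case split on the characteristic of the untilt and packages both the Kottwitz and Mazur conditions into the single nonemptiness criterion for ADLVs.
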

\begin{proof}
By definition, $\bungmu$ is the subfunctor of $\bung$ consisting of maps $X \to \bung$ for which $|X| \to |\bung| \cong B(G)$ factors over $\bgmu$, see Section \ref{Sec:BGMU}. It is therefore enough to show the factorization at the level of topological spaces. By Lemma \ref{Lem:LocalUniformisation} and the v-surjectivity of $b:\spd(\ovfp) \to \bun_{G}^{[b]}$, it suffices to show that $|\mintgcircbmu{e}|$ is empty unless $[b] \in \bgmu$. 

By \cite[Theorem 3.3.3]{PappasRapoportShtukas}, see \cite{GleasonComponents}, the reduction $\left(\mintgcircbmu{e}\right)^{\mathrm{red}}$ in the sense of \cite[Definition 3.12]{GleasonSpecialization} is isomorphic to the affine Deligne--Lusztig variety $X_{\calgcirc}(b, \mu^{-1})$ (see \cite[Definition 3.3.1]{PappasRapoportShtukas}). The space $X_{\calgcirc}(b, \mu^{-1})$ is empty unless $[b] \in \bgmu$ by \cite[Theorem A]{He2} and this along with \ \cite[Proposition 4.8.(4)]{GleasonSpecialization} implies that $|\mintgcircbmu{e}|$ is empty unless $[b] \in \bgmu$.
\end{proof}
Lemma \ref{Lem:NewtonOverHodge} will generally \emph{not} be true for $\shtgmu$, but we do have the following result: We recall from \cite[Theorem III.2.7]{FarguesScholze} that there is a locally constant map
\begin{align}
  \kappa_G:\lvert \bung \rvert \to \pi_1(G)_{\Gamma_p},
\end{align}
such that $\bungmu$ maps to $-\mu^{\natural}=\kappa_G(\mu^{-1})$. Here $\pi_1(G)$ is the algebraic fundamental group of $G$ and $\Gamma_p=\operatorname{Gal}(\qpbar/\qp)$. We let $\shtgmukappa \subset \shtgmu$ be the open and closed substack that is the preimage of $-\mu^\natural$ under $\kappa_G$.

\begin{Prop} \label{Prop:NewtonOverHodge}
  The map $\operatorname{BL}^{\circ}:\shtgmukappa \to \bung$ factors through $\bungmu$.
\end{Prop}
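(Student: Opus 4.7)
The plan is to reduce Proposition \ref{Prop:NewtonOverHodge} to the parahoric case established in Lemma \ref{Lem:NewtonOverHodge} by a twisting argument in the style of \cite[Section 4]{PappasRapoportRZSpaces}. Since $\bungmu \subset \bung$ is open, the desired factorization is a condition on the underlying map of topological spaces, so it suffices to check it on geometric points of $\shtgmukappa$.

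For this reduction to points, I would apply Lemma \ref{Lem:LocalUniformisation} with $\mathcal{G}$ in place of $\calgcirc$: for any $[b] \in B(G)$ in the image of $|\shtgmukappa|$, any geometric point of the corresponding Newton stratum lifts to $\mintgbmu{e}$ for a suitable representative $b \in G(W(\ovfp)[1/p])$. Thus the proposition reduces to showing: if $\mintgbmu{e}$ admits a geometric point and $\kappa_G([b]) = -\mu^\natural$, then $[b] \in B(G, \mu^{-1})$.

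For the twisting step, I would use the notation of \S\ref{Subsub:DeltaNotation}. V-locally the $\mathcal{G}$-torsor underlying a shtuka admits a reduction to $\calgcirc$, and the obstruction to doing this globally is measured by a class in $\pi_0(\mathcal{G})_\phi$. The key computation is that the Kottwitz condition $\kappa_G([b]) = -\mu^\natural$ forces this obstruction class to lie in the subgroup $\Pi_\mathcal{G} = \ker(\pi_0(\mathcal{G})_\phi \to \pi_1(G)_{\Gamma_p})$. For such a class $\delta \in \Pi_\mathcal{G}$, I would choose a lift $\dot\gamma \in N(\breve{\mathbb{Q}}_p)$ with $\dot\delta = \sigma(\dot\gamma)^{-1}\dot\gamma$ as in \cite[Lemma~4.3.1]{PappasRapoportRZSpaces}, set $b_\delta = \dot\gamma^{-1} b \sigma(\dot\gamma)$, and invoke the isomorphism \eqref{Eq:local-model-comparison} of v-sheaf local models to produce a non-empty v-cover
\[
\mathcal{M}^{\mathrm{int}}_{\calgdeltacirc, b_\delta, \mu, e} \to \mintgbmu{e}.
\]
Since $\calgdeltacirc$ is parahoric, Lemma \ref{Lem:NewtonOverHodge} applied to $\calgdeltacirc$ gives $[b_\delta] \in B(G, \mu^{-1})$; and since $\dot\gamma \in G(\breve{\mathbb{Q}}_p)$, we have $[b] = [b_\delta]$, concluding the argument.

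The main obstacle is the twisting construction of the third paragraph: verifying that the Kottwitz condition is exactly what is needed to produce a compatible $\dot\gamma$, and that after twisting, the resulting shtuka is genuinely a $\calgdeltacirc$-shtuka bounded by $\mu$ (not merely by a Weyl-translate). This step is essentially a precursor to the decomposition Theorem \ref{Thm:IntroShtukas}, and it relies on \eqref{Eq:local-model-comparison} to identify boundedness conditions for $\mathcal{G}$- and $\calgcirc$-shtukas together with the structure theory of \S\ref{Subsub:DeltaNotation}.
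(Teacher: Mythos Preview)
Your strategy matches the paper's argument when the leg lies in characteristic $p$ (the paper's Case~2): there one trivializes the $\mathcal{G}$-torsor over $\spec W(\mathcal{O}_C)$, writes the Frobenius as an element $b \in \calgcirc(W(C)) \admu \mathcal{G}(W(C))$ via \eqref{Eq:local-model-comparison}, checks that $\beta := -\tilde{\kappa}_G(b) - [\mu]$ lies in $\pi_0(\mathcal{G})$ and maps to zero in $\pi_1(G)_{\Gamma_p}$, twists by a suitable $\dot\gamma$ to land in $\calgdeltacirc(W(C)) \admu \calgdeltacirc(W(C))$, and invokes \cite[Theorem~A]{He2}. This is exactly your twisting step, with the ``obstruction class'' made concrete as the image of $\beta$ in $\pi_0(\mathcal{G})_\phi$. (A minor point: your $b_\delta = \dot\gamma^{-1} b \sigma(\dot\gamma)$ is the inverse $\sigma$-conjugation to the paper's $b' = \dot\gamma b \phi(\dot\gamma)^{-1}$; check which direction actually lands you in $\calgdelta$ rather than $\mathcal{G}_{\delta^{-1}}$.)

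The gap is the case where the leg lies at a characteristic-zero untilt. Here one cannot directly trivialize the shtuka over $W(\mathcal{O}_C)$ and read off a double-coset representative, so your ``obstruction in $\pi_0(\mathcal{G})_\phi$'' has no obvious definition, and you have not shown that the map $\mathcal{M}^{\mathrm{int}}_{\calgdeltacirc, b_\delta, \mu, e} \to \mintgbmu{e}$ hits the given point. The paper treats this case by a genuinely different argument: restricting the shtuka to $\mathcal{Y}_{[0,r]}(C,\mathcal{O}_C)$ for small $r$ (where there is no leg) yields via \cite[Theorem~8.5.3]{KedlayaLiu} a class in $H_\mathrm{et}^1(\zp, \mathcal{G})$ whose image in $B(G)_\mathrm{bsc}$ is some $[b_0]$; the hypothesis $\kappa_G([b]) = -\mu^\natural$ forces $\kappa([b_0]) = 0$, hence $[b_0]$ is trivial since it is basic, and then $[b] \in \bgmu$ by \cite[Proposition~9]{RapoportAccessiblePeriodDomain}. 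You could instead try to reduce to characteristic $p$ by specializing inside $\mintgbmu{e}$, but that requires a surjective specialization map for quasi-parahoric $\mathcal{G}$, which you have not supplied.
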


In the proof, we use the notation from Section~\ref{Subsub:DeltaNotation}.

\begin{proof}
  Since $\bungmu$ is an open substack of $\bung$, it is enough to check that for any map
  $\spa(C, C^+) \to \shtgmukappa$ with $C$ an algebraically closed
  perfectoid field, the induced map $\lvert \spa(C, C^+) \rvert \to B(G)$ on topological spaces has
  image contained in $\bgmu$. Using \cite[Proposition 2.1.1]{PappasRapoportShtukas}, we see that the restriction map
  \begin{align}
      \bun_G(\spa(C,C^+)) \to \bun_G(\spa(C, \mathcal{O}_C))
  \end{align}
  is an equivalence, and thus we may assume that  $C^+ = \mathcal{O}_C$. 
  Recall that $\shtgmu$ has a map to $\spd(\zp)$. We will verify the statement by dividing into the case when $C^\sharp$ has characteristic zero and when $C^\sharp = C$ has characteristic $p$. \smallskip

\noindent \textbf{Case 1:} First assume that $C^\sharp$ has characteristic zero, and let $(\mathscr{P},\phi_\mathscr{P})$ be a $\mathcal{G}$-shtuka with leg at $C^\sharp$ bounded by
  $\mu$. For sufficiently small $r$, the restriction
  $\mathscr{P} \vert_{\mathcal{Y}_{[0,r]}(C, \mathcal{O}_C)}$ defines a shtuka
  with no leg, and by \cite[Theorem~8.5.3]{KedlayaLiu}, this corresponds to an
  exact tensor functor $\mathsf{Rep}_{\zp}(\mathcal{G}) \to \mathsf{Mod}_{\zp}$
  which gives an element of $H_\mathrm{\acute{e}t}^1(\zp, \mathcal{G})$ by the
  Tannakian interpretation of torsors. Under the map (where $ B(G)_\mathrm{bsc}
  \subset B(G)$ denotes the subset of basic elements)
  \[
    H_\mathrm{\acute{e}t}^1(\zp, \mathcal{G}) \to H_\mathrm{\acute{e}t}^1(\qp,
    G) \hookrightarrow B(G)_\mathrm{bsc},
  \]
  this determines the isomorphism class $[b_0] \in B(G)_\mathrm{bsc}$ of the
  $C$-point of the $G$-bundle on $\mathcal{X}_{(C, \mathcal{O}_C)}$ coming from
  $\mathscr{P} \vert_{\mathcal{Y}_{(0,r]}(C,\mathcal{O}_C)}$, see \cite[Section~22.3]{ScholzeWeinsteinBerkeley}.
  This is because it corresponds to the exact tensor functor given by the
  composition $\mathsf{Rep}_{\zp}(\mathcal{G}) \to \mathsf{Mod}_{\zp} \to
  \mathsf{Mod}_{\qp} \to \mathsf{Vect}(\mathcal{X}_{(C, \mathcal{O}_C)})$, see
  the proof of \cite[Proposition~22.6.1]{ScholzeWeinsteinBerkeley}.
  The $G$-bundle $\mathcal{E}(\mathscr{P},\phi_\mathscr{P})$ comes from restricting to $\mathcal{Y}_{[r, \infty)}(C,\mathcal{O}_C)$ for $r \gg 0$, let us denote its isomorphism class by $[b] \in B(G)$. Then $\kappa([b_0]) - \kappa([b]) =
  -\mu^\natural$ as they are related by a modification bounded by $\mu$, see the proof of \cite[Proposition~25.3.2]{ScholzeWeinsteinBerkeley}. Since $(\mathscr{P},\phi_{\mathscr{P}})$ is assumed to be in $\shtgmukappa$, it follows
  that $\kappa([b_0]) = 0$. Moreover, since $[b_0]$ is basic, we see that $[b_0] = 0$.
  Therefore $[b] \in \bgmu$ by
  \cite[Proposition~9]{RapoportAccessiblePeriodDomain}. \smallskip

\noindent \textbf{Case 2:} Next, we consider the case where $C^\sharp = C$ has characteristic $p$. As before, let $(\mathscr{P}, \phi_\mathscr{P})$ be a $\mathcal{G}$-shtuka with leg at $C$ bounded by $\mu$. Using \cite[Proposition~2.1.3]{PappasRapoportShtukas} and \cite[Proposition~3.2.1]{PappasRapoportRZSpaces}, we may find a $\mathcal{G}$-torsor $\mathcal{P}$ on $\spec(W(\mathcal{O}_C))$ together with a
  meromorphic map $\phi_\mathcal{P} \colon \mathcal{P} \to \phi^\ast \mathcal{P}$
  that analytifies to $(\mathscr{P}, \phi_\mathscr{P})$. In particular, we may
  recover $[b] \in B(G)$ also by considering the isomorphism class of the
  $G$-isocrystal $(\mathcal{P}_{W(C)[p^{-1}]},
  \phi_{\mathcal{P}_{W(C)[p^{-1}]}})$.
  
  Fix a trivialization of $\mathcal{P} \cong \mathcal{G}_{W(\mathcal{O}_C)}$, so that the Frobenius $\phi_{\mathcal{P}_{W(C)[p^{-1}]}}$ gives us an element
  $b \in G(W(\mathcal{O}_C)[p^{-1}])$. The boundedness by $\mu$ condition now
  tells us that
  \[
    b \in \calgcirc(W(C)) \admu
    \mathcal{G}(W(C)).
  \]
Indeed, by definition the element $b$ lies in
\begin{align} \label{Eq:LocalModel}
     \locmodgmuv(C, \mathcal{O}_C) \subset \mathrm{Gr}_{\mathcal{G}}(C, \mathcal{O}_C) = G(W(C)[1/p])/\mathcal{G}(W(C)).
\end{align}
By \cite[Theorem~6.16]{AGLR}, we may identify 
\begin{align}
    \locmodgcircmuv(C, \mathcal{O}_C) \subset G(W(C)[1/p])/\calgcirc(W(C))
\end{align}
with 
\begin{align}
     \calgcirc(W(C)) \admu
    \calgcirc(W(C)) \subset G(W(C)[1/p])/\calgcirc(W(C)).
\end{align}
Since the natural map $\mathrm{Gr}_{\calgcirc} \to \mathrm{Gr}_{\mathcal{G}}$ induces an isomorphism 
\begin{align}
    \locmodgcircmuv \to \mathbb{M}_{\calg,\mu}^{\mathrm{v}},
\end{align}
we may identify \eqref{Eq:LocalModel} with
\begin{align}
    \calgcirc(W(C)) \admu
    \calg(W(C)) \subset  G(W(C)[1/p])/\mathcal{G}(W(C)).
\end{align}
We fix an embedding $\fpbar \hookrightarrow C$, so that we have a group homomorphism $G(\qpbr) \to G(W(C)[p^{-1}])$. We want to show that the class $[b] \in B(G)$, regarded as a $\phi$-conjugacy class in $G(W(C)[p^{-1}])$, is in $\bgmu$. Recall our assumption that
  $\kappa_G(b) = -\mu^\natural \in \pi_1(G)_{\Gamma_p}$. This means that
  \[
    \beta = -\tilde{\kappa}_G(b) - [\mu] \in \pi_0(\mathcal{G}) \subseteq
    \pi_1(G)_{I_p}
  \]
  can be written as $\beta = (1 - \phi) \gamma$ for some $\gamma \in
  \pi_1(G)_{I_p}$. 

  Choose a splitting of $\tilde{W} \to \pi_1(G)_{I_p}$ corresponding to a $\sigma$-stable alcove $\mathfrak{a}$ of $\mathcal{B}(G,\qpbr)$ with $\mathcal{F} \subset \mathfrak{a}$, as in Section \ref{Sec:Alcove}.\footnote{Here $\mathcal{F}$ is as in Section \ref{Subsub:DeltaNotation}.} By \cite[Lemma~4.3.1]{PappasRapoportRZSpaces}, there is a lift
  $\dot{\gamma}$ of $\gamma$ to $N(\qpbr)$ such that $\phi(\dot{\gamma})^{-1}
  \dot{\gamma} \in \mathcal{G}(\zpbr)$. We now have
  \begin{align*}
    b^\prime := \dot{\gamma} b \phi(\dot{\gamma})^{-1} = \dot{\gamma} (b
    \phi(\dot{\gamma})^{-1} \dot{\gamma}) \dot{\gamma}^{-1} &\in \dot{\gamma}
    \mathcal{G}(W(C)) \admu \mathcal{G}(W(C))
    \dot{\gamma}^{-1}. 
    \end{align*}
    As in the proof of
  \cite[Proposition~4.3.4]{PappasRapoportRZSpaces}, we may identify 
    \begin{align*}
    \dot{\gamma}\mathcal{G}(W(C)) \admu \mathcal{G}(W(C))
    \dot{\gamma}^{-1} &= \mathcal{G}_\delta(W(C))
    \admu \mathcal{G}_\delta(W(C)),
  \end{align*}
  where $\delta \in \Pi_G$ is the image of $\beta$ under $\pi_0(\mathcal{G}) \to
  \pi_0(\mathcal{G})_\phi$, so that $\mathcal{G}_\delta(W(C)) = \dot{\gamma}
  \mathcal{G}(W(C)) \dot{\gamma}^{-1}$. We then have
  \begin{align}
    \tilde{\kappa}_G(b^\prime) &= (1-\phi)\gamma + \tilde{\kappa}_G(b) \\
    &= (1 - \phi) \gamma - [\mu] - (1 - \phi)\gamma \\
    &=-[\mu] \in \pi_1(G)_{I_p}.
  \end{align}
  Moreover, as $b^\prime$ is a $\phi$-conjugate of $b$, it suffices to show that
  $[b^\prime] \in \bgmu$. By evaluating the isomorphism
  $\mathbb{M}_{\mathcal{G}_\delta^\circ,\mu}^\mathrm{v} \to
  \mathbb{M}_{\mathcal{G}_\delta,\mu}^\mathrm{v}$ on $(C,
  \mathcal{O}_C)$-points, we obtain the equality (as above)
  \[
    \mathcal{G}_\delta(W(C)) \admu
    \mathcal{G}_\delta(W(C)) = \mathcal{G}_\delta^\circ(W(C))
    \admu \mathcal{G}_\delta(W(C)).
  \]
  Thus we can write $b^\prime = hwg$ with $h \in
  \mathcal{G}_\delta^\circ(W(C))$, $w \in \admu$, and $g
  \in \mathcal{G}_\delta(W(C))$. By applying $\tilde{\kappa}_G$ on both sides,
  we obtain
  \[
    -[\mu] = \tilde{\kappa}_G(b^\prime) = \tilde{\kappa}_G(h) +
    \tilde{\kappa}_G(w) + \tilde{\kappa}_G(g) = -[\mu] + \tilde{\kappa}_G(g) \in
    \pi_1(G)_{I_p}
  \]
  This shows that $\tilde{\kappa}_G(g) = 0$, and hence
  \[
    g \in \mathcal{G}_\delta(W(C)) \cap \ker \tilde{\kappa}_G =
    \mathcal{G}_\delta^\circ(W(C)).
  \]
  It now follows that 
  \[
    b^\prime \in \mathcal{G}_\delta^\circ(W(C)) \admu
    \mathcal{G}_\delta^\circ(W(C))
  \]
  and thus by \cite[Theorem~A]{He2} that $[b] = [b^\prime] \in \bgmu$.
\end{proof}

\subsection{A group action on the moduli stack of parahoric shtukas}

Let $\mathcal{G}/\zp$ be a quasi-parahoric group scheme as before. The goal of
this section is to construct an action of $\pi_0(\mathcal{G})^\phi$ on
$\shtgcircmu$ together with a map $[\shtgcircmu / \pi_0(\mathcal{G})^\phi] \to
\shtgmu$.

\subsubsection{} Recall, e.g., from \cite[Section~4.4]{PappasRapoportRZSpaces},
that there is a short exact sequence
\[
  1 \to \mathcal{G}^\circ(\zp) \to \mathcal{G}(\zp) \to \pi_0(\mathcal{G})^\phi
  \to 1.
\]
For an element $g$ of $\pi_0(\mathcal{G})^\phi$ and a representation $(\Lambda,
\rho \colon \mathcal{G}^\circ \to \mathrm{GL}(\Lambda))$
in $\mathsf{Rep}_{\zp}(\mathcal{G}^\circ)$, we define the lattice
\[
  g \Lambda = \rho(\tilde{g}) \Lambda \subseteq \Lambda \otimes_{\zp} \qp,
\]
where $\tilde{g} \in \mathcal{G}(\zp) \subseteq G(\qp)$ is a lift of $g$. Note
that this does not depend on the choice of lift $\tilde{g}$ because $\Lambda$ is
stable under $\mathcal{G}^\circ(\zp)$.

\begin{Lem}
  The group homomorphism $\rho_{\qp} \colon G \to \mathrm{GL}_{\qp}(\Lambda
  \otimes_{\zp} \qp)$ extends (uniquely) to a homomorphism
  $g\rho:\mathcal{G}^\circ \to \mathrm{GL}_{\zp}(g\Lambda)$.
\end{Lem}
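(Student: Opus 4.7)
The plan is to handle uniqueness by flatness and then construct the extension explicitly by conjugating $\rho$ by a chosen lift $\tilde{g} \in \mathcal{G}(\zp)$ of $g$.

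For uniqueness, I would observe that $\mathcal{G}^\circ$ is smooth, hence flat, over $\zp$ with schematically dense generic fiber $G$; since $\mathrm{GL}_{\zp}(g\Lambda)$ is separated over $\zp$, any two $\zp$-morphisms $\mathcal{G}^\circ \rightrightarrows \mathrm{GL}_{\zp}(g\Lambda)$ that agree on $G$ necessarily coincide. For existence, I would fix any lift $\tilde{g} \in \mathcal{G}(\zp)$ of $g$ and set
\begin{equation*}
    g\rho := \mathrm{Int}(\rho(\tilde{g})) \circ \rho \circ \mathrm{Int}(\tilde{g}^{-1}).
\end{equation*}
This makes sense provided that (i) $\mathrm{Int}(\tilde{g}^{-1})$ restricts to an automorphism of $\mathcal{G}^\circ$, and (ii) conjugation by $\rho(\tilde{g})$ carries the $\zp$-subgroup scheme $\mathrm{GL}_{\zp}(\Lambda) \subset \mathrm{GL}_{\qp}(\Lambda \otimes \qp)$ onto $\mathrm{GL}_{\zp}(g\Lambda)$. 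Point (ii) is immediate from the defining equality $g\Lambda = \rho(\tilde{g})\Lambda$. For point (i), I would invoke the normality of $\mathcal{G}^\circ$ in $\mathcal{G}$ as $\zp$-group schemes, which reduces by a fibrewise check on the two-point scheme $\spec(\zp)$ (using flatness of $\mathcal{G}\times_{\zp}\mathcal{G}^\circ$) to the trivial observation $\mathcal{G}^\circ_{\qp} = \mathcal{G}_{\qp}$ on the generic fiber and the fact that $\mathcal{G}^\circ_{\fp}$ is the identity component of $\mathcal{G}_{\fp}$, and thus characteristic, on the special fiber. A direct computation on generic-fiber points, using that $\rho_{\qp}$ is a group homomorphism, then confirms that $g\rho$ restricts to $\rho_{\qp}$ on $G$.

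Independence of the choice of lift $\tilde{g}$ is then forced by the uniqueness statement, which justifies the notation $g\rho$; alternatively, two lifts of $g$ differ by an element of $\mathcal{G}^\circ(\zp)$, under which $\Lambda$ is preserved and the formula is visibly unchanged. I do not anticipate a genuine obstacle here: the main point of substance is the normality of $\mathcal{G}^\circ$ in $\mathcal{G}$ over $\zp$, which is handled by the fibrewise argument above; everything else is bookkeeping with the construction $\mathrm{Int}(\rho(\tilde{g})) \circ \rho \circ \mathrm{Int}(\tilde{g}^{-1})$.
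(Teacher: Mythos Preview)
Your proof is correct and shares the essential idea with the paper's argument---both hinge on the normality of $\mathcal{G}^\circ$ inside $\mathcal{G}$ and on conjugation by a lift $\tilde{g}$---but the technical packaging differs. The paper invokes \cite[Corollary~2.10.10]{KalethaPrasad}, which says that a homomorphism between the generic fibers of smooth affine integral models extends integrally as soon as it sends $\zpbr$-points to $\zpbr$-points; the check on $\zpbr$-points is then immediate from normality of $\mathcal{G}^\circ(\zpbr)$ in $\mathcal{G}(\zpbr)$ as abstract groups. You instead write down the extension directly as the scheme-theoretic composition $\mathrm{Int}(\rho(\tilde{g})) \circ \rho \circ \mathrm{Int}(\tilde{g}^{-1})$, which requires the slightly stronger input that $\mathcal{G}^\circ$ is stable under conjugation by $\tilde{g}$ as a $\zp$-subgroup scheme (not just on points); your fibrewise argument for this is fine, and in fact one can simply note that the relative identity component of a smooth group scheme is open and hence determined by its fibers, which are characteristic. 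Your approach is more self-contained and avoids the external reference; the paper's is shorter once the reference is granted.
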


\begin{proof}
  Both $\mathcal{G}^\circ$ and $\mathrm{GL}_{\zp}(g\Lambda)$ are smooth
  affine integral models of their respective generic fibers. Therefore by \cite[Corollary~2.10.10]{KalethaPrasad}, it
  suffices to show that the image of $\mathcal{G}^\circ(\zpbr)$ is contained in
  $\mathrm{GL}_{\zpbr}(g\Lambda \otimes_{\zp} \zpbr)$. Since
  $\mathcal{G}^\circ(\zpbr) \subseteq \mathcal{G}(\zpbr)$ is normal, it is in
  particular stable under conjugation by $\tilde{g} \in \mathcal{G}(\mathbb{Z}_
  p)$ a lift of $g$. The claim now follows, as $\Lambda \otimes_{\zp} \zpbr$ is
  stable under the action of $\mathcal{G}^\circ(\zpbr)$.
\end{proof}

We see that $\rho \mapsto g \rho$ defines a $\pi_0(\mathcal{G})^\phi$-action on
the category $\mathsf{Rep}_{\zp}(\mathcal{G}^{\circ})$ by exact tensor
equivalences. Moreover, for any representation $(\Lambda, \rho)$ in the image of
the forgetful (exact tensor) functor $\mathsf{Rep}_{\zp}(\mathcal{G}) \to
\mathsf{Rep}_{\zp}(\mathcal{G}^\circ)$, we have $\Lambda = g\Lambda$ and hence
$\rho = g\rho$.

\subsubsection{} Using the Tannakian formalism, this defines a
$\pi_0(\mathcal{G})^\phi$-action on the groupoid of $\mathcal{G}^\circ$-torsors
on $\mathcal{Y}_{[0,\infty)}(S)$ for $S \in \perf$. More precisely, for each
$\mathcal{G}^\circ$-torsor $\mathscr{P}$ on $Y_{[0,\infty)}(S)$ and an element
$g \in \pi_0(\mathcal{G})^\phi$, there is a $\mathcal{G}^\circ$-torsor
$g\mathscr{P}$ characterized by the property that
\[
  \mathcal{G}^\circ \backslash ((g\mathscr{P}) \times \Lambda) =
  \mathcal{G}^\circ \backslash (\mathscr{P} \times g^{-1} \Lambda).
\]
By construction, if we push out to
$\mathcal{G}$, we see that there is a canonical isomorphism
\[
  \mathcal{G} \times^{\mathcal{G}^\circ} \mathscr{P} \cong \mathcal{G}
  \times^{\mathcal{G}^\circ} (g\mathscr{P}).
\]
We also see that there is a canonical meromorphic homomorphism
\[ \begin{tikzcd}
  \mathscr{P} \arrow[r, dashed, "g"] & g\mathscr{P}
\end{tikzcd} \]
that is an isomorphism away from $S = V(p) \hookrightarrow
\mathcal{Y}_{[0,\infty)}(S)$, coming from the fact that $\Lambda \otimes_{\zp}
\qp = g^{-1}\Lambda \otimes_{\zp} \qp$ by construction of $g^{-1}\Lambda$.

\begin{Rem} \label{Rem:ComparisonWithPR} The $\pi_0(\mathcal{G})^\phi$-action we construct is in fact equivalent to the
  one given in \cite[Section~4.4]{PappasRapoportRZSpaces}. If we choose a lift
  $\tilde{g} \in \mathcal{G}(\zp)$ and twist the $\mathcal{G}^\circ(\zp)$-torsor
  structure to get a new torsor $\mathscr{P}_{\tilde{g}}$, then $\tilde{g}^{-1}$ induces
  an isomorphism
  \begin{align*}
    \mathcal{G}^\circ \backslash (\mathscr{P}_{\tilde{g}} \times \Lambda) &= (\mathscr{P} \times
    \Lambda) / \left((\tilde{g}^{-1} h \tilde{g} x, h y) \sim (x, y)\right) \\
    &\xrightarrow{(\mathrm{id}, \tilde{g}^{-1})} (\mathscr{P} \times \tilde{g}^{-1}
    \Lambda) / \left((\tilde{g}^{-1} h \tilde{g} x, \tilde{g}^{-1} hy) \sim (x, \tilde{g}^{-1}
    y)\right) \\ &= (\mathscr{P} \times \tilde{g}^{-1} \Lambda) / \left((h^\prime x, h^\prime
    y^\prime) \sim (x, y^\prime)\right) = \mathcal{G}^\circ \backslash
    (\mathscr{P} \times \tilde{g}^{-1} \Lambda)
  \end{align*}
  by substituting $h^\prime = \tilde{g}^{-1} h \tilde{g}$ and $y^\prime = \tilde{g}^{-1}
  y$.
\end{Rem}

\subsubsection{} We now use this action to define an action of
$\pi_0(\mathcal{G})^\phi$ on $\shtgcircmu$. For $S \in \perf$ and a
$\mathcal{G}^\circ$-torsor $\mathscr{P}$ on $S \bdtimes \spa(\zp)$, first note
that $g(\frob_S^\ast \mathscr{P}) = \frob_S^\ast (g\mathscr{P})$, as both
correspond to the exact tensor functor
\[
  \mathsf{Rep}(\mathcal{G}^\circ) \xrightarrow{g^{-1}}
  \mathsf{Rep}(\mathcal{G}^\circ) \to \mathsf{Vect}(S \bdtimes \spa(\zp))
  \xrightarrow{\frob_S^\ast} \mathsf{Vect}(S \bdtimes \spa(\zp)).
\]
Given $(\mathscr{P}, \phi_\mathscr{P})$ a $\mathcal{G}^\circ$-shtuka, we now
define $\phi_{g\mathscr{P}}$ to be the meromorphic map
\begin{center}
    \begin{tikzcd}
        \frob_S^\ast (g\mathscr{P}) = g(\frob_S^\ast \mathscr{P}) 
            \arrow[r, dashed, "g^{-1}"]
        & \frob_S^\ast\mathscr{P} 
            \arrow[r, dashed, "\phi_\mathscr{P}"]
        & \mathscr{P}
            \arrow[r, dashed, "g"]
        & g\mathscr{P}.
    \end{tikzcd}
\end{center}

\begin{Prop} \label{Prop:ActionExist}
  Let $S \in \perf$ and let $(\mathscr{P}, \phi_\mathscr{P}) \in \shtgcircmu(S)$
  be a $\mathcal{G}^\circ$-shtuka. Then $(g\mathscr{P}, \phi_{g\mathscr{P}})$
  defines an object of $\shtgcircmu(S)$.
\end{Prop}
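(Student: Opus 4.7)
The plan is to verify that $(g\mathscr{P}, \phi_{g\mathscr{P}})$ satisfies both conditions to lie in $\shtgcircmu(S)$: the composition defining $\phi_{g\mathscr{P}}$ must be a meromorphic isomorphism of $\mathcal{G}^\circ$-torsors with leg only at $S^\sharp$ (a priori, the factors $g$ and $g^{-1}$ could contribute an extra leg along the Cartier divisor $S = \{p=0\} \subset S \bdtimes \spa(\zp)$), and the relative position at $S^\sharp$ must be bounded by $\mu$.

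The key idea is to pass to the $\mathcal{G}$-torsor $\mathcal{Q} := \mathcal{G} \times^{\mathcal{G}^\circ} \mathscr{P}$. Under the canonical identification $\mathcal{Q} \cong \mathcal{G} \times^{\mathcal{G}^\circ} (g\mathscr{P})$ recalled from the preceding discussion, which exists because $\pi_0(\mathcal{G})^\phi$ acts trivially on representations pulled back from $\mathsf{Rep}_{\zp}(\mathcal{G})$, the $\mathcal{G}$-pushouts of the meromorphic maps $g$ and $g^{-1}$ are the identity. Consequently the $\mathcal{G}$-pushout of $\phi_{g\mathscr{P}}$ equals the pushout $\phi_\mathcal{Q}$ of $\phi_\mathscr{P}$, which is a meromorphic isomorphism of $\mathcal{G}$-torsors with leg precisely at $S^\sharp$.

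It remains to show that on $(S \bdtimes \spa(\zp)) \setminus S^\sharp$ the genuine $\mathcal{G}$-isomorphism $\phi_\mathcal{Q}$ carries the reduction of structure $\frob_S^\ast (g\mathscr{P})$ onto $g\mathscr{P}$. For this I view $\mathscr{P}$ and $g\mathscr{P}$ as sections $s_1$ and $s_2 = g \cdot s_1$ of the finite \'etale $\pi_0(\mathcal{G})$-torsor $T := \mathcal{Q}/\mathcal{G}^\circ$ on $S \bdtimes \spa(\zp)$. The isomorphism $\phi_\mathcal{Q}$ descends to a $\pi_0(\mathcal{G})$-equivariant isomorphism $\phi_T \colon \frob_S^\ast T \to T$ on the complement of $S^\sharp$, which extends uniquely because $\pi_0(\mathcal{G})$ is finite \'etale. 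The shtuka structure on $\mathscr{P}$ says exactly that $\phi_T(\frob_S^\ast s_1) = s_1$. Since $g \in \pi_0(\mathcal{G})^\phi$ is $\phi$-fixed, it defines a section of $\pi_0(\mathcal{G})$ over $S \bdtimes \spa(\zp)$ with $\frob_S^\ast g = g$, and by $\pi_0(\mathcal{G})$-equivariance of $\phi_T$ we compute
\[
\phi_T(\frob_S^\ast s_2) = \phi_T(\frob_S^\ast (g \cdot s_1)) = g \cdot \phi_T(\frob_S^\ast s_1) = g \cdot s_1 = s_2,
\]
as required.

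Finally, boundedness by $\mu$ follows from the boundedness of $(\mathscr{P}, \phi_\mathscr{P})$ together with the identification $\locmodgcircmuv \xrightarrow{\sim} \locmodgmuv$ of \eqref{Eq:local-model-comparison}: the relative position of $\phi_{g\mathscr{P}}$ maps under $\operatorname{Gr}_{\mathcal{G}^\circ} \to \operatorname{Gr}_\mathcal{G}$ to the relative position of $\phi_\mathcal{Q}$, which equals that of the $\mathcal{G}$-pushout of $\phi_\mathscr{P}$ and therefore lies in $\locmodgmuv$. The main obstacle is the $\phi$-equivariance step in the computation above, which is precisely where the hypothesis $g \in \pi_0(\mathcal{G})^\phi$ is essential: if $g$ were merely in $\pi_0(\mathcal{G})$, one would obtain $\phi(g) \cdot s_1$ instead of $s_2$, and the composition would fail to descend to an isomorphism of $\mathcal{G}^\circ$-torsors.
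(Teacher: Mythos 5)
Your argument for showing $\phi_{g\mathscr{P}}$ has a pole only along $S^\sharp$ is valid, though it takes a different route from the paper's: you pass to the $\mathcal{G}$-pushout $\phi_\mathcal{Q}$ and then use the torsor of $\mathcal{G}^\circ$-reductions (essentially the same device deployed later in Proposition~\ref{Prop:KisinPappasConjecture}), whereas the paper checks the statement directly on the Tannakian level, observing that for each $\Lambda \in \mathsf{Rep}_{\zp}(\mathcal{G}^\circ)$ the induced map on vector bundle shtukas is identified with $\phi_\mathscr{P}$ evaluated on $g^{-1}\Lambda$.

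The boundedness argument, however, has a genuine gap. You deduce that the relative position of $\phi_{g\mathscr{P}}$ lies in $\locmodgcircmuv$ from the fact that its image under $\operatorname{Gr}_{\mathcal{G}^\circ} \to \operatorname{Gr}_\mathcal{G}$ lies in $\locmodgmuv$, appealing to the isomorphism \eqref{Eq:local-model-comparison}. But that isomorphism says only that the map \emph{restricts} to a bijection $\locmodgcircmuv \to \locmodgmuv$; it does not say that $\locmodgcircmuv$ is the full \emph{preimage} of $\locmodgmuv$. On $(C,\mathcal{O}_C)$-points with $C$ of characteristic $p$, the map $\operatorname{Gr}_{\mathcal{G}^\circ} \to \operatorname{Gr}_\mathcal{G}$ is the projection $G(W(C)[1/p])/\mathcal{G}^\circ(W(\mathcal{O}_C)) \to G(W(C)[1/p])/\mathcal{G}(W(\mathcal{O}_C))$, whose fibers have cardinality $\lvert\pi_0(\mathcal{G})(\ovfp)\rvert$; the preimage of $\locmodgmuv(C,\mathcal{O}_C)$ therefore strictly contains $\locmodgcircmuv(C,\mathcal{O}_C)$ as soon as $\pi_0(\mathcal{G}) \neq 1$. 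Concretely, in the notation of Remark~\ref{Rem:ComparisonWithPR} the relative position of $\phi_{g\mathscr{P}}$ is the $\tilde{g}$-conjugate of that of $\phi_\mathscr{P}$; these two have the same image in $\operatorname{Gr}_\mathcal{G}$ but in general differ as points of $\operatorname{Gr}_{\mathcal{G}^\circ}$, so the image alone cannot detect boundedness. The paper instead proves boundedness by noting that $\tilde{g} \in \mathcal{G}(\zp)$ normalizes $\mathcal{G}^\circ$ and preserves the geometric conjugacy class of $\mu$, so $\locmodgcircmuv$ is stable under $\tilde{g}$-conjugation, and the relative position of $\phi_{g\mathscr{P}}$ therefore remains in $\locmodgcircmuv$; this is the step your argument is missing.
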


\begin{proof}
  We first check that $\phi_{g\mathscr{P}}$ only has poles at $R^\sharp$. This
  can be checked by considering the induced map on vector bundle shtukas for
  each $\Lambda \in \mathsf{Rep}_{\zp}(\mathcal{G}^\circ)$. We observe that the
  meromorphic map
  \begin{align*}
    \mathcal{G}^\circ \backslash (\frob_S^\ast \mathscr{P} \times g^{-1}
    \Lambda) &= \mathcal{G}^\circ \backslash (\frob_S^\ast (g\mathscr{P}) \times
    \Lambda) \dashrightarrow \mathcal{G}^\circ \backslash (\frob_S^\ast
    \mathscr{P} \times \Lambda) \\ &\dashrightarrow \mathcal{G}^\circ \backslash
    (\mathscr{P} \times \Lambda) \dashrightarrow \mathcal{G}^\circ \backslash
    (g\mathscr{P} \times \Lambda) = \mathcal{G}^\circ \backslash (\mathscr{P}
    \times g^{-1} \Lambda)
  \end{align*}
  is identified with $\phi_\mathscr{P} \times \mathrm{id}_{g^{-1}
  \Lambda}$ becasue the natural diagram obtained by pushing out
  $\phi_\mathscr{P} \colon \frob_S^\ast \mathscr{P} \dashrightarrow \mathscr{P}$
  along $g^{-1}\Lambda \dashrightarrow \Lambda$ commutes. The composition, which
  a priori is only an isomorphism away from $\spa(R^\sharp, R^{\sharp+}) \cup
  \spa(R, R^+)$, is therefore an isomorphism away from $\spa(R^\sharp,
  R^{\sharp+})$. Next, we need to show that the modification is bounded by
  $\mu$. Choosing a lift $\tilde{g} \in \mathcal{G}(\zp)$ of $g$ as in
  Remark~\ref{Rem:ComparisonWithPR}, this follows from the stability of
  $\mathbb{M}^{\mathrm{v}}_{\mathcal{G}^{\circ},\mu}$ under conjugation by
  $\tilde{g}$.
\end{proof}
We define the action of $\pi_0(\mathcal{G})^\phi$ on $\shtgcircmu$ by
\[
  g \colon (\mathscr{P}, \phi_\mathscr{P}) \mapsto (g\mathscr{P},
  \phi_{g\mathscr{P}}).
\]
Since we canonically have $\mathcal{G} \times^{\mathcal{G}^\circ} \mathscr{P} 
\cong \mathcal{G} \times^{\mathcal{G}^\circ} g\mathscr{P}$, the construction
$\mathscr{P} \mapsto \mathcal{G} \times^{\mathcal{G}^\circ} \mathscr{P}$
naturally induces a map
\[
  [\shtgcircmu / \pi_0(\mathcal{G})^\phi] \to \shtgmu.
\]

\subsection{Decomposition of the moduli stack of quasi-parahoric shtukas} \label{Sec:Decomposition} Our goal in this section is to prove Theorem \ref{Thm:IntroShtukas}. 
 We once again assume $\mathcal{G}/\zp$ is a quasi-parahoric group scheme with associated parahoric $\mathcal{G}^\circ$. As in \ref{Subsub:DeltaNotation}, we denote by $\Pi_\mathcal{G}$ the kernel of $H^1(\zp,\mathcal{G}) \to H^1(\qp,\mathcal{G})$. 
 
\subsubsection{} For each $\delta \in \Pi_{\mathcal{G}}$, fix a choice of
$\dot{\gamma} \in N(\qpbr)$ as in Section~\ref{Subsub:DeltaNotation}. We now construct maps
\[
  \shtgdeltamu \to \shtgmubr
\]
following \cite[Section~4.4]{PappasRapoportRZSpaces}. Suppose we are given a $\calgdelta$-shtuka
$(\mathscr{P}, \phi_\mathscr{P})$ over $\spa(R, R^+)$, where
the leg is over an untilt $\oebreve \to R^{\sharp+}$. Note that $R$ is
canonically an $\fpbar$-algebra, and hence $\spa(R, R^+) \bdtimes \spa(\zp)$
naturally lives over $\spa(\zpbr)$. There is an isomorphism of group schemes
\[
  \operatorname{Int} \dot{\gamma}^{-1} \colon \mathcal{G}_{\delta,\zpbr} \to
  \mathcal{G}_{\zpbr}; \quad g \mapsto \dot{\gamma}^{-1} g \dot{\gamma},
\]
and hence we can push out the $\mathcal{G}_\delta$-torsor $\mathscr{P}$ to a
$\mathcal{G}$-torsor
\[
  \mathscr{P}_{\dot{\gamma}} = \mathcal{G}_{\zpbr}
  \times^{\mathcal{G}_{\delta,\zpbr}} \mathscr{P}.
\]
This is equivalent to the description in \cite[Section 4.4]{PappasRapoportRZSpaces}, which is
phrased in terms of twisting the $\mathcal{G}$-action.

\subsubsection{} We now construct the Frobenius action on
$\mathscr{P}_{\dot{\gamma}}$. For $g \in \mathcal{G}_\delta(\zpbr)$ we have
\[
  \phi(\dot{\gamma}^{-1} g \dot{\gamma}) = \phi(\dot{\gamma}^{-1}) \phi(g)
  \phi(\dot{\gamma}) = \dot{\delta} (\dot{\gamma}^{-1} \phi(g) \dot{\gamma})
  \dot{\delta}^{-1},
\]
where $\dot{\delta} \coloneqq \phi(\dot{\gamma})^{-1} \dot{\gamma} \in
\mathcal{G}(\zpbr)$, and hence the diagram
\[ \begin{tikzcd}
  \frob_{\zpbr}^{\ast} \mathcal{G}_{\delta,\zpbr}
  \arrow{d}[']{\frob_{\zpbr}^{\ast} \operatorname{Int} \dot{\gamma}^{-1}}
  \arrow{rr}{\phi_{\calgdelta}} & & \mathcal{G}_{\delta,\zpbr}
  \arrow{d}{\operatorname{Int} \dot{\gamma}^{-1}} \\ \frob_{\zpbr}^{\ast}
  \mathcal{G}_{\zpbr} \arrow{r}{\phi_{\mathcal{G}}} & \mathcal{G}_{\zpbr} &
  \mathcal{G}_{\zpbr} \arrow{l}[']{\conj{\dot{\delta}}}
\end{tikzcd} \]
commutes. Writing $S = \spa(R, R^+)$ as usual, since $\frob_S^\ast \mathscr{P}_{\dot{\gamma}}$ is the pushforward of $\frob_S^\ast \mathscr{P}$ along $\frob_{\zpbr}^\ast \operatorname{Int}_{\dot{\gamma}^{-1}}$, it follows from the diagram above that $\frob_S^\ast \mathscr{P}_{\dot{\gamma}}$ is isomorphic to the $\mathcal{G}_{\zpbr}$-torsor 
\[
  \mathcal{G}_{\zpbr} \times^{\conj{\dot{\delta}},
  \mathcal{G}_{\zpbr}} (\mathcal{G}_{\zpbr} \times^{\mathcal{G}_{\delta,\zpbr}}
  \frob_S^\ast \mathscr{P}).
\]

Therefore, using $\phi_\mathscr{P}$, we may construct the meromorphic map
\begin{center}
    \begin{tikzcd} 
        \phi_{\mathscr{P}_{\dot{\gamma}}}\colon \frob_S^\ast \mathscr{P}_{\dot{\gamma}} = \mathcal{G}_{\zpbr} \times^{\conj{\dot{\delta}}, \mathcal{G}_{\zpbr}} (\mathcal{G}_{\zpbr} \times^{\mathcal{G}_{\delta,\zpbr}} \frob_S^\ast \mathscr{P})
           \arrow[r, dashed, "(\operatorname{Int} \dot{\delta}^{-1}{, \ }\phi_{\mathscr{P}})"]
        &[3em] \mathscr{P}_{\dot{\gamma}}.
    \end{tikzcd}
\end{center}

\begin{Prop} \label{Prop:ConjugationExist}
  For $S \in \perf$ and $(\mathscr{P}, \phi_\mathscr{P})$ an $S$-point of
 $\shtgdeltamu$, the induced shtuka $(\mathscr{P}_{\dot{\gamma}},
  \phi_{\mathscr{P}_{\dot{\gamma}}})$ defines an $S$-point of $\shtgmubr$.
\end{Prop}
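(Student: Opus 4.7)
The plan is to verify the two conditions defining a $\mathcal{G}$-shtuka bounded by $\mu$: (i) $\phi_{\mathscr{P}_{\dot\gamma}}$ is a well-defined meromorphic isomorphism of $\mathcal{G}$-torsors on $S \bdtimes \spa(\zpbr)$ with leg only at $S^\sharp$, and (ii) the induced modification at $S^\sharp$ is bounded by the v-sheaf local model $\locmodgmuv$.

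For (i), the key point is that, by the construction in Section~\ref{Subsub:DeltaNotation}, $\dot\delta = \phi(\dot\gamma)^{-1}\dot\gamma$ lies in $\mathcal{G}(\zpbr)$, so $\operatorname{Int}\dot\delta^{-1}$ is an honest automorphism of the group scheme $\mathcal{G}_{\zpbr}$. The associated map of pushed-out torsors is therefore an isomorphism defined on all of $S \bdtimes \spa(\zpbr)$ and contributes no new pole. The other factor of $\phi_{\mathscr{P}_{\dot\gamma}}$, namely $\phi_\mathscr{P}$ pushed out along $\operatorname{Int}\dot\gamma^{-1}$, is meromorphic with leg at $S^\sharp$, inheriting this property directly from $\phi_\mathscr{P}$. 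The composition thus has its leg exactly at $S^\sharp$.

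For (ii), the heart of the argument is that pushout along the scheme-theoretic isomorphism $\operatorname{Int}\dot\gamma^{-1}\colon \mathcal{G}_{\delta,\zpbr}\xrightarrow{\sim}\mathcal{G}_{\zpbr}$ induces a compatible isomorphism of Beilinson--Drinfeld affine Grassmannians over $\spd(\zpbr)$, and this identifies the v-sheaf local models for the two groups, $\mathbb{M}^{\mathrm v}_{\calgdelta,\mu} \cong \locmodgmuv$. Indeed, both sides are by definition the v-sheaf closure of $\mathrm{Gr}_{G,\mu}$ inside the respective Beilinson--Drinfeld Grassmannian over $\spd(\oebreve)$; the induced automorphism of the common generic fiber $\mathrm{Gr}_{G}$ is conjugation by $\dot\gamma \in G(\qpbr)$, which preserves $\mathrm{Gr}_{G,\mu}$ since $\mu$ is a $G(\qpbar)$-conjugacy class. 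Because $\operatorname{Int}\dot\delta^{-1}$ is defined over all of $\spec(\zpbr)$, its contribution to $\phi_{\mathscr{P}_{\dot\gamma}}$ at $S^\sharp$ amounts to a local automorphism of the trivialized $\mathcal{G}$-torsor and preserves the boundedness condition. The bound on $\phi_\mathscr{P}$ therefore translates into the desired bound for $\phi_{\mathscr{P}_{\dot\gamma}}$.

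The main obstacle will be giving a clean justification of the local model comparison under pushout, i.e., that v-sheaf closure commutes with the identification coming from $\operatorname{Int}\dot\gamma^{-1}$ and that the Schubert cell $\mathrm{Gr}_{G,\mu}$ is stable under $G(\qpbr)$-conjugation. Once this is in place, the argument will parallel Case~2 of the proof of Proposition~\ref{Prop:NewtonOverHodge} and the closely related \cite[Proposition~4.3.4]{PappasRapoportRZSpaces}.
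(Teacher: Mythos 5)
Your proposal is correct and follows essentially the same two-step argument as the paper: first, that conjugation by $\dot\gamma^{-1}$ identifies the v-sheaf local models $\mathbb{M}^{\mathrm v}_{\calgdelta,\mu,\oebreve}$ and $\mathbb{M}^{\mathrm v}_{\mathcal{G},\mu,\oebreve}$, and second, that $\mathbb{M}^{\mathrm v}_{\mathcal{G},\mu,\oebreve}$ is stable under conjugation by the element $\dot\delta^{-1}\in\mathcal{G}(\zpbr)$, so that the modified Frobenius $(\operatorname{Int}\dot\delta^{-1},\phi_\mathscr{P})$ stays bounded by $\mu$. The paper packages this more tersely by deferring to the structure of the proof of Proposition~\ref{Prop:ActionExist}, but the content and the route are the same.
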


\begin{proof}
  As in the proof of Proposition~\ref{Prop:ActionExist}, this follows from the
  fact that conjugation by $\dot{\gamma}^{-1}$ induces an isomorphism between
  the local models $\mathbb{M}_{\mathcal{G}_\delta, \mu
  , \oebreve}^\mathrm{v}$ and $\mathbb{M}_{\mathcal{G},  \mu
  , \oebreve}^\mathrm{v}$, together with the fact that $\mathbb{M}_{\mathcal{G},
   \mu , \oebreve}^\mathrm{v}$ is stable under conjugation by
 $\dot{\delta}^{-1}$.
\end{proof}

\subsubsection{} By combining Proposition~\ref{Prop:ActionExist},
Proposition~\ref{Prop:ConjugationExist}, and Lemma~\ref{Lem:NewtonOverHodge}, we
obtain a map
\[
  [\shtgdeltacircmu/\pi_0(\mathcal{G}_\delta)^\phi] \to \shtgmubrkappa
\]
for each $\delta \in \Pi_\mathcal{G}$. We now have the following key result.

\begin{Thm} \label{Thm:QuasiParahoricShtukas}
  The map
  \begin{align}
    \coprod_{\delta \in \Pi_\mathcal{G}}
    [\shtgdeltacircmu/\pi_0(\mathcal{G}_\delta)^\phi] \to \shtgmubrkappa
  \end{align}
  is an isomorphism.
\end{Thm}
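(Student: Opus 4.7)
The plan is to invert the stated construction: for each $S$-point of $\shtgmubrkappa$ I would produce a canonical $\delta \in \Pi_\mathcal{G}$ together with a $\calgdeltacirc$-shtuka, well-defined up to the $\pi_0(\mathcal{G}_\delta)^\phi$-action. This proceeds in three steps, followed by a check that the maps are mutually inverse.

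First, I would decompose $\shtgmubrkappa$ as an open and closed disjoint union indexed by $\delta \in \Pi_\mathcal{G}$. Given $(\mathscr{P},\phi_\mathscr{P}) \in \shtgmubrkappa(S)$ with $S = \spa(R,R^+)$ affinoid perfectoid, after spreading out and choosing an étale-local trivialization of $\mathscr{P}$, the Frobenius $\phi_\mathscr{P}$ is represented by an element $b \in \calgcirc(W(R))\admu\mathcal{G}(W(R))$, and the assignment $b \mapsto -\tilde{\kappa}_G(b) - [\mu] \in \pi_0(\mathcal{G})$ passes to a class in $\pi_0(\mathcal{G})_\phi$ independent of the trivialization. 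The Kottwitz hypothesis $\kappa_G = -\mu^\natural$, combined with the identification $\Pi_\mathcal{G} \cong \ker(\pi_0(\mathcal{G})_\phi \to \pi_1(G)_{\Gamma_p})$ from Section~\ref{Subsub:DeltaNotation}, forces this invariant to lie in $\Pi_\mathcal{G}$, and glues to a locally constant function $S \to \Pi_\mathcal{G}$, producing the decomposition.

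Next, on each $\delta$-stratum I would invert the $\dot\gamma$-conjugation. Fix $\dot\gamma \in N(\qpbr)$ as in Section~\ref{Subsub:DeltaNotation}. The isomorphism $\operatorname{Int}\dot\gamma^{-1} \colon \mathcal{G}_{\zpbr} \to \mathcal{G}_{\delta,\zpbr}$ pushes $\mathscr{P}$ to a $\calgdelta$-torsor $\mathscr{P}_{\dot\gamma^{-1}}$, reversing the construction of Proposition~\ref{Prop:ConjugationExist}. Spreading out via \cite[Proposition~2.1.3]{PappasRapoportShtukas} and \cite[Proposition~3.2.1]{PappasRapoportRZSpaces} reduces the boundedness question to an algebraic one over $\spec W(R^+)$, and the calculation in Case~2 of the proof of Proposition~\ref{Prop:NewtonOverHodge}, applied to $b' = \dot\gamma b \phi(\dot\gamma)^{-1}$, shows that after an appropriate change of trivialization this element lies in $\calgdeltacirc(W(R))\admu\calgdeltacirc(W(R))$. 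This gives a reduction of structure group of $\mathscr{P}_{\dot\gamma^{-1}}$ to $\calgdeltacirc$, with the required boundedness, producing an object of $\shtgdeltacircmu(S)$.

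Finally, Frobenius-compatible reductions of a given $\calgdelta$-torsor to $\calgdeltacirc$ form a $\pi_0(\mathcal{G}_\delta)^\phi$-torsor; unpacking Remark~\ref{Rem:ComparisonWithPR}, this is precisely the action used to form $[\shtgdeltacircmu/\pi_0(\mathcal{G}_\delta)^\phi]$, so the stratum-wise inverse descends to a well-defined morphism in the opposite direction. Combined with Proposition~\ref{Prop:ActionExist} and Proposition~\ref{Prop:ConjugationExist}, this produces mutually inverse morphisms. The main obstacle I foresee is executing Step~2 uniformly over perfectoid bases: the calculation of Proposition~\ref{Prop:NewtonOverHodge} is stated at a geometric point, and the Dieudonné-theoretic spreading tools cited above are what allow Frobenius questions on $S \bdtimes \spa(\zp)$ to be transported to $\spec W(R^+)$, where the Kottwitz and Bruhat-cell analysis proceeds cleanly.
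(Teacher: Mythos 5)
Your proposal takes a genuinely different route from the paper, and as written it has a gap that you flagged but did not resolve.

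The paper's argument avoids constructing a quasi-inverse. It first shows that the map is \emph{proper*} in the sense of Definition~\ref{Def:ProperStar} (Lemma~\ref{Lem:LiftsTwoOutOfThree}, Lemma~\ref{Lem:ProperRepresentableLifting}, Lemma~\ref{Lem:ProperStar}). Since products of points form a basis of the v-topology, proper*-ness reduces the isomorphism claim to algebraically closed perfectoid fields $C$. Over such a $C$, it checks bijectivity one Newton stratum at a time via Lemma~\ref{Lem:LocalUniformisation}, identifying the restriction to the $[b]$-stratum with the map $\coprod_\delta \mintgdeltacircmu/\pi_0(\mathcal{G}_\delta)^\phi \to \mintgmu$, which is an isomorphism by \cite[Theorem~4.4.1]{PappasRapoportRZSpaces}. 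Your proposal, by contrast, tries to build an explicit inverse morphism of v-stacks directly.

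The genuine gap is in Step~2 (and feeds back into Steps~1 and 3). To invert the $\dot\gamma$-conjugation and extract a $\calgdeltacirc$-reduction, you propose to transport the Frobenius to an algebraic object over $\spec W(R^+)$ using \cite[Proposition~2.1.3]{PappasRapoportShtukas} and \cite[Proposition~3.2.1]{PappasRapoportRZSpaces}, and then run the Bruhat/Kottwitz computation from Case~2 of Proposition~\ref{Prop:NewtonOverHodge}. But those algebraization statements hold over geometric points $\spa(C,\mathcal{O}_C)$ (and, with more work, over products of such points as in Lemma~\ref{Lem:ProperStar}) — they do \emph{not} provide an algebraic model over $W(R^+)$ for an arbitrary affinoid perfectoid $\spa(R,R^+)$. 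So the candidate inverse is only defined over a restricted class of test objects, not as a morphism of presheaves on $\perf$. The same issue affects the locally constant $\Pi_\mathcal{G}$-invariant in Step~1 (defined via a Kottwitz map that lives over points) and the non-emptiness of the torsor of Frobenius-compatible reductions in Step~3 (which needs a Lang--Steinberg argument valid only over algebraically closed points). To close the argument you would need precisely the proper*-type lifting machinery the paper develops — or some other descent/gluing argument promoting the point-level construction to a morphism of v-stacks. Once one is working over geometric points anyway, your Steps~1–3 essentially reprove the computation that \cite[Proposition~4.3.4, Theorem~4.4.1]{PappasRapoportRZSpaces} already packages; the paper gains efficiency by citing that result rather than re-deriving it.
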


The strategy of the proof is to reduce to the statement for rank-one geometric
points, and then verify the isomorphism on each Newton stratum using Proposition \ref{Prop:NewtonOverHodge} and \cite[Proposition~4.3.4]{PappasRapoportRZSpaces}.

\begin{Lem} \label{Lem:ProperStar}
  For every quasi-parahoric group $\mathcal{G} / \zp$ and geometric conjugacy
  class of a cocharacter $ \mu $ with reflex field $E$, the map
 $\shtgmu \to \spd(\mathcal{O}_E)$ is proper*. 
\end{Lem}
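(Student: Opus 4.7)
The plan is to verify the proper$*$ condition directly after reducing to the simpler base $\spd(\zp)$. Since $\mathcal{O}_E$ is a finite $\zp$-algebra, the map $\spd(\mathcal{O}_E) \to \spd(\zp)$ is proper and representable by diamonds, hence proper$*$ by Lemma~\ref{Lem:ProperRepresentableLifting}. By the two-out-of-three principle of Lemma~\ref{Lem:LiftsTwoOutOfThree}, it then suffices to show that the composition $\shtgmu \to \spd(\zp)$ is proper$*$.

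Unwinding Definition~\ref{Def:ProperStar}, this becomes the following concrete extension problem. Let $S = \spa((\prod_i C_i^+)[\varpi^{-1}], \prod_i C_i^+)$ be a product of geometric points in characteristic $p$ with rank-one cover $T = \coprod_i \spa(C_i, \mathcal{O}_{C_i})$; let $S^\sharp$ be an untilt of $S$; and suppose we are given a $\mathcal{G}$-shtuka over $T$ bounded by $\mu$ with leg at $S^\sharp\vert_T$. We must construct a unique extension to a $\mathcal{G}$-shtuka over $S$ with leg at $S^\sharp$ still bounded by $\mu$. Uniqueness is formal, since $T \to S$ is a v-cover and $\shtgmu$ is a v-stack.

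For existence, the strategy is to (i) translate the shtuka over each geometric factor $\spa(C_i, \mathcal{O}_{C_i})$ into algebraic data, namely a $\mathcal{G}$-torsor $\mathcal{P}_i$ on $\spec W(\mathcal{O}_{C_i})$ equipped with a meromorphic Frobenius, via \cite[Proposition~2.1.3]{PappasRapoportShtukas} and \cite[Proposition~3.2.1]{PappasRapoportRZSpaces} (as in Case 2 of the proof of Proposition~\ref{Prop:NewtonOverHodge}); (ii) glue these factor-by-factor descriptions into a single $\mathcal{G}$-torsor with meromorphic Frobenius over $\spec W(R^+)$ where $R^+ = \prod_i C_i^+$; and (iii) analytify to recover the required shtuka over $S$. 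Boundedness by $\mu$ is preserved because $\locmodgmuv$ is a v-closed subsheaf of $\mathrm{Gr}_{\mathcal{G}}$, so the condition of being bounded by $\mu$ descends along the v-cover $T \to S$. The Frobenius structure extends by the spreading-out-by-Frobenius principle of \cite[Proposition~22.1.1]{ScholzeWeinsteinBerkeley}.

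The principal obstacle will be step (ii), the extension of the $\mathcal{G}$-torsor: since $W(R^+)$ does not coincide with $\prod_i W(C_i^+)$, the extension is not formal. We resolve this via the Tannakian formalism combined with the theory of $G$-Breuil--Kisin--Fargues modules on products of geometric points, which reduces the problem to extending vector bundles with Frobenius over Witt rings of such products, a standard input in the setup of \cite{ScholzeWeinsteinBerkeley} and \cite{PappasRapoportRZSpaces}. Once this torsor extension is in hand, everything else in steps (i)--(iii) and the check of boundedness are essentially formal.
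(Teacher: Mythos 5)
Your proposal correctly identifies the obstruction ($W(\prod_i C_i^+) \neq \prod_i W(C_i^+)$) but does not actually resolve it — the appeal to ``Breuil--Kisin--Fargues modules on products of geometric points'' and ``the Tannakian formalism'' has no content here, because the Tannakian argument only produces a $\mathcal{G}$-torsor from a fiber functor over rings that are (Zariski-locally) local or at least over which torsors are trivial, and $W(\prod_i C_i^+)$ is neither a product of local rings nor obviously a ring over which $\mathcal{G}$-torsors trivialize. The paper's key move, which your proposal is missing, is to factor the map $\coprod_i \spa(C_i, \mathcal{O}_{C_i}) \to S$ through the intermediate space $\ProdRkOne = \spa((\prod_i C_i^+)[\varpi^{-1}], \prod_i \mathcal{O}_{C_i})$ and handle the two factors differently. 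The lift along $\ProdRkOne \to S$ only changes the $R^+$-ring to $R^\circ$, so it is handled using the fact that vector bundle shtukas depend only on the Tate ring $R$ (together with $\locmodgmuv \hookrightarrow \mathrm{Gr}_{\calg}$ being a closed immersion). The lift along $\coprod_i s_i \to \ProdRkOne$ is the genuine gluing step, and here the crucial fact is that $W(\prod_i \mathcal{O}_{C_i}) = \prod_i W(\mathcal{O}_{C_i})$, a product of strictly Henselian local rings, over which $\mathcal{G}$-torsors decompose as products of (trivial) torsors; this is exactly why the Tannakian gluing works, and it fails if one tries to do it over $W(\prod_i C_i^+)$ directly as you do.

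A secondary but genuine error: your claim that ``uniqueness is formal, since $T \to S$ is a v-cover'' is false. The map $\coprod_i \spa(C_i, \mathcal{O}_{C_i}) \to S$ is not surjective on topological spaces when the index set is infinite (it misses rank-one points above non-principal ultrafilters), so it is not a v-cover, and the later claim that ``boundedness by $\mu$ descends along the v-cover $T \to S$'' fails for the same reason. Uniqueness is not separate from existence here; the two-out-of-three and base-change lemmas (Lemmas~\ref{Lem:LiftsTwoOutOfThree}, \ref{Lem:BaseChangeLifting}) carry both simultaneously through the factorization above. Also, the spreading-out \cite[Proposition~22.1.1]{ScholzeWeinsteinBerkeley} is not relevant to this step — the issue is gluing across geometric factors, not extending a Frobenius-equivariant bundle on $\mathcal{Y}_{[r,\infty)}$.
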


\begin{proof}
  As noted after \cite[Lemma~2.4.4]{PappasRapoportShtukas}, the exact tensor
  category of vector bundle shtukas on $\spa(R, R^+)$ agrees with that of
 $\spa(R, R^\circ)$, and therefore the map $\shtg \to \spd(\zp)$ has uniquely
  existing lifts along
  \[
    \textstyle \ProdRkOne = \spa((\prod_i C_i^+)[\varpi^{-1}], \prod_i
    \mathcal{O}_{C_i}) \to \spa((\prod_i C_i^+)[\varpi^{-1}], \prod_i C_i^+) =
    \ProdPts.
  \]
  On the other hand, $\locmodgmuv
  \hookrightarrow \operatorname{Gr}_{\mathcal{G},\spd(\mathcal{O}_E)}$ is a
  closed immersion, so the map $\shtgmu \to \spd(\mathcal{O}_E)$ also has
  uniquely existing lifts along $\ProdRkOne \to \ProdPts$ by
  Lemma~\ref{Lem:ProperRepresentableLifting}. 

  We now produce uniquely existing lifts along
  \[
    \textstyle\RkOne = \coprod_i \spa(C_i, \mathcal{O}_{C_i}) \to \spa((\prod_i
    \mathcal{O}_{C_i})[\varpi^{-1}], \prod_i \mathcal{O}_{C_i}) = \ProdRkOne,
  \]
  following the argument of \cite[Proposition~11.10]{ZhangThesis}. Using
  \cite[Proposition~9.5]{GleasonIvanov},
  \cite[Theorem~3.8]{KedlayaAlgebraization}, and
  \cite[Proposition~3.2.2]{PappasRapoportRZSpaces}, we see that an
 $\ProdRkOne$-point of $\shtg$ corresponds to a $\mathcal{G}$-torsor
 $\mathscr{P}$ on $\spec(W(\prod_i \mathcal{O}_{C_i}))$ together with a
  meromorphic map $\phi_\mathscr{P} \colon \mathrm{Frob}^\ast \mathscr{P}
  \dashrightarrow \mathscr{P}$, and similarly for each $s_i$-point. Using the Tannakian formalism and that $W(\mathcal{O}_{C_i})$ are local rings, we first
  observe that the groupoid of $\mathcal{G}$-torsors over $W(\prod_i
  \mathcal{O}_{C_i}) = \prod_i W(\mathcal{O}_{C_i})$ is canonically equivalent
  to the product of the groupoids of $\mathcal{G}$-torsors over
 $W(\mathcal{O}_{C_i})$. Next, to control the meromorphic Frobenius action, we
  use the fact that $\locmodgmuv \to \spd(\mathcal{O}_E)$ is proper*, which
  follows from it being proper and representable, together with
  Lemma~\ref{Lem:ProperRepresentableLifting}. By trivializing the
 $\mathcal{G}$-torsors, this implies that given a collection of 
 $\mathcal{G}$-torsors on each $W(\mathcal{O}_{C_i})$ with meromorphic
  Frobenius actions bounded by $\mu$, their product is a
 $\mathcal{G}$-torsor on $\prod_i W(\mathcal{O}_{C_i})$ with meromorphic
  Frobenius action again bounded by $\mu$.

\end{proof}

\begin{proof}[Proof of Theorem~\ref{Thm:QuasiParahoricShtukas}]
  We first check that the map is proper*. By
  Lemma~\ref{Lem:LiftsTwoOutOfThree}, it suffices to show that the structure maps
 $[\shtgdeltacircmu/\pi_0(\mathcal{G}_\delta)^\phi] \to \spd(\oebreve)$ and
 $\shtgmubrkappa \to \spd(\oebreve)$ are proper*. This follows by combining
  Lemma~\ref{Lem:BaseChangeLifting}, Lemma~\ref{Lem:LiftsTwoOutOfThree}, and
  Lemma~\ref{Lem:ProperStar}.

  At this point, it suffices to show that for every algebraically closed
  perfectoid field $C$ the map
  \[
    \coprod_{\delta \in \Pi_\mathcal{G}}
    [\shtgdeltacircmu/\pi_0(\mathcal{G}_\delta)^\phi](C) \to \shtgmubrkappa(C)
  \]
  is an equivalence of groupoids. We can verify this one Newton stratum at a time, and by Proposition \ref{Prop:NewtonOverHodge} and Lemma \ref{Lem:NewtonOverHodge}, we only need to work with Newton strata corresponding to elements in $\bgmu$. For $[b] \in \bgmu$ choose $b \in G(\qpbr)$ with $b \in [b]$. Then by Lemma~\ref{Lem:LocalUniformisation}, we may identify the restriction of the map in the statement of Theorem \ref{Thm:QuasiParahoricShtukas} to the Newton stratum corresponding to $[b]$, with the map
  \[
    \coprod_{\delta \in \Pi_{\mathcal{G}}}
    \left[\left(\mintgdeltacircmu/\pi_0(\mathcal{G}_{\delta})^{\phi}\right) /
    \tilde{G}_b\right] \to \left[\mintgmu/\tilde{G}_b\right].
  \]
  By construction, the induced map
  \[\coprod_{\delta \in \Pi_{\mathcal{G}}} \mintgdeltacircmu/\pi_0(\mathcal{G}_{\delta})^{\phi} \to \mintgmu\]
  agrees with
  the one constructed by Pappas and Rapoport in
  \cite[Equation (4.4.1)]{PappasRapoportRZSpaces}, which by
  Theorem 4.4.1 of \textit{loc. cit.} is an isomorphism. Thus the natural map in \ref{Thm:QuasiParahoricShtukas} is a bijection on
  rank one geometric points, and by Lemma~\ref{Lem:ProperStar} it is also a
  bijection on products of geometric points. Since both sides are v-stacks, while products of points form a basis of the v-topology by \cite[Remark 1.3]{GleasonSpecialization}, we are done.
\end{proof}

\subsubsection{} \label{subsub:DeltaOneStack} It follows from Theorem \ref{Thm:QuasiParahoricShtukas} that the image of the map $\shtgcircmu \to \shtgmu$ is an open and
closed substack; we will denote this image by $\shtgmuone$. 

\begin{Cor} \label{Cor:QuasiParahoricShtukas}
  The natural map $$\shtgcircmu \to \shtgmuone$$ is a torsor for the abelian group $\pi_0(\mathcal{G})^\phi$.
\end{Cor}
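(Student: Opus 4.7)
The plan is to deduce the corollary directly from Theorem~\ref{Thm:QuasiParahoricShtukas} by singling out the component indexed by $\delta = 1 \in \Pi_\mathcal{G}$. With the notation of Section~\ref{Subsub:DeltaNotation}, for $\delta = 1$ we may take $\gamma = 0$ and $\dot\gamma = 1 \in N(\qpbr)$, so that $\mathcal{G}_{\delta=1} = \mathcal{G}$ and $\mathcal{G}_{\delta=1}^\circ = \mathcal{G}^\circ$, and moreover the morphism $[\shtgdeltacircmu/\pi_0(\mathcal{G}_\delta)^\phi] \to \shtgmubrkappa$ constructed in Section~\ref{Sec:Decomposition} reduces in this case to the map
\[
  [\shtgcircmu \times_{\mathcal{O}_E} \oebreve\, /\, \pi_0(\mathcal{G})^\phi] \to \shtgmubrkappa
\]
induced by the push-out $\mathscr{P} \mapsto \mathcal{G} \times^{\mathcal{G}^\circ} \mathscr{P}$, i.e.\ by the natural map $\shtgcircmu \to \shtgmu$.

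By Theorem~\ref{Thm:QuasiParahoricShtukas}, the displayed morphism is the inclusion of an open and closed summand in the decomposition, and in particular it is an open and closed immersion of v-stacks. I would then observe that the image of this summand is precisely the substack $\shtgmuone \times_{\mathcal{O}_E} \oebreve$: indeed, $\shtgmuone$ is defined as the (open and closed) image of $\shtgcircmu \to \shtgmu$, and this image is preserved by base change along $\spd(\oebreve) \to \spd(\mathcal{O}_E)$. Consequently we obtain an isomorphism
\[
  [\shtgcircmu \times_{\mathcal{O}_E} \oebreve\, /\, \pi_0(\mathcal{G})^\phi] \xrightarrow{\sim} \shtgmuone \times_{\mathcal{O}_E} \oebreve,
\]
and since the tautological map $\shtgcircmu \to [\shtgcircmu/\pi_0(\mathcal{G})^\phi]$ is a $\pi_0(\mathcal{G})^\phi$-torsor by construction, the base change $\shtgcircmu \times_{\mathcal{O}_E} \oebreve \to \shtgmuone \times_{\mathcal{O}_E} \oebreve$ is a $\pi_0(\mathcal{G})^\phi$-torsor.

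Finally, to descend from $\oebreve$ to $\mathcal{O}_E$, I would invoke the fact that $\spd(\oebreve) \to \spd(\mathcal{O}_E)$ is a v-cover together with the v-local nature of the torsor property for v-stacks: a morphism of v-stacks is a $\Lambda$-torsor precisely when it becomes one after pullback along any v-cover of the base. This yields the desired statement that $\shtgcircmu \to \shtgmuone$ is itself a $\pi_0(\mathcal{G})^\phi$-torsor. There is no real obstacle here: once Theorem~\ref{Thm:QuasiParahoricShtukas} is granted, the corollary is essentially a matter of reading off the $\delta = 1$ component and applying v-descent.
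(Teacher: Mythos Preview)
Your proposal is correct and follows essentially the same approach as the paper: read off the $\delta=1$ component of Theorem~\ref{Thm:QuasiParahoricShtukas} to get the torsor statement after base change to $\spd(\oebreve)$, then descend along the v-cover $\spd(\oebreve)\to\spd(\mathcal{O}_E)$. The paper's own proof is terser---it phrases the descent step as descent of the finite \'etale property via \cite[Corollary~9.11]{EtCohDiam}---but the content is the same.
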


\begin{proof}
  By Theorem~\ref{Thm:QuasiParahoricShtukas}, the map is
  finite \'{e}tale upon base changing along $\spd(\oebreve) \to \spd(\mathcal{O}_E)$. Since
  $\spd(\oebreve) \to \spd(\mathcal{O}_E)$ is v-surjective, the map $\shtgcircmu \to
  \shtgmu$ is also finite \'{e}tale according to \cite[Corollary~9.11]{EtCohDiam}.
\end{proof}

\begin{Cor} \label{Cor:QuasiParahoricShtukasGeneric}
  There is a natural isomorphism
  \[
    \shtgmuone \times_{\spd(\mathcal{O}_E)} \spd(E) \simeq
    \left[\mathrm{Gr}_{G,\mu^{-1}} / \underline{\mathcal{G}(\zp)}\right].
  \]
\end{Cor}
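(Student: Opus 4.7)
The plan is to combine two ingredients: the identification of $\shtgcircmu$ with a quotient stack on the generic fiber, from \eqref{Eq:GenericFibreShtukas}, and the torsor description of Corollary~\ref{Cor:QuasiParahoricShtukas}. More precisely, by \eqref{Eq:GenericFibreShtukas} we have
\[
\shtgcircmu \times_{\spd(\mathcal{O}_E)} \spd(E) \simeq \bigl[\mathrm{Gr}_{G,\mu^{-1}} / \underline{\calgcirc(\zp)}\bigr],
\]
and by Corollary~\ref{Cor:QuasiParahoricShtukas} the map $\shtgcircmu \to \shtgmuone$ is an \'etale $\pi_0(\mathcal{G})^\phi$-torsor. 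Base-changing the latter to $\spd(E)$, we obtain
\[
\shtgmuone \times_{\spd(\mathcal{O}_E)} \spd(E) \simeq \bigl[\shtgcircmu \times_{\spd(\mathcal{O}_E)} \spd(E) / \underline{\pi_0(\mathcal{G})^\phi}\bigr].
\]

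The next step is to identify this quotient with $[\mathrm{Gr}_{G,\mu^{-1}} / \underline{\mathcal{G}(\zp)}]$. Recall the short exact sequence $1 \to \mathcal{G}^\circ(\zp) \to \mathcal{G}(\zp) \to \pi_0(\mathcal{G})^\phi \to 1$, so formally it suffices to check that, under the isomorphism \eqref{Eq:GenericFibreShtukas}, the action of $\pi_0(\mathcal{G})^\phi$ on $\shtgcircmu$ constructed in Proposition~\ref{Prop:ActionExist} is intertwined with the action on $[\mathrm{Gr}_{G,\mu^{-1}} / \underline{\calgcirc(\zp)}]$ obtained by lifting an element of $\pi_0(\mathcal{G})^\phi$ to $\tilde{g} \in \mathcal{G}(\zp)$ and acting by right multiplication by $\tilde{g}^{-1}$ (which is well defined on the quotient groupoid, independently of the lift, because $\calgcirc(\zp)$ is normal in $\mathcal{G}(\zp)$).

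To verify this compatibility I would trace through the proof of \eqref{Eq:GenericFibreShtukas} in \cite[Proposition 11.16]{ZhangThesis}: on the generic fiber one trivialises the $\calgcirc$-torsor and encodes the meromorphic Frobenius as a point of $\mathrm{Gr}_{G,\mu^{-1}}$ up to the residual $\calgcirc(\zp)$-action. The alternative model for the $\pi_0(\mathcal{G})^\phi$-action described in Remark~\ref{Rem:ComparisonWithPR}, namely twisting the $\calgcirc$-structure by $\tilde{g}$, translates directly into right multiplication by $\tilde{g}^{-1}$ on the trivialisation, which under the quotient by $\calgcirc(\zp)$ depends only on the class $g \in \pi_0(\mathcal{G})^\phi$. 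Taking the quotient by $\pi_0(\mathcal{G})^\phi$ on both sides then yields
\[
\bigl[\mathrm{Gr}_{G,\mu^{-1}} / \underline{\calgcirc(\zp)}\bigr] \big/ \underline{\pi_0(\mathcal{G})^\phi} \simeq \bigl[\mathrm{Gr}_{G,\mu^{-1}} / \underline{\mathcal{G}(\zp)}\bigr],
\]
which, combined with the previous display, gives the desired isomorphism.

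The only genuine point of care is the action-compatibility check in the middle paragraph; once that is in hand, the rest is formal manipulation of torsors and quotient stacks.
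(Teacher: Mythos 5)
Your proof takes essentially the same route as the paper: use the identification \eqref{Eq:GenericFibreShtukas} for $\shtgcircmu$ on the generic fiber, then descend along the $\pi_0(\mathcal{G})^\phi$-torsor of Corollary~\ref{Cor:QuasiParahoricShtukas} using the short exact sequence $1 \to \mathcal{G}^\circ(\zp) \to \mathcal{G}(\zp) \to \pi_0(\mathcal{G})^\phi \to 1$. The only difference is that you explicitly flag and sketch the action-compatibility check (via Remark~\ref{Rem:ComparisonWithPR}), which the paper's terse proof leaves implicit; that is a genuine point of care and your handling of it is sound.
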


\begin{proof}
  This is true for $\shtgcircmu$ by \cite[Proposition~11.16]{ZhangThesis}, and
  the result now follows from the short exact sequence
  \[
    1 \to \mathcal{G}^\circ(\zp) \to \mathcal{G}(\zp) \to
    \pi_0(\mathcal{G})^\phi \to 1.
  \]
  and Corollary \ref{Cor:QuasiParahoricShtukasGeneric}.
\end{proof}

\subsubsection{} Now let $\mathcal{H}$ be another quasi-parahoric model of $G$ such that $\calgcirc \subset \mathcal{H} \subset \mathcal{G}$. Then we have the following corollary.
\begin{Cor} \label{Cor:QuasiParahoricShtukasIII}
    The natural map $\shthmuone \to \shtgmuone$ is a torsor for the finite abelian group $\pi_0(\mathcal{G})^{\phi}/\pi_0(\mathcal{H})^{\phi}$.
\end{Cor}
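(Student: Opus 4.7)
The plan is to deduce the corollary formally from Corollary~\ref{Cor:QuasiParahoricShtukas} applied twice, once to $\mathcal{H}$ and once to $\mathcal{G}$, after matching the relevant torsor structures. The first step is to observe that the inclusions $\calgcirc(\zp) \subset \mathcal{H}(\zp) \subset \mathcal{G}(\zp)$ yield quotient identifications $\pi_0(\mathcal{H})^\phi = \mathcal{H}(\zp)/\calgcirc(\zp)$ and $\pi_0(\mathcal{G})^\phi = \mathcal{G}(\zp)/\calgcirc(\zp)$, exhibiting $\pi_0(\mathcal{H})^\phi$ as a (normal) subgroup of $\pi_0(\mathcal{G})^\phi$ with quotient equal to the prescribed group $\pi_0(\mathcal{G})^\phi/\pi_0(\mathcal{H})^\phi$.

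The key compatibility to verify is that the $\pi_0(\mathcal{H})^\phi$-action on $\shtgcircmu$ constructed in Section~\ref{Sec:Decomposition} for $\mathcal{H}$ coincides with the restriction to this subgroup of the analogous action for $\mathcal{G}$. This is immediate from the construction: given a representation $(\Lambda,\rho)$ of $\calgcirc$ and an element $g$ of $\pi_0(\mathcal{H})^\phi$, the twisted lattice $g\Lambda = \rho(\tilde g)\Lambda$ depends only on $\tilde g \in G(\qp)$ and not on whether $\tilde g$ is chosen inside $\mathcal{H}(\zp)$ or inside $\mathcal{G}(\zp)$. Hence the Tannakian twist defining the action of $g$ on $\calgcirc$-shtukas agrees in both constructions.

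Now Corollary~\ref{Cor:QuasiParahoricShtukas} applied to $\mathcal{H}$ and to $\mathcal{G}$ gives étale torsors
\[
    \shtgcircmu \to \shthmuone \quad \text{and} \quad \shtgcircmu \to \shtgmuone
\]
under $\pi_0(\mathcal{H})^\phi$ and $\pi_0(\mathcal{G})^\phi$ respectively. By functoriality of pushout of torsors along $\calgcirc \hookrightarrow \mathcal{H} \hookrightarrow \mathcal{G}$, the composition of the first map with $\shthmuone \to \shtgmuone$ agrees with the second map. Equivalently, in stacky language we have identifications $\shthmuone \simeq [\shtgcircmu/\pi_0(\mathcal{H})^\phi]$ and $\shtgmuone \simeq [\shtgcircmu/\pi_0(\mathcal{G})^\phi]$, compatible with the transition map.

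The conclusion then follows from the general fact that if $Y \to X$ is a $\Gamma$-torsor and $\Gamma_0 \subset \Gamma$ is a subgroup, then $Y/\Gamma_0 \to Y/\Gamma$ is a $(\Gamma/\Gamma_0)$-torsor. The only nontrivial point in the argument is the compatibility of actions described above; once this is in place, the statement is purely formal, so I do not anticipate any serious obstacle.
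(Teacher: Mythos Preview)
Your proposal is correct and follows essentially the same approach as the paper: identify $\shthmuone$ and $\shtgmuone$ with the quotient stacks $[\shtgcircmu/\pi_0(\mathcal{H})^\phi]$ and $[\shtgcircmu/\pi_0(\mathcal{G})^\phi]$ via Corollary~\ref{Cor:QuasiParahoricShtukas}, and then invoke the general fact about quotients by a subgroup. You are somewhat more explicit than the paper in checking the compatibility of the two $\pi_0$-actions, which the paper takes as evident.
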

\begin{proof}
Let $\pi_0(\mathcal{H}) \subset \pi_0(\mathcal{G})$ be the inclusion induced by $\mathcal{H} \subset \mathcal{G}$. Then applying the discussion in Section \ref{subsub:DeltaOneStack} to both $\mathcal{H}$ and $\mathcal{G}$, we can identify the map in concern with the natural map
    \begin{align}
        \left[\shtgcircmu / \pi_0(\mathcal{H})^{\phi} \right] \to  \left[\shtgcircmu / \pi_0(\mathcal{G})^{\phi} \right],
    \end{align}
    which is clearly a torsor for $\pi_0(\mathcal{G})^{\phi}/\pi_0(\mathcal{H})^{\phi}$.
\end{proof}
\begin{Rem} \label{Rem:QuasiParahoricRemark}
    The subgroup $\mathcal{H} \subset \mathcal{G}$ is \emph{not} determined by the subgroup $\mathcal{H}(\zp) \subset \mathcal{G}(\zp)$, because the latter only depends on $\pi_0(\mathcal{H})^{\phi}$ and not on $\pi_0(\mathcal{H})$ itself. Nevertheless, Corollary \ref{Cor:QuasiParahoricShtukasIII} tells us that the stack $\shthmuone$ only depends on $\mathcal{H}(\zp)$. 
    
    Indeed, if $\mathcal{H}_1$ and $\mathcal{H}_2$ are two quasi-parahoric models of $G$ such that $\mathcal{H}_1(\zp)=\mathcal{H}_2(\zp) \subset G(\qp)$, then $$\mathcal{H}_1^{\circ}(\zp)=\mathcal{H}_1(\zp) \cap G(\qp)^{0}=\mathcal{H}_2(\zp) \cap G(\qp)^0=\mathcal{H}_2^{\circ}(\zp).$$ Thus the identity components $\mathcal{H}_1^{\circ}$ and $\mathcal{H}_2^{\circ}$ are parahoric integral models of $G$ with the same $\zp$-points, and therefore they must be isomorphic. Since $\mathcal{H}_1(\zp)=\mathcal{H}_2(\zp)$ we moreover find that $\pi_0(\mathcal{H}_1)^{\phi}=\pi_0(\mathcal{H}_2)^{\phi}$ as subgroups of $\pi_1(G)_{I_p}^{\phi}$. Corollary \ref{Cor:QuasiParahoricShtukasIII} and its proof now tell us that there is an isomorphism
    \begin{align}
        \operatorname{Sht}_{\mathcal{H}_1, \mu, \delta=1} \simeq \operatorname{Sht}_{\mathcal{H}_2, \mu, \delta=1}.
    \end{align}
\end{Rem}

\section{Conjectural canonical integral models} \label{Sec:ConjecturalModels} Let $\gx$ be a Shimura datum with reflex field $\mathsf{E}$, let $p$ be a prime and write $G=\mathsf{G}_{\qp}$. Let $\mathcal{G}$ be a quasi-parahoric model of $G$ over $\zp$, and let $K_p = \mathcal{G}(\zp)$. Choose a prime $v$ of $\mathsf{E}$ above $p$, and let $E$ denote the completion of $\mathsf{E}$ at $v$. Let $\mu$ denote the $G(\qpbar)$-conjugacy class of cocharacters of $G$ corresponding to $X$ and $v$. We will write $\mathcal{O}_E$ for the ring of integers of $E$ and $k_E$ for its residue field. For $K^p \subset \gafp$ a sufficiently small compact open subgroup we write $K=K_pK^p$. Associated to $\gx$ and $K^p$ is the Shimura variety $\mathbf{Sh}_K\gx$, which we view as an $E$-scheme (i.e., we take the base change to $E$ of the canonical model over $\mathsf{E}$). 

We will often consider Shimura varieties with infinite level structures. In particular, we let
\begin{equation}\label{Eq:SVTower}
    \mathbf{Sh}_{K^p}\gx = \varprojlim_{K_p' \subset K_p} \mathbf{Sh}_{K_p'K^p}\gx
\end{equation}
as $K_p'\subset K_p$ varies over all compact open subgroups of the fixed $K_p$, and let
\begin{equation}
    \mathbf{Sh}_{K_p}\gx = \varprojlim_{K^p\subset \gafp} \mathbf{Sh}_{K_pK^p}\gx
\end{equation}
as $K^p$ varies over all sufficiently small compact open subgroups $K^p \subset \gafp$.

Let $\mathsf{Z}^\circ$ denote the connected component of the center of $\mathsf{G}$. We will assume that $\gx$ satisfies
\begin{equation}\label{Eq:SV5}
    \operatorname{rank}_\mathbb{Q}(\mathsf{Z}^\circ) = \operatorname{rank}_\mathbb{R}(\mathsf{Z}^\circ).
\end{equation}
This equality is equivalent to Milne's axiom SV5 \cite[p.63]{Milne} by \cite[Lemma 1.5.5]{KisinShinZhu}.
\begin{Rem}
    By \cite[Lemma 5.1.2.(i)]{KisinShinZhu}, the assumption \eqref{Eq:SV5} is satisfied whenever $\gx$ is of Hodge type, which will be the main case of interest to us.
\end{Rem}

\subsection{Canonical integral models, after Pappas--Rapoport}

\subsubsection{Shtukas} \label{Subsub:ShtukasGenericConstruction}
Each finite level Shimura variety $\mathbf{Sh}_{K_p'K^p}\gx$ is a smooth algebraic variety over $E$, and the transition maps in the tower \eqref{Eq:SVTower} are finite \'etale. We denote by $\mathbb{P}_K$ the pro-\'etale $\mathcal{G}(\zp)$-cover 
\begin{align}
    \mathbf{Sh}_{K^p}\gx \to \mathbf{Sh}_K\gx.
\end{align}
Let $\mu$ denote the $G(\qpbar)$-conjugacy class of cocharacters of $G$ coming from the Hodge cocharacter and the place $v$. There is a $G(\qp)$-equivariant Hodge--Tate period map $\mathbf{Sh}_{K^p}\gx^{\diamondsuit} \to \operatorname{Gr}_{G,\mu^{-1}}$, see \cite[Proposition 4.1.2]{PappasRapoportShtukas} or \cite[Corollary 4.1.5]{RodriguezCamargo}. Thus we have a map $\mathbf{Sh}_K\gx^{\diamondsuit} \to \left[\operatorname{Gr}_{G,\mu^{-1}}/\mathcal{G}(\zp)\right]$, which by Corollary \ref{Cor:QuasiParahoricShtukasGeneric} gives us a map
\begin{align}
    \mathbf{Sh}_K\gx^{\diamondsuit} \to \shtgmuone \subset \shtgmu. 
\end{align}
We denote the corresponding $\mathcal{G}$-shtuka by $\mathscr{P}_{K,E}$. Inspired by the axioms in \cite[Conjecture 4.2.2]{PappasRapoportShtukas}, we make the following definition.
\begin{Def}\label{Def:PRAxioms}
    Let $\{\scrs_K\gx\}_{K^p \subset \mathsf{G}(\afp)}$ be a system of normal schemes that are flat, separated and of finite-type over $\mathcal{O}_E$, with generic fiber $\mathbf{Sh}_K\gx$, and with $K^p$ varying over all sufficiently small compact open subgroups of $\mathsf{G}(\afp)$. We say the system $\{\scrs_K\gx\}_{K^p}$ is a \textit{canonical integral model for} $\{\mathbf{Sh}_{K}\gx\}_{K^p}$ if the following properties are satisfied:
    \begin{enumerate}[(i)]
        \item For every discrete valuation ring $R$ of characteristic $(0,p)$ over $\mathcal{O}_E$, 
        \begin{align}
            \mathbf{Sh}_{K_p}\gx(R[1/p]) = \left(\varprojlim_{K^p} \scrs_K\gx\right)(R).
        \end{align}
        \item For every $K^p \subset \mathsf{G}(\afp)$, $g \in \mathsf{G}(\afp)$, and ${K'}^p$ with $g{K'}^pg^{-1} \subset K^p$, there are finite \'etale morphisms $[g]: \mathscr{S}_{K'}\gx \to \mathscr{S}_K\gx$ extending the natural maps on the generic fiber.
        \item The $\mathcal{G}$-shtuka $\mathscr{P}_{K,E}$ on $\mathbf{Sh}_K\gx^\diamondsuit$ extends to a $\mathcal{G}$-shtuka $\mathscr{P}_K$ on $\scrs_K\gx^{\diamondsuit /}$ for every sufficiently small $K^p \subset \mathsf{G}(\afp)$. 
        \item Let $\ell$ be an algebraically closed field of characteristic $p$ together with an embedding $e \colon k_E \hookrightarrow \ell$. For $x \in \scrs_K\gx(\ell)$ with corresponding $b_x \in \shtgmu(\spd(\ell))$, let $x_0 \in \mintgbxmu{e}(\spd(\ell))$ be the base point as in Remark \ref{Rem:BasePoint}. Then there is an isomorphism of completions
        \begin{align}
            \Theta_x: \widehat{\mintgbxmu{e}}_{/x_0} \xrightarrow{\sim} (\widehat{{\scrs_K\gx}_{\oeel{e},/x}})^\diamondsuit,
        \end{align}
        under which the shtuka $\Theta_x^\ast(\mathscr{P}_K)$ agrees with the universal shtuka $\mathscr{P}^\mathrm{univ}$ on $\mintgbxmu{e}$ coming from the map $\mintgbxmu{e} \to \shtgmu$ of Lemma \ref{Lem:LocalUniformisation}. Here the left hand side is defined as in \cite[Definition 4.18]{GleasonSpecialization}, see the explanation in \cite[Section 3.3.1-2]{PappasRapoportRZSpaces}.
    \end{enumerate}
\end{Def}

\begin{Rem}
    The extension of the $\mathcal{G}$-shtuka in (iii) is necessarily unique up
    to unique isomorphism. As in the proof of
    \cite[Corollary~2.7.10]{PappasRapoportShtukas}, even for quasi-parahoric
    groups $\mathcal{G}$ we can use the Tannakian formalism to reduce to \cite[Theorem~2.7.7]{PappasRapoportShtukas}.
\end{Rem}

The following conjecture is an extension of \cite[Conjecture 4.2.2]{PappasRapoportShtukas} to the case of quasi-parahoric $\mathcal{G}$.

\begin{Conj} \label{Conj:PR}
  For every Shimura datum $\gx$ satisfying \eqref{Eq:SV5} and $\mathcal{G} /
  \zp$ a quasi-parahoric model of $G$, if we set $K_p = \mathcal{G}(\zp)$, then
  there exists a system of canonical integral models $\{\scrs_K\gx\}_{K^p}$ of
  $\{\mathbf{Sh}_{K}\gx\}_{K^p}$.
\end{Conj}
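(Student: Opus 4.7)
The plan is to follow the strategy outlined in Section~\ref{SubSub:Sketch}: restrict to Hodge-type data and bootstrap from the known stabilizer quasi-parahoric case of Pappas--Rapoport to the general parahoric (and thence quasi-parahoric) case by identifying $\scrs_{K^{\circ}}\gx^{\lozenge/}$ with the pullback of the étale torsor $\shtgcircmu \to \shtgmuone$. I describe the Hodge-type case below.

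First, fix a Hodge embedding $\gx \hookrightarrow \gvx$ into a Siegel datum, choose a lattice $\Lambda \subset V_{\qp}$ whose stabilizer quasi-parahoric contains $K_p$, and define $\scrs_K\gx$ as the normalization of the Zariski closure of the generic fiber inside a Siegel integral model at level $K_\Lambda$. When $\mathcal{G}$ is a stabilizer quasi-parahoric, a straightforward upgrade of the arguments of \cite[Theorem~4.5.2]{PappasRapoportShtukas} to the quasi-parahoric setting establishes the canonical integral model axioms for this system, yielding in particular a $\mathcal{G}$-shtuka $\mathscr{P}_K$ on $\scrs_K\gx^{\lozenge/}$. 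For a parahoric $\mathcal{G}^{\circ} \subset \mathcal{G}$ (with $\mathcal{G}$ the associated stabilizer quasi-parahoric), define $\scrs_{K^{\circ}}\gx$ as the normalization of $\scrs_K\gx$ inside $\mathbf{Sh}_{K^{\circ}}\gx$; the task then becomes verifying the axioms for the resulting tower.

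The central step is to produce the dotted arrow in \eqref{Eq:DottedArrow}. The map $\scrs_K\gx^{\lozenge/} \to \shtgmu$ given by $\mathscr{P}_K$ factors through $\shtgmubrkappa$ by Proposition~\ref{Prop:NewtonOverHodge} (using connectedness to reduce to the generic fiber, where the Kottwitz invariant is forced to equal $-\mu^\natural$), and then through the open and closed substack $\shtgmuone$ by connectedness together with Corollary~\ref{Cor:QuasiParahoricShtukasGeneric}. Applying Corollary~\ref{Cor:QuasiParahoricShtukas}, the pullback of the étale $\Lambda$-torsor $\shtgcircmu \to \shtgmuone$, for $\Lambda := \pi_0(\mathcal{G})^\phi$, is an étale $\Lambda$-torsor $\mathscr{Z} \to \scrs_K\gx^{\lozenge/}$. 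The generic fiber of $\mathscr{Z}$ agrees with $\mathbf{Sh}_{K^{\circ}}\gx^{\lozenge} \to \mathbf{Sh}_K\gx^{\lozenge}$: the map $\mathbf{Sh}_K\gx^{\lozenge} \to \shtgmuone_E \cong [\mathrm{Gr}_{G,\mu^{-1}}/\underline{\mathcal{G}(\zp)}]$ is by construction the quotient of the Hodge--Tate period map at infinite level by $\mathcal{G}(\zp)$, so its fiber over $[\mathrm{Gr}_{G,\mu^{-1}}/\underline{\mathcal{G}^{\circ}(\zp)}]$ is precisely $\mathbf{Sh}_{K^{\circ}}\gx^{\lozenge}$.

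Now Proposition~\ref{Prop:FiniteEtaleCover}, applied to $X = \scrs_K\gx$, $Z_{\mathrm{rat}} = \mathbf{Sh}_{K^{\circ}}\gx$, and $\mathscr{Z}$, identifies $\scrs_{K^{\circ}}\gx \to \scrs_K\gx$ as an étale $\Lambda$-torsor and gives $\scrs_{K^{\circ}}\gx^{\lozenge/} \cong \mathscr{Z}$. Composing with the torsor map $\mathscr{Z} \to \shtgcircmu$ supplies the desired $\mathcal{G}^{\circ}$-shtuka; the resulting square \eqref{Eq:DottedArrow} is Cartesian by construction. The remaining axioms of Definition~\ref{Def:PRAxioms} are then routine: (i) and (ii) follow from the corresponding properties of $\scrs_K\gx$ together with the finite étaleness of $\scrs_{K^{\circ}}\gx \to \scrs_K\gx$; (iv) reduces, via the same étaleness, to the local-model isomorphism at points of $\scrs_K\gx$, which is provided by the stabilizer case. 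For a general quasi-parahoric $\mathcal{H}$ with $\mathcal{G}^{\circ} \subset \mathcal{H} \subset \mathcal{G}$, one runs the same argument using Corollary~\ref{Cor:QuasiParahoricShtukasIII} in place of Corollary~\ref{Cor:QuasiParahoricShtukas} to descend from $\mathcal{G}^\circ$-level to $\mathcal{H}$-level. The expected main obstacle is Proposition~\ref{Prop:FiniteEtaleCover}: the identification of $\mathscr{Z}$ with $\scrs_{K^{\circ}}\gx^{\lozenge/}$ is immediate on the generic fiber but genuinely delicate integrally, where it rests on the Henselian-type result of \cite{KimVectorBundles}.
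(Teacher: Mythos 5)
Your proposal follows essentially the same route as the paper: reduce to the stabilizer quasi-parahoric case via a good Hodge embedding (the paper's Theorem~\ref{Thm:PRAxiomsFunctorial} combined with Lemma~\ref{Lem:SiegelAxioms}), define the model at level $K^{\circ}$ (or $K'$) by relative normalization, exhibit the pullback of the \'etale torsor $\shtgcircmu \to \shtgmuone$ (resp.\ $\shthmuone \to \shtgmuone$) along $\scrs_K\gx^{\lozenge/} \to \shtgmuone$, identify it with $\scrs_{K^{\circ}}\gx^{\lozenge/}$ via Proposition~\ref{Prop:FiniteEtaleCover}, and verify the remaining axioms; this is exactly the content of Theorem~\ref{Thm:Devissage} in the paper, applied in the proof of Theorem~\ref{Thm:Main}.

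Two small inaccuracies, neither fatal. First, Proposition~\ref{Prop:NewtonOverHodge} does not say that $\scrs_K\gx^{\lozenge/} \to \shtgmu$ factors through $\shtgmukappa$; it says that $\shtgmukappa$ maps into $\bungmu$. The factorization of $\scrs_K\gx^{\lozenge/}$ through the open and closed substack $\shtgmuone$ is established directly by a connectedness argument (the paper's Remark~\ref{Rem:ShtukaOne}): the factorization holds over the generic fiber by construction, and $\mathbf{Sh}_K\gx^{\lozenge} \hookrightarrow \scrs_K\gx^{\lozenge/}$ is surjective on $\pi_0$. Your parenthetical comment contains this argument, but the attribution is off. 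Second, the phrase ``descend from $\mathcal{G}^{\circ}$-level to $\mathcal{H}$-level'' reads oddly, since $\mathcal{G}^{\circ} \subset \mathcal{H}$; what you actually do (and what the paper does, in one stroke via Theorem~\ref{Thm:Devissage}) is descend from $\mathcal{G}$-level to $\mathcal{H}$-level using Corollary~\ref{Cor:QuasiParahoricShtukasIII}. Finally, you should invoke Lemma~\ref{Lem:QuasiToStabilizer} explicitly to guarantee that a stabilizer quasi-parahoric $\mathcal{G}$ with $\mathcal{H}$ an open subgroup actually exists; this is what makes the reduction step available at all.
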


By \cite[Theorem 4.5.2]{PappasRapoportShtukas}, a system of canonical integral models exists in the case of a Hodge-type Shimura datum under the additional assumption that $\mathcal{G}$ is a stabilizer parahoric (see Definition~\ref{Def:StabilizerParahoric}). The conjecture is also known to hold if $\gx$ is of toral type (i.e., if $\mathsf{G} = \mathsf{T}$ is a torus) and $\mathcal{G}$ is parahoric, by \cite[Theorem A]{Daniels}.\footnote{In fact, an extension of Conjecture \ref{Conj:PR} is proven in \cite{Daniels} for $\gx$ of toral type which do not necessarily satisfy \eqref{Eq:SV5}. In this case one needs to work with a variant $\mathcal{G}^c$ of $\mathcal{G}$; see \cite[Section 4.2 and Section 4.3]{Daniels} for details.} We will show in Section \ref{Sec:HodgeType}, see Theorem \ref{Thm:Main}, that the conjecture holds for all Hodge type Shimura data $\gx$ and all quasi-parahoric models $\mathcal{G}$.

\begin{Rem} \label{Rem:ShtukaOne}
    The map $\scrs_K\gx^{\diamondsuit /} \to \shtgmu$ automatically factors through $\shtgmuone$ if it exists. Indeed, the inclusion $\shtgmuone \to \shtgmu$ is open and closed and the factorization property is true when restricted to $\mathbf{Sh}_{K}\gx^{\diamondsuit} \subset \scrs_K\gx^{\diamondsuit /}$ by construction. We now conclude using the fact that the inclusion $\mathbf{Sh}_{K}\gx^{\diamondsuit} \to \scrs_K\gx^{\diamondsuit /}$ induces a surjection on $\pi_0$. Indeed, taking $\pi_0$ of the pushout diagram
    \[
    \begin{tikzcd}
    (\scrs_K\gx^{\diamond})_E\arrow{r} \arrow{d}& \scrs_K\gx^{\diamond}\arrow{d}\\
    \mathbf{Sh}_{K}\gx^{\diamondsuit}\arrow{r} &\scrs_K\gx^{\diamondsuit/}
    \end{tikzcd}
    \]
    gives a pushout diagram of $\pi_0$'s. But we know that the top arrow is surjective on connected components by flatness of $\scrs_{K}\gx$ and \cite[Lemma 2.17]{AGLR}. This implies the desired surjectivity on $\pi_0$ for the bottom arrow. This also shows that the basepoint $x_0$ lies in the image of $\mintgcircbxmu{e} \to \mintgbxmu{e}$.
    
\end{Rem}
\begin{Rem} \label{Rem:Independence}
    Recall from Remark \ref{Rem:QuasiParahoricRemark} that quasi-parahoric models $\mathcal{G}$ are typically not determined by their set of $\zp$-points. Thus a priori it is possible that one could use different quasi-parahoric models $\mathcal{G}$ with the same $\zp$-points to give rise to different axioms for integral models of the Shimura variety of level $\mathcal{G}(\zp)$. However, by Remark \ref{Rem:QuasiParahoricRemark} and Remark \ref{Rem:ShtukaOne}, this does not happen.
\end{Rem}

\subsubsection{} Let $\iota: \gx \to \gxp$ be a closed embedding of Shimura data. Write $\mathsf{E}$, $\mathsf{E'}$ for the corresponding reflex fields. Choose a place $v$ of $\mathsf{E}$ above $p$ and let $v'$ be the induced place of $\mathsf{E}' \subset \mathsf{E}$; we let $E' \subset E$ denote the induced map on completions. Let $\mathcal{G}$ and $\mathcal{G}'$ be quasi-parahoric models of $G$ and $G'$ respectively; write $K_p=\mathcal{G}(\zp)$ and $U_p=\mathcal{G}'(\zp)$. We assume that $K_p=\iota^{-1}(U_p)\cap G(\qp)$. For every sufficiently small compact open subgroup $K^p \subset \gafp$ we choose $U^p \subset \gp(\afp)$ such that $\iota$ induces a closed immersion (see \cite[Lemma 2.1.2]{KisinModels})
\begin{align} \label{Eq:MorphismShimuraData}
    \mathbf{Sh}_{K}\gx \to \mathbf{Sh}_{U}\gxp \times_{\spec(E')} \spec(E),
\end{align}
where $U=U^pU_p$ and $K=K^pK_p$. We have the following version of \cite[Theorem 4.3.1, Theorem 4.5.2]{PappasRapoportShtukas}. 

\begin{Thm}[Pappas--Rapoport] \label{Thm:PRAxiomsFunctorial}
Let $\{\scrs_{U}\gxp\}_{U^p}$ be a canonical integral model of  $\{\mathbf{Sh}_{U}\gxp\}_{U^p}$. If $\mathcal{G}(\zpbr)= \iota^{-1}(\mathcal{G}'(\zpbr)) \cap G(\qpbr)$, then there is a canonical integral model $\{\scrs_K\gx\}_{K^p}$ of $\{\mathbf{Sh}_{K}\gx\}_{K^p}$ such that the morphism in \eqref{Eq:MorphismShimuraData} extends uniquely to a morphism
\begin{align}
    \iota:\scrs_{K}\gx \to \scrs_{U}\gxp \otimes_{\mathcal{O}_{E'}} \mathcal{O}_{E}
\end{align}
over $\spec(\mathcal{O}_E)$, such that the following diagram commutes
\begin{equation}
    \begin{tikzcd}
        \scrs_{K}\gx^{\diamondsuit/} \arrow{d}{\iota} \arrow{r}{\pi_{\mathrm{crys}, \mathcal{G}}} & \shtgmu \arrow{d} \\
        \scrs_{U}\gxp^{\diamondsuit/} \times_{\spd(\mathcal{O}_{E'})} \spd(\mathcal{O}_{E}) \arrow{r}\arrow{r}{\pi_{\mathrm{crys}, \mathcal{G}'}}&  \shtgmup \times_{\spd(\mathcal{O}_{E'})} \spd(\mathcal{O}_E).
    \end{tikzcd}
\end{equation}
\end{Thm}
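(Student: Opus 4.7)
The plan is to adapt the Pappas--Rapoport strategy (\cite[Theorem 4.3.1]{PappasRapoportShtukas}) to the quasi-parahoric setting. The first step is to define $\scrs_{K}\gx$ as the normalization of the Zariski closure of $\mathbf{Sh}_{K}\gx$ inside $\scrs_{U}\gxp \otimes_{\mathcal{O}_{E'}} \mathcal{O}_{E}$; this automatically produces a flat normal scheme of finite type over $\mathcal{O}_E$ with generic fiber $\mathbf{Sh}_K\gx$, together with the claimed morphism $\iota$.

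Axioms (i) and (ii) of Definition \ref{Def:PRAxioms} then follow routinely from the corresponding properties of $\scrs_U\gxp$. For (i), any $R[1/p]$-point of $\mathbf{Sh}_{K_p}\gx$ extends to an $R$-point of $\scrs_U\gxp \otimes_{\mathcal{O}_{E'}} \mathcal{O}_E$ and factors uniquely through $\scrs_K\gx$ by the universal property of relative normalization (since $R$ is normal). For (ii), the Hecke operators on $\scrs_U\gxp$ preserve the Zariski closure of $\mathbf{Sh}_K\gx$ and lift to finite \'etale maps on $\scrs_K\gx$ by functoriality of relative normalization.

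The critical step is axiom (iii), namely the construction of the $\mathcal{G}$-shtuka $\mathscr{P}_K$ on $\scrs_K\gx^{\lozenge/}$. The idea is to pull back $\mathscr{P}_U$ along $\iota$ to obtain a $\mathcal{G}'$-shtuka on $\scrs_K\gx^{\lozenge/}$ whose restriction to $\mathbf{Sh}_K\gx^\lozenge$ is canonically identified with $\mathcal{G}' \times^{\mathcal{G}} \mathscr{P}_{K,E}$, and then to reduce the structure group back to $\mathcal{G}$. The hypothesis $\mathcal{G}(\zpbr) = \iota^{-1}(\mathcal{G}'(\zpbr)) \cap G(\qpbr)$ combined with standard Bruhat--Tits theory implies that $\mathcal{G} \to \mathcal{G}'$ is a closed immersion of affine flat $\zp$-group schemes; hence $\mathcal{G}'/\mathcal{G}$ is representable by an affine scheme, and $\mathcal{G}$-reductions of a $\mathcal{G}'$-torsor correspond to sections of the associated affine bundle. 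Using normality of $\scrs_K\gx$ and the existence of such a section on the generic fiber, the reduction extends uniquely to $\scrs_K\gx^{\lozenge/}$, with compatibility to the meromorphic Frobenius automatic since this is a closed condition. Axiom (iv) then follows from the analogous property of $\scrs_U\gxp$ together with the observation that $\iota$ induces a closed immersion on formal completions matching the closed immersion of integral local Shimura varieties $\mintxgmu \hookrightarrow \mathcal{M}^{\mathrm{int}}_{\mathcal{G}',b_{\iota(x)},\mu'}$ at the level of deformations.

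The main obstacle is the extension of the $\mathcal{G}$-reduction in axiom (iii). The delicate point is verifying that $\mathcal{G}$ embeds as a closed subgroup scheme of $\mathcal{G}'$ under the stated compatibility of $\zpbr$-points; in the quasi-parahoric setting this requires handling both the parahoric identity components (to which the classical Bruhat--Tits arguments apply) and the finite component groups $\pi_0(\mathcal{G})$ and $\pi_0(\mathcal{G}')$. Once the shtuka is constructed, its factorization through $\shtgmuone$ is automatic by Remark \ref{Rem:ShtukaOne}, and the commutativity of the final diagram follows directly from the construction of $\mathscr{P}_K$ as a reduction of $\iota^\ast \mathscr{P}_U$.
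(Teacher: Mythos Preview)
Your overall structure mirrors the paper's, but the argument for axiom (iii) contains a genuine error. You assert that the hypothesis $\mathcal{G}(\zpbr) = \iota^{-1}(\mathcal{G}'(\zpbr)) \cap G(\qpbr)$ forces $\mathcal{G} \to \mathcal{G}'$ to be a \emph{closed immersion} of flat affine $\zp$-group schemes. This is false in general: what Bruhat--Tits theory (via \cite[Corollary 2.10.10]{KalethaPrasad}) gives is only a homomorphism $\mathcal{G} \to \mathcal{G}'$ extending $G \hookrightarrow G'$, and as the paper notes (following \cite[Section 4.3]{PappasRapoportShtukas}), this identifies $\mathcal{G}$ with the \emph{group smoothening} of the Zariski closure $\overline{\mathcal{G}}$ of $G$ in $\mathcal{G}'$. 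The closure $\overline{\mathcal{G}}$ need not be smooth, so $\mathcal{G} \to \overline{\mathcal{G}}$ is typically not an isomorphism and $\mathcal{G} \to \mathcal{G}'$ is not a closed immersion. Consequently $\mathcal{G}'/\mathcal{G}$ is not affine in general, and your section-extension argument breaks down. (The situation where $\mathcal{G} \hookrightarrow \mathcal{G}'$ \emph{is} a closed immersion is precisely the ``good Hodge embedding'' hypothesis used elsewhere in the paper, and it is not assumed here.)

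The paper's proof of axiom (iii) instead follows \cite[Section 4.6]{PappasRapoportShtukas} (with the modifications of \cite[Section 4.8.1]{PappasRapoportShtukas}): one realizes $\mathcal{G}$ as the scheme-theoretic stabilizer of a finite collection of tensors in $\Lambda^\otimes$, shows these tensors extend from the generic fiber, and then invokes the extension theorem of Ansch\"utz \cite[Corollary 11.6]{AnschuetzExtension}---crucially upgraded to the quasi-parahoric case by \cite[Propositions 3.2.1, 3.2.2]{PappasRapoportRZSpaces}---to produce the $\mathcal{G}$-reduction of the pulled-back $\mathcal{G}'$-shtuka. Your sketch of axiom (iv) is closer to the mark, but the paper emphasizes that one must check \cite[Proposition 4.7.1]{PappasRapoportShtukas} goes through for quasi-parahoric $\mathcal{G}$ and arbitrary algebraically closed $\ell$, rather than simply appealing to a closed immersion of integral local Shimura varieties.
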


\begin{proof}
This follows as in the proofs of \cite[Theorem 4.3.1, Theorem 4.5.2]{PappasRapoportShtukas}, with some small modifications as outlined below. \smallskip

We define $\mathscr{S}_{K}\gx$ to be the normalization of the Zariski closure of $\mathbf{Sh}_{K}\gx_{E}$ in $\mathscr{S}_{U}\gxp$ for all $K^p$. Axioms (i) and (ii) follow as in the proofs of \cite[Theorem 4.5.2]{PappasRapoportShtukas}. It remains to show that $\mathscr{S}_{K}\gx$ satisfies axioms (iii) and (iv). 

The assumption that $\mathcal{G}(\zpbr)= \iota^{-1}(\mathcal{G}'(\zpbr)) \cap G(\qpbr)$ implies that there is a natural map $\mathcal{G} \to \mathcal{G}'$ extending $G \to G'$ on the generic fiber, see \cite[Corollary 2.10.10]{KalethaPrasad}, which identifies $\mathcal{G}$ with the group smoothening of the Zariski closure $\overline{\mathcal{G}}$ of $G$ in $\mathcal{G}'$; this is explained in \cite[Section~3.6]{PappasRapoportShtukas}. 

Thus we obtain a commutative diagram
\begin{equation}
    \begin{tikzcd}
    \mathbf{Sh}_{K}\gx_{E}^{\diamondsuit} \arrow{d} \arrow{r} & \shtgmu \times_{\spd(\mathcal{O}_E)} \spd(E) \arrow{d} \\
    \mathbf{Sh}_{U}\gxp_{E}^{\diamondsuit} \arrow{r} & \operatorname{Sht}_{\mathcal{G}',\mu'} \times_{\spd(\mathcal{O}_E')} \spd(E).
    \end{tikzcd}
\end{equation}
By assumption the bottom horizontal arrow extends to a morphism $\mathscr{S}_{U^pU_p}\gxp^{\diamondsuit/} \to \operatorname{Sht}_{\mathcal{G}',\mu'} \times_{\spd(\mathcal{O}_E')} \spd \mathcal{O}_E$. We want to show that the top horizontal arrow extends (necessarily uniquely) to a morphism
\begin{align}
    \mathscr{S}_{K}\gx^{\diamondsuit/} \to \shtgmu
\end{align}
for all sufficiently small $K^p$. The existence of this extension of the $\mathcal{G}$-shtuka can be proved by following the argument in \cite[Section 4.6]{PappasRapoportShtukas}, taking into account the modifications to these arguments discussed in \cite[Section 4.8.1]{PappasRapoportShtukas}. Moreover we should take into account that \cite[Corollary 11.6]{AnschuetzExtension}, used in \cite[Lemma 4.6.6]{PappasRapoportShtukas}, has been extended to include quasi-parahoric group schemes, see \cite[Proposition 3.2.1, Proposition 3.2.2]{PappasRapoportRZSpaces}. A key point is that $\mathcal{G} \to \mathcal{G}'$ factors as $\mathcal{G} \to \overline{\mathcal{G}} \xhookrightarrow{} \mathcal{G}'$ where $\overline{\mathcal{G}} \xrightarrow{} \mathcal{G}'$ is a closed immersion and $\mathcal{G} \to \overline{\mathcal{G}}$ is a group smoothening, see \cite[Lemma 3.6.1]{PappasRapoportShtukas}. In particular, for a faithful representation $\mathcal{G}' \to \operatorname{GL}_{V,\zp}$, we may cut out both $\overline{\mathcal{G}} \subset \operatorname{GL}_{V,\zp}$ and $\mathcal{G}' \subset \operatorname{GL}_{V,\zp}$ by tensors by \cite[1.3.2]{KisinModels}. \smallskip 

We observe that the proof of \cite[Proposition 4.7.1]{PappasRapoportShtukas} goes through for quasi-parahoric $\mathcal{G}$ and with $k_E$ replaced by an arbitrary algebraically closed field $\ell$. The proof of Axiom (iv) in \cite[Section 4.7.1]{PappasRapoportShtukas} then applies to prove axiom (iv) for $\mathscr{S}_{K}\gx^{\diamondsuit/}$.
\end{proof}

\subsubsection{} We have the following version of
\cite[Theorem~4.2.4]{PappasRapoportShtukas}. Let $f \colon \gxg \to (\gp,
\mathsf{X}^\prime, \mathcal{G}^\prime)$ be a morphism of triples (meaning
Shimura data together with quasi-parahoric models) with induced inclusion
$\mathsf{E}' \subseteq \mathsf{E}$. Let $v \mid p$ be a place of $\mathsf{E}$
with induced place $v'$ of $\mathsf{E}'$, and let $E,E'$ be the respective
completions; write $K_p = \mathcal{G}(\zp)$ and $K_p' = \mathcal{G}'(\zp)$.
{
\def\gxgp{(\mathsf{G}', \mathsf{X}', \mathcal{G}')}
\def\ssgx{\mathscr{S}_K(\mathsf{G}, \mathsf{X})}
\def\ssgxp{\mathscr{S}_{K'}(\mathsf{G}', \mathsf{X}')}
\def\bsspp{\bar{\mathscr{S}}''}
\def\sspp{\mathscr{S}''}
\def\minttub{\widehat{\mathcal{M}^\mathrm{int}_{\mathcal{G},b_x,\mu,/x_0}}}
\def\minttubp{\widehat{\mathcal{M}^\mathrm{int}_{\mathcal{G}',b_{x'},\mu',/x_0'}}}

\begin{Prop} \label{Prop:PRAxiomsFunctorial}
  Assume there exist canonical integral models $\{\ssgx\}_{K^p}$ and
  $\{\ssgxp\}_{K^{p\prime}}$. Then for neat
  $K^p$ and $K^{p\prime}$ such that $f(K^p) \subseteq
  K^{p\prime}$, the map of Shimura varieties $\mathbf{Sh}_K\gx \to
  \mathbf{Sh}_K\gxp_{E}$ extends (necessarily uniquely) to a map of integral models
  \[
    \ssgx \to \ssgxp_{\mathcal{O}_{E}},
  \]
  and moreover there exists a (necessarily unique) $2$-commutative diagram
  \[ \begin{tikzcd}
    \ssgx^{\diamondsuit/} \arrow{r} \arrow{d}{\pi_{\mathrm{crys},\mathcal{G}}} &
    \ssgxp_{\mathcal{O}_{E}}^{\diamondsuit/} \arrow{d}{\pi_{\mathrm{crys},\mathcal{G}'}} \\
    \shtgmu \arrow{r} & \shtgmup \otimes_{\spd \mathcal{O}_{E'}} \spd \mathcal{O}_{E}
  \end{tikzcd} \]
  extending the natural one on the generic fiber.
\end{Prop}

\begin{proof}
The uniqueness of the morphism follows from the flatness and separatedness of the integral models, and the commutativity of the diagram follows from \cite[Corollary 2.7.10.]{PappasRapoportShtukas} and the existence of the analogous commutative diagram on the generic fiber. For the existence of the morphism, let us denote by $\bsspp$ the scheme-theoretic closure of the graph of $f$, 
  $\Gamma_f \subseteq \mathbf{Sh}_K\gx \times_{\spec E'} \mathbf{Sh}_{K'}\gxp$, inside $\ssgx \times_{\spec \mathcal{O}_E'} \ssgxp$. We define $\nu \colon \sspp \to \bsspp$ to be its normalization, so that we have maps
  \[
    \sspp \xrightarrow{\nu} \bsspp \hookrightarrow \ssgx \times_{\spec
    \mathcal{O}_{E'}} \ssgxp.
  \]
  We are going to show that the maps $\nu:\sspp \to \bsspp$ and $\bsspp \to \ssgx$ are isomorphisms, so that $\sspp \to \ssgx \times_{\spec
    \mathcal{O}_{E'}} \ssgxp$ is the graph of desired morphism. We note that the generic fiber of $\bsspp$ is isomorphic to $\mathbf{Sh}_K\gx$,
  which is already normal, and thus $\nu$ is an isomorphism over the generic
  fiber.\smallskip

  By definition of a canonical integral model, there exists a
  $\mathcal{G}$-shtuka $\mathscr{P}$ on $\ssgx^\diamond$ and also a
  $\mathcal{G}'$-shtuka $\mathscr{P}'$ on $\ssgxp^\diamond$. We consider their
  pullbacks along the morphisms
  \[
    \mathrm{pr}_1 \colon \bsspp \to \ssgx, \quad \mathrm{pr}_2 \colon \bsspp \to
    \ssgxp.
  \]
  We then obtain two $\mathcal{G}'$-shtukas
  \[
    \mathcal{G}' \times^{\mathcal{G}} (\nu^\ast \mathrm{pr}_1^\ast \mathscr{P}),
    \quad \nu^\ast \mathrm{pr}_2^\ast \mathscr{P}'
  \]
  on $\sspp^\diamond$, where the restriction of both $\mathcal{G}'$-shtukas to
  the generic fiber is naturally identified with the shtuka induced by the
  $K_p'$-local system on $\mathbf{Sh}_K\gx$. Using
  \cite[Corollary~2.7.10]{PappasRapoportShtukas}, we may extend the identification over the generic fiber uniquely to an
  isomorphism of $\mathcal{G}'$-shtukas
  \[
    \psi \colon \mathcal{G}' \times^{\mathcal{G}} (\nu^\ast \mathrm{pr}_1^\ast
    \mathscr{P}) \xrightarrow{\cong} \nu^\ast \mathrm{pr}_2^\ast \mathscr{P}'.
  \]

  Let $x'' \in \sspp(\fpbar)$ be an arbitrary point, where we implicitly choose
  an embedding $k_{E'} \to \fpbar$. Consider its images $\bar{x}'' = \nu(x'')
  \in \bsspp(\fpbar)$, $x = \mathrm{pr}_1(\bar{x}'') \in \ssgx(\fpbar)$, and
  $x' = \mathrm{pr}_2(\bar{x}'') \in \ssgxp(\fpbar)$. Denote by $\spf R_x$ and
  $\spf R_{x'}$ the formal completions of the closed points $x \in
  \ssgx_{\mathcal{O}_{\breve{E}}}$ and $x^\prime \in
  \ssgxp_{\mathcal{O}_{\breve{E}^\prime}}$ respectively, and similarly define $\spf R_{x''}$ and $\spf R_{\bar{x}''}$. By axiom (iv) of Definition \ref{Def:PRAxioms}, there exist
  framings of the shtukas $\mathscr{P} \vert_{\spd R_x}$ and $\mathscr{P}'
  \vert_{\spd R_{x'}}$ which induce isomorphisms
  \begin{equation}\label{Eq:framings}
    \spd R_x \xrightarrow{\cong} \widehat{\mintgbxmu{e}}_{/x_0}, \quad \spd R_{x'} \xrightarrow{\cong}
    \widehat{\mathcal{M}^{\mathrm{int}}_{\mathcal{G}',b_{x'},\mu',e'}}_{/x_0'}.
  \end{equation}
  Via the isomorphism $\psi$, we may identify the two basepoints and obtain a
  morphism
  \[
    g \colon \spd R_x \cong \widehat{\mintgbxmu{e}}_{/x_0} \to \widehat{\mathcal{M}^{\mathrm{int}}_{\mathcal{G}',b_{x'},\mu',e'}}_{/x_0'} \cong \spd R_{x'}
  \]
  by functoriality of integral local Shimura varieties. By \cite[Proposition~18.4.1]{ScholzeWeinsteinBerkeley}, this
  corresponds to a continuous ring homomorphism $R_{x'} \to R_x$\footnote{The normality
  of $R_x$ and $R_{x'}$ follow from normality of $\ssgx$ and $\ssgxp$ because
  base change along the ind-\'{e}tale maps $\mathcal{O}_E \to
  \mathcal{O}_E^\mathrm{unr}$ and $\mathcal{O}_{E'} \to
  \mathcal{O}_{E'}^\mathrm{unr}$ preserves normality, see \cite[Tag~033C, Tag~037D]{stacks-project}, and then normality passes further along formal completions by excellence, see \cite[Tag 0C23]{stacks-project}.}. \smallskip

  Note that the generic fiber $(\spf R_x)_\eta$ is naturally an open subset of
  the rigid analytic variety
  $\mathbf{Sh}_K\gx_{\breve{E}}^\mathrm{ad}$, and similarly for $(\spf
  R_{x'})_\eta$.

  \begin{Claim} \label{Claim:1}
   The diagram
  \[ \begin{tikzcd}[row sep=small]
    (\spf R_x)_\eta \arrow{r}{g} \arrow[hook]{d} & (\spf R_{x'})_\eta
    \arrow[hook]{d} \\ \mathbf{Sh}_K\gx_{\breve{E}}^\mathrm{ad} \arrow{r}{f} &
    \mathbf{Sh}_{K'}\gxp_{\breve{E}'}^\mathrm{ad}
  \end{tikzcd} \]
  commutes. 
  \end{Claim}

  \begin{proof}
  We first note that by \cite[Proposition~4.2.5]{PappasRapoportShtukas}, the
  isomorphisms \eqref{Eq:framings} may be chosen such that 
  \begin{itemize}
    \item[(1)] the framing of $\mathscr{P} \vert_{\spd R_x}$ pulled back to
      $\spd R_{x''}$ along $\mathrm{pr}_1 \circ \nu$ and pushed forward along
      $\mathcal{G} \to \mathcal{G}'$, and
    \item[(2)] the framing of $\mathscr{P}' \vert_{\spd R_{x'}}$ pulled back to
      $\spd R_{x''}$ along $\mathrm{pr}_2 \circ \nu$
  \end{itemize}
  agree under the identification of $\psi$. This shows that the diagram
  \[ \begin{tikzcd}
    \spd R_{x''} \arrow[equals]{r} \arrow{d}{\mathrm{pr}_1} & \spd R_{x''}
    \arrow{d}{\mathrm{pr}_2} \\ \spd R_x \cong \widehat{\mintgbxmu{e}}_{/x_0} \arrow{r}{g} & \widehat{\mathcal{M}^{\mathrm{int}}_{\mathcal{G}',b_{x'},\mu',e'}}_{/x_0'}
    \cong \spd R_{x'}
  \end{tikzcd} \]
  commutes. By \cite[Proposition~18.4.1]{ScholzeWeinsteinBerkeley}, the corresponding diagram of formal schemes also commutes, and this implies the commutativity of the following diagram:
  \begin{equation}
      \begin{tikzcd}
          (\spf R_{x''})_{\eta}  \arrow[equals]{r} \arrow{d} & (\spf R_{x''})_{\eta} \arrow{d} \\
           (\spf R_x)_\eta \arrow{r}{g} & (\spf R_{x'})_\eta.
      \end{tikzcd}
  \end{equation}
Observe that there is an open inclusion
\[
    (\spf R_{\bar{x}''})_\eta \subset ((\spf R_x)_\eta \times_{\spa \breve{E}'} (\spf
    R_{x'})_\eta) \cap \Gamma_f^\mathrm{ad}=:Y
  \]
and it follows similarly that $(\spf R_{x''})_{\eta}$ is open inside $Y$. It thus follows from the previous commutative diagram, that the diagram in the claim commutes when restricted to the open $(\spf R_{x''})_{\eta} \subset (\spf R_x)_\eta$. Since the locus where the two maps in the diagram in the claim agree is closed (the Shimura variety is separated) and contains the nonempty open $(\spf R_{x''})_{\eta} \subset (\spf R_x)_\eta$, we may conclude using the connectedness of $(\spf R_x)_\eta$ which follows from the normality of $R_x$, see \cite[Lemma~7.3.5]{deJongFormalRigid}.
  \end{proof}
Recall that we have a closed embedding
  \[
    \spf R_{\bar{x}''} \hookrightarrow \spf (R_x
    \operatorname{\widehat\otimes}_{\mathcal{O}_{\breve{E}'}} R_{x'}) = \spf R_x
    \times_{\spf \mathcal{O}_{\breve{E}'}} \spf R_{x'}
  \]
  of formal schemes, which on the generic fiber identifies with the graph of $g$. Now we have the following claim. 
  \begin{Claim}
  The closed sub-formal scheme
  $\spf R_{\bar{x}''}$ is equal to the graph of $g$, and $\spf R_{\bar{x}''}=\spf R_{x''}$.    
  \end{Claim}
  \begin{proof}
  The natural map
  \begin{align}
      \coprod_{x''}\spf R_{x''} \to \sspp \times_{\bsspp} \spf R_{\overline{x}''},
  \end{align}
  where the disjoint union is over all $x''$'s that map to $\overline{x}''$, is
  an isomorphism. In particular, the rigid fiber of the left hand side agrees
  with the rigid fiber of $\spf R_{\overline{x}''}$. Now both the graph of $g
  \colon \spf R_x \to \spf R_{x'}$ and $\coprod_{x''}\spf R_{x''}$ are affine
  flat normal formal schemes (by excellence of finite type schemes over
  $\mathcal{O}_{\breve{E}}$) mapping to $\spf R_x \times_{\spf
  \mathcal{O}_{\breve{E}'}} \spf R_{x'}$. Moreover, their generic fibers are
  the same Zariski closed subspace of the generic fiber of $\spf R_x
  \times_{\spf \mathcal{O}_{\breve{E}'}} \spf R_{x'}$ by Claim \ref{Claim:1}.
  The formal schemes are thus isomorphic (over $\spf R_x \times_{\spf
  \mathcal{O}_{\breve{E}'}} \spf R_{x'}$), because they can be recovered as
  the $\mathcal{O}^+$-sections of their (isomorphic) adic generic fibers, see
  \cite[Theorem 7.4.1]{deJongFormalRigid}. Since the graph of $g$ is closed
  inside of $\spf R_x \times_{\spf \mathcal{O}_{\breve{E}'}}\spf R_{x'}$, it
  follows that $\spf R_{x''} \to \spf R_x \times_{\spf
  \mathcal{O}_{\breve{E}'}} \spf R_{x'}$ is a closed immersion. Hence $\spf
  R_{x''} = \spf R_{\bar{x}''}$.
  \end{proof}
  Since $\nu:\sspp \to \bsspp$ is surjective, it follows that that complete
  local rings of $\bsspp$ are normal, which implies that $\bsspp$ is normal
  (because normality can be checked at completed local rings at closed points by
  \cite[Lemma~00MC, Lemma~033D]{stacks-project}) and thus that $\nu$ is an
  isomorphism. We have moreover shown that $\bsspp \to \ssgx$ induces
  isomorphisms $\spf R_{\bar{x}''} \xrightarrow{\sim} \spf R_{x}$ for all $x''$.
  Because $\mathscr{S}''(\fpbar) \to \bar{\mathscr{S}}''(\fpbar)$ is surjective,
  the birational map $\mathrm{pr}_1 \colon
  \bar{\mathscr{S}}''_{\mathcal{O}_{\breve{E}}} \to
  \ssgx_{\mathcal{O}_{\breve{E}}}$ induces isomorphisms on completed local
  rings. It is therefore a quasi-finite birational map between reduced separated
  Noetherian schemes, where the target is normal, and hence an open embedding by
  Zariski's main theorem.
  \smallskip

  To show that $\mathrm{pr}_1$ is an isomorphism, it now suffices to show that
  it is surjective on $\fpbar$-points. Given any $x \in
  \ssgx_{\mathcal{O}_{\breve{E}}}(\fpbar)$, we can first (by flatness) lift it
  to some $\mathcal{O}_F$-point $\tilde{x}$ for $F/\breve{E}$ a finite
  extension. We first note that the $K^p$-local system on the its generic
  point $\tilde{x}_\eta \in \ssgx(F)$ is trivial as it comes from a $K^p$ local
  system on $\spec \mathcal{O}_F$, and hence so is the
  $K^{p\prime}$-local system on $f(\tilde{x}_\eta)$. By the extension axiom~(i)
  of Definition~\ref{Def:PRAxioms}, this extends to an $\mathcal{O}_F$-point
  $\tilde{y}$ of $\ssgxp$. As $(\tilde{x}_\eta, \tilde{y}_\eta)$ is in the graph
  of $f$, its extension $(\tilde{x}, \tilde{y})$ is in the Zariski closure
  $\bar{\scrs}'' \cong \scrs$. Then the reduction $(x, y)$ is an $\fpbar$-point
  mapping under $\mathrm{pr}_1$ to $x$, completing the proof of surjectivity.
\end{proof}

The following corollary is a slight generalization of \cite[Theorem 4.2.4]{PappasRapoportShtukas} in the quasi-parahoric setting.

\begin{Cor} \label{Cor:PRUnique}
A canonical integral model $\{\scrs_K\gx\}_{K^p}$ of $\{\mathbf{Sh}_{K}\gx\}_{K^p}$ is unique up to unique isomorphism, if it exists.
\end{Cor}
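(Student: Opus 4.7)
The plan is to adapt the parahoric uniqueness argument of \cite[Theorem 4.2.4]{PappasRapoportShtukas} to the quasi-parahoric setting. By Remark \ref{Rem:ShtukaOne}, the shtuka on any canonical integral model factors through the open and closed substack $\shtgmuone \subset \shtgmu$, so the axioms of Definition \ref{Def:PRAxioms} are structurally parallel to those of the parahoric case, and the Pappas--Rapoport argument carries over with only minor modifications.

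Given two canonical integral models $\{\scrs_K\gx\}_{K^p}$ and $\{\scrs'_K\gx\}_{K^p}$, for each sufficiently small $K^p$ I would construct a unique isomorphism $\scrs_K\gx \xrightarrow{\sim} \scrs'_K\gx$ extending the identity on the common generic fiber $\mathbf{Sh}_K\gx$. Uniqueness of any such isomorphism is automatic from flatness of $\scrs_K\gx$ over $\mathcal{O}_E$ (so that $\mathbf{Sh}_K\gx$ is scheme-theoretically dense) and separatedness of $\scrs'_K\gx$. For existence, I would use axioms (i) and (ii) exactly as in \cite[Theorem 4.2.4]{PappasRapoportShtukas}: axiom (i) identifies the $R$-points of both inverse limits $\varprojlim_{K^p}\scrs_K\gx$ and $\varprojlim_{K^p}\scrs'_K\gx$ with $\mathbf{Sh}_{K_p}\gx(R[1/p])$ for every DVR $R$ of mixed characteristic over $\mathcal{O}_E$, while axiom (ii) ensures that the transition maps at finite level are finite \'etale. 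These ingredients combine via a N\'eron-type extension at codimension-one points of the special fibers to produce a $\gafp$-equivariant morphism at infinite level extending the identity on $\mathbf{Sh}_{K_p}\gx$, which descends to a morphism $\scrs_K\gx \to \scrs'_K\gx$ at each finite level $K = K_pK^p$.

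To verify the morphism is an isomorphism I would use axioms (iii) and (iv) for both systems. For any closed point $x \in \scrs_K\gx(\ell)$ with image $x' \in \scrs'_K\gx(\ell)$, axiom (iv) provides identifications
\[
(\widehat{{\scrs_K\gx}_{\oeel{e},/x}})^\lozenge \xleftarrow{\Theta_x} \widehat{\mintxgmu}_{/x_0} \xrightarrow{\Theta_{x'}} (\widehat{{\scrs'_K\gx}_{\oeel{e},/x'}})^\lozenge,
\]
where the common base point $x_0$ makes sense because the shtukas $\mathscr{P}_K$ and $\mathscr{P}'_K$ define the same class in $\shtgmuone(\spd(\ell))$ at $x$ and $x'$ (using Remark \ref{Rem:ShtukaOne}). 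Axiom (iii) forces both $\Theta_x$ and $\Theta_{x'}$ to match the universal shtuka on $\mintxgmu$ with the respective pullback of $\mathscr{P}_K$ and $\mathscr{P}'_K$; hence the composite $\Theta_{x'} \circ \Theta_x^{-1}$ agrees with the map induced by our morphism, so the morphism is an isomorphism on completed local rings. Combined with flatness, finite type, normality, and agreement on the generic fiber, it is an isomorphism of schemes.

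The main obstacle I anticipate is the N\'eron-type extension step — promoting the agreement on DVR points at infinite level to a morphism of finite-level schemes in a Hecke-equivariant way — which is handled exactly as in the parahoric case of \cite{PappasRapoportShtukas}, now using the torsor description $\shtgcircmu \to \shtgmuone$ of Corollary \ref{Cor:QuasiParahoricShtukas} to align the quasi-parahoric data with the parahoric one.
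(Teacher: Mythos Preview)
The paper takes a different, more streamlined route. Rather than constructing a morphism $\scrs_K\gx \to \scrs'_K\gx$ directly, it observes that the product $\{\scrs_K\gx \times \scrs'_K\gx\}_{K^p \times K^p}$ is itself a canonical integral model for the product Shimura datum $(\mathsf{G}\times\mathsf{G},\mathsf{X}\times\mathsf{X})$, and then applies the already-proved functoriality result, Theorem~\ref{Thm:PRAxiomsFunctorial}, to the diagonal embedding $(\mathsf{G},\mathsf{X},\mathcal{G}) \hookrightarrow (\mathsf{G}\times\mathsf{G},\mathsf{X}\times\mathsf{X},\mathcal{G}\times\mathcal{G})$. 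This produces a third model $\scrs''_K\gx$ with a map to the product extending the diagonal on the generic fiber, and one then verifies as in \cite[Theorem~4.2.4]{PappasRapoportShtukas} that this map is the graph of an isomorphism. The point is that the construction of the comparison morphism is entirely outsourced to Theorem~\ref{Thm:PRAxiomsFunctorial}.

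Your approach has a gap at the step you call ``N\'eron-type extension''. Axiom~(i) tells you that for every mixed-characteristic DVR $R$ over $\mathcal{O}_E$, the $R$-points of the two infinite-level towers agree as sets; it does not produce a morphism of schemes. In the hyperspecial case one can get away with the extension property alone because the models are smooth, but here the models are only normal and axiom~(i) is too weak to pin down a map. What is actually needed---and what your outline is implicitly reaching for---is to form the normalized Zariski closure of the diagonal inside $\scrs_K\gx \times_{\mathcal{O}_E} \scrs'_K\gx$ and then show the two projections are isomorphisms; but that is precisely what applying Theorem~\ref{Thm:PRAxiomsFunctorial} to the diagonal does. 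Your verification via axiom~(iv) that the resulting map is an isomorphism on formal completions is correct and is also what the paper's argument ultimately rests on. Finally, your closing reference to Corollary~\ref{Cor:QuasiParahoricShtukas} is unnecessary: the uniqueness argument uses only the axioms of Definition~\ref{Def:PRAxioms} (together with Remark~\ref{Rem:ShtukaOne}), not the torsor description.
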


\begin{proof}
Let $\{\ssgx\}_{K^p}$ and $\{\ssgx'\}_{K^{p}}$ be canonical integral models of $\{\mathbf{Sh}_{K}\gx\}_{K^p}$. Then by Proposition \ref{Prop:PRAxiomsFunctorial} applied to the identity map $f:\gxg \to \gxg$, there are unique maps
\[
  \ssgx \to \ssgx', \quad \ssgx' \to \ssgx
\]
extending the identity on the generic fiber. These are mutually inverse, because they are mutually inverse on the generic fiber and both $\ssgx$ and $\ssgx'$ are separated.
\end{proof}
}

\subsubsection{} The following theorem will be a crucial ingredient in the proof of Theorem \ref{Thm:IntroMain}. Let $\mathcal{G}$ be a quasi-parahoric model of $G$ and let $\mathcal{H} \subset \mathcal{G}$ be a quasi-parahoric subgroup (i.e. $\mathcal{H}^\circ=\calgcirc$). Let $K_p=\mathcal{G}(\zp)$ and $K_p'=\mathcal{H}(\zp)$. For $K^p \subset \gafp$ a sufficiently small compact open subgroup write $K=K^pK_p$ and $K'=K^pK_p'$. Assume for each $K^p$, we have a normal integral model $\scrs_{K}\gx$ of $\mathbf{Sh}_K\gx$ which is flat, separated and of finite-type over $\mathcal{O}_E$. We define
\begin{align}
    \scrs_{K'}\gx \to \scrs_{K}\gx
\end{align}
to be the relative normalization of $\mathscr{S}_K\gx$ in the composition
\[\mathbf{Sh}_{K^\prime}\gx \to \mathbf{Sh}_K\gx \to \mathscr{S}_K\gx.\]

\begin{Thm} \label{Thm:Devissage}
With the above construction, suppose $\{\scrs_K\gx\}_{K^p}$ is a canonical integral model for $\{\mathbf{Sh}_{K}\gx\}_{K^p}$. Then $\{\scrs_{K'}\gx\}_{K^p}$ is a canonical integral model for $\{\mathbf{Sh}_{K'}\gx\}_{K^p}$. Moreover, for each $K^p$ the map $\scrs_{K'}\gx \to \scrs_K\gx$ is finite \'{e}tale and the $2$-commutative diagram
\begin{equation}
    \begin{tikzcd}
        \scrs_{K'}\gx^{\diamondsuit/} \arrow{r} \arrow{d} & \shthmuone \arrow{d} \\
        \scrs_K\gx^{\diamondsuit/} \arrow{r} & \shtgmuone 
    \end{tikzcd}
\end{equation}
is $2$-Cartesian.
\end{Thm}

\begin{proof}
  We start by noting that axioms (i) and (ii) are a straightforward consequence
  of the corresponding axioms for $\{\scrs_K\gx\}_{K^p}$. Indeed, given a
  discrete valuation ring $R$ of mixed characteristic and a map $\spec R[1/p]
  \to \varprojlim_{K^p} \mathbf{Sh}_{K'}\gx$, we can extend its projection
  $\spec R[1/p] \to \varprojlim_{K^p} \mathbf{Sh}_{K}\gx$ to a map $\spec R \to
  \varprojlim_{K^p} \scrs_K\gx$. Since each $\scrs_{K'}\gx \to \scrs_K\gx$ is
  finite hence proper, we can use the valuative criterion to lift it to $\spec R
  \to \varprojlim_{K^p} \scrs_{K'}\gx$.

  For axiom (ii), we observe that for neat open compact subgroups $K^p_1
  \subseteq K^p_2$, the base change
  \begin{align*}
    \mathbf{Sh}_{K_p' K^p_1}\gx &= \mathbf{Sh}_{K_p' K^p_2}\gx
    \times_{\scrs_{K_p K^p_2}} \scrs_{K_p K^p_1} \\ &\to \scrs_{K_p' K^p_2}\gx
    \times_{\scrs_{K_p K^p_2}} \scrs_{K_p K^p_1} \to \scrs_{K_p K^p_1}
  \end{align*}
  is the composition of an open embedding followed by a finite map over a normal
  scheme $\scrs_{K_p K^p_1}$, and hence the relative normalization. This shows
  that $\scrs_{K_p' K^p_1} = \scrs_{K_p' K^p_2}\gx \times_{\scrs_{K_p K^p_2}}
  \scrs_{K_p K^p_1} \to \scrs_{K_p' K^p_2}$ is the base change of a finite
  \'{e}tale map, hence finite \'{e}tale.

  We next verify axiom (iii).
By Remark \ref{Rem:ShtukaOne}, the map $\scrs_K\gx^{\diamondsuit/} \to \shtgmu$ factors through $\shtgmuone$. The morphism
    \begin{align}
    \mathscr{Z}:=\scrs_{K}\gx^{\diamondsuit/} \times_{\shtgmuone} \shthmuone \to \scrs_K\gx^{\diamondsuit/}
    \end{align}
    is a torsor for the finite abelian group $\pi_0(\mathcal{G})^{\phi}/\pi_0(\mathcal{H})^{\phi}$ by Corollary \ref{Cor:QuasiParahoricShtukasIII}. If we base change to $\spd(E)$ and apply Corollary \ref{Cor:QuasiParahoricShtukasGeneric} twice, we obtain the Cartesian diagram
    \begin{equation}
    \begin{tikzcd}
                \mathscr{Z}_E \arrow{d} \arrow{r} & \left[ \operatorname{Gr}_{G, \mu^{-1}} / \underline{\mathcal{H}(\zp)} \right] \arrow{d} \\
        \mathbf{Sh}_K\gx^{\diamondsuit} \arrow{r} & \left[ \operatorname{Gr}_{G, \mu^{-1}} / \underline{\mathcal{G}(\zp)} \right].
    \end{tikzcd}
    \end{equation}
It follows from the construction of the bottom horizontal map, see Section \ref{Subsub:ShtukasGenericConstruction}, that this identifies $ \mathscr{Z}_E \to \mathbf{Sh}_K\gx^{\diamondsuit}$ with $\mathbf{Sh}_{K'}\gx^{\diamondsuit} \to \mathbf{Sh}_K\gx^{\diamondsuit}$. It now follows from Proposition \ref{Prop:FiniteEtaleCover} that $\scrs_{K'}\gx^{\diamondsuit/}$ is isomorphic to $\mathscr{Z}$. This shows that there is a morphism 
    \begin{align}
        \scrs_{K'}\gx^{\diamondsuit/} \to \shthmuone,
    \end{align}
    proving axiom (iii) and exhibits the Cartesian diagram in the statement of the theorem. Moreover, we see that $\scrs_{K'}\gx \to \scrs_{K}\gx$ is finite \'etale.

    For axiom (iv), we first observe that, by \cite[Proposition 4.2.1]{PappasRapoportRZSpaces} the natural map
    \begin{align}
        \widehat{\mintxgcircmu}_{/x_0} \to \widehat{\mintxgmu}_{/x_0} 
    \end{align}
    is an isomorphism, and the same is true for the natural map of formal completions of $\scrs_{K'}\gx$ and $\scrs_{K}\gx$, since $\scrs_{K'}\gx \to \scrs_{K}\gx$ is finite \'etale. Moreover, on both the integral local Shimura variety and the global integral model, the universal $\mathcal{G}$-shtuka pulls back to the pushforward along $\mathcal{G}^\circ \to \mathcal{G}$ of the universal $\mathcal{G}^\circ$-shtuka, by functoriality of the constructions. Thus axiom (iv) for $\{\scrs_{K'}\gx\}_{K^p}$ follows from axiom (iv) for $\{\scrs_{K}\gx\}_{K^p}$.
\end{proof}

\subsection{Integral models of Shimura varieties of Hodge type} \label{Sec:HodgeType}
For a symplectic space $(V, \psi)$ over $\mathbb{Q}$ we write $\gv=\mathrm{GSp}(V, \psi)$ for the group of symplectic similitudes of $(V,\psi)$ over $\mathbb{Q}$. It admits a Shimura datum $\mathcal{H}_V$ consisting of the union of the Siegel upper and lower half spaces. 
\begin{Lem} \label{Lem:SiegelAxioms}
    Conjecture \ref{Conj:PR} holds for any choice of parahoric $\mathcal{G}_V$ of $G_V$. 
\end{Lem}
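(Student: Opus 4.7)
The plan is to reduce this immediately to \cite[Theorem 4.5.2]{PappasRapoportShtukas}, whose conclusion already covers Hodge-type data at \emph{stabilizer} parahoric level, by showing that for $G_V = \mathrm{GSp}(V,\psi)$ every parahoric subgroup is automatically a stabilizer parahoric in the sense of Section \ref{Sec:Parahorics}. I will then observe that the axioms of Definition \ref{Def:PRAxioms} specialize to the original \cite[Conjecture 4.2.2]{PappasRapoportShtukas} when the group is genuinely parahoric, so no extra verification is needed.

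First I would note that since the derived group $\mathrm{Sp}(V,\psi)$ is simply connected, the algebraic fundamental group $\pi_1(G_V) \cong \mathbb{Z}$ (via the similitude character) is torsion-free, and hence so is $\pi_1(G_V)_{I_p}$. Therefore $\pi_1(G_V)_{I_p,\mathrm{tors}} = 0$, and by the characterization in Section \ref{Sec:BruhatTits} this forces $G_V(\qpbr)^1 = G_V(\qpbr)^0$. For any facet $\mathcal{F}$ of $\mathcal{B}(G_V,\qpbr)$ we then have
\[
    G_V(\qpbr)^0_{\mathcal{F}} \;=\; G_V(\qpbr)^1_{\mathcal{F}} \;=\; \mathrm{Stab}_{G_V(\qpbr)^1}(x),
\]
for any $x$ in the interior of $\mathcal{F}$, since the pointwise fixer of $\mathcal{F}$ coincides with the stabilizer of such a point. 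This shows every parahoric $\mathcal{G}_V$ of $G_V$ is a stabilizer parahoric.

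Finally, since $(G_V, \mathsf{H}_V)$ is of Hodge type (indeed of Siegel type), and since for a parahoric $\mathcal{G}_V = \mathcal{G}_V^\circ$ the group $\pi_0(\mathcal{G}_V)$ is trivial (so in particular $\Pi_{\mathcal{G}_V} = \{1\}$ and $\mathrm{Sht}_{\mathcal{G}_V,\mu_V,\delta=1} = \mathrm{Sht}_{\mathcal{G}_V,\mu_V}$), the axioms in Definition \ref{Def:PRAxioms} collapse to exactly those of \cite[Conjecture 4.2.2]{PappasRapoportShtukas}. The Lemma therefore follows from \cite[Theorem 4.5.2]{PappasRapoportShtukas}. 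The only thing to double-check is the trivial compatibility between the two sets of axioms under $\mathcal{G}_V = \mathcal{G}_V^\circ$; there is no substantive obstacle.
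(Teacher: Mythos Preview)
Your approach is essentially the same as the paper's: reduce to \cite[Theorem 4.5.2]{PappasRapoportShtukas} by noting that every parahoric of $G_V$ is a stabilizer parahoric. Your justification via $\pi_1(G_V)_{I_p,\mathrm{tors}}=0$ is correct and makes explicit a point the paper leaves implicit.

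However, there is one small gap. You assert that for a parahoric the axioms in Definition~\ref{Def:PRAxioms} ``collapse to exactly those of \cite[Conjecture 4.2.2]{PappasRapoportShtukas}.'' This is not quite true: axiom~(iv) here is stated for an \emph{arbitrary} algebraically closed field $\ell$ of characteristic $p$ with an embedding $k_E \hookrightarrow \ell$, whereas the original formulation in \cite{PappasRapoportShtukas} is over a fixed algebraic closure of the residue field. So the axioms do not literally collapse, and your closing remark that ``there is no substantive obstacle'' skips over the one point that actually needs checking. The paper's proof addresses precisely this: it observes that the deformation theory for $p$-divisible groups used in the proof of \cite[Lemma 4.10.1]{PappasRapoportShtukas} works over any algebraically closed field, so the argument for axiom~(iv) goes through in the required generality. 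You should add this observation.
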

\begin{proof}
    This is essentially a special case of \cite[Theorem 4.5.2]{PappasRapoportShtukas}. Our formulation of axiom (iv) is stronger, but the same proof works once we observe that the deformation theory for $p$-divisible groups as in the proof of \cite[Lemma 4.10.1]{PappasRapoportShtukas} works for arbitrary algebraically closed fields. 
\end{proof}

\subsubsection{Main results} \label{Sec:Main} Let $\gx$ be a Shimura datum of Hodge type with reflex field $\mathsf{E}$, let $p$ be a prime and write $G=\mathsf{G}_{\qp}$. Fix a place $v$ above $p$ of the reflex field $\mathsf{E}$, and let $E$ be the completion of $\mathsf{E}$ at $v$ with ring of integers $\mathcal{O}_E$ and residue field $k_E$. Let $\mathcal{H}$ be any quasi-parahoric integral model of $G$ and write $K_p'=\mathcal{H}(\zp)$. For any sufficiently small compact open subgroup $K^p \subset \gafp$ we will consider the Shimura variety $\mathbf{Sh}_{K'}\gx$ of level $K'=K^pK_p'$ as a scheme over $E$. The following is the main result of this paper and verifies Conjecture \ref{Conj:PR}.
\begin{Thm} \label{Thm:Main}
   There exists a canonical integral model $\{\scrs_{K'}\gx\}_{K^p}$ of $\{\mathbf{Sh}_{K'}\gx\}_{K^p}$. 
\end{Thm}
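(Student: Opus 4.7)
The plan is to reduce Theorem~\ref{Thm:Main} to the case where the quasi-parahoric under consideration is the maximal stabilizer quasi-parahoric containing $\mathcal{H}$, via Theorem~\ref{Thm:Devissage}, and then to construct the model at that level using a Hodge embedding into a Siegel datum together with the functoriality of Theorem~\ref{Thm:PRAxiomsFunctorial}.

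\textbf{Reduction step.} Let $\mathcal{G}$ be the stabilizer quasi-parahoric group scheme of $G$ whose identity component equals $\mathcal{H}^\circ$. Then $\mathcal{H} \subseteq \mathcal{G}$ with $\mathcal{H}^\circ = \mathcal{G}^\circ$, so $\mathcal{H}$ is a quasi-parahoric subgroup of $\mathcal{G}$ in the sense of Theorem~\ref{Thm:Devissage}. By that theorem, it suffices to produce a canonical integral model $\{\scrs_K\gx\}_{K^p}$ at level $K_p = \mathcal{G}(\zp)$: the relative normalization of $\scrs_K\gx$ in $\mathbf{Sh}_{K'}\gx$ will then furnish the desired canonical integral model at level $K_p' = \mathcal{H}(\zp)$.

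\textbf{Construction at stabilizer quasi-parahoric level.} Choose a Hodge embedding $\iota \colon \gx \hookrightarrow \gvx$ and a $\zp$-lattice $\Lambda \subset V \otimes \qp$ whose stabilizer $\mathcal{G}_V$ in $G_V = \mathrm{GSp}(V,\psi)_{\qp}$ is a stabilizer quasi-parahoric satisfying
\[
  \mathcal{G}(\zpbr) = \iota^{-1}(\mathcal{G}_V(\zpbr)) \cap G(\qpbr).
\]
Such a pair $(\iota, \Lambda)$ exists by a standard refinement of Kisin's embedding argument, realising the point of $\mathcal{B}^e(G,\qp)$ fixed by $\mathcal{G}$ as the image of a lattice point in the extended Siegel building (and, if necessary, replacing $V$ by a direct sum of copies as in \cite{KisinPappas}). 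By the natural extension of Lemma~\ref{Lem:SiegelAxioms} to stabilizer quasi-parahoric level on the Siegel side---whose proof proceeds identically to the cited argument, since the deformation-theoretic input via abelian varieties with $\Lambda$-compatible polarization is insensitive to the distinction between parahoric and stabilizer quasi-parahoric---we obtain a canonical integral model $\{\scrs_U\gvx\}_{U^p}$ at level $\mathcal{G}_V(\zp)$. Theorem~\ref{Thm:PRAxiomsFunctorial} applied to $\iota$ then yields a canonical integral model $\{\scrs_K\gx\}_{K^p}$ at level $K_p = \mathcal{G}(\zp)$, defined as the normalization of the Zariski closure of $\mathbf{Sh}_K\gx$ in $\scrs_U\gvx \otimes_{\mathcal{O}_{E'}} \mathcal{O}_E$. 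Combined with the reduction step this completes the proof.

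\textbf{Main obstacle.} The genuine content has been packaged into Theorem~\ref{Thm:Devissage}, which itself rests on Corollary~\ref{Cor:QuasiParahoricShtukasIII} (describing $\shthmuone \to \shtgmuone$ as a torsor for a finite abelian group) and Proposition~\ref{Prop:FiniteEtaleCover} (transferring \'etale $\Lambda$-torsors between v-sheaves and schemes through a key application of \cite{KimVectorBundles}). Once these are granted, the only delicate point in the plan above is arranging the Hodge embedding so that not merely the parahoric identity components but the full stabilizer quasi-parahoric structures match, i.e., verifying the displayed equality $\mathcal{G}(\zpbr) = \iota^{-1}(\mathcal{G}_V(\zpbr)) \cap G(\qpbr)$; this is a technical but well-understood refinement of the standard Hodge-embedding lemma, and the remainder is formal via Theorem~\ref{Thm:PRAxiomsFunctorial}.
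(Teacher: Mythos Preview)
Your proof is correct and follows essentially the same route as the paper: reduce to a stabilizer quasi-parahoric $\mathcal{G}$ containing $\mathcal{H}$ via Theorem~\ref{Thm:Devissage}, then produce the model at level $\mathcal{G}(\zp)$ by applying Theorem~\ref{Thm:PRAxiomsFunctorial} to a suitable Hodge embedding and invoking Lemma~\ref{Lem:SiegelAxioms} on the Siegel side. Two minor points of cleanup: the existence of such a $\mathcal{G}$ is exactly Lemma~\ref{Lem:QuasiToStabilizer} (you should say ``a'' rather than ``the'' stabilizer quasi-parahoric), and the existence of the Hodge embedding with $\mathcal{G}(\zpbr) = \iota^{-1}(\mathcal{G}_V(\zpbr)) \cap G(\qpbr)$ is precisely what is supplied by \cite[Section~1.3.2]{KMPS}. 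Also, your proposed ``extension of Lemma~\ref{Lem:SiegelAxioms} to stabilizer quasi-parahoric level'' is unnecessary: since $\pi_1(\mathrm{GSp})$ is torsion-free, every stabilizer quasi-parahoric of $G_V$ is already a parahoric, so $\mathcal{G}_V$ is parahoric and Lemma~\ref{Lem:SiegelAxioms} applies as stated.
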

\begin{proof}
By Corollary~\ref{Cor:QuasiToStabilizer}, we may choose a stabilizer Bruhat--Tits group scheme $\mathcal{G}$ such that $\mathcal{H}$ is an open subgroup of $\mathcal{G}$; write $K_p=\mathcal{G}(\zp)$. It is explained in \cite[Section~1.3.2]{KMPS} that there exists a Hodge embedding $\iota:\gx \to \gvx$ and a $\zp$-lattice $V_{p} \subset V_{\qp}$ on which $\psi$ is $\zp$-valued, such that $\mathcal{G}(\zpbr)$ is the stabilizer in $G(\qpbr)$ of $V_p \otimes_{\zp} \zpbr$.
  In other words, if $\mathcal{G}_V$ is the parahoric integral model of $G_V$ over $\zp$ that is the stabilizer of $V_p$, then we have $\mathcal{G}(\zpbr)=G(\qpbr) \cap \iota^{-1}(\mathcal{G}_V(\zpbr))$. It now follows from Theorem~\ref{Thm:PRAxiomsFunctorial} and Lemma~\ref{Lem:SiegelAxioms} that there exists a canonical integral model $\{\scrs_{K}\gx\}_{K^p}$ of $\{\mathbf{Sh}_{K}\gx\}_{K^p}$, and the theorem is now a direct consequence of Theorem~\ref{Thm:Devissage}.
\end{proof}
\begin{Rem}
    It follows from the proof of Theorem \ref{Thm:Devissage} that the integral models of Theorem \ref{Thm:Main} are constructed as relative normalizations, as in \cite[Section 4.3]{KisinPappas} and \cite[Section 7.1.10]{KisinPappasZhou}. Thus our integral models agree with those constructed in \cite[Section 4.3]{KisinPappas} and \cite[Section 7.1.10]{KisinPappasZhou}.
\end{Rem}
\subsection{Local model diagrams and a conjecture of Kisin and Pappas} \label{Sec:LocalModel} Let the notation be as in Section \ref{Sec:ConjecturalModels}. In particular, $\mathcal{G}$ is a quasi-parahoric model of $G$. As in \cite[Section 4.9.1]{PappasRapoportShtukas}, we associate to $\mathcal{G}$ the v-sheaf $\mathcal{G}^\diamondsuit$. Explicitly, if $S = \spa(R,R^+)$ is in $\perf$, then $\mathcal{G}^\diamondsuit(S)$ consists of pairs $(S^\sharp, g)$, where $S^\sharp= \spa(R^\sharp, R^{\sharp +})$ is an untilt of $S$ and $g$ is an element of $\mathcal{G}(R^\sharp)$.

In \textit{loc. cit.}, Pappas and Rapoport show that for $S$ in $\perf$ and $f:S \to \shtgmu$ there is a $\mathcal{G}^{\diamondsuit}$-torsor $\tilde{S} \to S$ equipped with a $\mathcal{G}^{\diamondsuit}$-equivariant map $\tilde{S} \to \locmodgmuv$\footnote{Note that since $\mu$ is minuscule, the action of the positive loop group $\mathcal{L}^+\calg$ on $\locmodgmuv$ factors through $\calg_{}^\diamondsuit$. This defines the $\calg^\diamondsuit$-action on the v-sheaf local model.}. In other words, there is a morphism of stacks
\begin{align}
    \shtgmu \to \left[ \locmodgmuv / \mathcal{G}^{\diamondsuit} \right].
\end{align}
By construction, this morphism is functorial in $\mathcal{G}$ in the sense that, given a morphism $\alpha:\mathcal{G}_1 \to \mathcal{G}_2$ of quasi-parahoric group schemes, the diagram
\begin{equation}\label{Eq:local-model-commutativity}
    \begin{tikzcd}
        \operatorname{Sht}_{\mathcal{G}_1, \mu}
            \arrow[r] \arrow[d]
        & \left[ \mathbb{M}_{\mathcal{G}_1, \mu}^\mathrm{v} / \mathcal{G}_1^\diamondsuit  \right] 
            \arrow[d]
        \\ \operatorname{Sht}_{\mathcal{G}_2, \alpha \circ \mu}
            \arrow[r]
        & \left[ \mathbb{M}_{\mathcal{G}_2, \alpha \circ \mu}^\mathrm{v} / \mathcal{G}_2^\diamondsuit \right]
    \end{tikzcd}
\end{equation}
is 2-commutative. Here the vertical maps are obtained by functoriality of the constructions of $\shtg$ and v-sheaf local models.

\subsubsection{} \label{Subsub:SchemeTheoreticLocalModel}
By \cite[Theorem~1.11]{AGLR} and \cite[Corollary~1.4]{GleasonLourencoLocalModel}, there exists a unique (up to unique isomorphism) normal scheme $\locmodgmu$ that is flat and proper over $\mathcal{O}_{E}$ with reduced special fiber, whose associated v-sheaf is isomorphic to $\locmodgmuv$. A canonical integral model $\{\scrsg\}_{K^p}$ is said to have a \emph{scheme-theoretic local model diagram} if for all sufficiently small $K^p$ there is a smooth morphism of algebraic stacks
\begin{align}
    \pi_{\mathrm{dR}, \mathcal{G}} \colon \scrsg \to \left[\locmodgmu / \mathcal{G} \right],
\end{align}
whose generic fiber comes from the canonical model of the standard principal bundle (base-changed to $E$), see \cite[Theorem 4.1]{Milne2002}, together with a
2-commutative diagram
\begin{equation}
    \begin{tikzcd}
        \scrsg^{\diamondsuit/} \arrow{d}{\pi_{\mathrm{dR},\mathcal{G}}^{\diamondsuit/}} \arrow{r}{\pi_{\mathrm{crys}}} & \shtgmu \arrow{d} \\\left[\locmodgmuv / \mathcal{G}^{\diamondsuit/}\right] \arrow{r} & \left[\locmodgmuv / \mathcal{G}^{\diamondsuit}\right].
    \end{tikzcd}
\end{equation}
Now assume that $\gx$ is of Hodge type. Then by Theorem \ref{Thm:WhyDidWeCheckThis?}, the local model diagrams of \cite[Theorem 7.1.3]{KisinPappasZhou} give scheme-theoretic local model diagrams for $\gxg$, where $\mathcal{G}$ is a stabilizer Bruhat--Tits group scheme. We note that these results are stated under some additional assumptions on $\gxg$ and $p$ that we will make explicit in Section \ref{subsub:Assumptions}.

\subsubsection{} Let $\calgcirc \subset \calg$ be the relative identity component and write $K_p^{\circ}=\calgcirc(\zp)$ and $K_p=\calg(\zp)$. For $K^p \subset \gafp$ a sufficiently small compact open subgroup we write $K=K^pK_p$ and $K^{\circ}=K^pK_p^{\circ}$. Under the assumptions made in \cite[Theorem 4.2.7.]{KisinPappas}, Kisin and Pappas conjecture in \cite[Section 4.3.10]{KisinPappas}, that the composition
\begin{align}
    \scrs_{K^{\circ}}\gx \to \scrs_{K}\gx \to \left[\locmodgmu / \mathcal{G}\right]
\end{align}
factors through
\begin{align}
    \left[ \locmodgmu / \calgcirc \right] \to \left[\locmodgmu / \mathcal{G} \right].
\end{align}
The following proposition shows that such factorization exists, whenever a scheme-theoretic local model diagram exists for $\{\scrs_{K}\gx\}_{K^p}$. 

\begin{Prop} \label{Prop:KisinPappasConjecture}
    Suppose that $\{\scrs_{K}\gx\}_{K^p}$ admits a scheme-theoretic local model diagram $\pi_{\mathrm{dR}, \mathcal{G}}$. Then $\{\scrs_{K^{\circ}}\gx\}_{K^p}$ admits a scheme-theoretic local model diagram $\pi_{\mathrm{dR}, \calgcirc}$ such that for all (sufficiently small) $K^p$, the diagram
    \begin{equation}\label{Eq:LocalModelSquare}
    \begin{tikzcd}
     \scrs_{K^{\circ}}\gx \arrow{d} \arrow{r}{\pi_{\mathrm{dR}, \calgcirc}} & \left[\locmodgcircmu / \calgcirc\right] \arrow{d} \\
     \scrs_{K}\gx \arrow{r}{\pi_{\mathrm{dR}, \mathcal{G}}} & \left[\locmodgmu / \mathcal{G}\right].
    \end{tikzcd}
    \end{equation}
    commutes, where we identify $\locmodgcircmu=\locmodgmu$ via the isomorphism \eqref{Eq:local-model-comparison}.
\end{Prop}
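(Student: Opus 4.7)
The plan is to construct $\pi_{\mathrm{dR}, \calgcirc}$ by finding a scheme-theoretic reduction of structure group of the pullback $\mathcal{G}$-torsor along $f$. Set $X := \scrs_{K^\circ}\gx$ and let $\widetilde{\scrs}_K\gx \to \scrs_K\gx$ be the $\mathcal{G}$-torsor underlying $\pi_{\mathrm{dR}, \mathcal{G}}$; pulling back along $f$ gives a $\mathcal{G}$-torsor $P \to X$ equipped with a smooth $\mathcal{G}$-equivariant map $P \to \locmodgmu$. Producing the desired $\pi_{\mathrm{dR}, \calgcirc}$ making \eqref{Eq:LocalModelSquare} commute is equivalent to constructing a $\calgcirc$-torsor $P^\circ \to X$ together with an isomorphism $P^\circ \times^{\calgcirc} \mathcal{G} \cong P$. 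The $\calgcirc$-equivariant map $P^\circ \to \locmodgmu$ inherited from $P \to \locmodgmu$ is automatically smooth, as smoothness of the resulting morphism $X \to [\locmodgmu / \calgcirc]$ can be tested after composing with the open immersion $[\locmodgmu / \calgcirc] \to [\locmodgmu / \mathcal{G}]$.

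Reductions of $P$ to $\calgcirc$ correspond bijectively to sections of $\mathscr{Y} := P/\calgcirc \to X$, a twisted form of the $\zp$-scheme $Q := \mathcal{G}/\calgcirc$. Because $\calgcirc_{\qp} = \mathcal{G}_{\qp}$, the generic fiber $Q_{\qp}$ is a point, so $\mathscr{Y}_E \to X_E$ is an isomorphism; on the special fiber, $\mathscr{Y}_{k_E} \to X_{k_E}$ is a torsor under the finite \'etale $k_E$-group scheme $\Lambda_0 := \pi_0(\mathcal{G})$, and a section of $\mathscr{Y} \to X$ exists if and only if this $\Lambda_0$-torsor is trivial. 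On the v-sheaf side, Theorem \ref{Thm:Main} supplies a morphism $X^{\lozenge/} \to \shtgcircmu$; composing with the Pappas--Rapoport v-sheaf local model map $\shtgcircmu \to [\locmodgcircmuv / \calgcirc^\lozenge]$, applying \eqref{Eq:local-model-comparison}, and invoking the 2-commutativity of \eqref{Eq:local-model-commutativity} for the open immersion $\calgcirc \hookrightarrow \mathcal{G}$, one obtains a v-sheaf reduction of $P^{\lozenge/}$ to a $\calgcirc^\lozenge$-torsor. This reduction in particular trivializes the v-sheaf $\Lambda_0$-torsor attached to $\mathscr{Y}_{k_E}$.

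The main obstacle is upgrading this v-sheaf trivialization to a scheme-theoretic one. The key input is \cite[Theorem 1.3]{KimVectorBundles} (as used in Lemma \ref{Lem:H1Henselianity}), which gives an isomorphism $H^1_{\text{\'et}}(X_{k_E}, \Lambda) \cong H^1_{\text{\'et}}(X_{k_E}^\diamond, \Lambda)$ for constant finite abelian $\Lambda$. To handle the possibly non-constant $\Lambda_0$, base change to $\oebreve$ (over which $\Lambda_0$ becomes a constant group), apply Kim's theorem to obtain a scheme-theoretic section of $\mathscr{Y}_{\oebreve} \to X_{\oebreve}$ lifting the chosen v-sheaf section, and then descend to $X$ along the faithfully flat map $\mathcal{O}_E \to \oebreve$ (the Galois-equivariance required for descent is inherited from the canonicity of the v-sheaf reduction coming from the $\calgcirc$-shtuka, which is defined over $\mathcal{O}_E$). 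This produces the required $P^\circ$; the commutativity of \eqref{Eq:LocalModelSquare} and the compatibility with the map $X^{\lozenge/} \to \shtgcircmu$ at the v-sheaf level then hold by construction.
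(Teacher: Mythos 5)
Your overall plan is the same as the paper's: pull back the $\mathcal{G}$-torsor, observe that reductions to $\calgcirc$ correspond to sections of a $j_*\pi_0(\mathcal{G})$-torsor (which you write as $\mathscr{Y} = P/\calgcirc$; the paper uses the pushout $\mathcal{P}\times^{\mathcal{G}} j_*\pi_0(\mathcal{G})$, which is the same object), obtain a v-sheaf section from the $2$-commutativity of \eqref{Eq:local-model-commutativity}, and then upgrade to a scheme-theoretic section. The structure is right, but the upgrade step is done differently, and your route has a gap.

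The paper does \emph{not} go through Kim's $H^1$-theorem here. Instead, it observes that $\mathcal{P} \times^{\mathcal{G}} j_*\pi_0(\mathcal{G}) = j_*\bigl(\mathcal{P}_{k_E}\times^{\mathcal{G}_{k_E}}\pi_0(\mathcal{G})\bigr)$, reduces the question to a comparison of $H^0$ over the special fiber, and then invokes full-faithfulness of the functor $X\mapsto X^\diamond$ on perfect schemes (\cite[18.3.1]{ScholzeWeinsteinBerkeley}) together with topological invariance of the \'etale site. This gives a \emph{bijection of section sets} directly, over $\mathcal{O}_E$, with no need to base-change to $\oebreve$ or to deal with nonconstant coefficient groups. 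Your Kim-plus-$\oebreve$-descent route is both a detour and genuinely incomplete as written: $H^1$-comparison only tells you the $\Lambda_0$-torsor $\mathscr{Y}_{k_E}$ is trivial; to find a section \emph{lifting the specific} v-sheaf section (which you need, since the conclusion must also be compatible with $\pi_{\mathrm{crys}}$), you additionally need a bijection on $H^0$ of the \emph{constant} sheaf $\Lambda_0$ — and that $H^0$-comparison is again full-faithfulness of $(-)^\diamond$, the tool the paper uses. The same issue recurs in your descent step: to see that the scheme section obtained over $\oebreve$ is Galois-equivariant, you need injectivity of scheme-sections into v-sheaf-sections, i.e. the same $H^0$ fact. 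So you cannot avoid it, and once you have it, Kim's theorem and the $\oebreve$-excursion are superfluous.

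One smaller but genuine error: $[\locmodgmu/\calgcirc]\to[\locmodgmu/\mathcal{G}]$ is \emph{not} an open immersion; it is a torsor under $j_*\pi_0(\mathcal{G})$ and is not even a monomorphism on the special fiber. The correct way to conclude smoothness (as in the paper) is that the inclusion $\mathcal{Q}\hookrightarrow\mathcal{P}$ of the reduced $\calgcirc$-torsor into the $\mathcal{G}$-torsor is an open immersion (because $\calgcirc\subset\mathcal{G}$ is), so the composite $\mathcal{Q}\hookrightarrow\mathcal{P}\to\locmodgmu$ is a composition of an open immersion with a smooth map, hence smooth.
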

To prove the proposition, we will need a lemma. As motivation, we recall from the proof of \cite[Proposition 3.2.1]{PappasRapoportRZSpaces} that there is a short exact sequence
    \begin{align} \label{Eq:SeS}
        1 \to \calgcirc \to \mathcal{G} \to i_{\ast} \pi_0(\mathcal{G}) \to 1
    \end{align}
    on the (big) \'etale site of $S=\scrs_{K^{\circ}}\gx$, where we view $\pi_0(\mathcal{G})$ as an \'{e}tale group scheme over $S_{k_E}:=\scrs_{K^{\circ}}\gx_{k_E}$, and $i$ is the closed immersion $i: S_{k_E}\hookrightarrow S$.
\begin{Lem} \label{Lem:ReductionOfStructures}
    Let $i:\spd(\fp) \to \spd(\zp)$ denote the inclusion. There is a diagram of short exact sequence of v-sheaves of groups over $\spd(\zp)$ 
    \begin{equation}
\begin{tikzcd}
     1 \arrow{r} & \mathcal{G}^{\circ, \diamondsuit/} \arrow{r} \arrow{d} & \mathcal{G}^{\diamondsuit/} \arrow{r} \arrow{d} & i_{\ast} \underline{\pi_0(\mathcal{G})} \arrow{r} \arrow[d, equals] & 1 \\
     1 \arrow{r} & \mathcal{G}^{\circ, \diamondsuit} \arrow{r} & \mathcal{G}^{\diamondsuit} \arrow{r} & i_{\ast} \underline{\pi_0(\mathcal{G})} \arrow{r} & 1 .
\end{tikzcd}
    \end{equation}
\end{Lem}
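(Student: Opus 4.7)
The strategy is to obtain both rows of the diagram by applying the functors $(-)^{\lozenge/}$ and $(-)^{\lozenge}$ to the short exact sequence of fppf group sheaves on $\spec(\zp)$
\begin{equation*}
    1 \to \calgcirc \to \calg \to Q \to 1.
\end{equation*}
Here $Q$ is the fppf quotient, which is supported on $\spec(\fp)$: the inclusion $\calgcirc \to \calg$ is an isomorphism over $\spec(\qp)$, and on the special fiber it identifies $\calgcirc_\fp$ with the identity component of $\calg_\fp$, so $Q$ coincides with the pushforward along $\spec(\fp) \hookrightarrow \spec(\zp)$ of the finite \'etale $\fp$-group scheme $\pi_0(\calg)$.

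Next I will construct the right horizontal maps and justify the shared target. On the char-$p$ locus $\spd(\fp) \hookrightarrow \spd(\zp)$ the v-sheaves $\calg^{\lozenge/}$ and $\calg^{\lozenge}$ specialize to $\calg_\fp^\diamond$ and $\calg_\fp^{\lozenge}$ respectively, both of which admit natural maps to the v-sheaf represented by $\pi_0(\calg)$. These two target v-sheaves are canonically identified: for any affinoid perfectoid $(R,R^+)$ over $\fp$, idempotents of $R$ are integral over $\fp \subset R^+$ and therefore lie in $R^+$, so $\pi_0(\spec R) = \pi_0(\spec R^+)$ and hence $\pi_0(\calg)(R) = \pi_0(\calg)(R^+)$. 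Adjunction along $j$ then produces the desired maps into $j_* \underline{\pi_0(\calg)}$, and commutativity of the right square is immediate since both horizontal maps are induced on the char-$p$ locus by the same projection $\calg_\fp \to \pi_0(\calg)$ applied to $R^+$-points, respectively to $R$-points. The left horizontal and middle vertical maps come from functoriality of $(-)^{\lozenge/}$ and $(-)^{\lozenge}$, and commutativity of the left square follows directly.

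Exactness of each row is then checked locus-by-locus. Over $\spd(\qp)$ both rows collapse to $1 \to G^{\lozenge} \xrightarrow{\id} G^{\lozenge} \to 1$, since $j_* \underline{\pi_0(\calg)}$ is trivial there and $\calgcirc_\qp = \calg_\qp = G$. Over $\spd(\fp)$ the problem reduces to verifying v-exactness of the sequences obtained from the fppf-exact sequence $1 \to \calgcirc_\fp \to \calg_\fp \to \pi_0(\calg) \to 1$ by taking either $R^+$-points or $R$-points. Injectivity of the left map and identification of its image with the kernel of the right map are immediate from the scheme-theoretic fact that $\calgcirc_\fp \subset \calg_\fp$ is precisely the open and closed subscheme of points mapping to the identity section of $\pi_0(\calg)$. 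The main step, and the principal obstacle, is v-surjectivity onto $j_* \underline{\pi_0(\calg)}$: given a section $s$ of $\underline{\pi_0(\calg)}$ over the char-$p$ locus of an affinoid perfectoid $S = \spa(R, R^+)$, finite \'etaleness of $\pi_0(\calg)$ over $\fp$ implies that $s$ becomes constant after passage to a finite \'etale cover $\spec(R'^+) \to \spec(R^+)$, and this extends to an affinoid perfectoid v-cover $\spa(R',R'^+) \to \spa(R,R^+)$ in the standard way. Over such a cover a lift is then obtained tautologically by choosing an $\fp$-point of $\calg_\fp$ in the prescribed connected component, yielding the required element of $\calg_\fp(R'^+)$ or $\calg_\fp(R')$.
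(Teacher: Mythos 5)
Your proposal takes a genuinely different route from the paper, and unfortunately there are two real gaps in the surjectivity step.

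First, you propose to check exactness ``locus-by-locus'' over $\spd(\qp)$ and $\spd(\fp)$. This does not suffice for surjectivity of $\mathcal{G}^{\lozenge/} \to j_*\underline{\pi_0(\mathcal{G})}$ over $\spd(\zp)$. A test object $S \to \spd(\zp)$ generally straddles mixed characteristic: $S_{\qp}$ is open but not closed, $S_{\fp}$ is closed but not open, so they do not give an open decomposition of $S$. The section $t \in j_*\underline{\pi_0(\mathcal{G})}(S)$ is data only on $S_{\fp}$, but the required lift is a section of $\mathcal{G}^{\lozenge/}$ over \emph{all} of $S$. Producing a lift over $S_{\fp}$ (as your argument does) is not the same as producing a v-cover $S' \to S$ of the full $S$ together with a lift over $S'$. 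Your argument leaves the extension from $S'_{\fp}$ to $S'$ unaddressed.

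Second, the statement ``a lift is then obtained tautologically by choosing an $\fp$-point of $\mathcal{G}_{\fp}$ in the prescribed connected component'' is not justified as written. If $\pi_0(\mathcal{G})$ is not split, the relevant component corresponds to a closed point $\spec(k')$ of $\pi_0(\mathcal{G})$ with $k'/\fp$ a nontrivial finite extension, and the corresponding component of $\mathcal{G}_{\fp}$ need not have any $\fp$-rational points. What you actually need is a $k'$-point; such a point exists, but this is a consequence of Lang's theorem (the component is a torsor under the connected smooth group $\mathcal{G}^\circ_{k'}$ over the finite field $k'$), which you do not invoke.

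The paper's proof avoids both issues by first base changing along the v-cover $\spd(\zpbr) \to \spd(\zp)$. Over $\zpbr$ the group $\pi_0(\mathcal{G})$ becomes a split finite constant group, and every connected component of $\mathcal{G}_{\ovfp}$ has a rational point which, by smoothness of $\mathcal{G}$ and Henselianity of $\zpbr$, lifts to a section $\spec(\zpbr) \to \mathcal{G}_{\zpbr}$. These $\zpbr$-points directly yield sections of $\mathcal{G}^{\lozenge/}_{\zpbr}$ over any test object, which simultaneously solves the extension problem and the rationality problem. The paper then covers $(j_*\underline{\pi_0(\mathcal{G})})_{\zpbr}$ by the ``constant'' open subfunctors indexed by $\pi_0(\mathcal{G})(\ovfp)$ and identifies the kernel via the open embedding of the zero section. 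Your observation that $\pi_0(\mathcal{G})(R) = \pi_0(\mathcal{G})(R^+)$ for affinoid perfectoid $(R,R^+)$ in characteristic $p$ (so that both rows share the same target) is correct and worth keeping, but the surjectivity argument should be replaced by the $\zpbr$ base-change argument.
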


\begin{proof}
  Note that we can check exactness after base changing to $\spd(\zpbr)$. For
  surjectivity of $\mathcal{G}^{\diamondsuit/}_{\zpbr} \to (i_\ast
  \underline{\pi_0(\mathcal{G})})_{\zpbr}$, we observe that there is an open
  cover $\coprod_{g \in \pi_0(\mathcal{G})(\fpbar)} \spd(\zpbr) \to
  (i_\ast\underline{\pi_0(\mathcal{G})})_{\zpbr}$ and a section $\spec(\zpbr) \to
  \mathcal{G}_{\zpbr}$ for each $g \in \pi_0(\mathcal{G})(\fpbar)$. These induce
  sections $\spd(\zpbr) \to \mathcal{G}^{\diamondsuit/}_{\zpbr}$, and hence imply
  surjectivity of $\mathcal{G}^{\diamondsuit/}_{\zpbr} \to (i_\ast
  \underline{\pi_0(\mathcal{G})})_{\zpbr}$. The kernel of this map can be
  identified with $\mathcal{G}^{\circ,\diamondsuit/}_{\zpbr}$, because the zero
  section $\spd(\zpbr) \to (i_\ast\underline{\pi_0(\mathcal{G})})_{\zpbr}$ is an
  open embedding whose preimage in $\mathcal{G}^{\diamondsuit/}$ precisely recovers
  $\mathcal{G}^{\circ,\diamondsuit/}$. The proof of the exactness of the second row
  is identical.
\end{proof}

\begin{proof}[Proof of Proposition \ref{Prop:KisinPappasConjecture}]
The morphism $\pi_{\mathrm{dR}, \mathcal{G}}$ induces a $\mathcal{G}$-torsor $\mathcal{P}'$ over $\scrs_{K}\gx$; we will denote its pullback to $\scrs_{K^{\circ}}\gx$ by $\mathcal{P}$. From the short exact sequence \eqref{Eq:SeS}, we see that the pushout $\mathcal{P} \times^{\mathcal{G}} i_{\ast} \pi_0(\mathcal{G})$ is a torsor for the sheaf of abelian groups $i_{\ast} \pi_0(\mathcal{G})$. It suffices to construct a section of it over $S$. Indeed, given such a section, the pullback along this section of the natural map $\mathcal{P} \to \mathcal{P} \times^{\mathcal{G}} i_{\ast} \pi_0(\mathcal{G})$ gives a reduction of $\mathcal{P}$ to a $\calgcirc$-torsor. 

By the 2-commutativity of the diagram \eqref{Eq:local-model-commutativity} applied to $\mathcal{G}^\circ \to \mathcal{G}$, we have a reduction of $\mathcal{P}^{\diamondsuit}$ to a $\left(\calgcirc\right)^{\diamondsuit}$-torsor $\tilde{\mathcal{Q}} \subset \mathcal{P}^{\diamondsuit}$. This gives an $S^{\diamondsuit/}$-point of 
\begin{align}
    \mathcal{P}^{\diamondsuit} \times^{\mathcal{G}^{\diamondsuit}}  i_{\ast} \underline{\pi_0(\mathcal{G})} \cong \mathcal{P}^{\diamondsuit/} \times^{\mathcal{G}^{\diamondsuit/}} i_{\ast} \underline{\pi_0(\mathcal{G})} \cong \left(\mathcal{P} \times^{\mathcal{G}} i_{\ast} \pi_0(\mathcal{G}) \right)^{\diamondsuit/},
\end{align}
where we used Lemma \ref{Lem:ReductionOfStructures} for the first isomorphism. We want to show that this point is induced by an $S$-point of $\mathcal{P} \times^{\mathcal{G}} i_{\ast} \pi_0(\mathcal{G})$. \smallskip

We first observe that 
\begin{equation}\label{Eq:TorsorPushforward}
\mathcal{P} \times^{\mathcal{G}} i_{\ast} \pi_0(\mathcal{G}) = i_{\ast} \left(\mathcal{P}_{k_E} \times^{\mathcal{G}_{k_E}} \pi_0(\mathcal{G}) \right).
\end{equation}
From this it follows that
\begin{equation}\label{Eq:LozengeSlashSections}
    H^0\left(S^{\diamondsuit/}, \left(\mathcal{P} \times^{\mathcal{G}} i_{\ast} \pi_0(\mathcal{G}) \right)^{\diamondsuit/}\right) = H^0\left(S_{k_E}^{\diamond}, \left(\mathcal{P}_{k_E} \times^{\mathcal{G}_{k_E}} \pi_0(\mathcal{G}) \right)^{\diamond} \right).
\end{equation}
Let $S_{k_E}^{\mathrm{perf}}$ denote the perfection of $S_{k_E}$. It follows from the full-faithfulness of the functor $X \mapsto X^{\diamond}$ on perfect schemes, see \cite[Proposition 18.3.1]{ScholzeWeinsteinBerkeley}, that 
\begin{align}
    H^0\left(S_{k_E}^{\diamond}, \left(\mathcal{P}_{k_E} \times^{\mathcal{G}_{k_E}} \pi_0(\mathcal{G}) \right)^{\diamond} \right) &= H^0\left(S_{k_E}^{\mathrm{perf}}, \left(\mathcal{P}_{k_E} \times^{\mathcal{G}_{k_E}} \pi_0(\mathcal{G}) \right)^{\mathrm{perf}} \right).
\end{align}
By topological invariance of the \'etale site, the right hand side identifies with
$H^0 (S_{k_E}, \mathcal{P}_{k_E} \times^{\mathcal{G}_{k_E}} \pi_0(\mathcal{G}))$.
But from \eqref{Eq:TorsorPushforward} and the definition of $i_\ast$, we have
\begin{align}
H^0\left(S_{k_E}, \mathcal{P}_{k_E} \times^{\mathcal{G}_{k_E}} \pi_0(\mathcal{G})\right) = 
H^0\left(S, i_{\ast} \left(\mathcal{P}_{k_E} \times^{\mathcal{G}_{k_E}} \pi_0(\mathcal{G}) \right) \right). 
\end{align}
By combining these bijections, we obtain from the $(\mathcal{G}^\circ)^\diamondsuit$-torsor $\tilde{\mathcal{Q}}$ an $S$-point of $\mathcal{P} \times^{\mathcal{G}} i_{\ast}\pi_0(\mathcal{G}) $, i.e., a $\mathcal{G}^\circ$-torsor $\mathcal{Q}$. Clearly $\mathcal{Q}^{\diamondsuit} \cong \tilde{\mathcal{Q}}$, since both are the pullback of $\mathcal{P}^\diamondsuit$ along the same section of $\mathcal{P}^{\diamondsuit} \times^{\mathcal{G}^{\diamondsuit}}  i_{\ast} \underline{\pi_0(\mathcal{G})}$ over $S^\diamondsuit$.

Thus we obtain the desired morphism $\pi_{\mathrm{dR},\mathcal{G}^\circ}$, and it follows from the construction that \eqref{Eq:LocalModelSquare} commutes. That $\pi_{\mathrm{dR},\mathcal{G}^\circ}$ recovers the canonical model of the standard principal bundle on the generic fiber follows from the corresponding fact for $\pi_{\mathrm{dR},\mathcal{G}}$. Finally, it remains to show that $\pi_{\mathrm{dR}, \mathcal{G}^\circ}$ is smooth. This can be checked after pullback to the smooth cover $\locmodgmu \to \left[\locmodgmu / \calgcirc\right]$, but here the map is given by 
\[\mathcal{Q}\hookrightarrow\mathcal{P}\xrightarrow{\pi_{\mathrm{dR},\mathcal{G}}|_{\mathcal{P}}} \locmodgmu.\]
While the first map is an open immersion and the second map is smooth by assumption, the composition is smooth. This concludes the proof that $\pi_{\mathrm{dR},\mathcal{G}}$ is a scheme-theoretic local model diagram for $\scrs_{K^\circ}\gx$.
\end{proof}

\subsubsection{} \label{subsub:KisinPappasZhou} We now combine Proposition \ref{Prop:KisinPappasConjecture} with Theorem \ref{Thm:WhyDidWeCheckThis?} to deduce the existence of scheme-theoretic local model diagrams for Shimura varieties of Hodge type at parahoric level. 
\begin{Thm} \label{Thm:SchemeTheoreticLocalModelExists}
     If $(\mathsf{G}, \mathsf{X}, \calgcirc)$ is a triple of Hodge type with $\calgcirc$ a parahoric which is the identity component of a stabilizer parahoric $\mathcal{G}$ with the property that $(\mathsf{G}, \mathsf{X}, \calg)$ satisfies assumptions (A),(B),(C) of Section \ref{subsub:Assumptions}, then $\scrs_{K^{\circ}}\gx$ admits a scheme-theoretic local model diagram. 
\end{Thm}
\begin{proof}
Let $\mathcal{G}$ be as in the statement of the theorem. Under our assumptions Theorem \ref{Thm:WhyDidWeCheckThis?} applies to produce a scheme-theoretic local model diagram for $\scrs_{K}\gx$. The result is now a consequence of Proposition \ref{Prop:KisinPappasConjecture}.
\end{proof}
The following two remarks show that there are many cases in which the assumptions of Theorem \ref{Thm:SchemeTheoreticLocalModelExists} are satisfied.
\begin{Rem} \label{Rem:AssumptionsI}
Let $(\mathsf{G}_2, \mathsf{X}_2)$ be a Shimura datum of abelian type with reflex field $\mathsf{E}_{2}$, let $v_{2}$ be a finite place of $\mathsf{E}_2$ above a rational prime $p$ and let $\calgcirc_2$ be a parahoric. If $p>2$, then by \cite[Proposition 7.2.18]{KisinPappasZhou} there is a Hodge type Shimura datum $\gx$ together with a central isogeny $\mathsf{G}^{\mathrm{der}} \to \g_2^{\mathrm{der}}$ inducing an isomorphism of adjoint Shimura data $\gx^{\mathrm{ad}}\to (\mathsf{G}_2, \mathsf{X}_2)^{\mathrm{ad}}$, and there is moreover a parahoric $\calgcirc$ of $G$ associated to $\calgcirc_2$ for which the assumptions of Theorem \ref{Thm:SchemeTheoreticLocalModelExists} hold. Using this, we expect it is possible to use Theorem \ref{Thm:SchemeTheoreticLocalModelExists} to construct scheme-theoretic local model diagrams for all $(\mathsf{G}_2, \mathsf{X}_2, \calgcirc_{2}, v_2)$ of abelian type, as long as $p>2$.
\end{Rem}

\begin{Rem} \label{Rem:AssumptionsII}
Let $(\mathsf{G}, \mathsf{X}, \calgcirc)$ be a triple of Hodge type. If $p$ is coprime to $2 \cdot \pi_1(\g^{\mathrm{der}})$, if $G$ splits over a tamely ramified extension, and if $G$ is non-exceptional in the sense of \cite[Section 6.1]{KisinPappasZhou}, then there is a quasi-parahoric model $\mathcal{G}$ such that: The identity component of $\mathcal{G}$ is $\calgcirc$, the triple $(\mathsf{G}, \mathsf{X}, \calg)$ satisfies assumptions (A),(B),(C) of Section \ref{subsub:Assumptions}. Indeed, take $\mathcal{G}$ to be the stabilizer quasi-parahoric associated to a generic point in the facet corresponding to $\calgcirc$ and apply \cite[Theorem 6.1.9]{KisinPappasZhou} and the fact that groups splitting over tamely ramified extensions are automatically $R$-smooth.
\end{Rem}

\subsection{Rapoport--Zink uniformization} Let the notation be as in Section \ref{Sec:LocalModel}. In particular, $\mathcal{G}$ is a stabilizer Bruhat--Tits model of $G$ over $\zp$ with $K_p=\mathcal{G}(\zp)$. Denote by $k_E$ the residue field of $\mathcal{O}_E$ as before. For $\ell$ an algebraically closed field in characteristic $p$ together with a fixed embedding
$e \colon k_E \hookrightarrow \ell$, write
\[
  \oeel{e} = \mathcal{O}_E \otimes_{W(k_E),e} W(\ell)
\]
as before. Then for $b \in G(W(\ell)[1/p])$ we have the v-sheaves $\mintgbmu{e}, \mintgcircbmu{e}$ and we will also consider the image $\mintgbmuone{e}$ of $\mintgcircbmu{e} \to\mintgbmu{e}$.

Let $x \in \scrs_{K_p}\gx(\ell)$, then its image under
$\pi_\mathrm{crys}$ defines a $\spd(\ell)$-point $b_x$ of $\shtgmuone$. Let $e:k_E \to \ell$ be the map corresponding to $x: \spec(\ell) \to \scrs_{K_p}\gx_{k_E} \to \spec(k_E)$, then attached to $x$ is a base point
  \[
    x_0 \colon \spd(\ell) \to \mintgbxmu{e},
  \]
  given by the $\spd(\ell)$-point of $\shtgmu$ corresponding to $\pi_{\mathrm{crys}}(x)$, see Remark \ref{Rem:BasePoint}. In fact, since $\pi_{\mathrm{crys}}(x) \in \shtgmuone$, our base point lies in $\mintgbxmuone{e}(\spd(\ell))$.

\begin{Thm} \label{Thm:Uniformization}
    If $\gx$ is of Hodge type, then there exists a uniformization map
\[
  \Theta_{\mathcal{G},x} \colon \mintgbxmuone{e} \to
  \scrs_{K_p}\gx_\oeel{e}^\diamond
\]
sending the base point $x_0$ to $x$, which restricts to an isomorphism
\[
  \Theta_{\mathcal{G},x} \colon \widehat{\mintgbxmuone{e}}_{/x_0}
  \xrightarrow{\cong} (\widehat{\scrs_{K_p}\gx_\oeel{e}}_{/x})^\diamondsuit.
\]
Moreover the composition of $\Theta_{\mathcal{G},x}$ with $\pi_{\mathrm{crys}}$ 
\begin{align}
    \mintgbxmuone{e} \to
  \scrs_{K_p}\gx_\oeel{e}^\diamond \to \shtgmu \times_{\spd(\CO_E)} \spd(\oeel{e})
\end{align}
is 2-isomorphic to the natural map of Lemma \ref{Lem:LocalUniformisation}.
\end{Thm}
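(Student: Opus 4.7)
The plan is to follow the strategy of \cite[Corollary~6.3]{GleasonLimXu}, adapted to the stabilizer quasi-parahoric setting by using Theorem~\ref{Thm:Main}, Theorem~\ref{Thm:QuasiParahoricShtukas}, and Remark~\ref{Rem:ShtukaOne}. The construction of $\Theta_{\mathcal{G},x}$ proceeds via the Hodge embedding and the classical Siegel uniformization, while the isomorphism on formal completions follows from axiom~(iv) of the canonical integral model.

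For the global construction, I would fix a Hodge embedding $\iota\colon \gx \hookrightarrow \gvx$ and a $\zp$-lattice $\Lambda$ such that the stabilizer parahoric $\mathcal{G}_V$ of $\Lambda$ satisfies $\mathcal{G}(\zpbr) = G(\qpbr)\cap \iota^{-1}(\mathcal{G}_V(\zpbr))$, as in the proof of Theorem~\ref{Thm:Main}. The image $x^V$ of $x$ in $\scrs_{M_p}\gvx(\ell)$ corresponds to a base point for the Siegel integral local Shimura variety $\mintgvbmu$, and classical Rapoport--Zink theory yields a uniformization $\Theta_V\colon \mintgvbmu \to \scrs_{M_p}\gvx_{\oeel{e}}^\diamond$. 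Functoriality of integral local Shimura varieties produces a map $\mintgbxmuone{e} \to \mintgvbmu$, and composition with $\Theta_V$ defines a map $\Phi\colon \mintgbxmuone{e}\to \scrs_{M_p}\gvx_{\oeel{e}}^\diamond$. Using that $\scrs_{K_p}\gx$ is an inverse limit of relative normalizations of closures of the Hodge-type Shimura varieties inside $\scrs_M\gvx$, I would show that $\Phi$ factors through $\scrs_{K_p}\gx_{\oeel{e}}^\diamond$ by combining the generic-fiber factorization (from the Hodge embedding of Shimura varieties) with the factorization at formal completions of closed points (from axiom~(iv)), in a globalization argument analogous to Proposition~\ref{Prop:FiniteEtaleCover}.

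For the formal-completion isomorphism, axiom~(iv) provides, for each sufficiently small $K^p$, an isomorphism $\widehat{\mintgbxmu{e}}_{/x_0}\xrightarrow{\sim}(\widehat{\scrs_K\gx_{\oeel{e}}}_{/x})^\lozenge$ compatible with the universal shtukas. By Remark~\ref{Rem:ShtukaOne}, the base point $x_0$ lies in the open and closed substack $\mintgbxmuone{e}\subset \mintgbxmu{e}$, so the formal completion at $x_0$ is unchanged by restriction to $\mintgbxmuone{e}$. Taking the inverse limit over $K^p$ (using the finite \'etale transition maps of axiom~(ii)) then yields the required isomorphism $\widehat{\mintgbxmuone{e}}_{/x_0}\xrightarrow{\sim}(\widehat{\scrs_{K_p}\gx_{\oeel{e}}}_{/x})^\lozenge$. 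The 2-isomorphism of $\pi_{\mathrm{crys}}\circ \Theta_{\mathcal{G},x}$ with the map of Lemma~\ref{Lem:LocalUniformisation} is built into the construction via axiom~(iii), which identifies the pullback of the universal shtuka on $\shtgmu$ to $\scrs_{K_p}\gx^{\lozenge/}$ with the tautological shtuka on $\mintgbxmuone{e}$.

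The main obstacle is the global factorization of $\Phi$ through $\scrs_{K_p}\gx^\diamond$, rather than merely through the ambient Siegel integral model. The available tools --- axiom~(iv) at closed points and the generic-fiber Hodge embedding --- give the factorization only locally and rationally, and promoting this to a factorization of v-sheaves requires an $\eta$-normality argument in the spirit of Proposition~\ref{Prop:FiniteEtaleCover}, combined with quasicompactness and the structure of $\mintgbxmuone{e}$ as a torsor over $\mintgbxmu{e}$-components under the isogeny action.
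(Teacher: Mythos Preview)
Your approach differs substantially from the paper's, and the ``main obstacle'' you identify is a real gap that your sketch does not close.

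The paper does not construct $\Theta_{\mathcal{G},x}$ by embedding into the Siegel integral local Shimura variety and then factoring back through a closed sublocus. Instead, it invokes \cite[Corollary~6.3]{GleasonLimXu} directly. In that argument the global map is built from the formal-completion isomorphism of axiom~(iv) together with the $G(\qp)$-action: one constructs a map
\[
  G(\qp)/\mathcal{G}(\zp)\;\longrightarrow\;\pi_0\bigl(\mintgbxmuone{e}\bigr)
\]
and, once this is known to be surjective, the local isomorphisms at the $G(\qp)$-translates of $x_0$ glue to the global $\Theta_{\mathcal{G},x}$. The only modification needed in passing from parahoric $\mathcal{G}^\circ$ to the stabilizer quasi-parahoric $\mathcal{G}$ is to verify this surjectivity; the paper does so by identifying the generic fibre of $\mintgbxmuone{e}$ with the local Shimura variety of level $\mathcal{G}(\zp)$ via \cite[Theorem~4.5.1]{PappasRapoportRZSpaces}, and then comparing with the parahoric case through the commutative square
\[
\begin{tikzcd}
  G(\qp)/\calgcirc(\zp) \arrow{r} \arrow{d} & \pi_0\bigl(\mintgcircbxmu{e}\bigr) \arrow{d} \\
  G(\qp)/\mathcal{G}(\zp) \arrow{r} & \pi_0\bigl(\mintgbxmuone{e}\bigr),
\end{tikzcd}
\]
using that the right vertical map is surjective (by definition of the $\delta=1$ component) and that the top horizontal map is surjective by \cite[Corollary~3.11]{GleasonLimXu}.

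Your route via the Siegel embedding runs into a genuine difficulty at exactly the step you flag: producing a factorization of $\Phi$ through the closed subfunctor $\scrs_{K_p}\gx_{\oeel{e}}^\diamond \hookrightarrow \scrs_{M_p}\gvx_{\oeel{e}}^\diamond$. The tools you invoke are not adapted to this. Proposition~\ref{Prop:FiniteEtaleCover} concerns finite \'etale $\Lambda$-torsors over schemes, not closed immersions of v-sheaves, and its proof relies on comparing \'etale $H^1$ with a finite constant group---there is no analogous mechanism for promoting ``factorization on the generic fibre and at formal completions of closed points'' to a factorization of a map out of the v-sheaf $\mintgbxmuone{e}$ through a closed sub-v-sheaf. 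The $\eta$-normality arguments you cite live on formal schemes, whereas the source here is not representable. Finally, your description of $\mintgbxmuone{e}$ as a torsor is inverted: it is $\mintgcircbxmu{e}\to\mintgbxmuone{e}$ that is the $\pi_0(\mathcal{G})^\phi$-torsor, not the other way around.
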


\begin{Rem} \label{Rem:HamacherKimRemark}
    Under certain additional hypotheses on $\gx$, it is conjectured in \cite[Axiom A]{Hamacher-Kim} that (for $\ell=\ovfp)$ there should be a uniformization map $\mintgbxmu{e}(\ovfp) \to \scrs_{K_p}\gx(\ovfp)$. If $\Pi_{\mathcal{G}} \not=1$, then such a map cannot upgrade to a uniformization map as in Theorem \ref{Thm:Uniformization}. Indeed, the natural map $ \scrs_{K_p}\gx^{\diamond} \to \shtgmu$ factors through $\shtgmuone$ by Remark \ref{Rem:ShtukaOne}, while the natural map $\mintgbxmu{e} \to \shtgmu$ does not.
\end{Rem}

\begin{proof}
The proof of \cite[Corollary 6.3]{GleasonLimXu} goes through, with the following modification. In the notation of \cite[Section 3.4]{GleasonLimXu}, we have an isomorphism, where the right hand side is the local Shimura variety of level $\mathcal{G}(\zp)$ over $\spd(\ebreve)$ associated to $(G, b_x, \mu, \mathcal{G}(\zp))$,
\begin{align}
  \mintgbxmuone{e} &\times_{\spd(\oeel{e})} \spd(\oeel{e}[1/p]) \\ &\simeq \operatorname{Sht}_{G,b_x,\mu, \mathcal{G}(\zp)} \times_{\spd(\ebreve)} \spd(\oeel{e}[1/p])
\end{align}
see \cite[Theorem 4.5.1]{PappasRapoportRZSpaces}. This means that we can follow the construction in \cite[Corollary 3.11]{GleasonLimXu} to construct a map
\begin{align}
    G(\qp)/\mathcal{G}(\zp) \to \pi_0 \left(\mintgbxmuone{e}\right).
\end{align}
To show that this map is surjective, we use the commutative diagram
\begin{equation}
    \begin{tikzcd}
        G(\qp)/\calgcirc(\zp) \arrow{r} \arrow{d} & \pi_0\left(\mintgcircbxmu{e}\right) \arrow{d} \\
        G(\qp)/\mathcal{G}(\zp) \arrow{r} & \pi_0\left(\mintgbxmuone{e}\right),
        \end{tikzcd}
\end{equation}
and the surjectivity of $\mintgcircbxmu{e} \to \mintgbxmuone{e}$ and $G(\qp)/\calgcirc(\zp) \to \pi_0\left(\mintgcircbxmu{e}\right)$, see \cite[Corollary 3.11]{GleasonLimXu} for the latter. With this in mind, the rest of the proof of \cite[Corollary 6.3]{GleasonLimXu} goes through. 
\end{proof}

\begin{Cor} \label{Cor:Uniformization}
For $z \in \scrs_{K_p^{\circ}}\gx(\ell)$ with image $x \in  \scrs_{K_p}\gx(\ell)$, there is a uniformization map 
    \begin{align}
        \Theta_{\calgcirc, z}:\mintgcircbxmu{e} \to \scrs_{K_p^{\circ}}\gx_{\oeel{e}}^{\diamond}
    \end{align}
sending the base point $z_0$ to $z$, that restricts to an isomorphism
\[
  \Theta_{\calgcirc,x} \colon \widehat{\mintgcircbxmu{e}}_{/z_0}
  \xrightarrow{\cong} (\widehat{\scrs_{K_p^{\circ}}\gx_\oeel{e}}_{/z})^\diamondsuit.
\]
\end{Cor}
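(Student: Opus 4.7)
The plan is to construct $\Theta_{\calgcirc, z}$ as a base change of $\Theta_{\mathcal{G}, x}$ from Theorem~\ref{Thm:Uniformization}, using two Cartesian identifications over $\shtgmuone$ that follow from the main results of the paper. First, I would revisit the proof of Theorem~\ref{Thm:Devissage} (applied with $\mathcal{H} = \calgcirc$) to extract the identification
\[
    \scrs_{K_p^{\circ}}\gx^{\diamond} = \scrs_{K_p}\gx^{\diamond} \times_{\shtgmuone} \shtgcircmu,
\]
obtained by restricting the $\lozenge/$-level Cartesian square $\scrs_{K_p^{\circ}}\gx^{\lozenge/} = \scrs_{K_p}\gx^{\lozenge/} \times_{\shtgmuone} \shtgcircmu$ along $\scrs_{K_p}\gx^{\diamond} \hookrightarrow \scrs_{K_p}\gx^{\lozenge/}$. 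Simultaneously, Lemma~\ref{Lem:LocalUniformisation} combined with Corollary~\ref{Cor:QuasiParahoricShtukas} yields
\[
    \mintgcircbxmu{e} = \mintgbxmuone{e} \times_{\shtgmuone} \shtgcircmu.
\]

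With these in hand, I would define $\Theta_{\calgcirc, z}$ as the base change of $\Theta_{\mathcal{G}, x}$:
\[
    \mintgcircbxmu{e} = \mintgbxmuone{e} \times_{\shtgmuone} \shtgcircmu \xrightarrow{\Theta_{\mathcal{G}, x} \times \mathrm{id}} \scrs_{K_p}\gx^{\diamond} \times_{\shtgmuone} \shtgcircmu = \scrs_{K_p^\circ}\gx^{\diamond}.
\]
By construction, this map is $\pi_0(\mathcal{G})^\phi$-equivariant with respect to the torsor structures on both sides (on the left over $\mintgbxmuone{e}$ by Corollary~\ref{Cor:QuasiParahoricShtukas}, on the right over $\scrs_{K_p}\gx^\diamond$ from Theorem~\ref{Thm:Devissage}). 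I would then choose the base point $x_0 \in \mintgcircbxmu{e}(\spd(\ell))$ to be the unique lift of the base point of $\mintgbxmuone{e}$ whose image in $\scrs_{K_p^\circ}\gx$ equals $z$; this is possible because the $\pi_0(\mathcal{G})^\phi$-action is simply transitive on preimages.

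For the isomorphism on formal completions, I would use that both torsor maps $\mintgcircbxmu{e} \to \mintgbxmuone{e}$ and $\scrs_{K_p^\circ}\gx^\diamond \to \scrs_{K_p}\gx^\diamond$ are étale, hence induce isomorphisms on formal completions at $(x_0, x_0)$ and $(z, x)$ respectively, and combine this with the completion isomorphism from Theorem~\ref{Thm:Uniformization}. Uniqueness of $\Theta_{\calgcirc, z}$ follows because $\scrs_{K_p^\circ}\gx^\diamond \to \scrs_{K_p}\gx^\diamond$ is étale: any two candidate maps both lifting $\Theta_{\mathcal{G}, x}$ and sending $x_0$ to $z$ must coincide. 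The main point requiring care is the first Cartesian identification, namely that the $\lozenge/$-level Cartesian square pulls back correctly to a $\diamond$-level Cartesian square; this should follow from the pushout description of $(-)^{\lozenge/}$ together with the compatibility of $(-)^\diamond$ with base change along the finite étale morphism $\scrs_{K_p^\circ}\gx \to \scrs_{K_p}\gx$.
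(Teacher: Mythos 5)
Your proposal is correct and follows essentially the same route as the paper's proof. The paper defines $Y$ as the fiber product $\mintgbxmuone{e} \times_{\scrs_{K_p}\gx^\diamond_{\oeel{e}}} \scrs_{K_p^\circ}\gx^\diamond_{\oeel{e}}$ (via $\Theta_{\mathcal{G},x}$), then ``stacks'' fiber product squares from Theorem~\ref{Thm:Devissage} and Lemma~\ref{Lem:LocalUniformisation} to identify $Y$ with $\mintgbxmuone{e}\times_{\shtgmuone}\shtgcircmu$ and hence with $\mintgcircbxmu{e}$ — which is precisely your two-step base-change argument reorganized; your extra discussion of the base point, the étale formal-completion isomorphisms, and the restriction from $(-)^{\lozenge/}$ to $(-)^\diamond$ along the finite étale cover just makes explicit what the paper leaves implicit in the phrase ``stacking fiber product squares.''
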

\begin{proof}
If we define $Y$ (and $\Theta_{\calgcirc, z}$) as the fiber product
\begin{equation} 
    \begin{tikzcd}
        Y \arrow{d} \arrow{r}{\Theta_{\calgcirc, z}} & \scrs_{K_p^{\circ}}\gx_{\oeel{e}}^{\diamond}  \arrow{d} \\
        \mintgbxmuone{e} \arrow{r}{\Theta_{\mathcal{G}, x}} & \scrs_{K_p}\gx_{\oeel{e}}^\diamond,
    \end{tikzcd}
\end{equation}
then by concatenating fiber product squares (see Theorem \ref{Thm:Devissage}) we get a fiber product diagram
\begin{equation}
    \begin{tikzcd}
        Y \arrow{r} \arrow{d} & \shtgcircmu \arrow{d} \\
        \mintgbxmuone{e} \arrow{r} & \shtgmuone.
    \end{tikzcd}
\end{equation}
It follows from the proof of Theorem \ref{Thm:QuasiParahoricShtukas} and Lemma \ref{Lem:LocalUniformisation} that $Y \to \mintgbxmuone{e}$ is isomorphic to $\mintgcircbxmu{e} \to\mintgbxmuone{e}$. To finish the proof, we need to show that the base point $z_0 \in \mintgcircbxmu{e}$ is mapped to $z$. Since $x_0 \in \mintgbxmuone{e}$ is mapped to $x$ under $\Theta_{\mathcal{G}, x}$, the identity $\Theta_{\calgcirc, z}(z_0)=z$ follows from the fact that $z_0$ is the unique lift of $x_0$ whose associated $\mathcal{G}^{\circ}$-shtuka is given by $\spd \ell \xrightarrow{z} \scrs_{K_p^{\circ}}\gx_{\oeel{e}}^{\diamond} \to \shtgcircmu$; this follows from the Cartesian diagram of Theorem \ref{Thm:Devissage}.
\end{proof}

\appendix 

{\section{On scheme-theoretic local model diagrams} \label{Appendix:A}
\def\lozenge{\diamondsuit}

In this appendix we flesh out the remark in \cite[Section 4.9.2]{PappasRapoportShtukas} that the local model diagrams of \cite[Theorem 7.1.3]{KisinPappasZhou} give scheme-theoretic local model diagrams for integral models of Hodge-type Shimura varieties. 

\subsection{Some rational \texorpdfstring{$p$}{p}-adic Hodge theory} \label{sub:rationalpadicHodge} Let $X$ be a smooth rigid space over a finite extension $E$ of $\qp$ with pro-\'etale site $X_{\text{pro\'et}}$ as in \cite[Definition 3.9]{ScholzePAdicHodge}. We will consider the period sheaves $\mathbb{B}_\mathrm{dR}^+, \mathbb{B}_\mathrm{dR}$ and $\mathcal{O}\mathbb{B}_\mathrm{dR}$, see \cite[Definition 6.1, Definition 6.8]{ScholzePAdicHodge}. Let $\mathbb{L}$ be a de Rham $\zp$-local system of rank $n$ on $X_{\text{pro\'et}}$. Associated to $\mathbb{L}$ is a filtered vector bundle with integrable connection $D_{\mathrm{dR}}(\mathbb{L})=(\mathcal{E}, \operatorname{Fil}^{\bullet}, \nabla)$ on $X_\text{\'et}$ satisfying Griffiths transversality, see \cite[Theorem 3.9]{LiuZhu}. We have two $\mathbb{B}_\mathrm{dR}^+$-lattices on $X_{\text{pro\'et}}$:
\begin{align}
\mathbb{M}:=\mathbb{L}\otimes_{\underline{\zp}}\mathbb{B}_\mathrm{dR}^+, \quad \text{ and } \quad \mathbb{M}_0:=(D_\mathrm{dR}(\mathbb{L})\otimes_{\CO_X}\CO\mathbb{B}_\mathrm{dR}^+)^{\nabla=0}.
\end{align}
Here for the construction of $\mathbb{M}_0$, we take flat sections for the induced connection $\nabla=\nabla_{D_\mathrm{dR}(\mathbb{L})}\otimes \mathrm{id}+\mathrm{id}\otimes\nabla_{\CO\mathbb{B}_\mathrm{dR}^+}$. Also, by \cite[Definition 2.6.4, Proposition 2.6.3, Proposition 2.5.1]{PappasRapoportShtukas}, there is an induced shtuka $\mathscr{V}_{\mathbb{L}}$ of rank $n$ on $X^{\lozenge}$. 

Let $S=\spa(R,R^+)$ be an affinoid perfectoid space of characteristic $p$ together with a map $f:S \to X^{\lozenge}$ corresponding to an untilt $S^{\sharp}$ and a map $f:S^{\sharp} \to X$. Note that by construction of $\mathscr{V}_{\mathbb{L}}$, the completion of $\mathscr{V}_{\mathbb{L}}$ (resp. $\mathrm{Frob}_S^\ast\mathscr{V}_{\mathbb{L}}$) along $S^\sharp$ is canonically identified with $f^\ast\mathbb{M}$ (resp. $f^\ast\mathbb{M}_0$), where these pullbacks are defined as in the proof of \cite[Proposition 2.6.3]{PappasRapoportShtukas}. We equip $\restr{\operatorname{Frob}_S^{\ast}\mathscr{V}_{\mathbb{L}}}{S^{\sharp}}$ with a decreasing filtration such that (the Tate twist can be ignored)
\[\mathrm{Fil}^{-i}(\restr{\operatorname{Frob}_S^{\ast}\mathscr{V}_{\mathbb{L}}}{S^{\sharp}})\coloneqq \mathbb{M}\cap \mathrm{Fil}^{i}(\mathbb{M}_0)/\mathbb{M}\cap\mathrm{Fil}^{i+1}(\mathbb{M}_0)(-i).\]
\begin{Lem} \label{Lem:DdR}
    There is a natural isomorphism between filtered vector bundles $D_{\mathrm{dR}}(\mathbb{L})_{S^{\sharp}}$ and $\restr{\operatorname{Frob}_S^{\ast}\mathscr{V}_{\mathbb{L}}}{S^{\sharp}}$. 
\end{Lem}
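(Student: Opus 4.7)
The plan is to produce the isomorphism in two steps: first matching the underlying vector bundles on $S^\sharp$, and then verifying that the filtrations agree.

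First I would unwind the construction of $\mathscr{V}_{\mathbb{L}}$ to identify $\restr{\operatorname{Frob}_S^{\ast}\mathscr{V}_{\mathbb{L}}}{S^{\sharp}}$ with $f^\ast(\mathbb{M}_0/\xi\mathbb{M}_0)$, where $\xi$ is a local generator of the ideal sheaf of $S^\sharp \subset S\bdtimes\spa(\zp)$. The de Rham comparison for $\mathbb{L}$ (see \cite[Theorem 3.9]{LiuZhu}) provides a horizontal filtered isomorphism
\[
D_\mathrm{dR}(\mathbb{L})\otimes_{\mathcal{O}_X}\mathcal{O}\mathbb{B}_\mathrm{dR}^+ \xrightarrow{\sim} \mathbb{M}_0\otimes_{\mathbb{B}_\mathrm{dR}^+}\mathcal{O}\mathbb{B}_\mathrm{dR}^+,
\]
exhibiting $\mathbb{M}_0$ as the horizontal sections of the right-hand side via the Poincar\'e lemma for $\mathcal{O}\mathbb{B}_\mathrm{dR}^+$. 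Reducing modulo $\xi$ (and using that $\mathcal{O}\mathbb{B}_\mathrm{dR}^+/\xi$ is the completed pro-\'etale structure sheaf) yields a canonical isomorphism of vector bundles
\[
\alpha \colon D_\mathrm{dR}(\mathbb{L})_{S^{\sharp}} \xrightarrow{\sim} \restr{\operatorname{Frob}_S^{\ast}\mathscr{V}_{\mathbb{L}}}{S^{\sharp}}.
\]

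Next I would verify that $\alpha$ is filtration-preserving. Under $\alpha$, the Hodge filtration on $D_\mathrm{dR}(\mathbb{L})_{S^\sharp}$ corresponds to the filtration on $\mathbb{M}_0/\xi\mathbb{M}_0$ induced by $\mathrm{Fil}^\bullet \mathbb{M}_0$. On the other hand, the filtration in the statement is built out of the relative position of the two lattices $\mathbb{M}$ and $\mathbb{M}_0$ inside $\mathbb{L}\otimes\mathbb{B}_\mathrm{dR}$. The matching is the rational $p$-adic Hodge-theoretic statement that the modification datum of the shtuka $\mathscr{V}_{\mathbb{L}}$ is encoded by the Hodge cocharacter of $\mathbb{L}$: working pro-\'etale locally, both lattices can be trivialized compatibly via the above comparison, so the intersections $\mathbb{M}\cap\mathrm{Fil}^{i}(\mathbb{M}_0)$ become explicit, and their successive quotients (reduced modulo $\xi$ and Tate-twisted by $(-i)$) recover the graded pieces of the Hodge filtration on $D_\mathrm{dR}(\mathbb{L})_{S^\sharp}$.

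The main obstacle is bookkeeping the Tate twists and carefully matching the two filtrations: the identification of the underlying vector bundles is essentially formal from the construction of $\mathscr{V}_{\mathbb{L}}$, while the filtration compatibility is the genuine content of the lemma. In particular, one must verify that the formula in the statement actually defines a decreasing filtration by subbundles of $\restr{\operatorname{Frob}_S^{\ast}\mathscr{V}_{\mathbb{L}}}{S^{\sharp}}$ (and not merely the formal expressions for graded pieces). This should follow from the minuscule nature of the relative position of $\mathbb{M}$ and $\mathbb{M}_0$, which forces the associated graded pieces to be locally free and the filtration to split after pulling back to $S^\sharp$.
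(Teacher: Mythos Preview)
Your proposal is correct and follows essentially the same approach as the paper: identify the underlying bundle as $\mathbb{M}_0/\xi\mathbb{M}_0$ and then match filtrations via the relative position of $\mathbb{M}$ and $\mathbb{M}_0$. The paper's proof is more terse, simply citing the discussion after \cite[Lemma~7.7]{ScholzePAdicHodge} for the identification $D_\mathrm{dR}(\mathbb{L}) \cong \mathrm{gr}^0(\mathbb{M}_0)$ and \cite[Proposition~7.9]{ScholzePAdicHodge} for the fact that the Hodge filtration is recovered by exactly the formula defining $\mathrm{Fil}^{-i}$ in the statement; this dispatches your worry about whether the formula defines a genuine filtration by subbundles, since that is part of what Scholze's Proposition~7.9 establishes.
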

\begin{proof}
The underlying vector bundle of $D_\mathrm{dR}(\mathbb{L})|_{X_\text{pro\'et}}$ can be recovered from $\mathbb{M}_0$ by taking 0th graded piece, see the discussion after the proof of \cite[Lemma 7.7]{ScholzePAdicHodge}. Its filtration can be recovered from the relative position of $\mathbb{M}$ and $\mathbb{M}_0$, as explained in \cite[Proposition 7.9]{ScholzePAdicHodge}. Since $\restr{\operatorname{Frob}_S^{\ast}\mathscr{V}_{\mathbb{L}}}{S^{\sharp}}=\mathrm{gr}^0(\mathbb{M}_0)$, by comparing with the formula in \cite[Proposition 7.9]{ScholzePAdicHodge}, we see that it agrees with $D_\mathrm{dR}(\mathbb{L})|_{S^\sharp}$ as filtered vector bundles. This identification is moreover natural in $S$. 
\end{proof}

Note that Lemma \ref{Lem:DdR} gives us a $2$-commutative diagram of tensor functors (discarding the connection on $D_\mathrm{dR}(-)$)
\begin{equation}
    \begin{tikzcd}
    \{\text{de Rham $\zp$-local systems on } X_\text{pro\'et} \} \arrow{r}{\mathrm{PR}} \arrow{d}{D_{\mathrm{dR}}} &  \{\text{Shtukas on } X^{\lozenge} \} \arrow{d} \\
     \{\text{Filtered vector bundles on } X_\text{\'et}\} \arrow{r} & \{\text{Filtered vector bundles on } X^{\lozenge}\}.
    \end{tikzcd}
\end{equation}    
Here $\mathrm{PR}$ is the (exact) tensor functor of \cite[Definition 2.6.4]{PappasRapoportShtukas}, and the right vertical arrow takes a shtuka $\mathscr{V}$ on $X^{\lozenge}$ to $\restr{\operatorname{Frob}_S^{\ast}\mathscr{V}_{\mathbb{L}}}{S^{\sharp}}$ as in Lemma \ref{Lem:DdR}.

\subsubsection{} \label{subsub:PidR} Now suppose that $X=Z^{\mathrm{an}}$ for a smooth $E$-scheme $Z$, and that there is an abelian scheme $\pi: A \to Z$ of relative dimension $g$ such that $\mathbb{L}\coloneqq R^1\pi_{\ast,\text{pro\'et}}\underline{\zp}$. Then as explained in \cite[Example 2.6.2]{PappasRapoportShtukas},
\begin{align}
D_{\mathrm{dR}}(\mathbb{L}) \simeq (\mathcal{H}^1_{\mathrm{dR}}(A/X), \operatorname{Fil}^{\bullet}_{\operatorname{Hdg}}),
\end{align}
where $\mathcal{H}^1_{\mathrm{dR}}(A/X)$ denotes the first relative de Rham cohomology of $\pi$, equipped with its Hodge filtration $ \operatorname{Fil}^{\bullet}_{\operatorname{Hdg}}$. To be precise, there is a natural surjective map of vector bundles $\mathcal{H}^1_{\mathrm{dR}}(A/X) \to \operatorname{Lie} (A^\vee)$ with kernel $ \operatorname{Fil}^{1}_{\operatorname{Hdg}}$. Note that it follows from \cite[Proposition 2.2.3]{CaraianiScholzeCompact} that $\mathbb{M}_0 \subset \mathbb{M}$. We recall the element $\xi \in \mathbb{B}_\mathrm{dR}^+$ generating $\ker \theta$, see \cite[Section 6]{ScholzePAdicHodge}.
\begin{Lem}\label{Lem:DdRFiltration}
    We can identify $\xi\mathbb{M} \subset \mathbb{M}_0$ with the kernel of the map
    \begin{align}
        \mathbb{M}_0 \to \mathcal{H}^1_{\mathrm{dR}}(A/X) \to \operatorname{Lie} (A^\vee).
    \end{align}
\end{Lem}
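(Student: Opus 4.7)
The plan is to combine the inclusion $\mathbb{M}_0 \subseteq \mathbb{M}$ from \cite[Proposition 2.2.3]{CaraianiScholzeCompact} with the description of the Hodge filtration on $D_{\mathrm{dR}}(\mathbb{L})$ via the relative position of the two $\mathbb{B}_{\mathrm{dR}}^+$-lattices $\mathbb{M}_0$ and $\mathbb{M}$, as already used in the proof of Lemma~\ref{Lem:DdR}.

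First, I would sharpen $\mathbb{M}_0 \subseteq \mathbb{M}$ to the chain
\[
  \xi\mathbb{M} \;\subseteq\; \mathbb{M}_0 \;\subseteq\; \mathbb{M}.
\]
Working pro-\'etale-locally on $X$, one can trivialise $\mathbb{M}$ and produce a basis $e_1,\dots,e_{2g}$ of $\mathbb{M}$ together with integers $a_i \geq 0$ such that $\xi^{a_i} e_i$ is a basis of $\mathbb{M}_0$. Under the description
\[
  \operatorname{Fil}^i D_{\mathrm{dR}}(\mathbb{L}) \;=\; \mathrm{image}\bigl((\xi^i \mathbb{M} \cap \mathbb{M}_0) \to \mathbb{M}_0/\xi\mathbb{M}_0\bigr)
\]
provided by \cite[Proposition~7.9]{ScholzePAdicHodge}, the $i$-th graded piece of the Hodge filtration on $D_{\mathrm{dR}}(\mathbb{L})=\mathbb{M}_0/\xi\mathbb{M}_0$ is spanned by the classes of those basis vectors with $a_j = i$. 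Matching against the Hodge filtration on $\mathcal{H}^1_{\mathrm{dR}}(A/X)$, where $\operatorname{Fil}^1_{\mathrm{Hdg}} = \omega_A$ and $\operatorname{Fil}^2_{\mathrm{Hdg}} = 0$, via the identification recalled in Section~\ref{subsub:PidR}, one concludes $a_i \in \{0,1\}$ for every $i$, which is equivalent to $\xi\mathbb{M} \subseteq \mathbb{M}_0$.

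Second, the map $\mathbb{M}_0 \to \mathcal{H}^1_{\mathrm{dR}}(A/X)$ of the statement is the natural projection $\mathbb{M}_0 \twoheadrightarrow \mathbb{M}_0/\xi\mathbb{M}_0 \cong D_{\mathrm{dR}}(\mathbb{L}) \cong \mathcal{H}^1_{\mathrm{dR}}(A/X)$, and composing with the quotient by $\omega_A = \operatorname{Fil}^1_{\mathrm{Hdg}}$ yields a map whose kernel is the preimage of $\omega_A$ in $\mathbb{M}_0$. By the formula above this preimage equals $\xi\mathbb{M}_0 + (\mathbb{M}_0 \cap \xi\mathbb{M})$. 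Using $\xi\mathbb{M} \subseteq \mathbb{M}_0$ (just established) together with $\xi\mathbb{M}_0 \subseteq \xi\mathbb{M}$, both summands lie in $\xi\mathbb{M}$ and the sum equals $\xi\mathbb{M}$, as desired. The main subtlety is to verify that the filtration produced by Scholze's lattice description matches the Hodge filtration on $\mathcal{H}^1_{\mathrm{dR}}(A/X)$ under the conventions of Section~\ref{subsub:PidR}; once this is checked the rest is an elementary lattice computation.
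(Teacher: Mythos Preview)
Your argument is correct and follows essentially the same approach as the paper's own proof: both use that the Hodge filtration on $H^1_{\mathrm{dR}}(A/X)$ has only two jumps and then read off the kernel from the explicit lattice description of the filtration (the formula just before Lemma~\ref{Lem:DdR}, equivalently \cite[Proposition~7.9]{ScholzePAdicHodge}). The paper merely states these two facts and leaves the elementary lattice computation implicit, whereas you spell it out.
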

\begin{proof}
Note that the Hodge filtration on $\mathcal{H}^1_{\mathrm{dR}}(A/X)$ only has two jumps. The lemma now follows from the explicit formula of the filtration above. 
\end{proof}

Let $\Lambda=\zp^{\oplus 2g}$ and let $P_{\Lambda}=\underline{\operatorname{Isom}}_X(\Lambda\otimes_{\zp} \CO_{X}, \mathcal{H}^1_{\mathrm{dR}}(A/X))$ be the frame bundle of $\mathcal{H}^1_{\mathrm{dR}}(A/X)$. Let $\operatorname{Gr}_{g, \Lambda}$ be the Grassmannian of rank $g$ quotients of $\Lambda$ considered as scheme over $\zp$\footnote{This is a partial flag variety for $\mathrm{GL}(\Lambda)$, not to be confused with the affine version appearing later. It equivalently parametrizes rank $g$ subspaces of $\Lambda^\vee\simeq \Lambda$. Hence the map below could also be interpreted as sending to $\operatorname{Lie} (A)\hookrightarrow \mathcal{H}_1^\mathrm{dR}(A/X)$. We will often not distinguish these two interpretations below, but we caution the reader that it is useful to sometimes switch between the two descriptions of $P_\Lambda$ to compare with various literature.}. Then there is a map of adic spaces 
\begin{align*}
    \pi_{\mathrm{dR}}:P_{\Lambda} \to \operatorname{Gr}_{g, \Lambda,E}^{\mathrm{an}}
\end{align*}
defined using the natural quotient map $\Lambda\otimes_{\zp} \CO_{X}\cong\mathcal{H}^1_{\mathrm{dR}}(A/X) \to \operatorname{Lie}(A^\vee)$. 

\subsubsection{} \label{subsub:BB} We adopt the notation from \ref{subsub:PidR} above. Let $\mathscr{V}$ be the vector bundle shtuka induced from $\mathbb{L}$. Then by Lemma~\ref{Lem:DdRFiltration}, $\mathscr{V}^\vee$ is minuscule of height $2g$ and dimension $g$ in the sense of \cite[Definition 2.2.2]{PappasRapoportShtukas} (note the inclusion relation there, which is why we switched to $\mathscr{V}^\vee$). By \cite[Lemma 2.4.4]{PappasRapoportShtukas}, we can think of it as a ${\operatorname{GL}(\Lambda)}$-shtuka bounded by the cocharacter $\mu_g=(1^{(g)}, 0^{(g)})$. Since ${\operatorname{GL}(\Lambda)}$ is a reductive group, we may identify the local model $\mathbb{M}_{{\operatorname{GL}(\Lambda)}, \mu_g}$ with the flag variety $\operatorname{Gr}_{g, \Lambda}$ of $g$-dimensional quotients of $\Lambda$ as $\CO_E$-schemes (see \cite[Example 4.12]{AGLR} and \cite[Proposition 19.4.2]{ScholzeWeinsteinBerkeley}). We consider the diamond associated to the local model $\mathbb{M}_{{\operatorname{GL}(\Lambda)}, \mu_g}^{\lozenge}$ as a closed subfunctor of the 
Beilinson--Drinfeld affine Grassmannian $\operatorname{Gr}_{{\operatorname{GL}(\Lambda)}}$ for ${\operatorname{GL}(\Lambda)}$.

On the generic fiber (base changed to $E$), the isomorphism $\mathbb{M}_{{\operatorname{GL}(\Lambda)}, \mu_g, E}^{\lozenge}\xrightarrow{\sim} \operatorname{Gr}_{g, \Lambda,E}^\lozenge$ is induced by the Bia\l{}ynicki-Birula map, see \cite[Proposition 19.4.2]{ScholzeWeinsteinBerkeley}.

\subsubsection{}\label{subsub:GLTorsors} Let the notation be as in Section~\ref{subsub:PidR}. We can define a (left) $\mathrm{GL}(\Lambda)^\lozenge$-torsor of trivializations $\mathcal{P}_\Lambda$ over $X^\lozenge$ via 
\[(S\to X^\lozenge)\mapsto \mathrm{Isom}_{\CO_{S^\sharp}}(\Lambda\otimes_{\zp} \CO_{S^\sharp},\restr{\frob_S^{\ast }\mathscr{V}}{S^{\sharp}}).\] 
Lemma \ref{Lem:DdR} shows that there is a natural isomorphism of $\mathrm{GL}(\Lambda)^\lozenge$-torsors over $X^\lozenge$ 
\begin{align}
   P^\lozenge_{\Lambda} \xrightarrow{\sim} \mathcal{P}_\Lambda.
\end{align}
Applying the construction of \cite[Section 4.9.1]{PappasRapoportShtukas}, we get a diagram
\[X^\lozenge\leftarrow \mathcal{P}_\Lambda\rightarrow \mathbb{M}_{{\operatorname{GL}(\Lambda)}, \mu_g,E}^{\lozenge}.\]
The right arrow is $\mathrm{GL}(\Lambda)^\lozenge$-equivariant, and following the notation in \textit{loc. cit.}, it is constructed by (locally on $S$) lifting a section of $\mathcal{P}_\Lambda$ to an isomorphism over $\widehat{S^\sharp}\coloneqq\spec(\hat{\CO}_{S\dot{\times}\spa \zp, S^\sharp})$, and then send it to the triple 
\[(S^\sharp, \mathscr{V}, \alpha:   \Lambda\otimes_{\zp} \CO_{\widehat{S^\sharp}\backslash S^\sharp}\simeq \frob^\ast\mathscr{V}|_{\widehat{S^\sharp}\backslash S^\sharp}\xrightarrow[\sim]{\phi_{\mathscr{V}}}\mathscr{V}|_{\widehat{S^\sharp}\backslash S^\sharp})\in \operatorname{Gr}_{\mathrm{GL}(\Lambda),E}(S).\]
Its image lies in the minuscule Schubert cell $\mathbb{M}_{{\operatorname{GL}(\Lambda)}, \mu_g, E}^{\lozenge}= \operatorname{Gr}_{\mathrm{GL}(\Lambda),\mu_g, E}$. By Lemma~\ref{Lem:DdR}, we have the following compatibility.

\begin{Prop} \label{Prop:EqualityFiltrations}
The diagram below commutes, where the vertical isomorphisms are the ones from Sections~\ref{subsub:GLTorsors} and \ref{subsub:BB}.
\[\begin{tikzcd}
    X^\lozenge \arrow[d,equal] & \mathcal{P}_\Lambda \arrow{l} \arrow[d,"\sim"]\arrow{r}& \mathbb{M}_{{\operatorname{GL}(\Lambda)}, \mu_g, E}^{\lozenge}\arrow{d}{\sim}\\
    X^\lozenge & P_\Lambda^\lozenge \arrow{r}\arrow{l}& \operatorname{Gr}_{g,\Lambda,E}^\lozenge
\end{tikzcd}\]
\end{Prop}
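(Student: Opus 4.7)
The plan is to verify commutativity pointwise on $S$-points, where $S = \spa(R, R^+) \to X^\lozenge$ is an affinoid perfectoid space corresponding to an untilt $S^\sharp \to X$. The left square is tautological, since both horizontal arrows are the structure maps of the respective $\mathrm{GL}(\Lambda)^\lozenge$-torsors and the vertical identification from Section~\ref{subsub:GLTorsors} is $\mathrm{GL}(\Lambda)^\lozenge$-equivariant. The content of the proposition is the commutativity of the right square.

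Fix a section $\alpha_0 \in \mathcal{P}_\Lambda(S)$, that is, an isomorphism $\alpha_0 \colon \Lambda \otimes_{\zp} \mathcal{O}_{S^\sharp} \xrightarrow{\sim} \restr{\mathrm{Frob}_S^\ast \mathscr{V}}{S^\sharp}$. Under the canonical identification of Section~\ref{subsub:GLTorsors} (which uses Lemma~\ref{Lem:DdR}), $\alpha_0$ corresponds to a section $\beta_0 \in P_\Lambda^\lozenge(S)$, i.e., a trivialization $\beta_0 \colon \Lambda \otimes_{\zp} \mathcal{O}_{S^\sharp} \xrightarrow{\sim} \mathcal{H}^1_{\mathrm{dR}}(A/X)|_{S^\sharp}$ that respects the filtrations (Hodge on the target, as in Lemma~\ref{Lem:DdR}, trivial on the source). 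The bottom path sends $\beta_0$ via $\pi_{\mathrm{dR}}$ to the composition of $\beta_0$ with the canonical quotient $\mathcal{H}^1_{\mathrm{dR}}(A/X)|_{S^\sharp} \twoheadrightarrow \mathrm{Lie}(A^\vee)|_{S^\sharp}$, viewed as a $g$-dimensional quotient of $\Lambda \otimes_{\zp} \mathcal{O}_{S^\sharp}$, i.e., a point of $\operatorname{Gr}_{g,\Lambda,E}^\lozenge(S)$.

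For the top path, one lifts $\alpha_0$ locally to an isomorphism $\tilde\alpha_0$ over $\widehat{S^\sharp}$, and the resulting point of $\operatorname{Gr}_{\mathrm{GL}(\Lambda),E}(S)$ encodes the relative position of $\mathscr{V}$ and $\mathrm{Frob}_S^\ast \mathscr{V}$ at $S^\sharp$ after trivializing the latter via $\alpha_0$. By the construction of the vector bundle shtuka $\mathscr{V}$ recalled in Section~\ref{sub:rationalpadicHodge}, the $\mathbb{B}_{\mathrm{dR}}^+$-completions of $\mathscr{V}$ and $\mathrm{Frob}_S^\ast \mathscr{V}$ at $S^\sharp$ are identified with the pullbacks of $\mathbb{M}$ and $\mathbb{M}_0$ respectively. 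Hence the top horizontal map records precisely the relative position of the lattices $\mathbb{M} \subset \mathbb{M}_0[\xi^{-1}]$ after trivializing $\mathbb{M}_0$ via $\alpha_0$. By the description of the Bia\l{}ynicki-Birula isomorphism on a minuscule Schubert cell in \cite[Proposition 19.4.2]{ScholzeWeinsteinBerkeley}, the image of this relative position in $\operatorname{Gr}_{g,\Lambda,E}^\lozenge(S)$ is the quotient of $\mathrm{gr}^0(\mathbb{M}_0) \cong \restr{\mathrm{Frob}_S^\ast \mathscr{V}}{S^\sharp}$ by the image of $\xi \mathbb{M} \cap \mathbb{M}_0$, trivialized via $\alpha_0$. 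By Lemma~\ref{Lem:DdRFiltration}, the image of $\xi \mathbb{M} \cap \mathbb{M}_0$ in $\mathrm{gr}^0(\mathbb{M}_0)$ is precisely $\mathrm{Fil}^1_{\mathrm{Hdg}} \subset \mathcal{H}^1_{\mathrm{dR}}(A/X)|_{S^\sharp}$ under the identification of Lemma~\ref{Lem:DdR}, and so the resulting quotient is exactly $\mathrm{Lie}(A^\vee)|_{S^\sharp}$ trivialized via $\beta_0$. This matches the image of $\beta_0$ under $\pi_{\mathrm{dR}}$, proving the commutativity.

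The main obstacle is to pin down the exact form of the Bia\l{}ynicki-Birula identification on the minuscule Schubert cell $\mathbb{M}_{\mathrm{GL}(\Lambda),\mu_g,E}^\lozenge$ and to verify that the quotient it produces is the Hodge quotient (rather than, say, its dual, or the Hodge quotient of $\mathcal{H}^1_{\mathrm{dR}}(A^\vee/X)|_{S^\sharp}$, or a twist thereof). Once this bookkeeping with $\mathbb{M}$ versus $\mathbb{M}_0$ and the cocharacter convention $\mu_g = (1^{(g)}, 0^{(g)})$ is settled, the rest of the proof is a straightforward diagram chase using the naturality of the identifications in Section~\ref{sub:rationalpadicHodge}.
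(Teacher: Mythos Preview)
Your proof is correct and follows exactly the approach the paper intends: the paper gives no proof at all, stating only ``By Lemma~\ref{Lem:DdR}, we have the following compatibility'' before the proposition, and your argument is the detailed unpacking of this sentence---tracing the right square pointwise using Lemma~\ref{Lem:DdR} to identify the torsors, Lemma~\ref{Lem:DdRFiltration} to identify $\xi\mathbb{M}$ with $\mathrm{Fil}^1_{\mathrm{Hdg}}$, and the explicit description of the Bia\l{}ynicki-Birula map on the minuscule cell. One small simplification: since $\xi\mathbb{M} \subset \mathbb{M}_0$ already holds (as noted just before Lemma~\ref{Lem:DdRFiltration}), you can write $\xi\mathbb{M}$ in place of $\xi\mathbb{M}\cap\mathbb{M}_0$ throughout.
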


\subsubsection{} \label{subsub:Shimura} Now let $\gx$ be a Shimura datum with Hodge cocharacter $\mu$ and reflex field $\mathsf{E}$ satisfying \eqref{Eq:SV5}. Let $v |p$ be a place of $\mathsf{E}$, $E\coloneqq \mathsf{E}_v$, and $\mathcal{G}$ be a parahoric model of $G$ over $\qp$. Let $K_p=\mathcal{G}(\zp)$ and $K=K^pK_p$ for $K^p \subset \gafp$ a neat compact open subgroup. We now specialize the previous section to the situation that $X=\mathbf{Sh}_K\gx^{\mathrm{an}}$. Then there is a pro-\'etale $\mathcal{G}(\zp)$-torsor $\mathbb{P} \to X$ which is de Rham in the sense of \cite[Definition 2.6.5]{PappasRapoportShtukas}, see \cite[Section 4.1]{PappasRapoportShtukas}. This gives us an exact tensor functor
\begin{align}
     \mathbb{L}_p:\operatorname{Rep}_{\zp} \mathcal{G} &\to \{\text{de Rham $\zp$-local systems on } X_\text{pro\'et} \} \\
     W &\mapsto \mathbb{P} \times^{\underline{\calg(\zp)}} \underline{W}. 
\end{align}

The composition $D_{\mathrm{dR}} \circ \mathbb{L}_p$ defines a $G^\text{an}$-torsor $P$ on $X_\text{\'et}$ via the Tannakian formalism. It thus follows from Lemma \ref{Lem:DdR} that the ${G}^{\lozenge}$-torsor $P^\lozenge$ on $X^{\lozenge}$ is naturally isomorphic to the $\mathcal{G}^{\lozenge}$-torsor $\mathcal{P}_{\mathrm{PR}}$ induced by the $\mathcal{G}$-shtuka over $X^{\lozenge}$ coming from $\mathbb{L}_p$. We note that since the filtered vector bundles in the essential image of $D_\mathrm{dR}\circ \mathbb{L}_p$ are equipped with a decreasing filtration of type $\mu$, the torsor $P$ has a canonical reduction of structure group to the parabolic $P^\mathrm{std}_\mu$ attached to $\mu$\footnote{We follow the convention in \cite[Section 2.1]{CaraianiScholzeCompact} for parabolics attached to cocharacters, i.e.
\[ P_\mu^\mathrm{std} = \lbrace g \in G : \lim_{t \to \infty} \mu(t) g \mu(t)^{-1} \text{ exists} \rbrace.\] 
We alert the readers that this is called $P_\mu$ in \cite[Definition~19.4.1]{ScholzeWeinsteinBerkeley}, but opposite to $P_\mu$ in \cite{CaraianiScholzeCompact}.}, see \cite[Remark 4.1(i)]{LiuZhu}. Therefore it admits a map $P\rightarrow \mathcal{F}\ell_{G,\mu}\coloneqq(G/P^\mathrm{std}_\mu)_E^\mathrm{an}$. On the other hand, similar to what we have explained in Section~\ref{subsub:GLTorsors}, the construction in \cite[Section 4.9.1]{PappasRapoportShtukas} gives a map $\mathcal{P}_\mathrm{PR}\to \mathrm{Gr}_{G,E}$ with image in the Schubert cell $\mathrm{Gr}_{G,\mu, E}=\mathbb{M}_{\calg,\mu,E}^\lozenge$. Moreover, the following diagram is commutative,
\[\begin{tikzcd}
    X^\lozenge \arrow[d,equal] & \mathcal{P}_\mathrm{PR} \arrow{l} \arrow[d,"\sim"]\arrow{r}& \mathbb{M}_{\calg,\mu, E}^{\lozenge}\arrow{d}{\sim}\\
    X^\lozenge & P^\lozenge \arrow{r}\arrow{l}& \mathcal{F}\ell_{G,\mu},
\end{tikzcd}\]
where the rightmost vertical arrow is induced by the Bia\l{}ynicki-Birula map. Note that there is also an exact tensor functor (the canonical construction)
\begin{align}
\mathcal{L}:\operatorname{Rep}_{\zp} \mathcal{G} \to \{\text{Filtered vector bundles on } X\},
\end{align}
see \cite[Proposition 5.2.10]{DiaoLanLiuZhu}. It follows from \cite[Theorem 5.3.1]{DiaoLanLiuZhu} that there is a natural isomorphism of tensor functors
 \begin{align}
     \mathcal{L} \xrightarrow{\sim} D_{\mathrm{dR}} \circ \mathbb{L}_p.
 \end{align}\
Thus the $\mathcal{G}^{\lozenge}$-torsor on $X^{\lozenge}$ corresponding to $\mathcal{L}$ via the Tannakian formalism, is naturally isomorphic to the $\mathcal{G}^{\lozenge}$-torsor $\mathcal{P}_{\mathrm{PR}}$ induced by the $\mathcal{G}$-shtuka over $X^{\lozenge}$ coming from $\mathbb{L}_p$. This isomorphism is moreover compatible with filtrations and thus with the map to $\mathbb{M}_{\calg,\mu, E}^{\lozenge}$.

\subsection{Some integral \texorpdfstring{$p$}{p}-adic Hodge theory}
Let $S^\sharp=\spa(R^{\sharp}, R^{\sharp+})$ be an untilt of an affinoid perfectoid space $S= \spa(R,R^+)$ in characteristic $p$ and let $\mathfrak A$ be the completion of an abelian scheme over $R^{\sharp+}$ with associated $p$-divisible group $Y = \mathfrak{A}[p^\infty]$. By \cite[Theorem 17.5.2]{ScholzeWeinsteinBerkeley}, we can associate to $Y$ a finite free $W(R^+)$-module $M(Y)$ equipped with an isomorphism 
\begin{align}
    \phi_{M}:\phi^{\ast} M(Y)[1/\phi(\xi)] \xrightarrow{\sim} M(Y)[1/\phi(\xi)]
\end{align}
such that
\begin{align}
    M(Y) \subset \phi_{M}\left(\phi^\ast M(Y)\right) \subset \tfrac{1}{\phi(\xi)} M(Y).
\end{align}
Here $\xi$ is a generator of the kernel of $W(R^+) \to R^{\sharp+}$. Let $M(Y)^{\ast}$ denote the $W(R^+)$-linear dual of $M(Y)$, which we will equip with the isomorphism $\phi_{M^{\ast}}$ given by the inverse of the $W(R^+)$-linear dual of $\phi_{M}$. This is the (contravariant) prismatic Dieudonn\'e module of $Y$ and it satisfies
\begin{align}
    \xi M(Y)^{\ast} \subset \phi_{M^{\ast}}(\phi^{\ast} M(Y)^{\ast})\subset M(Y)^{\ast}.
\end{align}
By restriction along $S\dot{\times}\spa \zp\to \spec W(R^+)$, it gives rise to a minuscule vector bundle shtuka with one leg at $S^\sharp$.

\begin{Lem} \label{Lem:IntegralDeRhamComparison}
There is a canonical isomorphism 
\begin{align}
    M(Y)^{\ast} \otimes_{W(R^+)} R^{\sharp+} \xrightarrow{\sim} H^1_{\mathrm{dR}}(A/R^{\sharp+})
\end{align}
compatible with base change.
\end{Lem}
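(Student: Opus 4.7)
The plan is to invoke the comparison between the prismatic Dieudonn\'e module and crystalline/de Rham cohomology. Since $S=\spa(R,R^+)$ is affinoid perfectoid, the untilt $R^{\sharp+}$ is an integral perfectoid ring, and the $W(R^+)$-module $M(Y)^\ast$ can be identified with the contravariant Breuil--Kisin--Fargues module of $Y$ in the sense of \cite[Theorem 17.5.2]{ScholzeWeinsteinBerkeley}; equivalently, it is the evaluation at the Fontaine prism $(W(R^+),\xi)$ of the contravariant prismatic Dieudonn\'e crystal of $Y$ in the sense of Ansch\"utz--Le Bras.

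The construction of the isomorphism then proceeds in two steps. First, the de Rham specialization for the contravariant prismatic Dieudonn\'e crystal yields a canonical, base-change compatible isomorphism
\[
M(Y)^\ast \otimes_{W(R^+)} R^{\sharp+} \xrightarrow{\sim} H^1_{\crys}(Y_0/R^{\sharp+}),
\]
where $Y_0 = Y \otimes_{R^{\sharp+}} R^{\sharp+}/p$ and the right-hand side denotes the evaluation at $R^{\sharp+}$ of the contravariant crystalline Dieudonn\'e crystal of $Y_0$ on the canonical PD thickening $R^{\sharp+} \to R^{\sharp+}/p$. Second, since $Y = \mathfrak{A}[p^\infty]$ comes from an abelian scheme, the classical identification between the crystalline Dieudonn\'e module of the $p$-divisible group and the first crystalline cohomology of the abelian scheme, combined with the crystalline--de Rham comparison for the smooth morphism $\mathfrak{A} \to \spf(R^{\sharp+})$, yields
\[
H^1_{\crys}(Y_0/R^{\sharp+}) \cong H^1_{\crys}(\mathfrak{A}_0/R^{\sharp+}) \cong H^1_{\mathrm{dR}}(\mathfrak{A}/R^{\sharp+}).
\]
Composing gives the desired isomorphism, and functoriality of both the prismatic Dieudonn\'e crystal and of the crystalline/de Rham realizations guarantees compatibility with base change.

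The main technical point will be to verify that the conventions for the Frobenius structures chosen above agree with those implicit in the comparison theorem, so that no extraneous Frobenius twist enters the identification. This boils down to carefully unpacking the definitions of $\phi_M$ and $\phi_{M^\ast}$ against those used in Scholze--Weinstein and Ansch\"utz--Le Bras; concretely, it reduces to the fact that the contravariant prismatic Dieudonn\'e module is set up precisely so that its specialization along $\theta \colon W(R^+) \to R^{\sharp+}$ computes the contravariant crystalline Dieudonn\'e module (without further twist), which is the side equal to $H^1_{\mathrm{dR}}$ for an abelian scheme.
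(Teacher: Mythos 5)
Your route is correct in outline but genuinely different from the paper's. The paper's proof is shorter and does not pass through crystalline Dieudonn\'e theory of the $p$-divisible group at all: it identifies $M(Y)^\ast$ directly with the Frobenius twist $\phi^\ast H^1_{\prism}(\mathfrak{A}/\prism_{R^{\sharp+}})$ of the \emph{relative prismatic cohomology of the abelian scheme} using \cite[Corollary 4.63, Proposition 4.49]{AnschuetzLeBras}, and then concludes in one step from the prismatic--de Rham comparison $\phi^\ast H^1_{\prism} \otimes_{W(R^+),\theta} R^{\sharp+} \simeq H^1_{\mathrm{dR}}(\mathfrak{A}/R^{\sharp+})$ of \cite[Theorem~1.8(3)]{BhattScholzePrisms}. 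You instead route through the $p$-divisible group $Y_0$: prismatic Dieudonn\'e module $\to$ crystalline Dieudonn\'e module of $Y_0$ at the PD thickening $R^{\sharp+}\to R^{\sharp+}/p$, then the Berthelot--Breen--Messing identification $\mathbb{D}(Y_0)(R^{\sharp+})\simeq H^1_{\crys}(\mathfrak{A}_0/R^{\sharp+})$, then the crystalline--de Rham comparison. Both chains are valid, but there are two things the paper's route buys. First, it avoids having to source a precise ``specialization of the prismatic Dieudonn\'e module along $\theta$ equals the crystalline Dieudonn\'e module'' statement, which is exactly where the Frobenius-twist bookkeeping lives; the paper's footnote about $\tilde{\xi}=\phi(\xi)$ in \cite[Proposition 4.49]{AnschuetzLeBras} shows this normalization is delicate, and you flag it as ``the main technical point'' but do not actually carry it out. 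Second, the paper's route via $\phi^\ast H^1_{\prism}(\mathfrak{A}/\prism_{R^{\sharp+}})$ is the one actually used again in Section~\ref{subsub:Compatibility} to compare with the $\mathbb{B}^+_{\mathrm{dR}}$-cohomology of the $F$-isocrystal and to verify the compatibility with Lemma~\ref{Lem:DdR}, so it feeds more naturally into the rest of the appendix. One small terminological caution: you call the first step a ``de Rham specialization'' but write its target as $H^1_{\crys}(Y_0/R^{\sharp+})$; since $M(Y)^\ast$ already incorporates the Frobenius twist, its reduction mod $\xi$ is indeed the de Rham, equivalently crystalline-over-a-lift, realization -- but it is worth stating this normalization explicitly rather than asserting it, precisely because the untwisted $H^1_{\prism}$ mod $\xi$ gives the Hodge--Tate specialization instead.
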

\begin{proof}
We may identify $M(Y)^{\ast}$ with the $\phi$-pullback of the relative prismatic cohomology of $\mathfrak{A}$ using \cite[Corollary 4.63, Proposition 4.49]{AnschuetzLeBras}.\footnote{In \cite[Proposition 4.49]{AnschuetzLeBras}, the Frobenius twist is hidden in the notation $\tilde{\xi}=\phi(\xi)$.} The comparison isomorphism now follows from \cite[Theorem 1.8.(3)]{BhattScholzePrisms}.
\end{proof}
\subsubsection{} \label{subsub:Compatibility} For a characteristic zero untilt $R^{\sharp}$, we want to compare the isomorphism of Lemma \ref{Lem:IntegralDeRhamComparison} with the isomorphism of Lemma \ref{Lem:DdR}. We will do this under the assumption that $\mathfrak{A}$ is the pullback of a formal abelian scheme $f:\mathfrak{B} \to \mathfrak{X}$ over a smooth formal scheme $\mathfrak{X}/\CO_K$ for some discrete valued field $K/\qp$ (which will be the case in our situation since the Siegel modular variety is smooth). Denote the special fiber of $\mathfrak{X}$ by $\mathfrak{X}_s$ and the rigid generic fiber of $\mathfrak{X}$ by $X$, and similarly for $\mathfrak{B}$. 

There is an $F$-isocrystal $\mathcal{E}$ on $\mathfrak{X}_s$ obtained by the contravariant Dieudonn\'e crystal of $\mathfrak{B}_s$. It is associated with the vector bundle with flat connection $(E,\nabla):=(R^1f_{\mathrm{dR},\ast}\CO_{B},\nabla_\mathrm{GM})$ ($\nabla_\mathrm{GM}$ denotes the Gauss-Manin connection) on $X$, in the sense of \cite[Proposition 2.17]{GuoReinecke}. The proof of Proposition 2.36.(i) in \textit{loc. cit.} shows that $E\otimes_{\CO_{X}}\CO\mathbb{B}^+_\mathrm{dR}$ equipped with the product connection is isomorphic to $(\mathbb{B}^+_\mathrm{dR}(\mathcal{E})\otimes_{\mathbb{B}^+_\mathrm{dR}}\CO\mathbb{B}^+_\mathrm{dR}, \mathrm{id}\otimes \nabla_{\CO\mathbb{B}^+_\mathrm{dR}})$. In particular, we have a natural identification of their horizontal sections 
\[\mathbb{B}^+_\mathrm{dR}(\mathcal{E})=(\mathbb{B}^+_\mathrm{dR}(\mathcal{E})\otimes_{\mathbb{B}^+_\mathrm{dR}}\CO\mathbb{B}^+_\mathrm{dR})^{\nabla=0}\cong (E\otimes_{\CO_{X}}\CO\mathbb{B}^+_\mathrm{dR})^{\nabla=0}=: \mathbb{M}_0.\]

On the other hand, under the prismatic--crystalline comparison \cite[Theorem 1.8(1)]{BhattScholzePrisms}, one computes that
\begin{align} \label{Eq:ComparisonIsomorphisms}
\mathbb{B}^+_\mathrm{dR}(\mathcal{E})(S^\sharp) &= \mathbb{A}_\mathrm{crys}(\mathcal{E})\otimes_{\mathbb{A}_\mathrm{crys}}\mathbb{B}^+_\mathrm{dR}(S^\sharp) \\&= R^1f_\mathrm{crys,\ast}\CO \otimes_{\mathbb{A}_\mathrm{crys}}\mathbb{B}^+_\mathrm{dR}(S^\sharp)\\ 
&\xrightarrow{\sim} \phi^\ast H^1_{\prism}(\mathfrak{A}/\prism_{R^{\sharp+}})\otimes_{W(R^+)} \mathbb{B}^+_\mathrm{dR}(S^\sharp)\\
&=M(Y)^\ast \otimes_{W(R^+)} B^+_\mathrm{dR}(R^\sharp),
\end{align}
where $\prism_{R^{\sharp+}}$ denotes the perfect prism $(W(R^+), \mathrm{ker}\theta=(\xi))$.\footnote{Note that the definition of the de Rham period sheaves in \cite{GuoReinecke} differs from ours by a Frobenius twist, see Definition 2.3, Warning 2.4 in \textit{loc. cit.}, but their arguments work verbatim.} Therefore we have a commutative diagram (cf. \cite[Lemma 2.18]{ImaiKatoYoucis})
\begin{equation}
    \begin{tikzcd}
         M(Y)^{\ast} \arrow{r} \arrow{d} &  M(Y)^{\ast} \otimes_{W(R^+)} B_{\mathrm{dR}}^{+}(R^{\sharp}) \arrow{d}{\sim} \\
         H^1_{\mathrm{dR}}(A/R^{\sharp}) & \arrow{l} \mathbb{M}_0(S^\sharp),
    \end{tikzcd}
\end{equation}
where the left vertical map is the one in Lemma \ref{Lem:IntegralDeRhamComparison} (composed with inverting $p$); the bottom horizontal map is as in Lemma \ref{Lem:DdRFiltration}; and the right vertical map comes from Equation \eqref{Eq:ComparisonIsomorphisms}.

\subsubsection{} \label{subsub:Compatibility2} The discussion above implies an integral version of the result in Section~\ref{subsub:GLTorsors}. Namely, if we assume $A\to X$ is the rigid generic fiber of a family of formal abelian schemes $\mathfrak{A}\to\mathfrak{X}$, for some smooth formal scheme $\mathfrak{X}$ over $\mathrm{Spf}\CO_E$, then we can construct a vector bundle shtuka $\mathscr{V}$ over $\mathfrak{X}^{\lozenge}$ by descending the relative (contravariant) prismatic Dieudonn\'e crystal of $\mathfrak{A}$ over integral perfectoids. Attached to it, there is a $\mathrm{GL}(\Lambda)^\lozenge$-torsor over $\mathfrak{X}^\lozenge$ of trivializations
\[\mathcal{P}_\Lambda: (S\to \mathfrak{X}^{\lozenge})\mapsto \mathrm{Isom}_{\CO_{S^\sharp}}(\Lambda\otimes_{\zp} \CO_{S^\sharp},\restr{\frob_S^{\ast }\mathscr{V}}{S^{\sharp}}).\] 
On the other hand, one can also consider the frame bundle $P_{\Lambda}$ of $\mathcal{H}^1_{\mathrm{dR}}(\mathfrak{A}/\mathfrak{X})$. Lemma \ref{Lem:IntegralDeRhamComparison} implies that the two $\mathrm{GL}(\Lambda)^\lozenge$-torsors  $P^\lozenge_{\Lambda}\times^{\mathrm{GL}_\Lambda^\diamond} \mathrm{GL}_\Lambda^\lozenge$ and $ \mathcal{P}_\Lambda$ are canonically isomorphic.\footnote{Note that if we apply big diamond ($\lozenge$) functor to $P_\Lambda$, we obtain a $\mathrm{GL}_\Lambda^\diamond$-torsor, because $P_\Lambda$ is a $p$-adic formal scheme, as opposed to a scheme.}

Repeating the procedure in Section~\ref{subsub:GLTorsors} we get a commutative diagram 
\[\begin{tikzcd}
    \mathfrak{X}^{\lozenge} \arrow[d,equal] & \mathcal{P}_\Lambda \arrow{l} \arrow[d,"\sim"]\arrow{r}& \mathbb{M}_{{\operatorname{GL}(\Lambda)}, \mu_g}^{\lozenge}\arrow{d}{\sim}\\
    \mathfrak{X}^{\lozenge} & P^\lozenge_{\Lambda}\times^{\mathrm{GL}_\Lambda^\diamond} \mathrm{GL}_\Lambda^\lozenge \arrow{r}\arrow{l}& \operatorname{Gr}_{g,\Lambda}^\lozenge.
\end{tikzcd}\]
By the discussion in Section \ref{subsub:Compatibility}, it is compatible with the one in Proposition~\ref{Prop:EqualityFiltrations} when passing to the generic fiber.

\subsection{Shimura varieties of Hodge type} We follow the notation in the proof of Theorem \ref{Thm:Main}. In particular, we have $\gxg$ with $\mathcal{G}$ being a stabilizer Bruhat--Tits group scheme and a Hodge embedding $\iota:\gx \to \gvx$. We may moreover take a $\zp$-lattice $\Lambda \subset V_{\qp}$ on which $\psi$ is $\zp$-valued, such that $\mathcal{G}(\zpbr)$ is the stabilizer in $G(\qpbr)$ of $\Lambda \otimes_{\zp} \zpbr$. The Hodge cocharacter is denoted by $\mu$.
\subsubsection{} \label{subsub:Assumptions} We now make the following assumptions, cf. those stated in \cite[Section 7.1.2]{KisinPappasZhou}.
\begin{enumerate}[label=(\Alph*)]
    \item The group scheme $\mathcal{G}$ is the stabilizer of a point $x \in \mathcal{B}(G,\qp)$ which is generic in its facet. 

    \item The group $G$ is $R$-smooth in the sense of \cite[Definition 2.10]{DanielsYoucis}, and $p$ is coprime to $2 \cdot \pi_1(G^{\mathrm{der}})$.
 
    \item The local Hodge embedding $\iota:\mathcal{G} \to \operatorname{GL}(\Lambda)$ is very good in the sense of \cite[Definition 5.2.5]{KisinPappasZhou}.
\end{enumerate}
These assumptions are often satisfied, see \cite[Section 6, 7.2]{KisinPappasZhou}. A very good Hodge embedding is in particular good, which means that the natural maps
\begin{align}
    \mathcal{G} \to {\operatorname{GL}(\Lambda)} \ \text{ and } \
    \locmodgmu \to \operatorname{Gr}_{g, \Lambda}
\end{align}
are closed immersions.\footnote{The local models used by \cite{KisinPappasZhou} agree with ours because theirs also satisfy the Scholze--Weinstein conjecture (which means that they have the correct associated v-sheaf), see \cite[Lemma 3.4.1]{KisinPappasZhou}.} 

\subsubsection{} Let $P_{\Lambda^\vee} \to \scrs_{K}\gx$ be the left ${\operatorname{GL}(\Lambda^\vee)}$-torsor parametrizing trivialisations $\Lambda^{\vee} \to \mathcal{H}^1_{\operatorname{dR}}(A)$\footnote{Equivalently, $P_{\Lambda^\vee}$ is the right $\operatorname{GL}(\Lambda)$-torsor of trivializations $\mathcal{H}_1^\mathrm{dR}(A)\to \Lambda$. Hence over $\mathbb{C}$, this is same as $\mathcal{G}_\mathrm{dR}$ in \cite[Page 19]{CaraianiScholzeCompact}, or Milne's principal bundle.}, where $A$ is the universal abelian variety (up to prime-to-$p$ isogeny) coming from $\iota$. Then there is a morphism $P_{\Lambda^\vee} \to \operatorname{Gr}_{g, \Lambda^\vee, \mathcal{O}_E}$ as in Section \ref{subsub:PidR}. By the proof of \cite[Theorem 7.1.3]{KisinPappasZhou} (which uses (A), (B), and (C) above), there is a left $\mathcal{G}$-torsor $P_{} \to \scrs_{K}\gx$ together with a $\mathcal{G}$-equivariant map $P_{} \to P_{\Lambda^\vee}$ such that the composition
\begin{align}
    P_{} \to P_{\Lambda^\vee} \to \operatorname{Gr}_{g, \Lambda^\vee,\CO_E}
\end{align}
factors through $ \locmodgmu$ via a smooth map. Altogether we have a diagram
\[\scrs_K\gx\leftarrow P_{}\rightarrow\locmodgmu,\]
where the left arrow is a $\calg$-torsor and the right arrow is smooth and $\calg$-equivariant. It moreover follows from the construction that its generic fiber comes from the canonical model of the standard principal bundle, see the discussion in the proof of \cite[Lemma 2.3.5]{CaraianiScholzeCompact}. Let us write 
\begin{align}
    \pi_{\mathrm{dR}, \mathcal{G}}\colon \scrs_K\gx \to \left[ \locmodgmu / \mathcal{G} \right]
\end{align}
for the induced smooth morphism of algebraic stacks.
\begin{Thm} \label{Thm:WhyDidWeCheckThis?}
If Assumptions (A),(B),(C) hold, then the morphism $\pi_{\mathrm{dR}, \mathcal{G}}$ is a scheme-theoretic local model diagram.
\end{Thm}

For the proof of Theorem \ref{Thm:WhyDidWeCheckThis?}, we will need the following two lemmas.

\begin{Lem} \label{Lem:RestrictToQp}
    Let $\mathscr{Y}$ be a v-sheaf which is separated over $\spd(\zp)$. For any normal scheme $X$ which is flat, separated and of finite-type over $\zp$, the natural restriction map
    \begin{align}
        \operatorname{Hom}_{\spd(\zp)}(X^{\lozenge/}, \mathscr{Y}) \to  \operatorname{Hom}_{\spd(\qp)}(X_{\qp}^{\lozenge}, \mathscr{Y}_\qp)
    \end{align}
    is injective.
\end{Lem}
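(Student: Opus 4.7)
The plan is to exploit the universal property of the pushout $X^{\lozenge/} = X^\diamond \sqcup_{(X^\diamond)_\qp} X_\qp^\lozenge$ of \cite[Definition 2.1.9]{PappasRapoportShtukas}. A $\spd(\zp)$-morphism $X^{\lozenge/} \to \mathscr{Y}$ amounts to a compatible pair $(f, g)$, where $f \colon X^\diamond \to \mathscr{Y}$ and $g \colon X_\qp^\lozenge \to \mathscr{Y}_\qp$ agree on $(X^\diamond)_\qp$. The restriction map in the statement of the lemma sends $(f, g) \mapsto g$, and I would reduce injectivity as follows: given two pairs $(f_1, g)$ and $(f_2, g)$ sharing the same $g$, the compatibility condition forces $f_1 \vert_{(X^\diamond)_\qp} = g \vert_{(X^\diamond)_\qp} = f_2 \vert_{(X^\diamond)_\qp}$, so it suffices to show that any two $\spd(\zp)$-morphisms $f_1, f_2 \colon X^\diamond \to \mathscr{Y}$ that agree on the open sub-v-sheaf $(X^\diamond)_\qp \hookrightarrow X^\diamond$ are equal.

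To prove the reduced claim, I would form the equalizer
\[
  E := X^\diamond \times_{\mathscr{Y} \times_{\spd(\zp)} \mathscr{Y}} \mathscr{Y}
\]
of $f_1$ and $f_2$. Separatedness of $\mathscr{Y}$ over $\spd(\zp)$ makes the diagonal $\mathscr{Y} \to \mathscr{Y} \times_{\spd(\zp)} \mathscr{Y}$ a closed immersion, so $E \hookrightarrow X^\diamond$ is a closed immersion of v-sheaves whose image contains the open $(X^\diamond)_\qp$. Equivalently, the complementary open sub-v-sheaf $U := X^\diamond \setminus E$ is disjoint from $(X^\diamond)_\qp$, so factors through the closed sub-v-sheaf $X_{\fp}^\diamond$. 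The lemma thus reduces to showing $U = \emptyset$.

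The hard part will be establishing topological density of $|(X^\diamond)_\qp|$ in $|X^\diamond|$; once this is done, the open sub-v-sheaf $U$ with empty topological image is automatically empty, and the proof is complete. For the density, I would pass to the $p$-adic completion $\widehat{X}$, using $X^\diamond = \widehat{X}^\lozenge$ and the identification $|\widehat{X}^\lozenge| = |\widehat{X}^{\mathrm{ad}}|$. On an affine open $\spf \widehat{A} \subseteq \widehat{X}$, the flatness hypothesis ensures $\widehat{A}$ is $p$-torsion-free, so $\widehat{A} \hookrightarrow \widehat{A}[1/p]$ is injective and every continuous valuation on $\widehat{A}$ with $p = 0$ is a specialization of a continuous valuation in $|\spa(\widehat{A}[1/p], \widehat{A})| = |\widehat{X}_\eta|$. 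This yields the density, forces $U = \emptyset$, and concludes the argument.
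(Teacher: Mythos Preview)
Your proposal is correct and follows essentially the same route as the paper: both arguments hinge on the density of $|(X^\diamond)_\qp|$ inside $|X^\diamond|$, combined with separatedness of $\mathscr{Y}$ to conclude via a closed-equalizer argument. The paper simply cites \cite[Lemma~2.17]{AGLR} for this density, whereas you sketch a direct argument through $|\widehat{X}^{\mathrm{ad}}|$; be aware that the identification $|\widehat{X}^\lozenge| = |\widehat{X}^{\mathrm{ad}}|$ you invoke is not a formality and itself needs justification, so the citation is the cleaner path.
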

\begin{proof}
    This follows from the density of $|X_{\qp}^{\lozenge}| \subset |X^{\lozenge/}|$, which in turn follows from the density of $|(X^{\diamond})_{\qp}| \subset | X^{\diamond}|$ (see \cite[Lemma 2.17]{AGLR}). 
\end{proof}

\begin{Lem}\label{Lem:Separated}
    The quotient v-sheaf $\mathrm{GL}(\Lambda)^\lozenge / \mathcal{G}^\lozenge$ is separated over $\spd(\zp)$. 
\end{Lem}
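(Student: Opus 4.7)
The plan is to show that the diagonal morphism
\[
\Delta \colon \mathscr{Q} \to \mathscr{Q} \times_{\spd(\zp)} \mathscr{Q},
\]
where $\mathscr{Q} := \mathrm{GL}(\Lambda)^\lozenge / \mathcal{G}^\lozenge$, is a closed immersion of v-sheaves.

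First, since the quotient map $\mathrm{GL}(\Lambda)^\lozenge \to \mathscr{Q}$ is a $\mathcal{G}^\lozenge$-torsor, it is v-surjective; therefore so is the product $p \colon \mathrm{GL}(\Lambda)^\lozenge \times \mathrm{GL}(\Lambda)^\lozenge \to \mathscr{Q} \times \mathscr{Q}$. Since being a closed immersion is v-local on the target, it suffices to show that the base change $p^\ast \Delta$ is a closed immersion. Unwinding the definition of the quotient, $p^\ast \Delta$ is identified with the morphism
\[
\mathrm{GL}(\Lambda)^\lozenge \times \mathcal{G}^\lozenge \to \mathrm{GL}(\Lambda)^\lozenge \times \mathrm{GL}(\Lambda)^\lozenge, \qquad (g, h) \mapsto (g, gh).
\]
Composing with the automorphism of the target sending $(x, y)$ to $(x, x^{-1} y)$, this map becomes $\mathrm{id} \times \iota^\lozenge$, where $\iota^\lozenge \colon \mathcal{G}^\lozenge \to \mathrm{GL}(\Lambda)^\lozenge$ is induced by the inclusion $\iota \colon \mathcal{G} \hookrightarrow \mathrm{GL}(\Lambda)$. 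The problem is thus reduced to showing that $\iota^\lozenge$ is itself a closed immersion of v-sheaves.

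By the goodness of the Hodge embedding, $\iota$ is a closed immersion of separated, affine, finite-type $\zp$-schemes. The conclusion will then follow from the general principle that the big diamond functor $(-)^\lozenge$ sends closed immersions of such schemes to closed immersions of v-sheaves. Concretely, for any affinoid perfectoid $S = \spa(R, R^+)$ with a map to $\mathrm{GL}(\Lambda)^\lozenge$, i.e.\ an untilt $S^\sharp = \spa(R^\sharp, R^{\sharp+})$ together with a $\zp$-algebra map $\mathcal{O}(\mathrm{GL}(\Lambda)) \to R^\sharp$, the fiber product $S \times_{\mathrm{GL}(\Lambda)^\lozenge} \mathcal{G}^\lozenge$ is represented by the closed sub-perfectoid of $S$ determined by the vanishing on $R^\sharp$ of the finitely many generators of the ideal defining $\mathcal{G} \subset \mathrm{GL}(\Lambda)$.

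The main technical point is to verify this last assertion — that $(-)^\lozenge$ preserves closed immersions between separated finite-type $\zp$-schemes. I would establish this by combining the compatibility of $(-)^\lozenge$ with passage to the $p$-adic formal completion recalled in \cite[Section 2.2]{AGLR} with the analogous statement for formal schemes and pre-adic spaces, observing that the vanishing locus of finitely many global functions on an affinoid perfectoid space defines a closed immersion of v-sheaves. Once this is in hand, the argument is complete.
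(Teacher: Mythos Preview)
Your reduction is correct and the approach works, but it is genuinely different from the paper's argument. You pull back the diagonal along the v-cover $\mathrm{GL}(\Lambda)^\lozenge \times \mathrm{GL}(\Lambda)^\lozenge \to \mathscr{Q} \times \mathscr{Q}$ and reduce to showing that $\iota^\lozenge \colon \mathcal{G}^\lozenge \to \mathrm{GL}(\Lambda)^\lozenge$ is a closed immersion of v-sheaves; the paper instead exhibits a monomorphism $\mathscr{Q} \hookrightarrow \mathbb{P}(W)^\lozenge$ into a projective space (by realizing $\mathcal{G}$ as the stabilizer of a line, via \cite[Lemma 6.17]{PhungHodosSantos}) and then uses that $\mathbb{P}(W)^\lozenge = \mathbb{P}(W)^\diamond$ is separated over $\spd(\zp)$. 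The paper's route sidesteps any discussion of how $(-)^\lozenge$ interacts with closed immersions: once you have a monomorphism into something separated, you are done.

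Your route is more direct but rests on the technical claim that the vanishing locus of finitely many functions on an affinoid perfectoid $S^\sharp$ is representable by a closed affinoid perfectoid subspace. This is true, but note two things. First, your sketched justification via ``compatibility of $(-)^\lozenge$ with passage to the $p$-adic formal completion'' is not quite right: it is $(-)^\diamond$, not $(-)^\lozenge$, that factors through $p$-adic completion, so that route will not give what you need. Second, the correct input is the theorem that Zariski closed subsets of affinoid perfectoid spaces (in any characteristic) are again affinoid perfectoid, with surjective transition map on global sections; in characteristic $p$ this is \cite[Proposition~5.3]{EtCohDiam}, and in general it is due to Bhatt--Scholze (see \cite[Remark~7.4.6]{ScholzeWeinsteinBerkeley} and the prisms paper). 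With that reference in hand your argument goes through: the fiber product $S \times_{\mathrm{GL}(\Lambda)^\lozenge} \mathcal{G}^\lozenge$ is identified, via tilting, with the tilt of the Zariski closed locus in $S^\sharp$ cut out by the ideal of $\mathcal{G}$ in $\mathrm{GL}(\Lambda)$, and this is a closed sub-perfectoid of $S$.
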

\begin{proof}
By \cite[Lemma 6.17]{PhungHodosSantos}, there is a finite free representation $W$ of $\operatorname{GL}(\Lambda)$ and a free rank-one saturated $\zp$-submodule $L \subset W$ for which $\mathcal{G}$ is the scheme-theoretic stabilizer of $L$ inside of $\mathrm{GL}(\Lambda)$. It follows that there is a morphism of v-sheaves
    \begin{equation}\label{Eq:Separated}
        \mathrm{GL}(\Lambda)^\lozenge / \mathcal{G}^\lozenge \to \mathbb{P}(W)^\lozenge,
    \end{equation}
    defined at the level of presheaves by $[g] \mapsto g\cdot L$. We claim this is a monomorphism. Indeed, suppose $S$ is a perfectoid space in characteristic $p$, and that $a,b \colon S \to \mathrm{GL}(\Lambda)^\lozenge / \mathcal{G}^\lozenge$ are two morphisms which agree after the composition to $\mathbb{P}(W)^\lozenge$. After replacing $S$ by a v-cover, we may assume $a$ and $b$ factor through morphisms $\tilde{a}$, $\tilde{b}\colon S \to \mathrm{GL}(\Lambda)^\lozenge$. Since $a$ and $b$ agree after the composition to $\mathbb{P}(W)^\lozenge$, and $\mathcal{G}$ is the stabilizer of $L$, it follows that $\tilde{a}\cdot\tilde{b}^{-1}$ factors through $\mathcal{G}^\lozenge$; thus $a = b$.

    Since \eqref{Eq:Separated} is a monomorphism, its diagonal is an isomorphism, and therefore \eqref{Eq:Separated} is separated. Now $\mathbb{P}(W)$ is proper over $\spec(\zp)$, so $\mathbb{P}(W)^\lozenge = \mathbb{P}(W)^\diamond$, and hence $\mathbb{P}(W)^\lozenge \to \spd(\zp)$ is separated by \cite[Proposition 4.17]{GleasonSpecialization}. The result follows.
\end{proof}

\begin{proof}[Proof of Theorem \ref{Thm:WhyDidWeCheckThis?}]
We start by identifying
\begin{align}
    P_{}^{\lozenge/} \times^{\mathcal{G}^{\lozenge/}} \mathcal{G}^{\lozenge} 
\end{align}
with the $\mathcal{G}^{\lozenge}$-torsor coming from the map $\scrs_{K}\gx^{\lozenge/} \to \shtgmu$. We first check that this holds after 
composing with $\shtgmu \to \operatorname{Sht}_{{\operatorname{GL}(\Lambda)}, \mu_g}$ and pushing out via
\begin{align}
    \mathcal{G}^{\lozenge} \to {\operatorname{GL}(\Lambda)}^{\lozenge}.
\end{align}
The latter result is true over $\mathbf{Sh}_K\gx^{\lozenge}$ by Proposition~\ref{Prop:EqualityFiltrations}. Moreover, by Sections \ref{subsub:Compatibility} and \ref{subsub:Compatibility2}, it is true over $\scrs_{K}\gx^{\diamond}$\footnote{The discussion there assumes that our (formal) abelian scheme comes via pullback from a (formal) abelian scheme over a smooth (formal) scheme. This assumptions holds here since $\Lambda$ is self dual and thus the integral model of the Shimura variety for $\gvx$ is smooth.}, such that induced isomorphisms agree on $\left(\scrs_{K}\gx^{\diamond}\right)_E$, so they glue to an isomorphism over $\scrs_{K}\gx^{\lozenge/}$. 

Next, we check that the induced $\mathcal{G}^{\lozenge}$-torsors agree: after trivializing the induced ${\operatorname{GL}(\Lambda)}$-torsor, which we may do Zariski locally on $\scrs_{K}\gx$, we are trying to show the equality of two morphisms $\scrs_{K}\gx^{\lozenge/} \to {\operatorname{GL}(\Lambda)}^{\lozenge}/\mathcal{G}^{\lozenge}$. By Lemma \ref{Lem:Separated} and Lemma \ref{Lem:RestrictToQp}, it suffices to check this after base change to the generic fiber, where the result follows from the discussion in Section \ref{subsub:Shimura}.

Finally, we check that, under this identification, the maps to the local model agree. But $\mathbb{M}_{\mathcal{G},\mu}$ is projective and hence proper over $\spec(\mathcal{O}_E)$, so $\mathbb{M}_{\mathcal{G},\mu}^\lozenge \to \spd(\mathcal{O}_E)$ is separated by \cite[Proposition 4.17]{GleasonSpecialization}. By another application of Lemma \ref{Lem:RestrictToQp}, it suffices to check the morphisms agree after base change to the generic fiber, where the result follows from the discussion in Section~\ref{subsub:Shimura}. Together with the results implied by \cite[Theorem 7.1.3]{KisinPappasZhou} discussed above, this concludes the proof that $\pi_{\mathrm{dR}, \mathcal{G}}$ is a scheme-theoretic local model diagram.
\end{proof}
}

\renewcommand{\VAN}[3]{#3}
\bibliographystyle{amsalpha}
\bibliography{references}

@article{GuoReinecke,
    author = {Haoyang Guo and Emanuel Reinecke},
    title = {{A prismatic approach to crystalline local systems}},
    journal = {Inventiones Mathematicae},
    volume = {236},
    pages = {1-164},
    year = {2024}
}

@misc{HainesRapoportParahoric,
      title={{On parahoric subgroups (Appendix to ``Twisted loop groups and their affine flag varieties'' by Pappas--Rapoport)}}, 
      author={T. Haines and M. Rapoport},
      year={2008},
      eprint={0804.3788},
      archivePrefix={arXiv},
      primaryClass={math.RT}
}

@incollection {Milne2002,
    AUTHOR = {Milne, J. S.},
     TITLE = {Canonical models of (mixed) {S}himura varieties and
              automorphic vector bundles},
 BOOKTITLE = {Automorphic forms, {S}himura varieties, and {$L$}-functions,
              {V}ol. {I} ({A}nn {A}rbor, {MI}, 1988)},
    SERIES = {Perspect. Math.},
    VOLUME = {10},
     PAGES = {283--414},
 PUBLISHER = {Academic Press, Boston, MA},
      YEAR = {1990},
      ISBN = {0-12-176651-9},
   MRCLASS = {11G18 (11F03 11F12 14K22)},
  MRNUMBER = {1044823},
MRREVIEWER = {Min\ Ho\ Lee},
}

@ARTICLE{PappasRapoportShtukas,
    AUTHOR = {Pappas, Georgios and Rapoport, Michael},
     TITLE = {{$p$}-adic shtukas and the theory of global and local
              {S}himura varieties},
   JOURNAL = {Camb. J. Math.},
  FJOURNAL = {Cambridge Journal of Mathematics},
    VOLUME = {12},
      YEAR = {2024},
    NUMBER = {1},
     PAGES = {1--164},
      ISSN = {2168-0930,2168-0949},
   MRCLASS = {11 (14)},
  MRNUMBER = {4701491},
}

@article {PappasRapoportCurves,
    AUTHOR = {Pappas, Georgios and Rapoport, Michael},
     TITLE = {On tamely ramified {$\mathcal{G}$}-bundles on curves},
      NOTE = {With an appendix by Brian Conrad},
   JOURNAL = {Algebr. Geom.},
  FJOURNAL = {Algebraic Geometry},
    VOLUME = {11},
      YEAR = {2024},
    NUMBER = {6},
     PAGES = {796--829},
      ISSN = {2313-1691,2214-2584},
   MRCLASS = {14L15 (14D23 20G35 20J06)},
  MRNUMBER = {4819392},
MRREVIEWER = {Zhenbo\ Qin},
}

@ARTICLE{CaraianiScholzeCompact,
       AUTHOR = {Caraiani, Ana and Scholze, Peter},
     TITLE = {On the generic part of the cohomology of compact unitary
              {S}himura varieties},
   JOURNAL = {Ann. of Math. (2)},
  FJOURNAL = {Annals of Mathematics. Second Series},
    VOLUME = {186},
      YEAR = {2017},
    NUMBER = {3},
     PAGES = {649--766},
      ISSN = {0003-486X,1939-8980},
   MRCLASS = {11F75 (11G18 11R23 14G35)},
  MRNUMBER = {3702677},
MRREVIEWER = {Nguy\cftil{e}n Qu\^{o}c Th\'{a}ng},
       DOI = {10.4007/annals.2017.186.3.1},
       URL = {https://doi.org/10.4007/annals.2017.186.3.1},
}

@article {Daniels,
    AUTHOR = {Daniels, Patrick},
     TITLE = {Canonical integral models for {S}himura varieties of toral
              type},
   JOURNAL = {Algebra Number Theory},
  FJOURNAL = {Algebra \& Number Theory},
    VOLUME = {19},
      YEAR = {2025},
    NUMBER = {2},
     PAGES = {247--286},
      ISSN = {1937-0652,1944-7833},
   MRCLASS = {11G18 (14G35 14G45)},
  MRNUMBER = {4859066},
       DOI = {10.2140/ant.2025.19.247},
       URL = {https://doi.org/10.2140/ant.2025.19.247},
}

@ARTICLE{Hamacher-Kim,
       AUTHOR = {Hamacher, Paul and Kim, Wansu},
     TITLE = {{$l$}-adic \'{e}tale cohomology of {S}himura varieties of
              {H}odge type with non-trivial coefficients},
   JOURNAL = {Math. Ann.},
  FJOURNAL = {Mathematische Annalen},
    VOLUME = {375},
      YEAR = {2019},
    NUMBER = {3-4},
     PAGES = {973--1044},
      ISSN = {0025-5831,1432-1807},
   MRCLASS = {14G35 (14F20)},
  MRNUMBER = {4023369},
MRREVIEWER = {Alan\ Koch},
       DOI = {10.1007/s00208-019-01815-6},
       URL = {https://doi.org/10.1007/s00208-019-01815-6},
}

@article {PappasRapoportRZSpaces,
    AUTHOR = {Pappas, Georgios and Rapoport, Michael},
     TITLE = {On integral local {S}himura varieties},
   JOURNAL = {J. Inst. Math. Jussieu},
  FJOURNAL = {Journal of the Institute of Mathematics of Jussieu. JIMJ.
              Journal de l'Institut de Math\'{e}matiques de Jussieu},
    VOLUME = {25},
      YEAR = {2026},
    NUMBER = {1},
     PAGES = {375--443},
      ISSN = {1474-7480,1475-3030},
   MRCLASS = {11G18 (14G35)},
  MRNUMBER = {5018888},
       DOI = {10.1017/S1474748025101345},
       URL = {https://doi.org/10.1017/S1474748025101345},
}

@article {KisinPappas,
    AUTHOR = {Kisin, M. and Pappas, G.},
     TITLE = {Integral models of {S}himura varieties with parahoric level
              structure},
   JOURNAL = {Publ. Math. Inst. Hautes \'{E}tudes Sci.},
  FJOURNAL = {Publications Math\'{e}matiques. Institut de Hautes \'{E}tudes
              Scientifiques},
    VOLUME = {128},
      YEAR = {2018},
     PAGES = {121--218},
      ISSN = {0073-8301},
   MRCLASS = {11G18},
  MRNUMBER = {3905466},
MRREVIEWER = {Martin Orr},
       DOI = {10.1007/s10240-018-0100-0},
       URL = {https://doi.org/10.1007/s10240-018-0100-0},
}

@article {ScholzeWeinsteinModuli,
    AUTHOR = {Scholze, Peter and Weinstein, Jared},
     TITLE = {Moduli of {$p$}-divisible groups},
   JOURNAL = {Camb. J. Math.},
  FJOURNAL = {Cambridge Journal of Mathematics},
    VOLUME = {1},
      YEAR = {2013},
    NUMBER = {2},
     PAGES = {145--237},
      ISSN = {2168-0930},
   MRCLASS = {14L05 (11G25 14C30 14D20)},
  MRNUMBER = {3272049},
MRREVIEWER = {Rui Miguel Saramago},
       DOI = {10.4310/CJM.2013.v1.n2.a1},
       URL = {https://doi.org/10.4310/CJM.2013.v1.n2.a1},
}

@book {ScholzeWeinsteinBerkeley,
    AUTHOR = {Scholze, Peter and Weinstein, Jared},
     TITLE = {Berkeley lectures on {$p$}-adic geometry},
    SERIES = {Annals of Mathematics Studies},
    VOLUME = {207},
 PUBLISHER = {Princeton University Press, Princeton, NJ},
      YEAR = {2020},
     PAGES = {x+250},
      ISBN = {978-0-691-20209-9; 978-0-691-20208-2; 978-0-691-20215-0},
   MRCLASS = {14G45 (14A15 14G22 14G35 14M15)},
  MRNUMBER = {4446467},
}

@article {KisinModels,
    AUTHOR = {Kisin, Mark},
     TITLE = {Integral models for {S}himura varieties of abelian type},
   JOURNAL = {J. Amer. Math. Soc.},
  FJOURNAL = {Journal of the American Mathematical Society},
    VOLUME = {23},
      YEAR = {2010},
    NUMBER = {4},
     PAGES = {967--1012},
      ISSN = {0894-0347},
   MRCLASS = {11G18 (14G35)},
  MRNUMBER = {2669706},
MRREVIEWER = {Jeffrey D. Achter},
       DOI = {10.1090/S0894-0347-10-00667-3},
       URL = {https://doi.org/10.1090/S0894-0347-10-00667-3},
}

@incollection {Milne,
    AUTHOR = {Milne, J. S.},
     TITLE = {Introduction to {S}himura varieties},
 BOOKTITLE = {Harmonic analysis, the trace formula, and {S}himura varieties},
    SERIES = {Clay Math. Proc.},
    VOLUME = {4},
     PAGES = {265--378},
 PUBLISHER = {Amer. Math. Soc., Providence, RI},
      YEAR = {2005},
   MRCLASS = {11G18 (14G35)},
  MRNUMBER = {2192012},
MRREVIEWER = {Ulrich G\"{o}rtz},
}

@misc{FarguesScholze,
    TITLE = {Geometrization of the local {L}anglands correspondence},
    AUTHOR = {Fargues, L. and Scholze, P.},
    JOURNAL = {arxiv e-prints},
    archivePrefix = {arXiv},
    eprint = {2102.13459},
    year = {2021},
    howpublished = {\href{https://arxiv.org/abs/2102.13459}{arXiv 2102.13459}}
}

@misc{ImaiKatoYoucis,
       author = {{Imai}, Naoki and {Kato}, Hiroki and {Youcis}, Alex},
        title = "{The prismatic realization functor for {S}himura varieties of abelian type}",
      journal = {arXiv e-prints},
     keywords = {Mathematics - Number Theory, Mathematics - Algebraic Geometry},
         year = 2023,
        month = oct,
          doi = {10.48550/arXiv.2310.08472},
archivePrefix = {arXiv},
       eprint = {2310.08472},
 primaryClass = {math.NT},
 howpublished = {\href{https://arxiv.org/abs/2310.08472}{arXiv 2310.08472}}
}

@article{BhattScholzePrisms,
	title={Prisms and prismatic cohomology},
	author={Bhatt, B. and Scholze, P.},
	journal={Ann. of Math. (2)},
	volume={196},
	number={3},
	pages={1135 -- 1275},
	year={2022}
}

@misc{KisinShinZhu,
       author = {{Kisin}, Mark and {Shin}, Sug Woo and {Zhu}, Yihang},
        title = "{The stable trace formula for {S}himura varieties of abelian type}",
      journal = {arXiv e-prints},
     keywords = {Mathematics - Number Theory, Mathematics - Algebraic Geometry, 11G18, 14G35, 11G15},
         year = 2021,
        month = oct,
archivePrefix = {arXiv},
       eprint = {2110.05381},
 primaryClass = {math.NT},
 howpublished = {\href{https://arxiv.org/abs/2110.05381}{arXiv 2110.05381}}
}

@article {GleasonLimXu,
    AUTHOR = {Gleason, Ian and Lim, Dong Gyu and Xu, Yujie},
     TITLE = {The connected components of affine {D}eligne--{L}usztig
              varieties},
   JOURNAL = {Invent. Math.},
  FJOURNAL = {Inventiones Mathematicae},
    VOLUME = {243},
      YEAR = {2026},
    NUMBER = {2},
     PAGES = {805--861},
      ISSN = {0020-9910,1432-1297},
   MRCLASS = {99-06},
  MRNUMBER = {5008157},
       DOI = {10.1007/s00222-025-01386-1},
       URL = {https://doi-org.lib-proxy01.skidmore.edu/10.1007/s00222-025-01386-1},
}

@article {ZhouIsogeny,
    AUTHOR = {Zhou, Rong},
     TITLE = {Mod {$p$} isogeny classes on {S}himura varieties with
              parahoric level structure},
   JOURNAL = {Duke Math. J.},
  FJOURNAL = {Duke Mathematical Journal},
    VOLUME = {169},
      YEAR = {2020},
    NUMBER = {15},
     PAGES = {2937--3031},
}

@article {KMPS,
    AUTHOR = {Kisin, Mark and Madapusi Pera, Keerthi and Shin, Sug Woo},
     TITLE = {Honda--{T}ate theory for {S}himura varieties},
   JOURNAL = {Duke Math. J.},
  FJOURNAL = {Duke Mathematical Journal},
    VOLUME = {171},
      YEAR = {2022},
    NUMBER = {7},
     PAGES = {1559--1614},
      ISSN = {0012-7094,1547-7398},
   MRCLASS = {11G18 (14G35)},
  MRNUMBER = {4484214},
       DOI = {10.1215/00127094-2021-0063},
       URL = {https://doi.org/10.1215/00127094-2021-0063},
}

@article{KedlayaLiu,
AUTHOR = {Kedlaya, Kiran S. and Liu, Ruochuan},
     TITLE = {Relative {$p$}-adic {H}odge theory: foundations},
   JOURNAL = {Ast\'{e}risque},
  FJOURNAL = {Ast\'{e}risque},
    NUMBER = {371},
      YEAR = {2015},
     PAGES = {239},
      ISSN = {0303-1179,2492-5926},
      ISBN = {978-2-85629-807-7},
   MRCLASS = {14C30 (14G22)},
  MRNUMBER = {3379653},
MRREVIEWER = {Giovanni\ Rosso},
}

@incollection {KedlayaAlgebraization,
    AUTHOR = {Kedlaya, Kiran S.},
     TITLE = {Some ring-theoretic properties of {${\rm A_{\rm inf}}$}},
 BOOKTITLE = {{$p$}-adic hodge theory},
    SERIES = {Simons Symp.},
     PAGES = {129--141},
 PUBLISHER = {Springer, Cham},
      YEAR = {[2020] \copyright 2020},
      ISBN = {978-3-030-43843-2; 978-3-030-43844-9},
   MRCLASS = {13F35 (14F30 14G45)},
  MRNUMBER = {4359204},
       DOI = {10.1007/978-3-030-43844-9\_4},
       URL = {https://doi.org/10.1007/978-3-030-43844-9_4},
}

@article{Huber,
	title={A generalization of formal schemes and rigid analytic varieties},
	author={Huber, R.},
	journal={Math. Z.},
	volume={217},
	number={4},
	year={1994},
	pages={513 -- 551}
}

@book{Huber96,
    author = {Roland Huber},
    title = {{\'E}tale cohomology of rigid analytic varieties and adic spaces},
    publisher = {Springer Fachmedien Wiesbaden},
    year = {1996},
    series = {Aspects of Mathematics},
    Volume = {30}
}

@article {GleasonSpecialization,
    AUTHOR = {Gleason, Ian},
     TITLE = {Specialization maps for {S}cholze's category of diamonds},
   JOURNAL = {Math. Ann.},
  FJOURNAL = {Mathematische Annalen},
    VOLUME = {391},
      YEAR = {2025},
    NUMBER = {2},
     PAGES = {1611--1679},
      ISSN = {0025-5831,1432-1807},
   MRCLASS = {14G45},
  MRNUMBER = {4853001},
       DOI = {10.1007/s00208-024-02952-3},
       URL = {https://doi.org/10.1007/s00208-024-02952-3},
}

@misc{ZhangThesis,
       author = {{Zhang}, Mingjia},
        title = "{A PEL-type Igusa Stack and the $p$-adic Geometry of {S}himura Varieties}",
      journal = {arXiv e-prints},
     keywords = {Mathematics - Number Theory, Mathematics - Algebraic Geometry},
         year = 2023,
        month = sep,
          doi = {10.48550/arXiv.2309.05152},
archivePrefix = {arXiv},
       eprint = {2309.05152},
 primaryClass = {math.NT},
 howpublished = {\href{https://arxiv.org/abs/2309.05152}{arXiv 2309.05152}}
}

@article {vH,
    AUTHOR = {{\VAN{Hoften}{}{}} van Hoften, Pol},
     TITLE = {Mod {$p$} points on {S}himura varieties of parahoric level},
   JOURNAL = {Forum Math. Pi},
  FJOURNAL = {Forum of Mathematics. Pi},
    VOLUME = {12},
      YEAR = {2024},
     PAGES = {Paper No. e20, 67},
      ISSN = {2050-5086},
   MRCLASS = {11G18 (14G35)},
  MRNUMBER = {4818941},
       DOI = {10.1017/fmp.2024.22},
       URL = {https://doi.org/10.1017/fmp.2024.22},
}

@misc{stacks-project,
  author       = {The {Stacks project authors}},
  title        = {The Stacks project},
  howpublished = {\url{https://stacks.math.columbia.edu}},
  year         = {2026},
}

@misc{AGLR,
       author = {{Ansch{\"u}tz}, Johannes and {Gleason}, Ian and {Louren{\c{c}}o}, Jo{\~a}o and {Richarz}, Timo},
        title = "{On the $p$-adic theory of local models}",
      journal = {arXiv e-prints},
     keywords = {Mathematics - Algebraic Geometry, Mathematics - Number Theory},
         year = 2022,
        month = jan,
archivePrefix = {arXiv},
       eprint = {2201.01234},
 primaryClass = {math.AG},
 howpublished = {\href{https://arxiv.org/abs/2201.01234}{arXiv 2201.01234}}
}

@article{AnschutzIsocrystal,
  AUTHOR = {Ansch\"{u}tz, Johannes},
     TITLE = {{$G$}-bundles on the absolute {F}argues-{F}ontaine curve},
   JOURNAL = {Acta Arith.},
  FJOURNAL = {Acta Arithmetica},
    VOLUME = {207},
      YEAR = {2023},
    NUMBER = {4},
     PAGES = {351--363},
      ISSN = {0065-1036,1730-6264},
   MRCLASS = {14G45 (13F35)},
  MRNUMBER = {4591255},
       DOI = {10.4064/aa221222-21-2},
       URL = {https://doi.org/10.4064/aa221222-21-2},
}

@book {PR,
    AUTHOR = {Platonov, Vladimir and Rapinchuk, Andrei},
     TITLE = {Algebraic groups and number theory},
    SERIES = {Pure and Applied Mathematics},
    VOLUME = {139},
      NOTE = {Translated from the 1991 Russian original by Rachel Rowen},
 PUBLISHER = {Academic Press, Inc., Boston, MA},
      YEAR = {1994},
     PAGES = {xii+614},
      ISBN = {0-12-558180-7},
   MRCLASS = {11E57 (11-02 20Gxx)},
}

@article{Kottwitz2, 
    AUTHOR = {Kottwitz, Robert E.},
     TITLE = {Isocrystals with additional structure. {II}},
   JOURNAL = {Compositio Math.},
  FJOURNAL = {Compositio Mathematica},
    VOLUME = {109},
      YEAR = {1997},
    NUMBER = {3},
     PAGES = {255--339},
      ISSN = {0010-437X,1570-5846},
   MRCLASS = {20G25 (11S25 14F30 14L05)},
  MRNUMBER = {1485921},
MRREVIEWER = {Guy\ Rousseau},
       DOI = {10.1023/A:1000102604688},
       URL = {https://doi.org/10.1023/A:1000102604688},
}

@article {RapoportRichartz,
    AUTHOR = {Rapoport, M. and Richartz, M.},
     TITLE = {On the classification and specialization of {$F$}-isocrystals
              with additional structure},
   JOURNAL = {Compositio Math.},
  FJOURNAL = {Compositio Mathematica},
    VOLUME = {103},
      YEAR = {1996},
    NUMBER = {2},
     PAGES = {153--181},
      ISSN = {0010-437X},
   MRCLASS = {14F30 (22E50)},
  MRNUMBER = {1411570},
MRREVIEWER = {Abdellah Mokrane},
       URL = {http://www.numdam.org/item?id=CM_1996__103_2_153_0},
}

@article {Viehmann,
    AUTHOR = {Viehmann, Eva},
     TITLE = {On {N}ewton strata in the {$B_{\rm dR}^+$}-{G}rassmannian},
   JOURNAL = {Duke Math. J.},
  FJOURNAL = {Duke Mathematical Journal},
    VOLUME = {173},
      YEAR = {2024},
    NUMBER = {1},
     PAGES = {177--225},
      ISSN = {0012-7094,1547-7398},
   MRCLASS = {11G18 (14G20 14M15)},
  MRNUMBER = {4728690},
MRREVIEWER = {Nguy\cftil en Qu\^oc Th\'ang},
       DOI = {10.1215/00127094-2024-0005},
       URL = {https://doi.org/10.1215/00127094-2024-0005},
}

@misc{EtCohDiam,
       author = {{Scholze}, Peter},
        title = "{Etale cohomology of diamonds}",
      journal = {arXiv e-prints},
     keywords = {Mathematics - Algebraic Geometry, Mathematics - Number Theory, 14F20, 14F05, 14G22, 14G20},
         year = 2017,
        month = sep,
          doi = {10.48550/arXiv.1709.07343},
archivePrefix = {arXiv},
       eprint = {1709.07343},
 primaryClass = {math.AG},
 howpublished = {\href{https://arxiv.org/abs/1709.07343}{arXiv 1709.07343}}
}

@misc{GleasonIvanov,
       author = {{Gleason}, Ian and {Ivanov}, Alexander B.},
        title = "{Meromorphic vector bundles on the Fargues--Fontaine curve}",
      journal = {arXiv e-prints},
     keywords = {Mathematics - Algebraic Geometry, Mathematics - Number Theory},
         year = 2023,
        month = jul,
          doi = {10.48550/arXiv.2307.00887},
archivePrefix = {arXiv},
       eprint = {2307.00887},
 primaryClass = {math.AG},
    howpublished = {\href{https://arxiv.org/abs/2307.00887}{arXiv 2307.00887}}
}

@article {AnschuetzLeBras,
    AUTHOR = {Ansch\"{u}tz, Johannes and Le Bras, Arthur-C\'{e}sar},
     TITLE = {Prismatic {D}ieudonn\'{e} theory},
   JOURNAL = {Forum Math. Pi},
  FJOURNAL = {Forum of Mathematics. Pi},
    VOLUME = {11},
      YEAR = {2023},
     PAGES = {Paper No. e2, 92},
      ISSN = {2050-5086},
   MRCLASS = {14F30 (14L05)},
  MRNUMBER = {4530092},
       DOI = {10.1017/fmp.2022.22},
       URL = {https://doi.org/10.1017/fmp.2022.22},
}

@article{AnschuetzExtension,
AUTHOR = {Ansch\"{u}tz, Johannes},
     TITLE = {Extending torsors on the punctured {${\rm Spec}(A_{\inf})$}},
   JOURNAL = {J. Reine Angew. Math.},
  FJOURNAL = {Journal f\"{u}r die Reine und Angewandte Mathematik. [Crelle's
              Journal]},
    VOLUME = {783},
      YEAR = {2022},
     PAGES = {227--268},
      ISSN = {0075-4102,1435-5345},
   MRCLASS = {20G05 (14M15)},
  MRNUMBER = {4373246},
MRREVIEWER = {Nguy\cftil{e}n Qu\^{o}c Th\'{a}ng},
       DOI = {10.1515/crelle-2021-0077},
       URL = {https://doi.org/10.1515/crelle-2021-0077}
}

@article {He2,
    AUTHOR = {He, Xuhua},
     TITLE = {Kottwitz-{R}apoport conjecture on unions of affine
              {D}eligne-{L}usztig varieties},
   JOURNAL = {Ann. Sci. \'{E}c. Norm. Sup\'{e}r. (4)},
  FJOURNAL = {Annales Scientifiques de l'\'{E}cole Normale Sup\'{e}rieure. Quatri\`eme
              S\'{e}rie},
    VOLUME = {49},
      YEAR = {2016},
    NUMBER = {5},
     PAGES = {1125--1141},
      ISSN = {0012-9593},
   MRCLASS = {14M15 (14G35 20G25)},
MRREVIEWER = {Alexander Boris Ivanov},
       DOI = {10.24033/asens.2305},
       URL = {https://doi.org/10.24033/asens.2305},
}

@book {KalethaPrasad,
    AUTHOR = {Kaletha, Tasho and Prasad, Gopal},
     TITLE = {Bruhat-{T}its theory---a new approach},
    SERIES = {New Mathematical Monographs},
    VOLUME = {44},
 PUBLISHER = {Cambridge University Press, Cambridge},
      YEAR = {2023},
     PAGES = {xxx+718},
      ISBN = {978-1-108-83196-3},
   MRCLASS = {20E42 (11F70 20G25 22E50)},
  MRNUMBER = {4520154},
MRREVIEWER = {Corina\ Ciobotaru},
}

@article {BTII,
    AUTHOR = {Bruhat, F. and Tits, J.},
     TITLE = {Groupes r\'{e}ductifs sur un corps local. {II}. {S}ch\'{e}mas
              en groupes. {E}xistence d'une donn\'{e}e radicielle
              valu\'{e}e},
   JOURNAL = {Inst. Hautes \'{E}tudes Sci. Publ. Math.},
  FJOURNAL = {Institut des Hautes \'{E}tudes Scientifiques. Publications
              Math\'{e}matiques},
    NUMBER = {60},
      YEAR = {1984},
     PAGES = {197--376},
      ISSN = {0073-8301,1618-1913},
   MRCLASS = {20G25 (14L15)},
  MRNUMBER = {756316},
MRREVIEWER = {James\ E.\ Humphreys},
       URL = {http://www.numdam.org/item?id=PMIHES_1984__60__5_0},
}

@article{RapoportReduction,
    AUTHOR = {Rapoport, Michael},
     TITLE = {A guide to the reduction modulo {$p$} of {S}himura varieties},
      NOTE = {Automorphic forms. I},
   JOURNAL = {Ast\'{e}risque},
  FJOURNAL = {Ast\'{e}risque},
    NUMBER = {298},
      YEAR = {2005},
     PAGES = {271--318},
      ISSN = {0303-1179,2492-5926},
   MRCLASS = {11G18 (11G40 14G35)},
  MRNUMBER = {2141705},
MRREVIEWER = {Ulrich\ G\"{o}rtz},
}

@article {RapoportAccessiblePeriodDomain,
    AUTHOR = {Rapoport, Michael},
     TITLE = {Accessible and weakly accessible period domains},
      NOTE = {appendix to \textit{On the $p$-adic cohomology of the Lubin--Tate tower} by Peter Scholze},
   JOURNAL = {Ann. Sci. \'{E}c. Norm. Sup\'{e}r. (4)},
  FJOURNAL = {Annales Scientifiques de l'\'{E}cole Normale Sup\'{e}rieure.
              Quatri\`eme S\'{e}rie},
    VOLUME = {51},
      YEAR = {2018},
    NUMBER = {4},
     PAGES = {811--863},
      ISSN = {0012-9593,1873-2151},
   MRCLASS = {14F30 (11F80 11S37)},
  MRNUMBER = {3861564},
MRREVIEWER = {Fausto\ Jarqu\'{\i}n Z\'{a}rate},
       DOI = {10.24033/asens.2367},
       URL = {https://doi.org/10.24033/asens.2367},
}

@article {GleasonLourencoLocalModel,
    AUTHOR = {Gleason, Ian and Louren\c{c}o, Jo\~{a}o},
     TITLE = {Tubular neighborhoods of local models},
   JOURNAL = {Duke Math. J.},
  FJOURNAL = {Duke Mathematical Journal},
    VOLUME = {173},
      YEAR = {2024},
    NUMBER = {4},
     PAGES = {723--743},
      ISSN = {0012-7094,1547-7398},
   MRCLASS = {14G45 (11G18 14G35)},
  MRNUMBER = {4734553},
       DOI = {10.1215/00127094-2023-0028},
       URL = {https://doi.org/10.1215/00127094-2023-0028},
}

@misc{RodriguezCamargo,
       author = {{Rodr{\'\i}guez Camargo}, J.~E.},
        title = "{Locally analytic completed cohomology}",
      journal = {arXiv e-prints},
     keywords = {Mathematics - Number Theory},
         year = 2022,
        month = aug,
          eid = {arXiv:2209.01057},
        pages = {arXiv:2209.01057},
          doi = {10.48550/arXiv.2209.01057},
archivePrefix = {arXiv},
       eprint = {2209.01057},
 primaryClass = {math.NT},
       adsurl = {https://ui.adsabs.harvard.edu/abs/2022arXiv220901057R},
      adsnote = {Provided by the SAO/NASA Astrophysics Data System},
    howpublished = {\href{https://arxiv.org/abs/2209.01057}{arXiv 2209.01057}}
}

@misc{GleasonComponents,
       author = {{Gleason}, Ian},
        title = "{On the geometric connected components of moduli spaces of p-adic shtukas and local Shimura varieties}",
      journal = {arXiv e-prints},
     keywords = {Mathematics - Algebraic Geometry, Mathematics - Number Theory},
         year = 2021,
        month = jul,
          eid = {arXiv:2107.03579},
        pages = {arXiv:2107.03579},
          doi = {10.48550/arXiv.2107.03579},
archivePrefix = {arXiv},
       eprint = {2107.03579},
 primaryClass = {math.AG},
       adsurl = {https://ui.adsabs.harvard.edu/abs/2021arXiv210703579G},
      adsnote = {Provided by the SAO/NASA Astrophysics Data System},
    howpublished = {\href{https://arxiv.org/abs/2107.03579}{arXiv 2107.03579}}
}

@incollection {MilnePoints,
	AUTHOR = {Milne, James S.},
	TITLE = {The points on a {S}himura variety modulo a prime of good
	reduction},
	BOOKTITLE = {The zeta functions of {P}icard modular surfaces},
	PAGES = {151--253},
	PUBLISHER = {Univ. Montr\'{e}al, Montreal, QC},
	YEAR = {1992},
	ISBN = {2-921120-08-9},
	MRCLASS = {11G18 (14G35)},
	MRNUMBER = {1155229},
	MRREVIEWER = {Takeshi\ Ooe},
}

@incollection {MoonenModels,
	AUTHOR = {Moonen, Ben},
	TITLE = {Models of {S}himura varieties in mixed characteristics},
	BOOKTITLE = {Galois representations in arithmetic algebraic geometry
	({D}urham, 1996)},
	SERIES = {London Math. Soc. Lecture Note Ser.},
	VOLUME = {254},
	PAGES = {267--350},
	PUBLISHER = {Cambridge Univ. Press, Cambridge},
	YEAR = {1998},
	ISBN = {0-521-64419-4},
	MRCLASS = {11G18 (14G35 14K10)},
	MRNUMBER = {1696489},
	MRREVIEWER = {Rutger\ Noot},
	DOI = {10.1017/CBO9780511662010.008},
	URL = {https://doi.org/10.1017/CBO9780511662010.008},
}

@article {AchingerLaraYoucis,
    AUTHOR = {Achinger, Piotr and Lara, Marcin and Youcis, Alex},
     TITLE = {Specialization for the pro-\'{e}tale fundamental group},
   JOURNAL = {Compos. Math.},
  FJOURNAL = {Compositio Mathematica},
    VOLUME = {158},
      YEAR = {2022},
    NUMBER = {8},
     PAGES = {1713--1745},
      ISSN = {0010-437X,1570-5846},
   MRCLASS = {14G22 (14F20 14F35)},
  MRNUMBER = {4490930},
       DOI = {10.1112/s0010437x2200759x},
       URL = {https://doi.org/10.1112/s0010437x2200759x},
}

@misc{IgusaStacks,
      title={Igusa Stacks and the Cohomology of {S}himura Varieties}, 
      author={Patrick Daniels and Pol van Hoften and Dongryul Kim and Mingjia Zhang},
      year={2024},
      eprint={2408.01348},
      archivePrefix={arXiv},
      primaryClass={math.NT},
      url={https://arxiv.org/abs/2408.01348}, 
      howpublished = {\href{https://arxiv.org/abs/2408.01348}{arxiv 2408.01348}}
}

@article {DanielsYoucis,
    AUTHOR = {Daniels, Patrick and Youcis, Alexander},
     TITLE = {Canonical integral models for {S}himura varieties of abelian
              type},
   JOURNAL = {Forum Math. Sigma},
  FJOURNAL = {Forum of Mathematics. Sigma},
    VOLUME = {13},
      YEAR = {2025},
     PAGES = {Paper No. e69, 47},
      ISSN = {2050-5094},
   MRCLASS = {11G18 (11R39 14G35)},
  MRNUMBER = {4888034},
       DOI = {10.1017/fms.2025.27},
       URL = {https://doi.org/10.1017/fms.2025.27},
}

@misc{KimVectorBundles,
       author = {{Kim}, Dongryul},
        title = "{Descending finite projective modules from a Novikov ring}",
      journal = {arXiv e-prints},
     keywords = {Mathematics - Commutative Algebra, Mathematics - Algebraic Geometry},
         year = 2024,
        month = feb,
archivePrefix = {arXiv},
       eprint = {2402.17852},
 primaryClass = {math.AC},
 howpublished = {\href{https://arxiv.org/abs/2402.17852}{arXiv 2402.17852}}
}

@article {PappasIntegralModels,
    AUTHOR = {Pappas, Georgios},
     TITLE = {On integral models of {S}himura varieties},
   JOURNAL = {Math. Ann.},
  FJOURNAL = {Mathematische Annalen},
    VOLUME = {385},
      YEAR = {2023},
    NUMBER = {3-4},
     PAGES = {2037--2097},
      ISSN = {0025-5831,1432-1807},
   MRCLASS = {11G18 (14G35)},
  MRNUMBER = {4566689},
       DOI = {10.1007/s00208-022-02387-8},
       URL = {https://doi.org/10.1007/s00208-022-02387-8},
}

@misc{Takaya,
       author = {{Takaya}, Yuta},
        title = "{Moduli spaces of level structures on mixed characteristic local shtukas}",
      journal = {arXiv e-prints},
     keywords = {Mathematics - Number Theory, Mathematics - Algebraic Geometry},
         year = 2024,
        month = feb,
          doi = {10.48550/arXiv.2402.07135},
archivePrefix = {arXiv},
       eprint = {2402.07135},
 primaryClass = {math.NT},
howpublished = {\href{https://arxiv.org/abs/2402.07135}{arxiv 2402.07135}},
}

@article {ScholzePAdicHodge,
    AUTHOR = {Scholze, Peter},
     TITLE = {{$p$}-adic {H}odge theory for rigid-analytic varieties},
   JOURNAL = {Forum Math. Pi},
  FJOURNAL = {Forum of Mathematics. Pi},
    VOLUME = {1},
      YEAR = {2013},
     PAGES = {e1, 77},
      ISSN = {2050-5086},
   MRCLASS = {14G22 (14C30 14F30 14G20 32J27 32P05)},
  MRNUMBER = {3090230},
MRREVIEWER = {Hui\ June\ Zhu},
       DOI = {10.1017/fmp.2013.1},
       URL = {https://doi.org/10.1017/fmp.2013.1},
}

@article {LiuZhu,
    AUTHOR = {Liu, Ruochuan and Zhu, Xinwen},
     TITLE = {Rigidity and a {R}iemann-{H}ilbert correspondence for
              {$p$}-adic local systems},
   JOURNAL = {Invent. Math.},
  FJOURNAL = {Inventiones Mathematicae},
    VOLUME = {207},
      YEAR = {2017},
    NUMBER = {1},
     PAGES = {291--343},
      ISSN = {0020-9910,1432-1297},
   MRCLASS = {14G22 (14G35 14J60)},
  MRNUMBER = {3592758},
MRREVIEWER = {Marco\ A.\ Garuti},
       DOI = {10.1007/s00222-016-0671-7},
       URL = {https://doi.org/10.1007/s00222-016-0671-7},
}

@article {DiaoLanLiuZhu,
    AUTHOR = {Diao, Hansheng and Lan, Kai-Wen and Liu, Ruochuan and Zhu,
              Xinwen},
     TITLE = {Logarithmic {R}iemann-{H}ilbert correspondences for rigid
              varieties},
   JOURNAL = {J. Amer. Math. Soc.},
  FJOURNAL = {Journal of the American Mathematical Society},
    VOLUME = {36},
      YEAR = {2023},
    NUMBER = {2},
     PAGES = {483--562},
      ISSN = {0894-0347,1088-6834},
   MRCLASS = {14F40 (14D07 14F30 14G22 14G35)},
  MRNUMBER = {4536903},
       DOI = {10.1090/jams/1002},
       URL = {https://doi.org/10.1090/jams/1002},
}

@article {Borovoi,
    AUTHOR = {Borovoi, Mikhail},
     TITLE = {Abelian {G}alois cohomology of reductive groups},
   JOURNAL = {Mem. Amer. Math. Soc.},
  FJOURNAL = {Memoirs of the American Mathematical Society},
    VOLUME = {132},
      YEAR = {1998},
    NUMBER = {626},
     PAGES = {viii+50},
      ISSN = {0065-9266,1947-6221},
   MRCLASS = {20G10 (11E72 14E20 18G50)},
  MRNUMBER = {1401491},
MRREVIEWER = {James\ E.\ Humphreys},
       DOI = {10.1090/memo/0626},
       URL = {https://doi.org/10.1090/memo/0626},
}

@article {PhungHodosSantos,
    AUTHOR = {Hai, Ph\`{u}ng H\^{o} and dos Santos, Jo\~{a}o Pedro},
     TITLE = {On the structure of affine flat group schemes over discrete
              valuation rings, {II}},
   JOURNAL = {Int. Math. Res. Not. IMRN},
  FJOURNAL = {International Mathematics Research Notices. IMRN},
      YEAR = {2021},
    NUMBER = {12},
     PAGES = {9375--9424},
      ISSN = {1073-7928,1687-0247},
   MRCLASS = {14L17 (14L15 18M25 20G35)},
  MRNUMBER = {4276322},
MRREVIEWER = {Alan\ Koch},
       DOI = {10.1093/imrn/rnaa247},
       URL = {https://doi.org/10.1093/imrn/rnaa247},
}

@ARTICLE{KisinPappasZhou,
       author = {{Kisin}, Mark and {Pappas}, Georgios and {Zhou}, Rong},
        title = "{Integral models of Shimura varieties with parahoric level structure, II}",
      journal = {arXiv e-prints},
     keywords = {Mathematics - Number Theory, Mathematics - Algebraic Geometry},
         year = 2024,
        month = sep,
          doi = {10.48550/arXiv.2409.03689},
archivePrefix = {arXiv},
       eprint = {2409.03689},
 primaryClass = {math.NT},
}

@article {deJongFormalRigid,
    AUTHOR = {de Jong, A. J.},
     TITLE = {Crystalline {D}ieudonn\'e{} module theory via formal and rigid
              geometry},
   JOURNAL = {Inst. Hautes \'Etudes Sci. Publ. Math.},
  FJOURNAL = {Institut des Hautes \'Etudes Scientifiques. Publications
              Math\'ematiques},
    NUMBER = {82},
      YEAR = {1995},
     PAGES = {5--96},
      ISSN = {0073-8301,1618-1913},
   MRCLASS = {14L05 (14F30)},
  MRNUMBER = {1383213},
MRREVIEWER = {Pierre\ Berthelot},
       URL = {http://www.numdam.org/item?id=PMIHES_1995__82__5_0},
}

\end{document}